\def\restriction#1#2{\mathchoice
              {\setbox1\hbox{${\displaystyle #1}_{\scriptstyle #2}$}
              \restrictionaux{#1}{#2}}
              {\setbox1\hbox{${\textstyle #1}_{\scriptstyle #2}$}
              \restrictionaux{#1}{#2}}
              {\setbox1\hbox{${\scriptstyle #1}_{\scriptscriptstyle #2}$}
              \restrictionaux{#1}{#2}}
              {\setbox1\hbox{${\scriptscriptstyle #1}_{\scriptscriptstyle #2}$}
              \restrictionaux{#1}{#2}}}
\def\restrictionaux#1#2{{#1\,\smash{\vrule height .8\ht1 depth .85\dp1}}_{\,#2}}
\newcommand{\limplies}{\rightarrow}
\DeclareMathOperator{\Aut}{Aut}
\DeclareMathOperator{\aut}{Aut}
\DeclareMathOperator{\sym}{Sym}
\DeclareMathOperator{\bdn}{bdn}
\DeclareMathOperator{\tp}{\mathsf{tp}}
\DeclareMathOperator{\qftp}{\mathsf{qftp}}
\DeclareMathOperator{\val}{val}
\DeclareMathOperator{\Th}{\mathsf{Th}}
\DeclareMathOperator{\Sig}{Sig}
\DeclareMathOperator{\arity}{arity}
\DeclareMathOperator{\HC}{\mathsf{HC}}
\let\strokeL\L
\newcommand{\Los}{\strokeL o\'s }
\newcommand{\Ifrak}{\ensuremath{\mathfrak{I}}}
\newcommand{\Nfrak}{\ensuremath{\mathfrak{N}}}
\newcommand{\Ical}{\ensuremath{\mathcal{I}}}
\newcommand{\Lcal}{\ensuremath{\mathcal{L}}}
\newcommand{\Ncal}{\ensuremath{\mathcal{N}}}
\newcommand{\Mbb}{\ensuremath{\mathbb{M}}}
\newcommand{\Nbb}{\ensuremath{\mathbb{N}}}
\let\str\CMcal
\newcommand\cla\EuScript
\let\lang\CMcal
\newcommand{\lL}{\lang{L}}
\newcommand{\sM}{\str{M}}
\newcommand{\sN}{\str{N}}
\newcommand{\sS}{\str{S}}
\newcommand{\sI}{\str{I}}
\newcommand{\sJ}{\str{J}}
\newcommand{\sH}{\str{H}}
\newcommand{\sG}{\str{G}}
\newcommand{\sR}{\str{R}}
\let\b\mathbb
\newcommand{\bN}{\b{N}}
\newcommand{\cC}{\cla{C}}
\newcommand{\cK}{\cla{K}}
\newcommand{\NOT}{\mathrm{N}}
\newcommand{\codingcla}[1]{\cC_{#1}}
\newcommand{\ncodingcla}[1]{\NOT\codingcla{#1}}
\let\monster\mathbb
\newcommand{\mM}{\monster{M}}
\newcommand{\dprk}{\mathrm{dp-rk}}
\let\fullprod\boxtimes
\let\suppos\ast
\let\disjointunion\sqcup
\newcommand{\cR}{\cla{R}}
\let\reduces\multimapdotbothA
\let\reducesboth\multimapdotboth
\newcommand{\reducesqf}{\overset{\text{qf}}{\reduces}} 
\newcommand{\reducesbothqf}{\overset{\text{qf}}{\reducesboth}}
\DeclareRobustCommand{\reducesneq}{\mathrel{\mathpalette\reducesneq@{\centernot\multimapdotboth}}}
\newcommand{\reducesneq@}[2]{%
  \vtop{\offinterlineskip
    \ialign{\hfil##\hfil\cr
      $\m@th#1\multimapdotbothA$\cr 
      \noalign{\sbox\z@{$\m@th#1\mkern2mu$}\kern-\wd\z@}
      $\m@th\alexey@demote{#1}#2$\cr
    }%
  }%
}
\newcommand{\alexey@demote}[1]{%
  \ifx#1\displaystyle\scriptstyle\else
  \ifx#1\textstyle\scriptstyle\else
  \scriptscriptstyle\fi\fi
}
\definecolor{lime}{HTML}{A6CE39}
\DeclareRobustCommand{\orcidicon}{%
	\begin{tikzpicture}
	\draw[lime, fill=lime] (0,0) 
	circle [radius=0.16] 
	node[white] {{\fontfamily{qag}\selectfont \tiny ID}};
	\draw[white, fill=white] (-0.0625,0.095) 
	circle [radius=0.007];
	\end{tikzpicture}
	\hspace{-2mm}
}
\xdef\csname orcid\x\endcsname{\noexpand\href{https://orcid.org/\csname orcidauthor\x\endcsname}{\noexpand\orcidicon}}
\definecolor{lime}{HTML}{A6CE39}
\DeclareRobustCommand{\orcidicon}{%
	\begin{tikzpicture}
	\draw[lime, fill=lime] (0,0) 
	circle [radius=0.16] 
	node[white] {{\fontfamily{qag}\selectfont \tiny ID}};
	\draw[white, fill=white] (-0.0625,0.095) 
	circle [radius=0.007];
	\end{tikzpicture}
	\hspace{-2mm}
}
\colorlet{PierreColor}{cyan}
\colorlet{NadavColor}{-green!40!yellow}
\colorlet{ArisColor}{red}
\DeclareMathOperator{\age}{\mathsf{Age}}
\author{Nadav Meir\footnote{Supported by Narodowe Centrum Nauki, Poland, grant 2016/22/E/ST1/00450, and by Israel Science Foundation grant number 665/20 and 555/21.} \thanks{The first author is supported by Narodowe Centrum Nauki, Poland, grant 2016/22/E/ST1/00450, Israel Science Foundation grant number 555/21 and Israel Science Foundation grant number 665/20.
} \orcidA{}, Aris Papadopoulos\footnote{Supported by a Leeds Doctoral Scholarship, from the University of Leeds.} \orcidB{}, and Pierre Touchard \footnote{Supported by the University of Campania ‘Luigi Vanvitelli’ in the framework of V:ALERE 2019 (GoAL project), and by KU Leuven IF
C14/17/083 and C16/23/010.} \orcidC{}}
\title{Generalised Indiscernibles, Dividing Lines, and Products of Structures\footnotetext{2020 \textit{Mathematics Subject Classification}: Primary: 03C45
; Secondary: 05C55 }}
\theoremstyle{plain}
\newtheorem{theorem}{Theorem}[section]
\newtheorem{lemma}[theorem]{Lemma}
\newtheorem{proposition}[theorem]{Proposition}
\newtheorem{fact}[theorem]{Fact}
\newtheorem{corollary}[theorem]{Corollary}
\newtheorem{thmx}{Theorem}
\theoremstyle{definition}
\newtheorem{definition}[theorem]{Definition}
\newtheorem{conjecture}[theorem]{Conjecture}
\newtheorem*{notation*}{Notation}
\newtheorem{question}[theorem]{Question}
\newtheorem*{question*}{Question}
\newtheorem{example}[theorem]{Example}
\theoremstyle{remark}
\newtheorem{remark}[theorem]{Remark}
\newtheorem*{remark*}{Remark}
\newtheorem*{observation}{Observation}
\newtheorem{claim}{Claim}
\newtheorem{poc}{Proof of Claim}
\newenvironment{claimproof}{%
	\begin{poc}
	}{%
	\hfill$\blacktriangleleft$
	\end{poc}
	}
\definecolor{black}{rgb}{0,0,0}
\colorlet{savedColor}{.}
\crefname{subsection}{subsection}{subsections}
\crefname{subsubsection}{subsubsection}{subsubsections}
\let\vphi\varphi
\begin{document}

\maketitle


\begin{abstract}
    Generalised indiscernibles highlight a strong link between model theory and structural Ramsey theory. In this paper, we use generalised indiscernibles as tools to prove results in both these areas. More precisely, we first show that a reduct of an ultrahomogenous $\aleph_0$-categorical structure which has higher arity than the original structure cannot be Ramsey. In particular, the only nontrivial Ramsey reduct of the generically ordered random $k$-hypergraph is the linear order. We then turn our attention to model-theoretic dividing lines that are characterised by collapsing generalised indiscernibles, and prove, for these dividing lines, several transfer principles in (\emph{full} and \emph{lexicographic}) products of structures. As an application, we construct new algorithmically tame classes of graphs.
\end{abstract}

\section*{Introduction}

A major theme in modern model theory, originating, essentially, from the work of Shelah, is to try and discover properties that divide the class of all first-order theories into two sides, a \emph{`tame side'} and a \emph{`wild side'} \cite{She90}. One wants to be able to deduce structure results for the theories that lie on the tame side and non-structure results for the theories that lie on the wild side. These abstract properties are most commonly referred to as \emph{dividing lines}. Perhaps the most well-known dividing line, which appeared early on in Shelah's work, in the 1970s, is \emph{stability}, and thus the programme of studying the properties and structure of stable theories is called \emph{stability theory}. 

Many beautiful theorems have been proved for stable theories, but since their introduction in the 1970s, there has been a general trend of trying to generalise results for stable theories to wider (and wilder) classes of theories, such as \emph{($n$-)dependent} (also referred to as \emph{$n$-NIP}) \emph{theories}, \emph{distal theories}, \emph{NSOP$_n$ theories}, and much more, leading to what is sometimes referred to as \emph{neostability theory} or \emph{classification theory}.

In a certain light, the various definitions of dividing lines appearing in neostability theory may seem rather ad hoc, and hence a theme from a sort of ``meta-classification theoretic'' point of view is to systematically study dividing lines. Thus rather than explicitly studying the tame and wild theories, according to some dividing line, one may choose to make the object of study the dividing lines themselves.

One of the starting points of this paper is to examine the behaviour of various dividing lines through the study of transfer principles in some natural product constructions. We will discuss this in more detail in the remainder of this introduction. We will mainly study notions of model-theoretic tameness which admit definitions based on (generalised) indiscernible sequences: ($n$-)NIP, monadically NIP, and ($n$-)distal theories, as well as a hierarchy of dividing lines that we introduce now. 

\paragraph{Generalised Dividing Lines} 
Various methods have been proposed as a uniform way of generalising and extending existing and well-studied dividing lines, see for instance \cite{GHS17} and \cite{GM22}. In this paper, we will focus on the approach taken in \cite{GHS17} (and developed further in \cite{GH19}), which provides a uniform scheme of dividing lines arising from \emph{coding} classes of finite structures: given a class of finite structures, $\cla{K}$, we will consider the class $\ncodingcla{\cla{K}}$ of all first-order theories that do not code all the members of $\cla{K}$ in a uniform manner (see \cref{sec:K-configurations} for precise definitions), and $\codingcla{\cla{K}}$ the complement of $\ncodingcla{\cla{K}}$, in the class of all first-order theories.

The study of dividing lines of the form $\ncodingcla{\cla{K}}$ where $\cla{K}$ is a Ramsey class (see \Cref{def:hp-jep-ap-erp}), inevitably leads to the notion of $\mathsf{FLim}(\cla{K})$-indexed indiscernibles (in the sense of \Cref{def:generalised-indiscernibles}), where $\mathsf{FLim}(\cla{K})$ denotes the Fraïssé limit of $\cla{K}$. To explain this connection, recall that \emph{order-indiscernible sequences} -- sequences of elements indexed by linear orders with ``constant'' behaviour on increasing subsequences -- are widely used in model theory, as a way to extract ``essential'' behaviours of other sequences. Using Ramsey's theorem and compactness, given \emph{any} sequence $(a_i)_{i\in \mathbb{Q}}$, one can find an ordered-indiscernible sequence realising its so-called \emph{Ehrenfeucht–Mostowski type} (or \emph{ EM-type}). This is however not simply a property specific to linear orders. In a very precise sense, the true reason this is possible is that the class of all finite linear orders satisfies a structural analogue of Ramsey's theorem (see \Cref{def:hp-jep-ap-erp}).

One can then observe a deep connection between generalised indiscernibles, structural Ramsey theory, and the study of dividing lines arising from coding Ramsey structures.  Intuitively, the existence of `uncollapsed' $\mathsf{FLim}(\cla{K})$-indiscernible sequences (see \Cref{def:indiscernible collapsing qftp}) for a Ramsey class $\cla{K}$ in some model of a first-order theory $T$ indicates that $T$ can somehow `see a trace' of $\cla{K}$ and therefore $T$ cannot be in $\ncodingcla{\cla{K}}$, and vice versa (see \Cref{thm:Ramsey class collapse}, which generalises {\cite[Theorem 3.14]{GH19}}). One of our main tools for analysing dividing lines will be various notions of \emph{indiscernible collapse}. 

The starting point of understanding dividing lines in this way is Shelah's well-known theorem that in a stable theory every indiscernible sequence is totally indiscernible, and, in fact, this property characterises the class of stable theories (see \cite[Theorem~II.2.13]{She90}). Results of this nature have been shown for various other dividing lines, and essentially amount to instances of \Cref{thm:Ramsey class collapse} with the appropriate class $\cla{K}$ for each dividing line. 

\paragraph{Transfer principles}
A \emph{transfer principle} for a given theory $T$ and a (model-theoretic) property $P$, is a statement of the following form: a model $\sM$ of $T$  has property $P$ if, and only if, some simpler structures related to $\sM$ have property $P$. The \emph{Ax-Kochen-Ershov theorem} is one of the most well-known and most celebrated transfer principles. It states that Henselian valued fields of equicharacteristic zero are model complete relative to their residue fields and value groups. There is extensive literature in model theory that focuses on such transfer principles, for they are often an important step toward characterising models with a given property. At the same time, the transfer principles give us information on how ``well-behaved'' a theory is. For instance, the theorem of Delon, which states that equicharacteristic 0 Henselian valued fields transfer the NIP (the absence of the \emph{independence property}) from the residue field to the valued field itself \cite{Del81}, suggests that this class of valued fields is well-behaved for the model theorist, and analogous transfers for this class are known (or expected to be true) with respect to many other dividing lines.

The study of theories that are well-behaved with respect to a given dividing line is an active topic of research. In this work, we take another approach and use transfer principles as a tool that reveals whether a dividing line itself is a well-behaved one. A reasonable approach could be to think that a dividing line is ``well-behaved'' if some natural transfer principles hold. More precisely, we are interested in the following general question:

\begin{question*}
    Let $P$ be some notion of tameness (e.g. some dividing line). Is it true that two structures $\sM$ and $\sN$ are tame with respect to $P$ if, and only if, their \emph{full product}, $\sM \boxtimes \sN$ (see \Cref{def:fullproduct}), and their \emph{lexicographic product}, $\sM[\sN]$ (see \Cref{DefinitionLexicographicProduct2}), are tame with respect to $P$?
\end{question*}

It appears that for some Ramsey classes $\cla{K}$ the class of theories $\ncodingcla{\cla{K}}$ that do not code $\cK$ does not admit such transfers. The important distinction is that this is the case, for Ramsey classes of structures collapsing generalised indiscernibles to a \emph{fixed} reduct, and this is what we will discuss in more detail now.

\paragraph{Main Results} 
As we have already mentioned, generalised indiscernibles, coding configurations, and Ramsey classes are closely interconnected notions. This connection was first established by Guingona and Hill for structures in finite relational language {\cite[Theorem 3.14]{GH19}}. Our first theorem gives a slightly more general version: 

\begin{thmx}[\Cref{thm:Ramsey class collapse}]\label{thm:Ramsey class collapse-intro}
    Let $\sI$ be an $\aleph_0$-categorical Fraïssé limit of a Ramsey class. Then the following are equivalent for a theory $T$:
    \begin{enumerate}
        \item $T\in \ncodingcla{\sI}$.
        \item $T$ collapses $\sI$-indiscernibles.
    \end{enumerate}
\end{thmx}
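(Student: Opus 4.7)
The plan is to prove both implications by contraposition, with the \emph{generalised modelling property} (GMP) enjoyed by $\sI$ --- a consequence of the Ramsey property of $\age(\sI)$ --- serving as the bridging tool. Recall that the GMP states that any $\sI$-indexed sequence $(a_i)_{i \in \sI}$ in a sufficiently saturated model of $T$ can be replaced by an $\sI$-indiscernible $(b_i)_{i \in \sI}$ locally realising its EM-type. The $\aleph_0$-categoricity of $\sI$ further guarantees that, for each $n$, only finitely many quantifier-free $n$-types are realised in $\sI$, each isolated by a single $\sI$-formula; this is what allows one to convert statements about types and EM-types into statements about single formulas.

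For $(2) \Rightarrow (1)$, I would argue contrapositively: suppose $T \in \codingcla{\sI}$, so that there is a formula $\varphi(\bar x_1, \ldots, \bar x_k; \bar y)$ in the language of $T$ and, by compactness, an $\sI$-indexed family $(a_i)_{i \in \sI}$ on which the truth values of $\varphi$ reproduce the quantifier-free diagram of $\sI$. Applying the GMP, one obtains an $\sI$-indiscernible $(b_i)_{i \in \sI}$ with the same EM-type, which therefore also codes the quantifier-free structure of $\sI$ through $\varphi$. Since the coding distinguishes tuples of distinct quantifier-free $\sI$-type, the indiscernible $(b_i)$ fails to collapse, contradicting $(2)$.

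For $(1) \Rightarrow (2)$, I would assume that some $\sI$-indiscernible $(a_i)_{i \in \sI}$ in a model of $T$ does not collapse. This furnishes a formula $\varphi$ and tuples $\bar i, \bar j$ from $\sI$ that agree on the relevant quantifier-free information but on which $\varphi$ disagrees when evaluated at $a_{\bar i}$ and $a_{\bar j}$. By $\sI$-indiscernibility, $\varphi$ picks out a fixed quantifier-free $\sI$-type on index tuples, which --- by $\aleph_0$-categoricity --- is isolated by a single formula of $\sI$. Iterating this observation over a finite list of quantifier-free types exhausting $\age(\sI)$ in each arity, together with a standard compactness argument, produces a coding of $\age(\sI)$ in $T$, i.e.\ $T \in \codingcla{\sI}$.

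The main obstacle lies in this second direction, where a single non-collapse witness must be bootstrapped to a coding of all of $\sI$. This is delicate because the different quantifier-free types of $\sI$ must all be handled coherently by single formulas of $T$; the key is to repeatedly extract indiscernibles using the Ramsey property, each round securing one further quantifier-free relation, and then to assemble the finitely many witnessing formulas via compactness. The $\aleph_0$-categoricity ensures that only finitely many formulas are needed at each stage, which is what keeps the iteration tractable.
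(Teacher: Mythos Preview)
Your plan for $(2)\Rightarrow(1)$ is correct and matches the paper: assuming $T\in\codingcla{\sI}$, one takes the coding sequence, applies the modelling property (available because $\age(\sI)$ is Ramsey), and observes that the resulting $\sI$-indiscernible still distinguishes all quantifier-free $\sI$-types, hence is non-collapsing.

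Your plan for $(1)\Rightarrow(2)$, however, has a genuine gap. You describe non-collapse as furnishing ``a formula $\varphi$ and tuples $\bar i,\bar j$ that agree on the relevant quantifier-free information but on which $\varphi$ disagrees'', and then propose to \emph{iterate}, ``repeatedly extract[ing] indiscernibles using the Ramsey property, each round securing one further quantifier-free relation''. This misreads what non-collapse gives you and imports the Ramsey property into the wrong direction. A single witness of the form you describe is what you get from failure of $\sJ$-indiscernibility for \emph{one} proper reduct $\sJ$; that is far too weak to code $\sI$. Non-collapse says this happens for \emph{every} proper reduct simultaneously, and the paper's Lemma~\ref{uncollapsed finite iff uncollapsed} converts this (under $\aleph_0$-categoricity) into the biconditional
\[
\qftp_\sI(\bar i)=\qftp_\sI(\bar j)\ \Longleftrightarrow\ \tp(a_{\bar i})=\tp(a_{\bar j}),
\]
which is the real starting point. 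From here no iteration and no further use of Ramsey is needed (the paper explicitly remarks that Ramsey is used only in the other implication): the single non-collapsing sequence $(a_i)$ already codes $\sI$. The remaining work is to pass from the type-level biconditional to \emph{formulas} $J(R)$. The paper does this by naming the sequence with a predicate, showing the resulting expansion is still $\sI$-indiscernible, and observing that the induced structure on $\{a_i:i\in\sI\}$ is then $\aleph_0$-categorical, so the finitely many $n$-types realised along the sequence are isolated by formulas; the disjunction of the isolating formulas for the types corresponding to $R$ gives $J(R)$. Your ``iteration via Ramsey'' does not accomplish this step and it is unclear how it could: extracting a fresh indiscernible would discard the coherence already secured, not add to it.
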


We then use generalised indiscernibles as a criterion to show that certain classes cannot be Ramsey. More precisely, we obtain the following result:

\begin{thmx}[\Cref{reduct of higher arity is not Ramsey}]\label{reduct of higher arity is not Ramsey-intro}
    Let $\sJ$ be an ultrahomogeneous $n$-ary $\aleph_0$-categorical structure in a relational language $\lL$, and let $\sI$ be a non-$n$-ary reduct of $\sJ$ in a finite relational language $\lL'$. Then $\age(\sI)$ is not a Ramsey class.
\end{thmx}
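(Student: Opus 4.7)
The plan is to argue by contradiction using \cref{thm:Ramsey class collapse-intro}. Assume toward a contradiction that $\age(\sI)$ is a Ramsey class. Because $\sI$ is $\aleph_0$-categorical and ultrahomogeneous, $\sI$ is the Fra\"iss\'e limit of $\age(\sI)$, so \cref{thm:Ramsey class collapse-intro} applies: for any first-order theory $T$, we have $T \in \ncodingcla{\sI}$ if and only if $T$ collapses $\sI$-indiscernibles. I will derive a contradiction by taking $T := \Th(\sJ)$ and showing that $T$ satisfies the right-hand side of this equivalence while clearly violating the left-hand side.

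For the collapsing side, I would exploit that $\sJ$ is ultrahomogeneous, $\aleph_0$-categorical, and $n$-ary. Ultrahomogeneity together with $\aleph_0$-categoricity yields quantifier elimination for $\sJ$ in its language, whose relation symbols all have arity at most $n$; hence the type of any tuple in $\sJ$ is determined by the types of its $n$-subtuples, a property that is first-order axiomatisable and therefore holds in every $M \models T$. Consequently, for any $\sI$-indiscernible sequence $(a_i)_{i \in \sI}$ in such an $M$, the type $\tp_M(a_{\bar{i}})$ depends only on the collection $\{\qftp_\sI(\bar{i}') : \bar{i}'\text{ an }n\text{-subtuple of }\bar{i}\}$, which is precisely the information encoded by $\qftp_{\sI^{(n)}}(\bar{i})$, where $\sI^{(n)}$ denotes the $n$-ary reduct of $\sI$. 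Since $\sI$ is non-$n$-ary by hypothesis, $\sI^{(n)}$ is a \emph{proper} reduct of $\sI$, so every $\sI$-indiscernible in $M$ collapses.

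For the coding side, $\sI$ is a reduct of $\sJ$ on the same universe and $\lL'$ is a finite relational language: each relation symbol of $\lL'$ is $\emptyset$-definable in $\sJ$ by some $\lL$-formula, and collecting these definitions shows that every finite substructure of $\sI$ is realised verbatim as a parameter configuration inside $\sJ$. This should place $\Th(\sJ)$ in $\codingcla{\sI}$, violating the left-hand side of \cref{thm:Ramsey class collapse-intro} and completing the contradiction.

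The main obstacle will be verifying carefully that the formal notion of ``collapse'' used in \cref{thm:Ramsey class collapse-intro} is genuinely witnessed by reduction to the $n$-ary reduct $\sI^{(n)}$; that is, one has to check against the precise definition that having every $\sI$-indiscernible become $\sI^{(n)}$-indiscernible qualifies as collapse in the paper's sense. The non-$n$-arity of $\sI$ is exactly what guarantees that $\sI^{(n)}$ is a proper reduct, and hence exactly what powers the contradiction.
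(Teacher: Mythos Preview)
Your proposal is correct and follows essentially the same route as the paper's proof: assume $\age(\sI)$ is Ramsey, observe that $\sJ\in\codingcla{\sI}$ since $\sI\reduces\sJ$, and use the $n$-arity of $\sJ$ to show that every $\sI$-indiscernible in $\sJ$ is already indiscernible for the reduct of $\sI$ to its symbols of arity $\leq n$, which is a proper quantifier-free reduct precisely because $\sI$ is not $n$-ary. The only cosmetic difference is that the paper packages your collapsing argument as a standalone proposition (\Cref{Prop:NaryCollapsingHigherArityStructure}) before invoking it, whereas you argue it inline; your ``main obstacle'' is exactly the content of that proposition and resolves as you anticipate.
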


Next, we provide a negative answer to a question asked in \cite[Section 7]{GP23} and \cite[Question 4.7]{GPS21} about the linearity of the hierarchy given by coding $\cK$-configurations. More precisely, in \cite{GPS21} the authors observe that one has the following strict inclusions:
\[
\codingcla{\cla{E}} \supset \codingcla{\cla{LO}} \supset \codingcla{\cla{OG}} = \codingcla{\cla{OH}_2}\supset \codingcla{\cla{OH}_3} \supset \cdots \supset \codingcla{\cla{OH}_n} \supset \cdots
\]
where $\cla{E}$ is the class of all finite sets (in the language of pure equality), $\cla{LO}$ is the class of all finite linear orders, $\cla{OG}$ the class of all finite ordered graphs and $\cla{OH}_n$ the class of all finite ordered $n$-hypergraphs (see \Cref{sub:standard-structs} for more details). The question then becomes the following:

\begin{question*}
    Are there any other classes $\codingcla{\cK}$? If so, do these classes remain linearly ordered under inclusion?
\end{question*}

We answer this question negatively, as follows:

\begin{thmx}[\Cref{example-full-product-bad}]
    Building on the notation above, let $\cla{OC}$ be the class of finite convexly ordered binary branching $C$-relations (see \Cref{sub:standard-structs}), and let  $\cla{OG}\boxtimes \cla{OC}$ be the full product the classes $\cla{OG}$ and $\cla{OC}$ (see \Cref{def:fullproduct}). Then:
\begin{center}
    
\begin{tikzpicture}
    \draw (-1.5,0) node{$\codingcla{\cla{E}}$};
    \draw (-0.5,0) node{\scalebox{1.5}{$\supset$}};
    \draw (0.4,0) node{$\codingcla{\cla{LO}}$};
    \draw (0.9,0.9) node
        {\scalebox{1.7}{$\mathrel{\rotatebox[origin=c]{45}{$\supset$}}$}};
    \draw (1.7,1.4) node{$\codingcla{\cla{OG}}$};
    \draw (0.9,-0.9) node
    {\scalebox{1.7}{$\mathrel{\rotatebox[origin=c]{-45}{$\supset$}}$}};
    \draw (1.7,-1.4) node{$\codingcla{\cla{OC}}$};
    \draw (2.5,0.9) node
    {\scalebox{1.7}{$\mathrel{\rotatebox[origin=c]{-45}{$\supset$}}$}};
    \draw (2.5,-0.9) node
    {\scalebox{1.7}{$\mathrel{\rotatebox[origin=c]{45}{$\supset$}}$}};
    \draw (3.5,0) node{$\codingcla{\cla{OG}\boxtimes \cla{OC}}$};
    \draw (4.5,0) node{\scalebox{1.5}{$\supset$}};
    \draw (5.4,0) node{$\cdots$};
    \draw (1.7,0.3) node{\scalebox{1.7}{$\mathrel{\rotatebox[origin=c]{90}{$\nsubseteq$}}$}};
    \draw (1.7,-0.3) node{\scalebox{1.7}{$\mathrel{\rotatebox[origin=c]{90}{$\nsupseteq$}}$}};

\end{tikzpicture}
\end{center}

\end{thmx}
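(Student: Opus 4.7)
The plan is to translate every comparison in the diagram into a statement about the (non-)collapse of $\mathsf{FLim}(\cla{K})$-indiscernibles via \Cref{thm:Ramsey class collapse-intro}, and then to exhibit explicit witness theories for each new strict inclusion and for the incomparability.

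The inclusions $\codingcla{\cla{OG}\boxtimes\cla{OC}} \subseteq \codingcla{\cla{OG}} \cap \codingcla{\cla{OC}}$ and $\codingcla{\cla{OC}} \subseteq \codingcla{\cla{LO}}$ should follow by a Ramsey-extraction argument: from an uncollapsed indiscernible indexed by a finer class (such as $\mathsf{FLim}(\cla{OG}\boxtimes\cla{OC})$), one extracts a sub-sequence indexed by an embedded copy of the coarser class and then applies a further Ramsey reduction to ensure the sub-sequence is itself uncollapsed (otherwise the original could be collapsed to begin with). The remaining strict containments along the top-left of the diagram are the known ones from \cite{GPS21}.

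For the two incomparability claims, the natural witnesses are $T_1 := \Th(\mathsf{FLim}(\cla{OG}))$ and $T_2 := \Th(\mathsf{FLim}(\cla{OC}))$: each trivially codes its own class. The key technical claim is that $T_1$ collapses every $\mathsf{FLim}(\cla{OC})$-indiscernible, and symmetrically. The intuition is that the only non-order relation in $\mathsf{FLim}(\cla{OG})$ is the edge relation, and on any $\mathsf{FLim}(\cla{OC})$-indiscernible sequence the edge pattern must be invariant under automorphisms of the index that fix only its linear-order reduct; hence such an indiscernible already arises from an $\mathsf{FLim}(\cla{LO})$-indiscernible, which is precisely collapse for $\cla{OC}$. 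The dual argument for $T_2$ is analogous once one identifies the ``only non-order'' structure of $\mathsf{FLim}(\cla{OC})$ as the $C$-relation.

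The most delicate part is the strict inclusion $\codingcla{\cla{OG}\boxtimes\cla{OC}} \subsetneq \codingcla{\cla{OG}} \cap \codingcla{\cla{OC}}$: one needs a theory that codes $\cla{OG}$ and $\cla{OC}$ separately but not jointly. The naive candidate, the two-sorted disjoint union $\Th(\mathsf{FLim}(\cla{OG}) \sqcup \mathsf{FLim}(\cla{OC}))$, does \emph{not} suffice, because pairs $(a,b)$ across the two sorts can be freely arranged to realise any finite configuration from $\cla{OG}\boxtimes\cla{OC}$ by independently varying $a$ and $b$, so the naive union already lies in $\codingcla{\cla{OG}\boxtimes\cla{OC}}$. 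Instead, one must construct a structure in which the two sorts are rigidly linked --- for instance via a definable bijection, or via a definable equivalence relation with finite classes --- so as to block coding-via-pairs while still allowing each factor to be freely realised on its own sort. I expect the main obstacle to be the careful design of this linked two-sorted witness, together with the $\mathsf{FLim}(\cla{OG}\boxtimes\cla{OC})$-indiscernible-collapse analysis (on mixed-sort tuples) needed to verify that the linked structure genuinely sits outside $\codingcla{\cla{OG}\boxtimes\cla{OC}}$.
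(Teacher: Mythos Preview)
Your proposal contains one genuine error and one imprecision.

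The genuine error is in your final paragraph. You claim the ``most delicate part'' is the strict inclusion $\codingcla{\cla{OG}\boxtimes\cla{OC}} \subsetneq \codingcla{\cla{OG}} \cap \codingcla{\cla{OC}}$, and you then try to construct a witness theory that codes $\cla{OG}$ and $\cla{OC}$ separately but not jointly. No such theory exists: it is a theorem (\Cref{fact: equality of closedness}, from \cite{GPS21}) that $\codingcla{\cK_1\boxtimes\cK_2} = \codingcla{\cK_1}\cap\codingcla{\cK_2}$ for any two classes. Your own observation that the disjoint union $\mathsf{FLim}(\cla{OG})\sqcup\mathsf{FLim}(\cla{OC})$ already codes the product is not a failure of a ``naive candidate'' --- it is the phenomenon itself. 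The strict inclusions $\codingcla{\cla{OG}} \supsetneq \codingcla{\cla{OG}\boxtimes\cla{OC}}$ and $\codingcla{\cla{OC}} \supsetneq \codingcla{\cla{OG}\boxtimes\cla{OC}}$ in the diagram follow \emph{immediately} from the equality $\codingcla{\cla{OG}\boxtimes\cla{OC}} = \codingcla{\cla{OG}}\cap\codingcla{\cla{OC}}$ together with the incomparability you established in the middle paragraph; there is nothing further to prove.

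The imprecision is in your ``dual argument'' for $T_2 = \Th(\mathsf{FLim}(\cla{OC})) \notin \codingcla{\cla{OG}}$. The argument for $T_1 \notin \codingcla{\cla{OC}}$ is an arity argument: $T_1$ is binary and the only binary structure in $\cla{OC}$ is the order, so $\cla{OC}$-indiscernibles collapse to order-indiscernibles (\Cref{Prop:NaryCollapsingHigherArityStructure}). This does \emph{not} dualise: $T_2$ is ternary, and $\cla{OG}$ is binary, so the arity trick gives nothing. The paper's actual argument here is different and simpler: $\mathsf{FLim}(\cla{OC})$ is NIP (being interpretable in a tree), and $\ncodingcla{\cla{OG}}$ is precisely the class of NIP theories (\Cref{fact:nip-n-nc-k}).
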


We then turn our attention to transfer principles. One of our main tools is a characterisation of generalised indiscernibles in full products (\Cref{PropositionCharacterisationIndiscernibleFullProduct}) and lexicographic sums (\Cref{prop:TransferTrivialIndiscernibilityLexicographicSum}). We immediately obtain transfer theorems for dividing lines characterised by generalised indiscernibility. These results are summarised below:

\begin{thmx}[\Cref{cor:nip-n-transfer-full}, \Cref{cor:tranfer-n-distal-full}, \Cref{cor:transfer-ind-triv-full}]\label{thm:full-transfer-intro}
Let $\sM$, $\sN$ be structure in respective languages $\lL_\sM$ and $\lL_\sN$. For 
\[
    P\in\{\text{NIP$_n$, $n$-distal, indiscernible-triviality}\}
\] 
the following are equivalent:
\begin{enumerate}
    \item $\sM$ and $\sN$ have $P$.
    \item The full product of $\sM$ and $\sN$, $\sM\boxtimes\sN$, has $P$.
\end{enumerate}
\end{thmx}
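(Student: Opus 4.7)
The plan is to reduce the statement, in each of the three cases, to the interaction between two ingredients. The first is the characterisation of NIP$_n$, $n$-distal, and indiscernible-triviality via generalised indiscernible collapse: by \Cref{thm:Ramsey class collapse-intro}, each of these properties is equivalent to a condition of the form ``every $\sI$-indiscernible sequence is in fact $\sJ$-indiscernible'', for an appropriate $\aleph_0$-categorical Fraïssé limit $\sI$ of a Ramsey class and a fixed reduct $\sJ$ of $\sI$. (For indiscernible-triviality one takes $\sI = (\mathbb{Q},<)$ and $\sJ$ the pure set; for NIP$_n$ and $n$-distal one uses the appropriate ordered hypergraph-type structures from the known indiscernible-collapse characterisations.) The second is \Cref{PropositionCharacterisationIndiscernibleFullProduct}, which I expect to say that a sequence $(c_i)_{i \in \sI} = ((a_i, b_i))_{i \in \sI}$ is $\sI$-indiscernible in $\sM\boxtimes\sN$ if, and only if, both coordinate projections are $\sI$-indiscernible in their respective factors. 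Thus it suffices, for each relevant pair $(\sI,\sJ)$, to prove that the collapse from $\sI$-indiscernibility to $\sJ$-indiscernibility holds in both $\sM$ and $\sN$ if, and only if, it holds in $\sM\boxtimes\sN$.

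For the forward direction I would take an $\sI$-indiscernible sequence $(c_i)_{i \in \sI} = ((a_i,b_i))_{i \in \sI}$ in $\sM\boxtimes\sN$ and apply \Cref{PropositionCharacterisationIndiscernibleFullProduct} to obtain $\sI$-indiscernibility of each projection in its factor. The hypothesis that $P$ holds in each factor then upgrades each projection to a $\sJ$-indiscernible sequence, and a second, reverse, application of \Cref{PropositionCharacterisationIndiscernibleFullProduct} shows that $(c_i)_{i \in \sI}$ itself is $\sJ$-indiscernible in the product.

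For the converse, suppose the collapse fails in, say, $\sM$, witnessed by an $\sI$-indiscernible sequence $(a_i)_{i \in \sI}$ that is not $\sJ$-indiscernible. Fix any $b \in \sN$. The constant sequence $(b)_{i \in \sI}$ is trivially $\sI$-indiscernible (in fact totally indiscernible) in $\sN$, so \Cref{PropositionCharacterisationIndiscernibleFullProduct} yields that $((a_i,b))_{i \in \sI}$ is $\sI$-indiscernible in $\sM\boxtimes\sN$. Since its $\sM$-projection fails to be $\sJ$-indiscernible, the same proposition (in the other direction) shows that $((a_i,b))_{i \in \sI}$ is not $\sJ$-indiscernible either, so $P$ fails in $\sM\boxtimes\sN$; the case where $P$ fails in $\sN$ is symmetric.

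I expect that the real substance of the argument is carried entirely by \Cref{PropositionCharacterisationIndiscernibleFullProduct}, which must reconcile the finitary local condition defining $\sI$-indiscernibility with the way quantifier-free formulas on the product decompose over the two factors; this is where the product structure genuinely enters, and is the step I anticipate being most delicate. Granting that proposition, the theorem reduces to the bookkeeping above, provided one has checked, for each of the three properties, that the characterisation given by \Cref{thm:Ramsey class collapse-intro} really is of the ``fixed reduct'' form $\sI \reduces \sJ$ used in the argument (which is immediate for indiscernible-triviality and follows from the classical collapse characterisations of NIP$_n$ and $n$-distality in the other two cases).
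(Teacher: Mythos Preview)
Your approach is correct for NIP$_n$ and matches the paper's argument there: \Cref{PropositionCharacterisationIndiscernibleFullProduct} gives exactly the decomposition you describe, and combined with the collapse characterisation of NIP$_n$ (\Cref{thm:nip-k-collapse}) this yields \Cref{cor:nip-n-transfer-full}.

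However, your framework breaks down for the other two properties, because neither $n$-distality nor indiscernible-triviality is of the form ``every $\sI$-indiscernible sequence is $\sJ$-indiscernible'' for a fixed pair $(\sI,\sJ)$. Your claim that indiscernible-triviality corresponds to $\sI=(\mathbb{Q},<)$ and $\sJ$ the pure set is simply wrong: that condition characterises \emph{stability}. Indiscernible-triviality (\Cref{def:indiscernible collapsing}\,ff., or more precisely the definition in \Cref{Subsubsec:Monadic NIP}) says that an order-indiscernible sequence which is indiscernible over each element of a set $B$ is indiscernible over all of $B$; this is a condition about indiscernibility \emph{over parameters}, not a collapse condition. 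Likewise, $n$-distality is defined via Dedekind partitions and indiscernible insertion, not via collapse to a fixed reduct; indeed the paper explicitly notes that the class of distal structures is not of the form $\ncodingcla{\cK}$ for any $\cK$, so \Cref{thm:Ramsey class collapse-intro} is not available here.

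The paper instead handles these two cases directly. For distality (and analogously $n$-distality) one takes an indiscernible sequence $((a_i,b_i))_{i\in\mathbb{Q}}$ in $\sM\boxtimes\sN$ together with a parameter set $B$ witnessing a potential failure; \Cref{PropositionCharacterisationIndiscernibleFullProduct} decomposes indiscernibility of the sequence \emph{and} indiscernibility over $\pi_{\sM}(B)$, $\pi_{\sN}(B)$ into the two factors, where distality of each factor can be applied, and then recombined. Indiscernible-triviality is handled the same way, projecting the parameter sets to each coordinate. The key point you are missing is that \Cref{PropositionCharacterisationIndiscernibleFullProduct} transfers not just $\sI$-indiscernibility but also $\sI$-indiscernibility \emph{over a parameter set} (via the projections of that set), and this is what drives the arguments for $n$-distality and indiscernible-triviality.
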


\begin{thmx}[\Cref{cor:nip-n-transfer-lex}, \Cref{prop:TransferTrivialIndiscernibilityLexicographicSum}, \Cref{cor:TransferPrincipleMonadicNIP}, \Cref{prop:m-distality-lex-transfer}]\label{thm:lex-transfer-intro}
Let $\sM$ be an $\lL_\sM$-structure and $\mathfrak{N}=\{\sN_a\}_{a\in \sM}$ be a collection $\mathfrak{N}$ of $\lL_\mathfrak{N}$-structures indexed by $\sM$. For: 
\[
    P\in\{\text{NIP$_n$},\text{ $n$-distal}, \text{ indiscernible-triviality}, \text{ monadic NIP}\},
\]
the following are equivalent:
\begin{enumerate}
    \item $\sM$ and the common theory of $\{\sN_a:a\in \sM\}$ have $P$.
    \item The lexicographic sum of $\sM$ and $\mathfrak{N}$, $\sM[\mathfrak{N}]$, has $P$.
\end{enumerate}
\end{thmx}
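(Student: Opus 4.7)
The plan is to handle the four properties uniformly through the lens of generalised indiscernible collapse, with monadic NIP as an exceptional case requiring additional care. For the first three properties (NIP$_n$, $n$-distality, and indiscernible-triviality), each admits a characterisation as the collapse of $\sI$-indiscernibles for a suitable $\aleph_0$-categorical Fraïssé limit $\sI$ of a Ramsey class, via \Cref{thm:Ramsey class collapse-intro}. Thus the transfer theorem reduces, in each of these cases, to a structural description of $\sI$-indiscernibles in the lexicographic sum $\sM[\mathfrak{N}]$, which I would isolate as a preliminary proposition. Concretely, given a sequence $(c_i)_{i \in \sI}$ in $\sM[\mathfrak{N}]$, write each $c_i = (a_i, b_i)$ where $a_i$ is its outer coordinate in $\sM$ and $b_i$ is its fibre component in $\sN_{a_i}$. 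The proposition would state that $(c_i)_{i \in \sI}$ is $\sI$-indiscernible in $\sM[\mathfrak{N}]$ if and only if $(a_i)_{i \in \sI}$ is $\sI$-indiscernible in $\sM$ and, on each equivalence class of $\{i : a_i = a\}$, the tuple $(b_i)_i$ is $\sI$-indiscernible in $\sN_a$ (with a matching quantifier-free type).

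From this structural lemma, both implications follow straightforwardly. For $(1) \Rightarrow (2)$, any $\sI$-indiscernible sequence in $\sM[\mathfrak{N}]$ decomposes into an outer $\sI$-indiscernible sequence in $\sM$ and inner ones in the fibres; applying the hypothesised collapse in each component yields the collapse in the lex sum, which by \Cref{thm:Ramsey class collapse-intro} gives the desired property. For $(2) \Rightarrow (1)$, I would exploit the fact that $\sM$ is (relatively) interpretable in $\sM[\mathfrak{N}]$ (as the quotient by the definable equivalence relation ``same outer coordinate'') and that each fibre $\sN_a$ is definable over the parameter $a$; a non-collapsing $\sI$-indiscernible sequence in either component would then lift, via the structural lemma, to a non-collapsing $\sI$-indiscernible sequence upstairs. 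The appropriate choice of $\sI$ yields the statements referenced as \Cref{cor:nip-n-transfer-lex}, \Cref{prop:m-distality-lex-transfer}, and \Cref{prop:TransferTrivialIndiscernibilityLexicographicSum}.

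The main obstacle is the case of monadic NIP, since this property is not captured by the collapse of a single $\sI$-indiscernible sequence in the sense of \Cref{thm:Ramsey class collapse-intro}: monadic NIP requires tameness to persist after naming arbitrary unary predicates, which is stable under no single generalised-indiscernibility collapse. Here I would use a characterisation of monadic NIP via pre-coding or forbidden patterns (in the style of Braunfeld--Laskowski), or work directly from the definition by quantifying over unary expansions. The subtlety is that an arbitrary unary predicate $U$ on $\sM[\mathfrak{N}]$ does not factor as a pair of predicates on $\sM$ and on each $\sN_a$; instead, one must view $U$ as an $\sM$-indexed family of predicates on the fibres and encode this information using a combination of a unary predicate on $\sM$ (via an auxiliary parameter) together with unary predicates on each $\sN_a$. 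The argument then shows that any non-monadic-NIP configuration in $\sM[\mathfrak{N}]$ must already witness non-monadic-NIP either in $\sM$ or in some $\sN_a$, which is the content of \Cref{cor:TransferPrincipleMonadicNIP}, and is where I expect the bookkeeping to be most delicate.
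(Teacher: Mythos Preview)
Your structural lemma—characterising $\sI$-indiscernibles in $\sM[\mathfrak{N}]$ in terms of $\sI$-indiscernibles in $\sM$ and in the fibres—is indeed the paper's core tool (\Cref{PropositionCharacterisationIndiscernibleLexicographicProduct}). However, two genuine gaps need attention.

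First, your claim that $n$-distality and indiscernible-triviality ``each admits a characterisation as the collapse of $\sI$-indiscernibles for a suitable $\aleph_0$-categorical Fra\"iss\'e limit $\sI$ of a Ramsey class'' is incorrect. Neither property is of the form $\ncodingcla{\cK}$: the paper explicitly notes (just after \Cref{ncoding is closed under interpretation}) that distality is not preserved under reducts and hence cannot be any $\ncodingcla{\cK}$, and indiscernible-triviality is a condition on indiscernibility \emph{over parameter sets}, not a collapse-to-a-reduct condition. For these two, the paper argues directly from the definitions—Dedekind-cut insertion for $n$-distality, and indiscernibility over $\{d,d'\}$ versus over $d$ and over $d'$ separately for indiscernible-triviality—using the structural lemma only for ordinary order-indexed indiscernibles. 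Your unified collapse argument works only for the NIP$_n$ case.

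Second, the structural lemma does not hold for arbitrary $\sI$: the paper gives an explicit counterexample (\Cref{ex:lex-product-bad}) and then restricts to \emph{reasonable} indexing structures (\Cref{DefinitionReasonableIndexingStructures}), shown to coincide with primitive ones under mild hypotheses (\Cref{prop:reasonable-iff-primitive}). This is harmless in the applications—linear orders and $\cla{OH}_{n+1}$ are primitive—but the hypothesis must be stated, or your characterisation is false as written.

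Finally, your monadic NIP plan is more laborious than necessary. The paper does not unpack unary expansions at all; instead it invokes the Braunfeld--Laskowski equivalence monadic NIP $\Leftrightarrow$ dp-minimal $+$ indiscernible-triviality (\Cref{thm:characterisations-of-monadic-NIP}), combines the just-proved indiscernible-triviality transfer with an existing burden-transfer result for lexicographic sums (\Cref{Fact:BurdenLexicographicProduct}), and finishes in two lines.
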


Notice that in particular, monadic NIP transfers in lexicographic sums. We apply this result to generalise one of the results of \cite{PS2023}, to lexicographic sums of ordered graphs of bounded twin-width. More precisely, once we obtain a result characterising ultraproducts of classes of lexicographic products, which is interesting in its own right (\Cref{prop:ultraproducts}), we prove the following:

\begin{thmx}[\Cref{cor:sum-has-bdd-twin-width}]\label{thm:sum-has-bdd-twin-width-intro}
    Let $\cC_1$ and $\cC_2$ be two hereditary classes of finite graphs with bounded twin-width. Then, the class of graphs consisting of lexicographic sums of graphs from $\cC_1$ and $\cC_2$ has bounded twin-width.
\end{thmx}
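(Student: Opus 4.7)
The plan is to combine the transfer principle for monadic NIP under lexicographic sums (\Cref{cor:TransferPrincipleMonadicNIP}) with the ultraproduct decomposition (\Cref{prop:ultraproducts}), and then to translate between the combinatorial and model-theoretic statements using the equivalence between bounded twin-width and monadic NIP for (suitably ordered) hereditary classes of finite graphs established in \cite{PS2023}. This gives the conclusion as soon as one knows that the class of lexicographic sums in question is monadically NIP.

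I would argue monadic NIP by contradiction, through ultraproducts. Writing $\cC := \{\sA[\mathfrak{B}] : \sA\in\cC_1,\ \sB_a\in\cC_2 \text{ for each } a\in \sA\}$, suppose that $\cC$ is not monadically NIP. Then, by compactness, there exists a non-principal ultraproduct $\sG = \prod_{\Ucal}\sG_i$, with each $\sG_i\in\cC$, such that $\sG$ itself fails to be monadically NIP. Applying \Cref{prop:ultraproducts}, $\sG$ decomposes (up to interpretation) as a lexicographic sum $\sA[\mathfrak{B}]$, where $\sA$ is an ultraproduct of members of $\cC_1$, and where the fibres $\{\sB_a : a\in \sA\}$ share a common theory realised as an ultraproduct of theories of members of $\cC_2$. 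Since monadic NIP is elementary and preserved under ultraproducts, and since $\cC_1, \cC_2$ are monadically NIP by the bounded-twin-width hypothesis combined with the \cite{PS2023} equivalence, both $\sA$ and the common fibre theory are monadically NIP. But then \Cref{cor:TransferPrincipleMonadicNIP} forces $\sA[\mathfrak{B}]$ to be monadically NIP, contradicting the choice of $\sG$. The converse direction of the \cite{PS2023} equivalence then yields that $\cC$ has bounded twin-width.

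The main obstacle will be reconciling the purely combinatorial notion of lexicographic sum of finite graphs with the model-theoretic lexicographic sum of \Cref{DefinitionLexicographicProduct2}, in such a way that the decomposition produced by \Cref{prop:ultraproducts} matches exactly the decomposition required by \Cref{cor:TransferPrincipleMonadicNIP}. A related subtlety is the choice of ordering: the \cite{PS2023} equivalence is applied in both directions, so one must check that compatible linear orders can be placed on $\cC_1$, $\cC_2$, and on the resulting lexicographic sums, without disturbing bounded twin-width. These are bookkeeping issues rather than conceptual ones, but they are where the technical content of the argument sits; the model-theoretic engine (transfer of monadic NIP plus ultraproduct decomposition) does the rest of the work automatically.
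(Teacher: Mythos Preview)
Your proposal is correct and follows essentially the same route as the paper: pass to ordered graphs, use the equivalence between bounded twin-width and monadic NIP for hereditary classes of ordered binary structures, and then combine \Cref{prop:ultraproducts} with \Cref{cor:TransferPrincipleMonadicNIP} to show the lexicographic-sum class is monadically NIP (this is packaged in the paper as \Cref{cor:monadic-NIP-sums-of-classes}). Two small corrections: the relevant equivalence is \Cref{fact:twin-width}, due to \cite{BGOdMSTT2022}, not \cite{PS2023}; and the fibres produced by \Cref{prop:ultraproducts} need not share a \emph{common} theory---each $\mathcal{H}_g$ is merely some ultraproduct of members of $\cC_2$---but this is exactly what \Cref{cor:TransferPrincipleMonadicNIP} requires, so your argument goes through unchanged.
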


\paragraph{Structure of the paper} \Cref{sec:prelims,sec:Generalised Indiscernibles and Coding Configurations} contain all the relevant terminology and background definitions needed for the remainder. In \Cref{sec:Generalised Indiscernibles and Coding Configurations}, we focus on coding configurations and collapsing indiscernibles and prove some preliminary results.  Then, in \Cref{sec: Collapsing indiscernible as a witness for the Ramsey property} we prove \Cref{thm:Ramsey class collapse-intro} and \Cref{reduct of higher arity is not Ramsey-intro}. We then devote \Cref{sec:transfer} to proving \Cref{thm:full-transfer-intro} and \Cref{thm:lex-transfer-intro}. In \Cref{sec:twin-width} we use the monadic NIP case in \Cref{thm:lex-transfer-intro} to prove \Cref{thm:sum-has-bdd-twin-width-intro}. We conclude the paper with some open questions, in \Cref{sec:questions}.

\paragraph{Acknowledgements} The research presented in this paper started during the P\&S Workshop, which was part of the Unimod 2022 programme, at the School of Mathematics at the University of Leeds. We would like to thank the organisers of the very well-organised workshop and the School of Mathematics for its hospitality. We would also like to thank S. Braunfeld and N. Ramsey for the discussions during the Model Theory Conference in honour of Ludomir Newelski's 60th birthday, which led to \Cref{example-full-product-bad}, and D. Macpherson for his valuable comments on previous versions of this paper. Part of this research was also conducted in the Nesin Matematik Köyü, and we would like to thank it for its hospitality.

\paragraph{Notation} We assume familiarity with basic model theory, and refer the reader to \cite{Hodges1993} or \cite{TZ12}, for the relevant model-theoretic background. Most of the notation we will use throughout this paper is standard. Languages will be denoted by $\lL,\lL'$ etc. We abusively also denote by $\lL$ the set of all $\lL$-formulas, and write $\Sig(\lL)$ to specifically refer to the signature. If $f$ is a function symbol and $R$ is a relation symbol, we denote by $\arity(f)$ and $\arity(R)$ their respective arities.  Structures are typically denoted by $\sM,\sN,\dots$ while their respective base sets are denoted by $M,N,\dots$. Classes of structures are usually denoted by $\cK$. By a \emph{class of theories}, we mean a collection of theories with a certain elementary property $P$, such as the class $\ncodingcla{\cK}$ of theories who do not `code' a class $\cK$ of structures. 

\tableofcontents

\section{Preliminaries}\label{sec:prelims}

\subsection{Reducts}

A reduct of an $\lL$-structure $\sM$ can be defined as a structure obtained from $\sM$ by forgetting some of the symbols in $\lL$. We will use a more general notion of a reduct, sometimes also called a \emph{definable} or \emph{first-order reduct}:

\begin{definition}\label{def:definable reduct}
   Let $\sM_0$ and $\sM_1$ be structures on the same domain. 
   \begin{itemize}
       \item We say that $\sM_0$ is a \emph{first-order reduct}, \emph{$\emptyset$-definable reduct} (or just \emph{reduct} for short) \emph{of $\sM_1$} if every relation and every function in $\sM_0$ is definable in $\sM_1$ without parameters, and write $\sM_0\reduces_{FO}\sM_1$, or $\sM_0\reduces\sM_1$. In this case, we also say that $\sM_1$ is an \emph{expansion of $\sM_0$}. 
        \item We say that $\sM_0$ and $\sM_1$ are \emph{interdefinable}, and we write $\sM_0\reducesboth \sM_1$, if both $\sM_0\reduces\sM_1$ and $\sM_1\reduces\sM_0$ hold. 
        \item We say that $\sM_0$ is a \emph{quantifier-free reduct} of $\sM_1$ and we write $\sM_0\reducesqf\sM_1$ if every relation and every function in $\sM_0$ is quantifier-free definable in $\sM_1$ without parameters. 
        \item We say that $\sM_0$ and $\sM_1$ are \emph{quantifier-free interdefinable} if both $\sM_0\reducesqf\sM_1$ and $\sM_1\reducesqf\sM_0$ hold, and we write $\sM_0\reducesbothqf\sM_1$.
\end{itemize}
\end{definition}

Reducts of structures and, in particular, reducts of ultrahomogenous structures\footnote{Recall: $\sM$ is \emph{ultrahomogeneous} if every partial isomorphism between finite substructures of $\sM$ extends to an automorphism of $\sM$. We will call $\sM$ \emph{finitely homogeneous} if it is ultrahomogeneous in a finite relational language.}, play an important role in this paper, as they will be used to construct dividing lines between tame and wild structures. Notice that interdefinability (of structures on the same domain) is a much more restrictive condition than saying that two theories define each other. For instance, two independent orders on $\mathbb{Q}$ define each other (both are models of the same theory) but are not interdefinable.

To give some additional context, we recall here a well-known conjecture, due to Thomas: 

\begin{conjecture}[{Thomas, \cite{Tho96}}]\label{conjecture:thomas strong}
    If $\sM$ is a countable ultrahomogeneous structure in a finite relational language, then $\sM$ has finitely many reducts (up to interdefinability).
\end{conjecture}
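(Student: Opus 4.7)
The reality here is that this is a long-standing and widely studied open problem, so any plan must be framed as ``what an attack might look like'' rather than ``how I would close it''. The established reformulation I would start from is the Galois adjunction (via Ryll-Nardzewski / Ahlbrandt--Ziegler) between reducts (up to interdefinability) of an $\aleph_0$-categorical $\sM$ and closed subgroups of $\Sym(M)$ containing $\Aut(\sM)$; since finite homogeneity in a finite relational language implies $\aleph_0$-categoricity, the conjecture becomes: $\Aut(\sM)$ has only finitely many closed overgroups in $\Sym(M)$.

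Given this reformulation, the first step of the plan would be to invoke the canonical-functions machinery of Bodirsky--Pinsker: any function $M \to M$ in the closure of such an overgroup can, after suitable precomposition with elements of $\Aut(\sM)$, be forced to be ``canonical'' with respect to some Ramsey expansion of $\sM$. The existence of such an expansion is guaranteed in many cases of interest by the Kechris--Pestov--Todor\v{c}evi\'c correspondence. Moreover, when $\age(\sM)$ is itself Ramsey, \Cref{thm:Ramsey class collapse-intro} already imposes severe constraints on how a putative new reduct can (fail to) collapse $\sM$-indiscernibles. Using $\aleph_0$-categoricity of the Ramsey expansion one obtains only finitely many canonical behaviours of any fixed arity, which yields finiteness of reducts \emph{generated by} functions of any bounded arity.

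The main obstacle, and the reason the conjecture remains open in full generality, is passing from ``finitely many canonical behaviours at each arity'' to ``finitely many closed overgroups''. Distinct canonical behaviours may generate the same overgroup, and, more seriously, there is no a priori arity bound: one must rule out infinite strictly ascending or descending chains of overgroups built from canonical functions of unbounded arity. For concrete structures such as $(\Qbb,<)$ (Cameron), the random graph (Thomas), the random $k$-hypergraph (Thomas), and the random poset (Pach--Pinsker--Pluh\'ar--Pongr\'acz--Szab\'o), this obstacle has been overcome by ad hoc case analysis leveraging both the Ramsey expansion and the specific combinatorics of the age. I would therefore not attempt the conjecture in full generality; instead, I would target a structured subclass -- for instance, free amalgamation classes with trivial algebraic closure, or ages built by the product constructions of \Cref{sec:transfer} from bases whose reducts are already classified -- where the canonical-functions technique has the most traction, and treat any progress there as evidence toward the main statement rather than a proof of it.
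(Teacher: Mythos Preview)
This statement is Thomas' conjecture, which the paper records only as context and explicitly flags as open (``the general case remains open''). There is therefore no proof in the paper to compare your proposal against, and you have correctly identified that any honest write-up must be a survey of attack lines rather than a proof. Your reformulation via closed overgroups of $\Aut(\sM)$ in $\Sym(M)$ and the sketch of the Bodirsky--Pinsker canonical-functions method are accurate summaries of the known machinery, and your identification of the main obstacle --- the lack of an a priori arity bound --- is the genuine reason the problem is open. The only caveat is that your invocation of \Cref{thm:Ramsey class collapse-intro} as imposing ``severe constraints'' on reducts is more suggestive than substantive: that theorem controls when indiscernibles collapse in an external theory $T$, not the internal lattice of reducts of $\sM$ itself, so its leverage on Thomas' conjecture is at best indirect.
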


This conjecture has been verified for many well-known examples (e.g. in \cite{Cameron_1976} it is shown that $(\mathbb{Q},\leq)$ has $3$ \emph{proper} (i.e. different from the structure itself and an infinite set) reducts up to interdefinability; in \cite{Thomas_1991} it is shown that the random graph has $3$ proper reducts up to interdefinability; in \cite{BPP2015} it is shown that the ordered random graph has $42$ proper reducts up to interdefinability, etc.) but the general case remains open.

It is well known that the set of reducts of a structure $\sM$ forms a lattice with respect to the relation $\reduces$. Moreover, if $\cR$ is the set of reducts of $\sM$, up to interdefinability, then the lattice $(\cR,\reduces)$ is precisely the opposite lattice of $(\{\aut(\sR): \sR\reduces \sM\},\leq)$. Recall that $\{\aut(\sR): \sR\reduces \sM\}$ is precisely the set of closed subgroups of $\sym(\sM)$ (with respect to the product topology) containing $\aut(\sM)$. 

\subsection{Standard ultrahomogeneous structures} \label{sub:standard-structs}

In this \namecref{sub:standard-structs} we fix some of our notation and recall some of the standard relational structures that we will be using throughout this paper. Before we do this, we recall some basic definitions. Let us start by recalling the \emph{(structural) Erd\H{os}-Rado partition arrow}. Let $\lL$ be a countable first-order language. Given $\lL$-structures $A\subseteq B\subseteq C$, we will write $\binom{B}{A}$ for the set of all embeddings of $A$ into $B$.  Given $k\in\Nbb$, we write $C\to (B)^A_k$ to mean that for every $k$-colouring $c:\binom{C}{A}\to \{1,\dots,k\}$ there is some $B'\in\binom{C}{B}$ such that $c$ restricted to $B'$ is constant. 

\begin{definition}[HP, JEP, (S/F)AP, RP]\label{def:hp-jep-ap-erp}
    A class of $\lL$-structures $\cla{C}$ has the:
	\begin{enumerate}
		\item \emph{Hereditary Property} (HP) if whenever $A\in\cla{C}$ and $B\subseteq A$ we have that $B\in\cla{C}$.
		\item \emph{Joint Embedding Property} (JEP) if whenever $A,B\in\cla{C}$ there is some $C\in\cla{C}$ such that both $A$ and $B$ are embeddable in $C$.
		\item \emph{Amalgamation Property} (AP) if whenever $A,B,C\in\cla{C}$ are such that $A$ embeds into $B$ via $e:A\hookrightarrow B$ and into $C$ via $f:A\hookrightarrow C$ there exist some $D\in\cla{C}$ and embeddings $g:B\hookrightarrow D$, $h:C\hookrightarrow D$ such that $g\circ e = h\circ f$.
        \item The \emph{Strong Amalgamation Property} (SAP) if whenever $A,B,C\in\cla{C}$ are such that $A$ embeds into $B$ via $e:A\hookrightarrow B$ and into $C$ via $f:A\hookrightarrow C$ there exists some $D\in\cla{C}$ and embeddings $g:B\hookrightarrow D$, $h:C\hookrightarrow D$ such that $g\circ e = h\circ f$, and moreover for all $b\in B$ and $c \in C$, if $g(b) = h(c)$, there is some $a\in A$ such that $e(a) = b$ and $f(a) = c$.
        \item The \emph{Free Amalgamation Property} (FAP) if $\lL$ is a relational language, and for all $R\in\mathsf{Sig}(\lL)$ we have that $R^{A\otimes_C B}=R^A\cup R^B$.
        \item \emph{Ramsey Property} (RP) if whenever $A,B\in\cla{C}$ are such that $A\subseteq B$, then there is some $C\in\cla{C}$ such that $C\to (B)^A_2$.
	\end{enumerate}
    We say that a countable class $\cla{C}$ is a \emph{Fra\"iss\'e class} if it has HP, JEP and AP, and we say that it is a \emph{Ramsey class} if it has HP, JEP and RP.
\end{definition}

We have (FAP)$\Rightarrow$ (SAP) $\Rightarrow$ (AP), and both implications here are strict. \emph{Fra\"iss\'e's theorem} (see, for instance, \cite[Theorem $7.1.2$]{Hodges1993}) tells us that if $\cla{C}$ is a Fra\"iss\'e class then there is a unique (up to isomorphism) countable ultrahomogeneous structure $\sM$ whose age is $\cla{C}$. We will denote this $\sM$ by $\mathrm{Flim}(\cla{C})$. 

Given an arbitrary class of structures $\cC$ there is a natural way of closing $\cC$ under substructures, making it satisfy HP. We call this the \emph{hereditary closure of $\cC$} and denote it by $\HC(\cC)$. Explicitly, for a class of structures $\cC$: 
\[
    \HC(\cC):=\{B\subseteq A : A\in \cC\}.
\]

Conventionally, for a structure $\sM$, we denote by $\age(\sM)$ the class of finitely generated substructures that embed into $\sM$. For a class of structures $\cC$, we sometimes overload this notation by writing $\age(\cC)$ for the class of finitely generated structures embeddable in some structure $A\in \cC$. 

\begin{remark}
    In the notation above, the following are all clear:
\begin{enumerate}
    \item $\HC(\cC)$ has HP.
    \item $\age(\{\sM\}) = \age(\sM)$.
    \item $\age(\cC) \subseteq \cC$ if $\cC$ has HP.
    \item $\age(\HC(\cC)) = \age(\cC) = \age(\age(\cC))$.
\end{enumerate}
\end{remark}
 
\begin{fact}[{\cite[Theorem 4.2(i)]{Nesetril_2005}}]
A Ramsey class of finite structures is a Fraïssé class.
\end{fact}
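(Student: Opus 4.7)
Since the definitions of both \emph{Ramsey class} and \emph{Fra\"iss\'e class} share HP and JEP, the statement reduces to showing that a class $\mathcal{C}$ with HP, JEP, and RP automatically satisfies the amalgamation property AP. Consequently, the plan is to derive AP from RP in the presence of HP and JEP; this is the classical argument of Ne\v{s}et\v{r}il and R\"odl.

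Given an amalgamation diagram $e\colon A\hookrightarrow B_1$ and $f\colon A\hookrightarrow B_2$ in $\mathcal{C}$, I would first apply JEP to obtain some $D\in\mathcal{C}$ admitting embeddings $\iota_i\colon B_i\hookrightarrow D$ for $i=1,2$. The compositions $\iota_1\circ e$ and $\iota_2\circ f$ yield two, a priori distinct, copies $A_1,A_2$ of $A$ inside $D$. If $A_1=A_2$ the amalgamation is already in hand; otherwise, the task is to produce a larger structure in which one can arrange that the two copies of $A$ coincide.

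To do this, I would apply the Ramsey property to the pair $A\subseteq D$ (using the standard reduction that iterates $2$-colour RP to $k$-colour RP) to obtain $E\in\mathcal{C}$ satisfying $E\to(D)^A_k$ for a suitable $k$. The key is to design a colouring $\chi\colon\binom{E}{A}\to[k]$ whose colour on an embedded copy $A'\subseteq E$ records the ``local configuration'' of pairs of copies of $B_1$ and $B_2$ extending $A'$ inside $E$. Extracting via RP a monochromatic copy $D^\ast\subseteq E$ of $D$ then yields, inside $D^\ast$, two embeddings of $B_1$ and $B_2$ whose induced copies of $A$ must necessarily coincide, producing the required amalgamation in $\mathcal{C}$.

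The main obstacle is the precise design of the colouring: one must ensure that the invariants detected by $\chi$ genuinely force the two copies of $A$ (one coming from each $B_i$) to align after the Ramsey collapse, rather than merely equalising some spurious pair of $A$-copies. This typically relies on iterating the Ramsey property and on the rigidity phenomenon for Ramsey-class structures. Once this colouring is correctly set up, the rest of the argument is a direct application of RP.
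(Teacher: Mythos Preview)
The paper does not supply a proof of this fact; it simply cites \cite[Theorem~4.2(i)]{Nesetril_2005}. So there is no paper-proof to compare against, and your task is really to see whether your sketch stands on its own.

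Your outline is correct: one only needs to derive AP from HP, JEP, and RP, and the route via JEP (to place $B_1,B_2$ inside a common $D$) followed by an application of $C\to(D)^A_2$ is indeed the standard one. However, you explicitly flag the design of the colouring as ``the main obstacle'' and leave it vague, gesturing at ``local configurations'', ``iterating the Ramsey property'', and ``rigidity''. None of that machinery is needed, and as written your proposal is not yet a proof: the content of the argument lives precisely in the colouring you have not specified.

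The actual colouring is extremely simple. Having fixed $C$ with $C\to(D)^A_2$, colour an embedding $\alpha\in\binom{C}{A}$ with colour~$1$ if there exists $\beta\in\binom{C}{B_1}$ with $\beta\circ e=\alpha$, and colour~$0$ otherwise. Take a monochromatic copy $\delta\colon D\hookrightarrow C$. The embedding $\alpha_1:=\delta\circ\iota_1\circ e$ visibly extends to $B_1$ (via $\delta\circ\iota_1$), so the common colour is~$1$. Hence $\alpha_2:=\delta\circ\iota_2\circ f$ also extends to some $\beta\colon B_1\hookrightarrow C$ with $\beta\circ e=\alpha_2=(\delta\circ\iota_2)\circ f$; then $\beta$ and $\delta\circ\iota_2$ witness amalgamation inside $C\in\mathcal{C}$. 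Two colours suffice, no iteration and no rigidity are required.
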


\paragraph{Basic classes of structures}
We recall and fix notation for some basic classes of structures, which have already been mentioned in the introduction:
\begin{itemize}
    \item $\cla{E}$ denotes the class of all finite sets in the language of pure equality.
    \item $\cla{LO}$ denotes the class of finite (total) orders $(X,<)$, where $<$ is a binary relation symbol for the order relation.
    \item $\cla{CO}$ denotes the class of finite cyclic orders $(X,CO)$ where $CO$ is a ternary relation symbol for the cyclic order relation (discussed in more detail later).
\end{itemize}

We will now introduce two well-known ultrahomogeneous structures, namely the $C$-relation and its reduct, the $D$-relation\footnote{We deeply thank D. Bradley-Williams for his useful and generous comments on $D$-relations and related issues.}. For more details on these structures, we direct the reader to \cite{AM22} or \cite{BJP16}.

\paragraph{Generalised chain relations}

\begin{definition}[{\cite[Paragraph 3.3]{BJP16}}]\label{Crelation}
    A ternary relation $C(x;y,z)$ on a set $X$ is called a \emph{$C$-relation} if for all $a,b,c,d\in X$ we have that:
    \begin{enumerate}
        \item\label{c1} $C(a;b,c) \limplies C(a;c,b)$;
        \item\label{c2} $C(a;b,c) \limplies \lnot C(b;a,c)$;
        \item\label{c3} $C(a;b,c) \limplies (C(a;d,c)\lor C(d;b,c))$;
        \item\label{c4} $a\neq b\limplies C(a;b,b)$.
    \end{enumerate}
    We say that a $C$-relation is \emph{derived from a binary tree} or that it is \emph{binary branching} if, in addition, for all distinct elements $a,b,c\in X$ we have that: 
    \begin{enumerate}\setcounter{enumi}{4}
        \item\label{c5} $C(a;b,c)\lor C(b;a,c)\lor C(c;a,b)$.
    \end{enumerate}
    Let $\prec$ be a total order on $X$. We say that $\prec$ is \emph{convex} for $C$ if for all $a,b,c\in X$, if $C(a; b, c)$ and $a \prec c$, then either $a \prec b \prec c$ or $a \prec c \prec b$. We denote by $\cla{OC}$ the class of all convexly ordered finite binary branching $C$-relations.
\end{definition}

\begin{fact}[{\cite[Theorem 5.1]{Bodirsky2015}}]
    The class $\cla{OC}$ is a Ramsey class.
\end{fact}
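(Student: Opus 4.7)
The plan is to recast elements of $\cla{OC}$ as finite planar rooted binary trees and then reduce the Ramsey property to Milliken's theorem on strong subtrees of the infinite binary tree.

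First, I would set up a combinatorial encoding. Given $(X, C, \prec) \in \cla{OC}$, I would associate to it a rooted binary tree $T(X)$ whose leaf set is $X$, where for any $b, c \in X$ the meet $b \wedge c$ in $T(X)$ lies strictly below $a \wedge b = a \wedge c$ precisely when $C(a; b, c)$ holds. Axioms \eqref{c1}--\eqref{c4} guarantee that $C$ is induced by such a tree-meet structure, and the binary-branching condition \eqref{c5} forces every internal node (after contracting degree-$2$ vertices) to have exactly two children. The convexity of $\prec$ with respect to $C$ then uniquely determines a planar embedding of $T(X)$ in which $\prec$ coincides with the left-to-right order of the leaves. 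Under this encoding, an embedding $A \hookrightarrow B$ in $\cla{OC}$ corresponds exactly to choosing a subset of the leaves of $T(B)$ and taking the induced (leaf-labelled, planar) subtree structure.

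Second, I would appeal to Milliken's theorem for finite strongly embedded subtrees of the infinite rooted binary tree $\mathcal{T}_2 = 2^{<\omega}$: for every finite strong subtree $S \subseteq \mathcal{T}_2$ and every finite colouring of the strong copies of a strong subtree $S' \subseteq S$, there is a strong copy of $S$ on which the colouring of its strong $S'$-subcopies is monochromatic. Given $A \subseteq B$ in $\cla{OC}$, I would realise $T(B)$ as a strong subtree of $\mathcal{T}_2$ (possible after suitably padding unary vertices), so that the planar order on leaves is inherited from the lexicographic order on $2^{<\omega}$. A $2$-colouring of $\binom{C}{A}$ for a candidate $C$ pulls back to a colouring of strong $T(A)$-copies inside a sufficiently large strong $T(B)$-copy in $\mathcal{T}_2$. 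Milliken's theorem then returns a monochromatic strong copy of $T(B)$, whose associated element of $\cla{OC}$ serves as the desired $C$.

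The main technical obstacle is bridging the mismatch between Milliken's notion of a \emph{strong} subtree (where levels and branching are tightly controlled) and the notion of induced substructure in $\cla{OC}$ (which is insensitive to subdivision of internal edges and to relabelling of non-leaf nodes). I would resolve this by a normalisation lemma: every embedding in $\cla{OC}$ can be lifted to a strong embedding of the corresponding padded trees in $\mathcal{T}_2$, and conversely any monochromatic strong copy of $T(B)$ yields, after contracting degree-$2$ nodes and reading off the leaf order, a monochromatic embedded copy of $B$ in the sense of $\cla{OC}$. Once this translation is in place, the Ramsey property transfers cleanly and one can extract a finite $C \in \cla{OC}$ by compactness (or by choosing a sufficiently deep finite truncation of $\mathcal{T}_2$).
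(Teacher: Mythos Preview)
The paper does not supply its own proof of this statement; it is quoted as a fact from \cite[Theorem~5.1]{Bodirsky2015} and used as a black box. So there is no in-paper argument to compare your sketch against.

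Your proposed route via Milliken's theorem is a legitimate strategy, and the dictionary you set up between $(X,C,\prec)\in\cla{OC}$ and leaf sets of planar rooted binary trees is correct: embeddings in $\cla{OC}$ are exactly leaf-subset maps whose induced meet-tree matches the source. You are also right that the real content sits in the ``normalisation lemma'' bridging Milliken's \emph{strong} subtrees (level-respecting, branching at every passed level) and the much looser notion of leaf-induced substructure in $\cla{OC}$. As written, though, that bridge is only asserted, not built. The issue is two-directional: you need that \emph{every} $\cla{OC}$-embedding of $A$ into $C$ lifts to a strong copy of the padded $T(A)$ inside your chosen realisation of $T(C)$ in $2^{<\omega}$ (so that the colouring you hand to Milliken sees all the copies you care about), and conversely that distinct strong copies project to distinct $\cla{OC}$-embeddings (so that monochromaticity transfers back). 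Neither direction is automatic once one allows leaves of $T(A)$ or $T(B)$ to sit at different depths, and naive padding can create spurious strong copies that do not correspond to any $\cla{OC}$-embedding. The usual fix is to realise every tree so that all leaves lie on a single common level and all internal branch points lie on pairwise distinct levels, and then verify by hand that under this normalisation the two notions of ``copy'' coincide; this is routine but it is precisely the substance of the argument, and your sketch does not carry it out.
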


\paragraph{Generalised direction relations}

\begin{definition}[{\cite[Paragraph 3.4]{BJP16}}] \label{Drelation}
A quaternary relation $D(x, y; z, w)$ on a set $X$ is called a \emph{$D$-relation} if for all $x,y,z,w,a \in X$ we have that:
\begin{enumerate}
    \item\label{d1} $D(x, y; z, w)\rightarrow D(y, x; z, w) \wedge D(x, y; w, z) \wedge D(z,w; x, y)$;
    \item\label{d2}  $D(x, y; z, w)\rightarrow \neg D(x, z; y, w);$
    \item\label{d3} $D(x, y; z, w) \rightarrow (D(a, y; z, w) \vee D(x, y; z, a))$;
    \item\label{d4} $(x\neq z \wedge y\neq z) \rightarrow D(x, y; z, z )$.
\end{enumerate}
    Similarly to \Cref{Crelation}, we say that a $D$-relation is \emph{derived from a binary tree} or that it is \emph{binary branching} if, in addition for any four elements $x,y,z,w\in X$, if at least $3$ of which are distinct then we have: 
    \begin{enumerate}\setcounter{enumi}{4}
        \item\label{d5} $D(x,y;z,w)\lor D(x,z;y,w)\lor D(x,w;y,z)$.
    \end{enumerate}
From a binary tree, we obtain such a $D$-relation on the set of leaves by setting $D(a,b;c,d)$ if the paths between $a$ and $b$ and between   $c$ and $d$ are disjoint. For instance, if $a,b,c$ and $d$ are arranged as follows:

\begin{center}
\begin{tikzpicture}[scale=2]
    \draw (180/16:1) node[right] {$b$};
    \draw (11*90/8:1) node[left] {$a$};
    \draw (-3*90/8:1) node[right] {$c$};
    \draw (-5*90/8:1) node[right] {$d$};
     \tikzmath{
     \S = 5;
        function branch(\x,\y,\s,\a) {
            if (\y > 1-1/\s) then {
                {\fill (\x:\y) circle (1.3pt);};
            } else {
                {\draw (\x:\y) -- (\x-\a:\y+1/\s);
                \draw (\x:\y) -- (\x+\a:\y+1/\s);};
                \y1=\y+1/\s; \x1=\x-\a; \x2=\x1+2*\a; \a1=\a/2;
                branch(\x1,\y1,\s,\a1); branch(\x2,\y1,\s,\a1);
            };
        };
     branch(-90,1/\S,\S,90);
     }
\end{tikzpicture}
\end{center}

    then $D(a,b;c,d)$ holds.
    
    Recall that a (strict) \emph{cyclic order} $CO$ on $X$ is a ternary relation such that, for all $x,y,z,w \in X$:
    \begin{enumerate}
    \item\label{CO1} $CO(x, y,z)\rightarrow CO(y,z,x) $;
    \item\label{CO2}  $CO(x, y,z)\rightarrow \neg CO(x,z,y) $
    \item\label{CO3} $CO(x, y,z) \wedge CO(y, y,w)\rightarrow CO(x, y,w)$;
    \item\label{CO4} if $x,y,z$ are distinct, then $CO(x, y,w)$ or $CO(x, w,y)$.
\end{enumerate}

    A cyclic order $CO$ on a binary $D$-relation $X$ is \emph{convex} for $D$ if for all distinct $x,a,b,c\in X$:
    \[CO(a,b,c)\wedge  CO(c,x,a) \rightarrow D(x,a;b,c)\vee D(a,b;c,x).\]
\begin{center}
    \begin{tikzpicture}
        \draw (0,0) circle (1) ;
        \draw (0,0) -- (-30:0.5)-- (-15:1);
        \fill (-15:1) circle (2pt);
        \draw (-15:1) node[right] {$x$};
        \draw (-30:0.5) -- (-45:1);
        \fill (-45:1) circle (2pt);
        \draw  (-45:1) node[below right] {$c$};
        \draw (0,0) -- (210:0.5)-- (195:1);
        \fill (195:1) circle (2pt);
        \draw  (195:1) node[left] {$a$};
        \draw (210:0.5) -- (225:1);
        \fill (225:1) circle (2pt);
        \draw   (225:1) node[below left] {$b$};

        \begin{scope}[shift={(4,0)}]
        \draw (0,0) circle (1) ;
        \draw (0,0) -- (-30:0.5)-- (-15:1);
        \fill (-15:1) circle (2pt);
        \draw (-15:1) node[right] {$c$};
        \draw (-30:0.5) -- (-45:1);
        \fill (-45:1) circle (2pt);
        \draw  (-45:1) node[below right] {$b$};
        \draw (0,0) -- (210:0.5)-- (195:1);
        \fill (195:1) circle (2pt);
        \draw  (195:1) node[left] {$x$};
        \draw (210:0.5) -- (225:1);
        \fill (225:1) circle (2pt);
        \draw   (225:1) node[below left] {$a$};
        \end{scope}
    \end{tikzpicture}
    \end{center}
Equivalently, we have for all distinct $a,b,c,d\in X$, 
\[CO(a,b,c)\wedge  CO(b,c,d) \rightarrow \neg D(a,c;b,d).\]
Graphically, this means that the following configuration doesn't occur:
\begin{center}
     \begin{tikzpicture}
        \draw (0,0) circle (1) ;
        \draw (0,0) -- (-30:0.5)-- (255:1);
        \fill (-75:1) circle (2pt);
        \draw (-75:1) node[below] {$c$};
        \draw (-30:0.5) -- (-45:1);
        \fill (-45:1) circle (2pt);
        \draw  (-45:1) node[below right] {$d$};
        \draw (0,0) -- (210:0.5)-- (-75:1);
        \fill (255:1) circle (2pt);
        \draw  (255:1) node[below] {$b$};
        \draw (210:0.5) -- (225:1);
        \fill (225:1) circle (2pt);
        \draw   (225:1) node[below left] {$a$};
    \end{tikzpicture}

    \end{center}

    We shall denote by $\cla{COD}$ the class of all convexly ordered finite binary branching $D$-relations.
\end{definition}
We leave the following representation of a finite structure in $\cla{COD}$ to perhaps help convey some graphical intuition:
\begin{center}
\begin{tikzpicture}[scale=2]
    \draw (0,0) circle (1);
    \draw (0,0) -- (90:1/6);
    \draw (0,0) -- (-30:1/6);
    \draw (0,0) -- (-150:1/6);
     \tikzmath{
     \S = 6;
        function branch(\x,\y,\s,\a) {
            if (\y > 1-1/\s) then {
                {\fill (\x:\y) circle (0.5pt);};
            } else {
                {\draw (\x:\y) -- (\x-\a:\y+1/\s);
                \draw (\x:\y) -- (\x+\a:\y+1/\s);};
                \y1=\y+1/\s; \x1=\x-\a; \x2=\x1+2*\a; \a1=\a/2;
                branch(\x1,\y1,\s,\a1); branch(\x2,\y1,\s,\a1);
            };
        };
     branch(-30,1/\S,\S,30);
     branch(-150,1/\S,\S,30);
     branch(90,1/\S,\S,30);
     }
\end{tikzpicture}
\end{center}

\begin{fact}\label{fact:CODhasQE}
    The theory of dense cyclically ordered binary branching $D$-relations is complete, $\aleph_0$-categorical, and admits quantifier elimination in the language $\{D,CO\}$. It follows that $\cla{COD}= \age(\sM)$ is a Fraïssé class, where $\sM$ is the unique ultrahomogeneous countable model of the theory of dense cyclically ordered binary branching $D$-relations.
\end{fact}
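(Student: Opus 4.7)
The plan is to deduce the fact by leveraging the analogous, and better-known, result for convexly ordered binary branching $C$-relations, namely that $\cla{OC}$ is itself a Fraïssé class (indeed Ramsey, as already recorded above). The key idea is to set up a natural correspondence between the two classes: from a structure $(X \cup \{0\}, C, <) \in \cla{OC}$ with $0$ the minimum of $<$, one obtains a structure in $\cla{COD}$ on $X$ by defining $D(x,y;z,w)$ via the usual tree-theoretic description (the paths from $x$ to $y$ and from $z$ to $w$ in the rooted tree recovered from $C$ are vertex-disjoint), and by defining $CO$ as the cyclic order obtained by closing up the convex linear order $<$ restricted to $X$. Conversely, given a structure in $\cla{COD}$, one recovers a structure in $\cla{OC}$ by formally adjoining a new minimum element and cutting the cyclic order at its intended position.

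First, I would verify that $\cla{COD}$ is a Fraïssé class. HP is immediate from the first-order axioms defining the class. JEP and AP can be established by passing to the associated pointed structures in $\cla{OC}$, applying amalgamation there, and then forgetting the chosen root. Fraïssé's theorem then yields a unique countable ultrahomogeneous structure $\sM$ with $\age(\sM) = \cla{COD}$, whose theory is axiomatised by the first-order sentences describing binary branching $D$-relations with convex cyclic order, together with the extension axioms asserting that every one-point extension of a finite substructure to a structure in $\cla{COD}$ is realised (the \emph{density} clause).

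Next, I would derive the three model-theoretic properties simultaneously. The language $\{D, CO\}$ is finite and relational, so ultrahomogeneity of $\sM$ immediately yields quantifier elimination for its theory by the standard criterion: any two tuples with the same quantifier-free type generate isomorphic substructures, and the isomorphism extends to an automorphism. The number of isomorphism types in $\cla{COD}$ of any fixed finite size is finite, since an unrooted binary tree shape together with a convex cyclic order on its leaves is a finite combinatorial datum (counted by Catalan-type numbers); hence by the Ryll-Nardzewski theorem the theory is $\aleph_0$-categorical, and completeness follows since the axioms admit no finite models.

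The main obstacle is making the correspondence with $\cla{OC}$ fully rigorous at the level of amalgams. If $B, C \in \cla{COD}$ both extend a common $A$, one must choose roots compatibly on all three, amalgamate the resulting pointed structures in $\cla{OC}$, and then forget the root to produce a $\cla{COD}$-amalgam of $B$ and $C$ over $A$. The delicate point is that the ``cut'' of the cyclic order used to linearise $A$ must be propagated consistently to $B$ and $C$; this is possible because any convex cyclic order admits a cut placing a chosen element first, and a cut of $A$ can always be extended to a cut of each of $B$ and $C$ lying in a compatible arc. Once this technical point is settled, the remainder of the argument is routine tree combinatorics, and the statement of the fact follows.
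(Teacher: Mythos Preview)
Your approach is correct but takes a genuinely different route from the paper. The paper gives a direct syntactic proof of quantifier elimination: it classifies the literals in one free variable over parameters, proves a reduction claim showing that any $D$-constraint on $x$ forces a $CO$-constraint on $x$, and then argues that consistency of a system of literals reduces to consistency of the cyclic-order constraints alone, which is handled by density; completeness and $\aleph_0$-categoricity then follow from QE in a finite relational language. You instead establish that $\cla{COD}$ is a Fra\"iss\'e class by transferring amalgamation from $\cla{OC}$ via the $C$/$D$ correspondence, and read off QE and $\aleph_0$-categoricity from ultrahomogeneity of the limit. This is more structural and sits naturally with the surrounding material (\Cref{fact:DCrelationInterdefinable,fact: interdefinability OD and OC}), but the ``add a root'' map you describe is not canonical: attaching a new leaf in a given gap of the cyclic order can be done at several edges of the underlying tree, and you rightly identify the compatibility of these choices across an amalgamation diagram as the crux. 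A cleaner variant that sidesteps this entirely is to \emph{remove} a common point $a_0 \in A$ rather than add one: \Cref{fact:DCrelationInterdefinable} then yields $C$-relations on $A\setminus\{a_0\}$, $B\setminus\{a_0\}$, $C\setminus\{a_0\}$ functorially, with convex linear orders obtained by cutting each cyclic order at $a_0$; amalgamate in $\cla{OC}$ and reattach $a_0$ at the end of the resulting linear order. The paper's argument buys explicit control over definable sets; yours buys a clean reduction to a known case.
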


For completeness, we give a sketch of a proof: 
\begin{proof}[Proof (sketch)]
    Let $T$ denote the theory of dense cyclically ordered binary branching $D$-relations and let $\sM$ be a model of $T$. By assumption, there are, up to equivalence, five kinds of literals in a single variable $x$, namely: $x=a$,$ x\neq a$, $CO(a,x,b)$, $D(a,b,c,x)$, $D(x,a,b,c)$, where $a,b,c$ are parameters in $\sM$ such that $CO(a,b,c)$ .  To simplify systems of literals, we need the following claim:
    \begin{claim}
        Let $a,b,c,x \in \mathcal{M}$ all distinct such that $CO(a,b,c)$. 
        Then $D(x,a,b,c)$ implies 
        \[\left(D(x,a,b,c) \wedge  CO(c,x,a) \right)\vee \left(D(b,c;a,x) \wedge CO(a,x,b) \right).\]
    \end{claim}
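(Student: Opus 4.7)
The plan is to exploit the symmetries of the $D$-relation given by axiom (\ref{d1}) to reduce the claim to a statement about the cyclic order, and then use the convexity axiom relating $CO$ and $D$ to rule out one of the three possible cyclic positions of $x$ relative to $a,b,c$.

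First I would observe that by axiom (\ref{d1}) the hypothesis $D(x,a;b,c)$ is equivalent to $D(a,x;b,c)$ and to $D(b,c;a,x)$. Hence the $D$-part of \emph{both} disjuncts of the conclusion holds automatically from the hypothesis, and the content of the claim reduces to showing
\[
CO(a,b,c) \wedge D(x,a;b,c) \;\Longrightarrow\; CO(a,x,b) \vee CO(c,x,a).
\]
Since $x,a,b,c$ are distinct, totality of $CO$ (axiom (\ref{CO4})) together with the cyclic axiom (\ref{CO1}) tells us that exactly one of $CO(a,x,b)$, $CO(b,x,c)$, $CO(c,x,a)$ holds: intuitively, once $a,b,c$ are placed on the circle in this cyclic order, $x$ must lie in one of the three open arcs they determine. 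Thus it suffices to derive a contradiction from $CO(a,b,c) \wedge CO(b,x,c) \wedge D(x,a;b,c)$.

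The key step will be to combine $CO(a,b,c)$ and $CO(b,x,c)$ to produce $CO(a,b,x)$, using the transitivity of the cyclic order (standard consequence of the $CO$ axioms, applying (\ref{CO1}) and (\ref{CO3}) to permute arguments appropriately). Once I have $CO(a,b,x)$ and $CO(b,x,c)$, the convexity identity displayed just before the claim, namely
\[
CO(a',b',c') \wedge CO(b',c',d') \;\Longrightarrow\; \neg D(a',c';b',d'),
\]
applied with $(a',b',c',d') = (a,b,x,c)$ yields $\neg D(a,x;b,c)$. By (\ref{d1}) this is $\neg D(x,a;b,c)$, contradicting the hypothesis and thereby ruling out the arc $CO(b,x,c)$. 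The remaining two cases give precisely the two disjuncts.

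The only nontrivial step is the small cyclic-order calculation producing $CO(a,b,x)$ from $CO(a,b,c)$ and $CO(b,x,c)$, which is a routine manipulation of the $CO$ axioms; everything else is bookkeeping using the pairwise symmetries of $D$. I do not anticipate any genuine obstacle, and the proof should fit comfortably in a few lines once the reduction via (\ref{d1}) is made explicit.
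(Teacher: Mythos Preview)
Your proposal is correct and follows essentially the same route as the paper: both arguments reduce the claim (via the symmetries of $D$) to ruling out the case $CO(b,x,c)$, and both do so by a single application of convexity together with axiom~(\ref{d2}). The only cosmetic difference is that you invoke the ``equivalently'' form of convexity (with $(a',b',c',d')=(a,b,x,c)$) after first deducing $CO(a,b,x)$, whereas the paper applies the original form (with $(a',b',c')=(c,a,b)$) directly to $CO(c,a,b)\wedge CO(b,x,c)$ and then uses (\ref{d2}) to contradict $D(x,a;b,c)$; this saves you the small cyclic-order manipulation.
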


    \begin{center}
    \begin{tikzpicture}
        \draw (0,0) circle (1) ;
        \draw (0,0) -- (-30:0.5)-- (-15:1);
        \fill (-15:1) circle (2pt);
        \draw (-15:1) node[right] {$c$};
        \draw (-30:0.5) -- (-45:1);
        \fill (-45:1) circle (2pt);
        \draw  (-45:1) node[below right] {$b$};
        \draw (0,0) -- (210:0.5)-- (195:1);
        \fill (195:1) circle (2pt);
        \draw  (195:1) node[left] {$x$};
        \draw (210:0.5) -- (225:1);
        \fill (225:1) circle (2pt);
        \draw   (225:1) node[below left] {$a$};

        \begin{scope}[shift={(4,0)}]
        \draw (0,0) circle (1) ;
        \draw (0,0) -- (-30:0.5)-- (-15:1);
        \fill (-15:1) circle (2pt);
        \draw (-15:1) node[right] {$x$};
        \draw (-30:0.5) -- (-45:1);
        \fill (-45:1) circle (2pt);
        \draw  (-45:1) node[below right] {$a$};
        \draw (0,0) -- (210:0.5)-- (195:1);
        \fill (195:1) circle (2pt);
        \draw  (195:1) node[left] {$b$};
        \draw (210:0.5) -- (225:1);
        \fill (225:1) circle (2pt);
        \draw   (225:1) node[below left] {$c$};
        \end{scope}
    \end{tikzpicture}
    \end{center}
    \begin{claimproof}
        Assume $CO(a,b,c)$, $D(x,a;b,c)$ and $CO(a,x,c)$. We need to show that $ CO(c,x,b)$, as then, we will have $ CO(a,x,b)$.
        Assume not, then we have $\neg CO(c,x,b)\wedge  CO(c,a,b)$. By convexity, this implies  
        $D(x,c;a,b) \vee D(c,a;b,x) $. By Axiom \ref{d3} of \Cref{Drelation}, this implies $\neg D(x,a;b,c)$ and we have a contradiction.       
    \end{claimproof}
    
    Consider a system $S(x)$ of literals 
    \[
        \{CO(a,x,b) \ : \ (a,b)\in P \} \cup \{ D(a,b,c,x) \ : \ (a,b,c)\in L \} \cup \{ D(x,a,b,c) \ : \ (a,b,c)\in R \} \cup \{x\neq a \ : \ a\in Q \},
    \]
    where $P \subseteq \mathcal{M}^2$, $L,R \subseteq CO \subseteq \mathcal{M}^3$ and $Q$ contains the set of all parameters in $P,L$ and $R$. Using Axiom \ref{d5}, we may assume that if $a,b,c$ are parameters in $P, L$ or $R$ such that $CO(a,b,c)$ holds, then $(a,b,c)\in L \cup R$. By the previous claim, we may assume that if $(a,b,c)\in L\cup R$, then $(c,a)\in P$.

    One can see, by convexity, that $S(x)$ admits a solution if, and only if the subsystem
   
    \[
        \{CO(a,x,b) \ : \ (a,b)\in P\} \cup \{x\neq a \ : \ a\in Q \}
    \]
    has a solution. Since $\sM$ is a dense cyclic order, this system has a solution if and only if for all $(a,b),(a',b')\in P$,  $b\neq a'$ and  $\neg(CO(a,b,a')\wedge CO(b,a',b'))$. We can this way eliminate one existential quantifier, and this process does not depend on the model $\sM$ we considered. It follows that the theory eliminates quantifiers. Finally, since the language is finite relational, quantifier elimination implies completeness and $\aleph_0$-categoricity of the theory, as well as the ultrahomogeneity of its unique countable model.
\end{proof}

We will see in \cref{sec: Collapsing indiscernible as a witness for the Ramsey property} (\Cref{Example:CODisnotRamsey}) that, unlike $\cla{OC}$, the class $\cla{COD}$ is \emph{not} a Ramsey class, even augmented with a generic order.

\paragraph{Aside on Ordered $D$-relations}
One can of course consider a convex order on the $D$-relation.  A total order $\leq$ on $X$ is \emph{convex} for $D$ if for all $x,y,z,w\in X$ such that $x\leq y$ and $x\leq z$, if $D(x, y; z,w)$ either $x,y < z,w $ or $x < z,w < y$. We denote by $\cla{OD}$ the class of all convexly ordered finite binary branching $D$-relations. However, this does not give rise to a new structure. Indeed, we will show that $\cla{OD}$ and $\cla{OC}$ are interdefinable (without parameters).

First, we recall that a \emph{pointed} $D$-relation (i.e. a $D$-relation with a fixed ``named'' point) is quantifier-free interdefinable with a $C$-relation: 

\begin{fact}[{\cite[Theorems~22.1 and 23.4]{AN_1998}}]\label{fact:DCrelationInterdefinable}
    Let $(X,D)$ be a $D$-relation on a set $X$ and let $a\in X$ be any point. We can define a $C$-relation $C_0$ on $X_0 = X\setminus\{x\}$ by setting:
    \[
        C_0(x,y,z)\text{ if, and only if } D(a,x;y,z).
    \]
    Conversely, if $(X,C)$ is a $C$-relation on a set $X$ and $a\notin X$ then we can define a $D$-relation $D_0$ on $X\cup\{a\}$ by setting $D_0(x,y;z,w)$ if, and only if:
    \begin{itemize}
        \item $x=y$ and $x\neq z$, $x\neq w$; or $z=w$ and $z\neq x$, $z\neq y$;
    \end{itemize}
    or
    \begin{itemize}
        \item $x,y,w,z$ are all distinct, and at least one of the following holds:
            \begin{itemize}
                \item $x=a$ and $C(y;w,z)$; or $y=a$ and $C(x;w,z)$; or $z=a$ and $C(w;x,y)$; or $w=a$ and $C(z;x,y)$; or
                \item $C(x;w,z)\land C(y;w,z)$ or $C(z;x,y)\land C(w;x,y)$.
            \end{itemize}
    \end{itemize}
    Moreover, the two constructions are inverse to each other.
\end{fact}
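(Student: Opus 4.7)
My plan is to verify the two constructions separately and then show they are mutually inverse, keeping in mind throughout the geometric picture that a $C$-relation encodes a \emph{rooted} semilinear order (in which $C(x;y,z)$ says the meet $y\meet z$ strictly exceeds $x\meet y=x\meet z$), while a $D$-relation encodes its \emph{unrooted} analogue (in which $D(x,y;z,w)$ says that the paths between $\{x,y\}$ and between $\{z,w\}$ are disjoint in the associated tree). Selecting a distinguished point $a$ of a $D$-structure ``roots'' the tree at $a$, and conversely adjoining a fresh root $a$ extends a $C$-structure to a $D$-structure. The assignment $C_0(x;y,z) := D(a,x;y,z)$ and the case split in the definition of $D_0$ are the algebraic shadows of these geometric operations.

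For the forward direction, I would check each of axioms (\ref{c1})--(\ref{c4}) of \Cref{Crelation} for $C_0$ by direct substitution into (\ref{d1})--(\ref{d4}) of \Cref{Drelation}. Axiom (\ref{c1}) is immediate from the $z\leftrightarrow w$ symmetry inside (\ref{d1}); axiom (\ref{c2}) is the specialisation of (\ref{d2}) in which the first argument is the root; axiom (\ref{c3}) requires a small manoeuvre -- apply (\ref{d1}) to move the root out of the first slot, apply (\ref{d3}) to insert the new element in that freed slot, and then (\ref{d1}) again to restore the desired form; and (\ref{c4}) follows from (\ref{d4}) using that the root lies outside $X\setminus\{a\}$.

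The converse direction is the more delicate half, because the definition of $D_0$ has several sub-cases according to which (if any) of $x,y,z,w$ equals $a$ and to coincidences among the arguments. I would verify (\ref{d1})--(\ref{d4}) by case analysis on the position of $a$ among the four arguments, applying the corresponding $C$-axiom within each case. The key case is (\ref{d3}), where inserting an arbitrary new element $a'$ must preserve the validity of $D_0$ on one of the two resulting pairs: when $a'\neq a$ and $a\notin\{x,y,z,w\}$, applying (\ref{c3}) to the $C$-witness of $D_0(x,y;z,w)$ yields the conclusion, while the boundary cases where $a'=a$ or $a$ occurs among $x,y,z,w$ are handled by the ``one argument equals $a$'' clauses together with (\ref{c4}).

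Finally, showing that the two constructions are mutually inverse is largely mechanical once the case split is in hand: starting from $(X,D,a)$, forming $C_0$ on $X\setminus\{a\}$ and then re-extending by adjoining $a$ reproduces $D$, because on tuples avoiding $a$ both sides unwind to the same disjunction of $C_0$-conditions, and on tuples involving $a$ agreement is forced by (\ref{d4}) together with the defining clauses for $D_0$; the reverse round-trip is symmetric. The main obstacle throughout is not conceptual but organisational: the case split in the definition of $D_0$ is the source of all complications, and I would organise the verification by the number of arguments coinciding with $a$ so that no combination is missed.
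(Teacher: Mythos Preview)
The paper does not prove this statement: it is recorded as a \emph{Fact} with a citation to \cite[Theorems~22.1 and 23.4]{AN_1998}, and no argument is given. Your proposal is therefore not comparable to any proof in the paper, but it is a correct outline of how one would verify the result directly from the axioms. In particular, your manoeuvre for (\ref{c3}) --- passing from $D(a,x;y,z)$ to $D(y,z;a,x)$ via (\ref{d1}), applying (\ref{d3}) to insert $d$, and then returning via (\ref{d1}) --- is exactly right, and your plan to organise the verification of the converse by the number of arguments equal to $a$ is the sensible way to keep the case analysis under control.
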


If the $D$-relation is convexly ordered, we can recover $\emptyset$-definably a $C$-relation by setting, for any ordered triple $x,y,z \in X:$
\begin{align}\label{Eq:DefinitionCrelationFromConvexDRelation}
        C(x;y,z) \Leftrightarrow \forall a\, \exists b\leq a\, \ D(b,x;y,z). 
\end{align}

In particular, we have the following:

\begin{fact}\label{fact: interdefinability OD and OC}
The relation $C$ given by (\ref{Eq:DefinitionCrelationFromConvexDRelation}) is a convexly ordered $C$-relation. Conversely, the $D$-relation we started with is precisely the $D$-relation induced by $C$ as in \Cref{fact:DCrelationInterdefinable}. \emph{In particular, the structures $(X,D,<)$ and $(X,C,<)$ are $\emptyset$-interdefinable.}
\end{fact}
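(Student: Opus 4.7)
My plan is to use the convex order to emulate a ``point at $-\infty$'' and reduce the claim to the pointed-$D$-to-$C$ correspondence of \Cref{fact:DCrelationInterdefinable}. Working in the countable generic model (which is $\aleph_0$-categorical by \Cref{fact:CODhasQE}, and in which the convex order is dense without endpoints), I would first establish a \emph{stabilisation lemma}: for every $x,y,z\in X$ the truth value of $D(b,x;y,z)$ is independent of $b$ once $b\prec x,y,z$. This should follow from axiom (D3) applied to $D(b_2,x;y,z)$ with interpolation $b_1\prec b_2\prec x,y,z$: the resulting disjunction $D(b_1,x;y,z)\vee D(b_2,x;y,b_1)$ simplifies because convexity of $<$ for $D$ forces the second alternative to place $b_1$ in the ``upper half'' together with $x$, contradicting $b_1\prec b_2$. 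Hence the defining formula $\forall a\,\exists b\leq a\,D(b,x;y,z)$ agrees with this stabilised value, which we may denote by $D(-\infty,x;y,z)$.

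With the stabilisation in place, axioms (C1), (C2), (C4) translate immediately from (D1), (D2), (D4) applied with first coordinate $b\prec x,y,z$, and binary branching (C5) follows from (D5) applied to the tuple $(b,a,c,d)$ with $b$ below $a,c,d$. For axiom (C3), one applies (D3) to $D(b,a;y,z)$ with interpolation variable $d$ after shrinking $b$ further so that $b\prec a,y,z,d$; the resulting disjunction $D(b,a;d,z)\vee D(b,d;y,z)$ translates to $C(a;d,z)\vee C(d;y,z)$ as required. Convexity of $<$ for $C$ reduces, via the same identification, to the convexity clause for $D$ applied to a tuple whose minimum is the auxiliary point $b$; fixing $b$ as this minimum selects the correct disjunct of the clause and yields the constraint $a\prec b\prec c$ or $a\prec c\prec b$ on the remaining arguments.

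For the converse direction, given $x,y,z,w\in X$, one picks (using density and absence of a minimum) some $b^*\prec x,y,z,w$; by the stabilisation lemma, our $C$ restricted to $X\setminus\{b^*\}$ coincides with the pointed $C$-relation $C_{b^*}(u;v,w)=D(b^*,u;v,w)$ provided by \Cref{fact:DCrelationInterdefinable}. The explicit inverse construction in \Cref{fact:DCrelationInterdefinable} then expresses $D(x,y;z,w)$ as a boolean combination of atomic $C$-formulas on $x,y,z,w$. Since this value is independent of the choice of $b^*\prec x,y,z,w$, the existential quantifier over $b^*$ (equivalently, the universal one) yields a $\emptyset$-definable formula in $(X,C,<)$ that recovers $D$, completing the interdefinability.

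The main technical obstacle I anticipate is the stabilisation lemma: making rigorous the claim that $D(b,x;y,z)$ depends only on $x,y,z$ (and not on the specific choice of $b\prec x,y,z$) requires a careful case analysis combining (D3), the convexity clause of $<$ for $D$, and possibly (D5), and is the combinatorial heart of the argument. Once stabilisation is in hand, the rest of the proof is essentially mechanical translation through \Cref{fact:DCrelationInterdefinable}.
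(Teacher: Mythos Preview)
Your stabilisation lemma is false as stated, and this breaks the entire argument. You claim that for $b\prec x,y,z$ the truth value of $D(b,x;y,z)$ is independent of $b$, but consider the following five leaves of a binary tree (read left to right, giving the convex order $b_1\prec b_2\prec x\prec y\prec z$):
\[
\begin{array}{l}
\text{root has children }A\text{ (left) and }z;\quad A\text{ has children }b_1\text{ and }B;\\
B\text{ has children }C\text{ and }y;\quad C\text{ has children }b_2\text{ and }x.
\end{array}
\]
Here the path $b_2\!-\!C\!-\!x$ is disjoint from $y\!-\!B\!-\!A\!-\!\text{root}\!-\!z$, so $D(b_2,x;y,z)$ holds; but the path $b_1\!-\!A\!-\!B\!-\!C\!-\!x$ shares the edge $A\!-\!B$ with $y\!-\!B\!-\!A\!-\!\text{root}\!-\!z$, so $D(b_1,x;y,z)$ fails. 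Thus $D(b,x;y,z)$ genuinely changes value among points $b\prec x,y,z$. Your proposed contradiction via convexity does not materialise: applying (D3) to $D(b_2,x;y,z)$ with $b_1$ gives $D(b_1,x;y,z)\vee D(b_2,x;y,b_1)$, and in the example the second disjunct holds; rewriting it as $D(b_1,y;b_2,x)$ and applying convexity only yields $b_1<b_2,x<y$, which is true and gives no contradiction.

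What the paper actually proves is the weaker (and correct) statement that $C(x;y,z)$ holds if and only if $D(b,x;y,z)$ holds for all $b$ in \emph{some} initial segment of $(X,<)$; equivalently, the truth value of $D(b,x;y,z)$ stabilises only \emph{eventually} as $b\to -\infty$, not immediately below $x,y,z$. This eventual stabilisation is what \Cref{lem:small-d-lemma} is designed to establish (an interpolation lemma, not a constancy lemma). With only eventual stabilisation in hand, your recovery of $D$ from $C$ via a single fixed $b^\ast\prec x,y,z,w$ no longer works directly: you need the extra observation (used in the paper) that the Boolean combination $(D(a,x;z,w)\wedge D(a,y;z,w))\vee(D(x,y;w,a)\wedge D(x,y;z,a))$ computes $D(x,y;z,w)$ for \emph{every} $a<x,y,z,w$ (by \Cref{fact:DCrelationInterdefinable}), so that one may pass to the limit $a\to-\infty$ term by term.
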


Note that we use the total order to obtain $\emptyset$-interdefinability (i.e. interdefinability without parameters), at the cost of using quantifiers in the defining formulas. The point is that the order allows us to talk about a (possibly) imaginary ``first'' element of $(X,<)$. First, we need a small lemma:

\begin{lemma}\label{lem:small-d-lemma}
    Let $(X,D,<)$ be a convexly ordered $D$-relation. Let $x,x',x'',y,z,w\in X$. If $x''\leq x' \leq x \leq y$ , $x\leq z$ and $D(x,y;z,w)\wedge D(x'',y;z,w)$, then $D(x',y;z,w)$.
\end{lemma}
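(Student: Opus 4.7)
The plan is to argue by contradiction, so suppose $\neg D(x', y; z, w)$.

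First, I would apply Axiom~\ref{d3} of \Cref{Drelation} to each of $D(x, y; z, w)$ and $D(x'', y; z, w)$ with the parameter $a = x'$; the assumption $\neg D(x', y; z, w)$ excludes one of the two resulting disjuncts in each application, and so we obtain $D(x, y; z, x')$ and $D(x'', y; z, x')$. Next, by the binary-branching Axiom~\ref{d5} applied to the quadruple $(x', y, z, w)$, at least one of $D(x', z; y, w)$ or $D(x', w; y, z)$ must hold; I would treat each case separately.

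In the case $D(x', z; y, w)$, I would apply Axiom~\ref{d3} with $a = x$ and then with $a = x''$; each time the ``wrong'' branch ($D(x, z; y, w)$, respectively $D(x'', z; y, w)$) contradicts the corresponding hypothesis via Axiom~\ref{d2} (after using Axiom~\ref{d1} to align the relations), so both $D(x', z; x, y)$ and $D(x', z; x'', y)$ must hold. Convexity of $<$ for $D$ applied to $D(x', z; x, y)$---legitimate since $x' \leq x$ and $x' \leq z$ (from $x' \leq x \leq z$)---rules out the first clause (which would force $z < x$, contradicting $x \leq z$), and so forces $y < z$. Analogously, convexity applied to the Axiom~\ref{d1}-rewriting $D(x'', y; x', z)$ of $D(x', z; x'', y)$---now with $x'' \leq y$ and $x'' \leq x'$---rules out the first clause (which would force $y < x'$, contradicting $x' \leq y$), and so forces $z < y$. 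This contradicts the previously derived $y < z$.

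The case $D(x', w; y, z)$ proceeds analogously but, I expect, will be the main obstacle: the hypothesis is asymmetric in $z$ and $w$, since we have $x \leq z$ but no direct inequality between $x$ and $w$. The same Axiom~\ref{d3}-and-\ref{d2} argument produces $D(x', w; x, y)$ and $D(x', w; x'', y)$. To apply convexity in a useful way one first inspects the two clauses of convexity for the original $D(x, y; z, w)$: the first clause $y < z, w$ gives $x \leq y < w$, hence $x' \leq x \leq w$; the second clause $x < z, w < y$ similarly gives $x \leq w$ via the convex-order interpretation, and hence again $x' \leq w$. With this in hand, convexity applied to $D(x', w; x, y)$ rules out the clause forcing $w < x$ and yields $y < w$, while convexity applied to the Axiom~\ref{d1}-rewriting $D(x'', y; x', w)$ of $D(x', w; x'', y)$ rules out the clause forcing $y < x'$ and yields $w < y$, a contradiction. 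Degenerate cases in which some of $x'', x', x, y$ coincide reduce to tautologies via Axiom~\ref{d1}.
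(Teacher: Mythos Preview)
Your opening move coincides with the paper's: from $\neg D(x',y;z,w)$ and Axiom~\ref{d3} you correctly obtain $D(x,y;z,x')$ and $D(x'',y;z,x')$. But at this point you are already done, because via Axiom~\ref{d1} these two relations are literally $D(x',z;x,y)$ and $D(x'',y;x',z)$ --- the very relations you then labour to rederive inside Case~1 from the extra hypothesis $D(x',z;y,w)$ and a further pass through Axiom~\ref{d3}. Your Case~1 convexity argument (the first relation forces $y<z$, the second forces $z<y$) therefore goes through unconditionally, with no case hypothesis needed; this is exactly the paper's three-line proof.

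The detour through the binary-branching Axiom~\ref{d5} is thus superfluous, and with it all of Case~2. It is also, strictly speaking, a gap: the lemma assumes only a convexly ordered $D$-relation, and binary branching (Axiom~\ref{d5}) is an additional axiom not among the hypotheses. Simply drop the invocation of~\ref{d5} and run the convexity steps from your Case~1 directly on the two relations you already obtained in the first paragraph.
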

\begin{proof}
    Assume for a contradiction, that $D(x',y;z,w)$ does not hold. By Axiom \labelcref{d3}, since $D(x,y;z,w)$ and $D(x'',y;z,w)$ hold, we have $D(x,y;z,x')$ and $D(x'',y;z,x')$. Now, by convexity and the fact that $x''\leq x' \leq x$, from $D(x,y;z,x')$ we get that: 
    \[
        x' < x,y < z.
    \]
    Similarly, using the other $D$-relations and $x'' \leq x\leq y$, we have   
    \[
        x'' < x',z < y,
    \] 
    which is a contradiction.
\end{proof}

\begin{proof}[Proof of \Cref{fact: interdefinability OD and OC}]
    We start with the following claim:
    \begin{claim}\label{claim:initial-segment-characterisation}
        $C(x;y,z)$ holds if, and only if, on an initial segment $I$ of $X$, we have $D(b,x;y,z)$ for all $b\in I$.
    \end{claim}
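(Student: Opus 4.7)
The plan is to treat the two directions of the biconditional separately. The ``if'' direction is a direct unfolding of the defining formula \eqref{Eq:DefinitionCrelationFromConvexDRelation}, while the ``only if'' direction carries the content, and is exactly what \Cref{lem:small-d-lemma} is tailored for: it provides a convexity-in-the-first-position property for $D$, which lets us promote the cofinally many witnesses guaranteed by $C(x;y,z)$ into an entire initial segment of witnesses.

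For the ``if'' direction, suppose $D(b,x;y,z)$ holds for every $b$ in some nonempty initial segment $I$ of $X$. Given any $a\in X$, either $a\in I$ (in which case $b:=a$ works), or $a\notin I$, in which case every element of the nonempty set $I$ lies strictly below $a$ by downward-closure of $I$; picking any such $b\in I$ again yields $b\leq a$ with $D(b,x;y,z)$. This is precisely \eqref{Eq:DefinitionCrelationFromConvexDRelation}, so $C(x;y,z)$ holds.

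For the ``only if'' direction, suppose $C(x;y,z)$ holds. Applying \eqref{Eq:DefinitionCrelationFromConvexDRelation} with $a:=\min\{x,y,z\}$, we fix $b_0\leq a$ such that $D(b_0,x;y,z)$ holds; in particular $b_0\leq x$ and $b_0\leq y$. Set $I:=\{b\in X:b\leq b_0\}$, which is an initial segment of $X$ containing $b_0$. To verify $D(b,x;y,z)$ for arbitrary $b\in I$, apply \eqref{Eq:DefinitionCrelationFromConvexDRelation} once more to obtain $b''\leq b$ with $D(b'',x;y,z)$. Since $b''\leq b\leq b_0$, with $b_0\leq x$ and $b_0\leq y$, and both $D(b_0,x;y,z)$ and $D(b'',x;y,z)$ hold, the hypotheses of \Cref{lem:small-d-lemma} are met (with $b_0,b,b''$ in the roles of $x,x',x''$, and with $x,y,z$ in the roles of $y,z,w$), and the lemma delivers $D(b,x;y,z)$.

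The only delicate point in the argument is bookkeeping the variable renaming when invoking \Cref{lem:small-d-lemma}; once this is in place, the claim reduces to two short applications of the defining formula for $C$ sandwiched around one application of the lemma.
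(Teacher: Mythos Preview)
Your proof is correct and follows essentially the same approach as the paper: both directions are handled as you describe, and the ``only if'' direction is obtained by finding a single witness $b_0$ below $x,y,z$, then using \Cref{lem:small-d-lemma} (together with a fresh witness $b''\leq b$ furnished by the defining formula for $C$) to push $D(\,\cdot\,,x;y,z)$ down to every $b\leq b_0$. The paper's argument is phrased slightly differently---it shows directly that the set of witnesses below $x,y,z$ is downward closed---but the content is identical.
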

    \begin{claimproof} 
        To see this, assume $C(x;y,z)$. For all $b,b'$ such that $b'\leq b<x,y,z$, if $D(b,x;y,z)$ holds, then $ D(b',x;y,z)$ also holds. Indeed, since $C(x;y,z)$ holds, there is $b''\leq b'$ such that  $D(b'',x;y,z)$ holds. We have then that $D(b',x;y,z)$ by convexity and \Cref{lem:small-d-lemma}. Then, the equivalence is immediate.        
    \end{claimproof}

    Let us now show that $C$ is indeed a $C$-relation. Let $x,y,z$ be elements in $X$. To show Axioms \labelcref{c1,c2}, assume $C(x;y,z)$. Then there is an initial segment $I$ such that for all $b\in I$,  $D(b,x;y,z)$ and therefore $D(b,x;z,y)$ and $\neg D(b,y;x,z)$ by definition of a $D$-relation. We have therefore $C(x;z,y)$ and $\neg C(y;x,z)$.
    
    To show Axiom \labelcref{c3}, assume $C(x;y,z)$ and pick $a\in X$. Then on an initial segment $I$, we have $D(x,b;y,z)$  for all $b\in I$. By Axiom \labelcref{d3} of the definition of a $D$-relation, we have  $D(a,b;y,z)$ or $D(x,b;y,a)$ for all $b\in I$. Using the convexity of $D$, either $D(a,b;y,z)$ holds for any $b$ in an initial segment, or $D(x,b;y,a)$ holds for any $b$ in an initial segment. Therefore $C(a;y,z)$ or $C(x;y,a)$.
    We have Axiom \labelcref{c4} by definition. The fact that $C$ is derived from a binary tree (Axiom \labelcref{c5}) and is convex for $<$ can be shown similarly to Axiom \labelcref{c3}. 

    Finally, to show that $D$ can be recovers from $C$, we observe that for any $a<x,y,z,w$ in $X$, $D(x,y;z,w)$ holds if, and only if, 
    $(D(a,x;z,w)\wedge D(a,y;z,w) )\vee (D(x,y;w,a)\wedge D(x,y;z,a))$ holds. This follows, for instance, from \Cref{fact:DCrelationInterdefinable}. We deduce from \Cref{claim:initial-segment-characterisation} that 
    $D(x,y;z,w)$ holds if, and only if, $(C(x;z,w)\wedge C(y;z,w) )\vee (C(w;x,y)\wedge C(z,x,y))$ as wanted.
\end{proof}

\paragraph{Ordered Hypergraphs} Fix $m\in\Nbb$. Let $\lL_0=\{R_i:i<m\}$ where each $R_i$ is a relation symbol of arity $r_i$, for each $i<m$. A \emph{hypergraph of type $\lL_0$} or \emph{$\lL_0$-hypergraph} is a structure $\left(A,\left(R_i\right)_{i<m}\right)$ such that, for all $i<m$:
\begin{itemize}
    \item (\emph{Uniformity}): If $R_i(a_0,\dots,a_{r_i-1})$ then all $a_0,\dots,a_{r_i-1}$ are distinct.
    \item (\emph{Symmetry}): If $R_i(a_0,\dots,a_{r_i-1})$ then we also have $R_i\left(a_{\sigma(0)},\dots,a_{\sigma(r_i-1)}\right)$ for any permutation ${\sigma\in S_{r_i}}$.
\end{itemize}

The point is that each $R_i$ is interpreted in $A$ as an $r_i$-ary ``hyperedge'' relation, i.e. $R_i\subseteq[A]^{r_i}$.

Let $\mathcal{L}_{0}^+=\mathcal{L}_0\cup\{<\}$. If $\sM=(A,(R_i)_{i<m},<)$ is an $\mathcal{L}_0^+$-structure whose $\mathcal{L}_0$-reduct is an $\mathcal{L}_0$-hypergraph and $<$ is interpreted as a linear order in $M$, then we say that $M$ is an \emph{ordered $\mathcal{L}_0^+$-hypergraph}. 

Let $\cla{C}$ be the class of all linearly ordered finite $\mathcal{L}_0$-hypergraphs (for arbitrary $\mathcal{L}_0^+$). Then, $\cla{C}$ is a Fraïssé class and its Fraïssé limit is the \emph{ordered random $\mathcal{L}_0^+$-hypergraph}, whose order is isomorphic to $(\mathbb{Q},<)$. 

\begin{fact}[\cite{NR1977}]
For any finite $\mathcal{L}_0^+$, let $\cla{C}$ be the class of all ordered $\mathcal{L}_0^+$-hypergraphs. Then $\cla{C}$ is a Ramsey class.
\end{fact}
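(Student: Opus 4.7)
The plan is to apply the Nešetřil--Rödl partite construction. First observe that $\cla{C}$ is closed under free amalgamation over substructures (interpret each relation symbol as the union of its two interpretations, and extend the linear order arbitrarily, e.g.\ placing $B\setminus A$ below $C\setminus A$ with both retaining their internal orders), so HP, JEP and AP are immediate and only the Ramsey property requires work.

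Fix $A\subseteq B$ in $\cla{C}$; we seek $C\in\cla{C}$ with $C\to(B)^A_2$. I would proceed in three phases. In the \emph{base case}, when $\Sig(\mathcal{L}_0)=\emptyset$, the statement reduces to the classical finite Ramsey theorem applied to $|A|$-element subsets of a sufficiently long linear order. Next one establishes the key technical lemma: call a structure $P\in\cla{C}$ \emph{$k$-partite} (for $k=|B|$) if its vertex set is partitioned into $k$ ordered intervals $V_1<\dots<V_k$ with every hyperedge meeting each $V_i$ in at most one vertex. The \emph{Partite Lemma} asserts that for every $k$-partite $P_0\in\cla{C}$, there exists a $k$-partite $P_1\in\cla{C}$ such that any $2$-colouring of the transversal copies of $A$ in $P_1$ admits a monochromatic $k$-partite substructure isomorphic to $P_0$; this is proved by a column-by-column induction, repeatedly invoking the base case to absorb one column at a time while freezing the colouring on the columns already treated.

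Finally, one assembles the desired $C$ by the \emph{partite construction proper}. Enumerate the embeddings $\iota_1,\dots,\iota_N\colon A\hookrightarrow B$ and build a chain $P^0\subseteq\dots\subseteq P^N$ of $k$-partite members of $\cla{C}$: take $P^0$ to be a large disjoint union of copies of $B$ arranged into $k$ columns, and at step $i$ apply the Partite Lemma to a pre-amalgamation over the image of $\iota_i$, so that in $P^i$ every $2$-colouring of the $A$-copies yields a transversal copy of $B$ on which the first $i$ embedding types are monochromatic. Forgetting the partition in $P^N$ gives the required $C$.

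The main obstacle is controlling which \emph{new} copies of $A$ are introduced by each free amalgamation: the partite discipline is essential because every $A$-copy in $P^i$ must meet each column in at most one vertex, which restricts these new copies tightly enough for the inductive bookkeeping to go through. Having multiple relation symbols of varying arities causes no additional difficulty, as free amalgamation and the partite framework treat each symbol independently, and the linear order on the vertices is automatically compatible with any partition into ordered columns.
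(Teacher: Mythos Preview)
The paper does not prove this statement; it is quoted as a classical theorem of Ne\v{s}et\v{r}il and R\"{o}dl and no argument is supplied. So there is no proof in the paper to compare against.

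Your overall plan is the right one --- the Ne\v{s}et\v{r}il--R\"{o}dl partite method --- but one step is misdescribed. The Partite Lemma is not proved by a ``column-by-column induction, repeatedly invoking the base case'' (classical Ramsey on linear orders). A transversal copy of $A$ hits all $k$ columns simultaneously, so thinning one column at a time via ordinary Ramsey gives no control over the colouring of such transversals. The standard proof of the Partite Lemma instead invokes the Hales--Jewett theorem: one takes the parts of $P_1$ to be large powers of the parts of $P_0$, so that transversal copies of $A$ correspond to combinatorial lines, and a monochromatic combinatorial subspace produced by Hales--Jewett encodes the required copy of $P_0$. The iterative ``one-at-a-time'' picture you describe belongs rather to the partite \emph{construction} phase (your third step), where one iterates over the embedding types $\iota_1,\dots,\iota_N$, not over columns. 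With the Partite Lemma proved correctly via Hales--Jewett, the rest of your sketch is sound; the remark that multiple relation symbols of varying arities cause no extra difficulty is also correct. (A minor terminological point: the amalgamation you describe is strong but not literally free, since the linear order must be extended nontrivially across the two factors; this does not affect the argument.)
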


In particular, if $\mathcal{L}_0^+ = \{<,R\}$, where $R$ is a single relation symbol of arity $n$ we call an ordered $\mathcal{L}_0^+$-hypergraph an \emph{ordered $n$-\textit{uniform} hypergraph}. The Fraïssé limit of the class of ordered $n$-uniform hypergraphs is the \emph{ordered random $n$-uniform hypergraph}, which we denote by $\cla{OH}_n$. 

\begin{remark} A first-order $\mathcal{L}_0$-structure $(M,<,R)$ is a model of $\Th(\cla{OH}_n)$ if, and only if:
\begin{itemize}
    \item $(M,<,R)$ is an ordered $n$-uniform hypergraph, in the sense above.
    \item $(M,<)$ is a model of $\mathsf{DLO}$.
    \item For all finite disjoint subsets $A_0,A_1\subseteq M^{n-1}$ and any $b_0,b_1\in M$ such that $b_0<b_1$, there is some $b\in M$ such that:
    \begin{itemize}
        \item $b_0<b<b_1$.
        \item For every $(a_{0,1},\dots,a_{0,n-1})\in A_0$, and $(a_{1,1},\dots,a_{1,n-1})\in A_1$ we have that:
        \[
        R(b,a_{0,1},\dots, a_{0,n-1})\text{ and } \lnot R(b,a_{1,1},\dots,a_{1,n-1}).
        \]
    \end{itemize}
\end{itemize}
\end{remark}

In particular, an ordered random $1$-hypergraph is a dense linear order with a dense co-dense subset. We will denote $\cla{OG}$, the ordered random $2$-hypergraph, $\cla{OH}_2$, (since $2$-hypergraphs are just graphs). 

\subsection{Product constructions}\label{sec:products}
We now introduce the two constructions that we will consider in this paper: the full product and the lexicographic sum. For both of them, we recall a quantifier elimination result relative to their factors. This will be later used in \Cref{sec:transfer} in order to describe generalised indiscernible sequences (see \Cref{sub:generalised-indiscernibles}) in full products and lexicographic sums, in terms of generalised indiscernible sequences in their factors. This description will then be our main tool for proving transfer principles in these products.

\subsubsection{Full Product}

\begin{definition}\label{def:fullproduct}
For $i\in \{1,2\}$, let $\sM_i$ be an $\lL_{\sM_i}$-structure with main sort $M_i$. We define the \emph{full product of $\sM_1$ and $\sM_2$}, denoted $\sM_1 \boxtimes \sM_2$, to be the (multisorted) structure:
\[
    \{M_1\times M_2,\sM_1,\sM_2,\pi_{M_1}, \pi_{M_2}\}
\]
where for $i\in \{1,2\}$, $\pi_{M_i}$ is the natural projection $M_1\times M_2 \rightarrow M_i$ and the sort $\sM_1$ and $\sM_2$ are equipped with their respective structure.\footnote{This notion of product should not be confused with the \emph{Feferman product} (of two structures); full products are equipped with projection maps, which cannot always be recovered in the Feferman product.} We denote by $\lL_{\sM_1\boxtimes\sM_2}$ the corresponding language (which contains \emph{disjoint} copy of $\lL_{\sM_1}$ and $\lL_{\sM_2}$).
\end{definition}

The full product $\sM_1 \boxtimes \sM_2$ is sometimes called the \emph{direct product}, and can be described in a one-sorted language. However, the language $\lL_{\sM_1\boxtimes\sM_2}$ has the following immediate advantage: 

\begin{fact}\label{fact: QE relative in full product}
For $i\in \{1,2\}$, let $\sM_i$ be as above. Then $\sM_1 \boxtimes \sM_2$ eliminates quantifiers relative to $\sM_1$ and $\sM_2$. Equivalently, every $\lL_{\sM_1\boxtimes\sM_2}$-formula is equivalent to a formula without quantifier in the main sort $M_1\times M_2$.
\end{fact}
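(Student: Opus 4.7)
The plan is to proceed by induction on formula complexity. Since the language $\lL_{\sM_1\boxtimes\sM_2}$ contains no function symbol whose codomain is the main sort $M_1\times M_2$, every term of main sort is simply a variable. Consequently, every atomic formula is of one of two kinds: either an equality $x=y$ between two main-sort variables, or an atomic $\lL_{\sM_i}$-formula applied to $\sM_i$-terms, where such terms are built from $\sM_i$-variables, $\lL_{\sM_i}$-function symbols, and the projections $\pi_{M_i}(z)$ of main-sort variables $z$. In particular, the only way that main-sort variables enter a formula is either in equalities between such variables, or inside applications of the projection symbols.

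The Boolean and factor-sort quantifier steps of the induction are immediate (factor-sort quantifiers are permitted in the target), so the only case to handle is a main-sort existential quantifier $\exists x\,\psi(x,\bar{y})$, where by the inductive hypothesis $\psi$ contains no main-sort quantifier. The idea is to \emph{split} $x$ into its two projections. First, rewrite every main-sort equality $x=y$ appearing in $\psi$ as
\[
    \pi_{M_1}(x)=\pi_{M_1}(y) \ \land\ \pi_{M_2}(x)=\pi_{M_2}(y);
\]
after this rewrite, the only occurrences of $x$ inside $\psi$ are as arguments of projections. Next, introduce fresh variables $x_1$ of sort $M_1$ and $x_2$ of sort $M_2$ and substitute $\pi_{M_1}(x)\mapsto x_1$, $\pi_{M_2}(x)\mapsto x_2$ throughout, obtaining a formula $\psi'(x_1,x_2,\bar{y})$ in which $x$ no longer appears and which still has no main-sort quantifier. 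Since the projections $\pi_{M_i}:M_1\times M_2\to M_i$ are surjective (both $M_i$ being nonempty), we have
\[
    \sM_1\boxtimes\sM_2 \models \exists x\,\psi(x,\bar{y}) \ \leftrightarrow\ \exists x_1\,\exists x_2\,\psi'(x_1,x_2,\bar{y}),
\]
which closes the induction.

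The argument is essentially bookkeeping, and the only subtle point is ensuring main-sort equalities are rewritten before substitution so that after the pass no term of main sort depending on $x$ remains; there is no genuine obstacle, and the content of the proposition is really encoded in the choice of language. Indeed, the morally stronger consequence -- obtained by iterating the above -- is that every $\lL_{\sM_1\boxtimes\sM_2}$-formula is equivalent to one in which each main-sort variable $z$ occurs only through $\pi_{M_1}(z)$ and $\pi_{M_2}(z)$, and in which all quantifiers are over the factor sorts; equivalently, every such formula is a Boolean combination of $\lL_{\sM_1}$- and $\lL_{\sM_2}$-formulas whose free variables are the projections of the original main-sort free variables.
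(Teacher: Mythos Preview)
Your proof is correct and follows essentially the same approach as the paper: induct on formula complexity and eliminate a main-sort existential by splitting it into two factor-sort existentials via the projections. Your direct substitution $\pi_{M_i}(x)\mapsto x_i$ is slightly slicker than the paper's route, which first Morleyises the factors and passes through disjunctive normal form to separate $\psi$ into an $\sM_1$-part conjoined with an $\sM_2$-part before splitting the quantifier.
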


\begin{proof}
    We may Morleyise $\sM_1$ and $\sM_2$, so, without loss, we may assume that they eliminate quantifiers in their respective relational languages $\lL_1$ and $\lL_2$. We prove the \namecref{fact: QE relative in full product} by induction on the complexity of an $\lL_{\sM_1\fullprod\sM_2}$-formula $\vphi(\bar{x})$:
    \begin{itemize}
        \item If $\vphi = R$ for some $R\in \lL_i$, then, by definition $\sM_1\fullprod\sM_2 \models \vphi(\bar{x})\iff \sM_i\models \vphi(\pi_i(\bar{x}))$.
        \item\label{item relative QE equality} If $\vphi(x,y)$ is $x=y$, then, by definition,  $\sM_1\fullprod\sM_2 \models \vphi(x,y)\iff \sM_1\models \vphi(\pi_1(x,y))\land \sM_2\models \vphi(\pi_2(x,y))$.
        \item If $\vphi$ is of the form $\neg \psi$ or $\psi_1\land \psi_2$, then the \namecref{fact: QE relative in full product} follows from the induction hypothesis.
        \item Finally, assume $\vphi(\bar{x})$ is of the form $\exists y\, \psi(\bar{x};y)$. Applying the induction hypothesis to $\psi$, we may assume $\psi$ is in \emph{disjunctive normal form}, i.e. $\psi(\bar{x};y) = \bigvee_{i\in I}\bigwedge_{j\in J_i}\theta_{i,j}(\bar{x};y)$, where $I, \Set{J_i}_{i\in I}$ are finite and $\theta_{i,j}$ are all atomic or negated atomic. As disjunction always commutes with the existential quantifier, we may further assume $\psi(\bar{x};y) = \bigwedge_{j\in J}\theta_{j}(\bar{x};y)$. 
        Breaking up equality in $M_1\times M_2$ to a conjunction of equalities in $\sM_1$ and $\sM_2$ as above, we may assume $\psi$ is 
        \[\psi(\bar{x};y) = \psi_1(\pi_1(\bar{x};y))\land\psi_2(\pi_2(\bar{x};y)).\]
        The \namecref{fact: QE relative in full product} follows since 
        \[\exists y \in M_1\times M_2\, \psi_1(\pi_1(\bar{x};y))\land\psi_2(\pi_2(\bar{x};y))\iff \exists y \in M_1, \psi_1(\pi_1(\bar{x};y)) \land \exists y \in M_2, \psi_2(\pi_2(\bar{x};y)).\]
    \end{itemize}
\end{proof}

\begin{definition}
    Let $\cla{C}_1$ and $\cla{C}_2$ be two classes of structures. We denote by $\cC_1\boxtimes \cC_2$ the smallest hereditary class of structures containing $C \boxtimes D$ for all $C\in \cC_1$ and $D \in \cC_2$.
\end{definition}

\subsubsection{Lexicographic product}
The \emph{lexicographic sum} of relational structures was studied by the first author in \cite{Mei16}. It is a method of constructing an $\lL$-structure $\sM[\sN]$ from two $\lL$-structures $\sM$ and $\sN$, where $\lL$ is a relational language, in a way that generalises the lexicographic (wreath) product of graphs. We recall here the relevant definitions and a quantifier elimination result.

\begin{definition} \label{DefinitionLexicographicProduct2}
    Let $\sM$ be an $\lL_\sM$-structure and $\mathfrak{N}=\{\sN_a\}_{a\in \sM}$ be a collection $\mathfrak{N}$ of $\lL_\mathfrak{N}$-structures indexed by $\sM$. The \emph{lexicographic sum of $\mathfrak{N}$ with respect to $\sM$}, denoted by $\sM[\mathfrak{N}]$, is the multisorted structure with:
    \begin{itemize}
        \item a main sort with base set  $S:=\bigcup_{a\in M} \{a\}\times \sN_a$, 
        \item a sort for the structure $\sM$, 
        \item the natural projection map $v: S\rightarrow \sM$.
    \end{itemize}
     To distinguish symbols in the main sort $S$ from symbols in the ribs $\sN_a$, we denote by $\lL_{\bullet, \mathfrak{N}} := \{P_\bullet\}_{P\in \lL_\mathfrak{N}}$ a copy of $\lL_\mathfrak{N}$.
The set $S$ is equipped with an $\lL_{\bullet,\mathfrak{N}}$-structure: 
    \begin{itemize}
        \item if $P\in \lL_\mathfrak{N}$ is an $n$-ary predicate, then
         \begin{align*}
            P_\bullet^{\sM[\mathfrak{N}]} :=  \left\{\left((a,b_1),\ldots, (a,b_n)\right) \ \vert \ a\in M \ \text{and } \sN_a \models P(b_1,\ldots,b_n)\right\}.
         \end{align*}
         \item if $f\in \lL_\mathfrak{N}$ is an $n$-ary function symbol, then
         \begin{align*}
            f_\bullet^{\sM[\mathfrak{N}]}((a_1,b_1),\ldots, (a_n,b_n)) := \begin{cases} (a_n,f^{\sN_a}(b_1,\cdots,b_n) ) & \text{ if } a=a_1=\cdots=a_n,\\
            u & \text{ otherwise}.
            \end{cases}.
         \end{align*}
        \end{itemize}
    where $u$ is a specific constant which stands for `undetermined'. We can also pick a constant in the language.          
  
  We denote by $\lang{L}_{\sM[\mathfrak{N}]}$ the multisorted language 
  \[
  (S,\lang{L}_{\bullet,\mathfrak{N}}) \cup (M,\lang{L}_{\sM}) \cup \{v:S \rightarrow M\}.
  \]
  If for every $a\in \sM$, $\sN_a$ are isomorphic copies of a structure $\mathcal{N}$, we simply denote the lexicographic sum by $\sM[\sN]$, and we call it the \emph{lexicographic product of $\sM$ and $\sN$}.
\end{definition}

\begin{remark}
    Notice that the projection to the second coordinate is not in the language of the lexicographic product. But, to simplify notation, for all $a\in\sM$ we identify $\sN_a$ with $\{a\}\times\sN_a$ and write $\sN_a\vDash\vphi(c)$ for $c=(a,n)\in\sM[\mathfrak{N}]$ and $\vphi$ is an $\lL_\mathfrak{N}$-formula such that $\sN_a\vDash\vphi(n)$.
\end{remark}

    The sort $\sM$ in $\sM[\mathfrak{N}]$ can come with additional structure: we can indeed define the predicates $P_\vphi^\sM:= \{a\in \sM \ \vert \ \sN_a\models \vphi \}$ for all $\lL_\mathfrak{N}$-sentences $\vphi$. For technical reasons, we do not add these predicates in the language $\lang{L}_{\sM[\mathfrak{N}]}$, but they play an important role in our analysis.

    Any $\rho\in S_1^{\Th(\sM)}$ can be seen as a filter on the set of $\emptyset$-definable (unary) subsets of $\sM$. By \Los, the theory of an ultraproduct $\prod_{\mathcal{U}}\sN_a$ depends only on the type $\rho$ that $\mathcal{U}$ extends. By abuse of notation, we write such an ultraproduct $\sN_\rho$ or $\prod_{\rho}\sN_a$.

    An elementary extension of $\sM[\mathfrak{N}]$ is of the form $\tilde{\sM}[\tilde{\mathfrak{N}}]$ where $\tilde{\sM}$ is an $\lL_{\sM}$-structure and  $\tilde{\mathfrak{N}}$ is a collection of $\lL_{\mathfrak{N}}$-structures $\tilde{\sN}_{\tilde{a}}$ such that:
    \begin{itemize}
        \item $\tilde{\sM}$ is an elementary extension of $ \sM$ in $\lL_\sM \cup \{P_\vphi\}_{\vphi\in\lL_{\mathfrak{N}}}$;
        \item For $a\in \sM$, we have that $\tilde{\sN}_{a}$ is an elementary extension of $\sN_a$;
        \item For $\tilde{a} \in \tilde{\sM}\setminus\sM$, we have that $\tilde{\sN}_{\tilde{a}}$ is elementary equivalent to the ultraproduct $\prod_{\rho}\sN_a$ where $\rho=\tp(\Tilde{a})\in S_1^{\Th(\sM)}$.
    \end{itemize}

\begin{fact}\label{thm:QuantifierEliminationLexSum}
 Consider the lexicographic sum $\sS:=\sM[\mathfrak{N}]$ of a class of $\lL_\mathfrak{N}$-structures $\mathfrak{N}:= \{\sN_a\}_{a\in \sM}$ with respect to an $\lL_\sM$-structure $\sM$. Assume that 
 \begin{enumerate}
     \item For all sentences $\vphi\in \lL_{\mathfrak{N}}$, the set $\{a\in \sM \ \vert \ \sN_a \models \vphi \}$ is $\emptyset$-definable in $\sM$,
     \item For all $\rho\in S_1^{\Th(\sM)}$, $\sN_\rho$ admits quantifier elimination in $\lL_{\mathfrak{N}}$.
 \end{enumerate}
    Then $\sM[\mathfrak{N}]$ eliminates quantifiers relative to $\sM$ in $\lL_{\sM[\mathfrak{N}]}$.   
\end{fact}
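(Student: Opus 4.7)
I would proceed by induction on the complexity of $\lL_{\sM[\mathfrak{N}]}$-formulas, aiming to show that every such formula is equivalent in $\sM[\mathfrak{N}]$ to one with no quantifier over the main sort $S$ (quantifiers over $\sM$ are allowed). Atomic and boolean cases are immediate, so the core task is to eliminate an existential quantifier $\exists y \in S$ from $\exists y\, \psi(\bar x, y)$, where $\psi$ is already quantifier-free in the sort $S$ by the induction hypothesis. After placing $\psi$ in disjunctive normal form and distributing the existential through the disjunction, we may assume $\psi$ is a conjunction of literals. The literals involving $y$ split into three families: equalities or inequalities $y = x_i$, $y \neq x_i$ with $x_i \in S$; $\lL_{\bullet,\mathfrak{N}}$-literals applied to $y$ together with some of the $\bar x$; and $\lL_\sM$-formulas in the term $v(y)$. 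If $y = x_i$ appears as a conjunct, substitution finishes the step.

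Otherwise, I would case-split over the finitely many possibilities for which of the $v(x_i)$'s coincide with $v(y)$. Within each case, any $\lL_{\bullet,\mathfrak{N}}$-literal involving $y$ together with an $x_j$ with $v(x_j) \neq v(y)$ collapses to a tautology or a contradiction, by the very definition of the $P_\bullet$'s and $f_\bullet$'s (recalling the ``undetermined'' value $u$). Thus the body of the existential reduces to the conjunction of an $\lL_\sM$-formula on $v(y)$ and $v(\bar x)$ with a statement of the form ``$\sN_{v(y)} \vDash \theta(y,\bar x_a)$'' for some quantifier-free $\lL_\mathfrak{N}$-formula $\theta$, where $\bar x_a$ enumerates those $x_i$'s lying in the same fiber as $y$. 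Taking the existential, the whole thing becomes the assertion that $\sN_a \vDash \exists y\,\theta(y, \bar x_a)$, with $a := v(y)$.

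The key step is then to express this fiberwise existential as a quantifier-free $\lL_{\sM[\mathfrak{N}]}$-formula. Here I would combine assumptions (1) and (2) with a compactness argument on the Stone space $S_1^{\Th(\sM)}$. By (2), for each $\rho \in S_1^{\Th(\sM)}$ there is a quantifier-free $\lL_\mathfrak{N}$-formula $\xi_\rho(\bar z)$ equivalent to $\exists y\,\theta(y,\bar z)$ in $\sN_\rho$; the witnessing sentence $\sigma_\rho := \forall \bar z\,(\exists y\,\theta(y,\bar z) \leftrightarrow \xi_\rho(\bar z))$ belongs to $\Th(\sN_\rho)$. By (1), the set $D_\rho := \{a \in \sM : \sN_a \vDash \sigma_\rho\}$ is $\emptyset$-definable in $\sM$, and the corresponding clopen subset of $S_1^{\Th(\sM)}$ contains $\rho$ (using Łoś to identify $\Th(\sN_\rho)$ with the theory of $\sN_a$ for $a$ realising $\rho$). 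These sets form an open cover of the compact space $S_1^{\Th(\sM)}$, so finitely many $\rho_1,\dots,\rho_m$ suffice, yielding
\[
    \sN_a \vDash \exists y\, \theta(y, \bar x_a) \iff \bigvee_{i=1}^{m} \bigl(a \in D_{\rho_i} \,\land\, \xi_{\rho_i}(\bar x_a)\bigr),
\]
whose right-hand side is quantifier-free in $\lL_{\sM[\mathfrak{N}]}$: each $a \in D_{\rho_i}$ is cut out by an $\lL_\sM$-formula in $v(y)$, and each $\xi_{\rho_i}(\bar x_a)$ lifts to a quantifier-free $\lL_{\bullet,\mathfrak{N}}$-formula on $\bar x_a$. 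Reassembling the case split and the outer disjunction completes the induction.

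\textbf{Main obstacle.} The heart of the proof is this final synthesis, turning ``one quantifier in the fiber'' into ``finitely many $\lL_\sM$-definable cases'', uniformly across all fibers. Some additional care is needed to handle the interplay between original fibers $\sN_a$ (for $a \in \sM$) and the ultraproduct fibers $\sN_\rho \equiv \sN_{\tilde a}$ in elementary extensions, but this coherence is precisely what assumption (1) guarantees via Łoś's theorem.
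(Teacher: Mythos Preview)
The paper does not give its own proof of this fact; it simply refers the reader to \cite[Theorem~2.7]{Mei16}. Your proposal is a correct and self-contained direct argument, following the standard template for relative quantifier elimination: induction on formulas, reduction to a single $\exists y\in S$ over a conjunction of literals, case-split on the pattern of coincidences $v(y)=v(x_i)$, and then the key uniformisation step.

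Your handling of that key step is right. The point is that assumption~(1) makes each set $D_{\rho}=\{a\in\sM:\sN_a\models\sigma_\rho\}$ a $\emptyset$-definable subset of $\sM$, hence a clopen in $S_1^{\Th(\sM)}$; since $\sigma_\rho\in\Th(\sN_\rho)$ and any ultrafilter $\mathcal U\supseteq\rho$ gives $\sN_\rho\equiv\prod_{\mathcal U}\sN_a$, \L o\'s forces $D_\rho\in\rho$, so the cover really is a cover and compactness applies. Two small points worth making explicit when you write this up: (i) in the case where $v(y)$ differs from every $v(x_i)$, the residual existential $\exists a\in M$ is over the $\sM$-sort and is therefore permitted in a formula ``quantifier-free relative to $\sM$''; (ii) with function symbols $f_\bullet$ in $\lL_{\mathfrak N}$, atomic formulas may involve nested terms, so one should first flatten these (introducing auxiliary equalities) so that the ``different-fibre $\Rightarrow$ value $u$'' reduction goes through cleanly. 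Neither affects the correctness of your outline.
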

For a proof, the reader can refer to \cite[Theorem~2.7]{Mei16}.

\begin{definition}
    Let $\cla{C}_1$ and $\cla{C}_2$ be two classes of finite structures. We denote by $\cC_1[\cC_2]$ the class of lexicographic sums $C[(D_c)_{c\in C}]$ where $C\in \cC_1$ and $D_c \in \cC_2$ for every $c\in C$.
\end{definition}

\subsection{Dividing lines in model theory}

Throughout this section, $T$ will always denote a complete $\lL$-theory and $\mM\vDash T$ a $\kappa(\Mbb)$-saturated and $\kappa(\Mbb)$-homogeneous (monster) model of $T$, for some very large cardinal $\kappa(\Mbb)$. Unless otherwise stated all (tuples of) elements and subsets will come from this monster model and will be \emph{small}, i.e. of size less than $\kappa(\Mbb)$.

\subsubsection{NIP and higher-arity generalisations}

In this section, we briefly recall some basic definitions from neostability theory, in particular, NIP and its higher-arity generalisations. The notion of NIP, originating in the work of Shelah, has been studied intensively in the past years, and the definition is standard, but we include it here for the sake of keeping this paper as self-contained as possible. For more information on NIP, we direct the reader to \cite{Sim15}.

\begin{definition}[Independence Property, NIP]
    We say that a formula $\vphi(\bar x;\bar y)$ has the \emph{independence property} in $T$ if there exist $(\bar a_i)_{i\in \Nbb}$ and $(\bar b_I)_{I\subseteq\Nbb}$ such that:
    \[
        \vDash \vphi(\bar b_{I},\bar a_{i}) \text{ if, and only if, } i\in I.
    \]
    We say that $T$ is \emph{dependent} or \emph{NIP} (No Independence Property) if no formula has the independence property in $T$.
\end{definition}

In this text, we will consider a ``higher-arity'' generalisation of NIP, also due to Shelah in \cite{She14}, and later studied in more detail in \cite{CPT19}. This will be one of our main examples of dividing lines arising from generalised indiscernibles. 

\begin{definition}[$n$-Independence Property, NIP$_n$]
    We say that a formula $\vphi(\bar x;\bar y_1,\dots,\bar y_{n})$ has the \emph{$n$-Independence Property} (in $T$) if there exist $(\bar a_i^1\frown\dots\frown\bar a_{i}^n )_{i\in \Nbb}$ and $(\bar b_I)_{I\subseteq\Nbb^n}$ such that:
    \[
        \vDash \vphi(\bar b_{I};\bar a_{i_1}^1,\dots,\bar a_{i_n}^n) \text{ if, and only if, } (i_1,\dots,i_n))\in I.
    \]
    We say that $T$ is \emph{$n$-dependent} or \emph{NIP$_n$} (No $n$-Independence Property) if no formula has the $n$-independence property in $T$.
\end{definition}

\begin{remark}
    It is easy to observe that NIP$_1$ corresponds precisely to NIP. Moreover, for all $n\in\Nbb$ we have that if $T$ is NIP$_{k}$ then it is NIP$_{k+1}$, and all these implications are strict, witnessed by the random $k$-hypergraph, which is NIP$_{k+1}$, but not NIP$_k$. 
\end{remark}

In the next subsections, we will also recall the definitions of various classes of theories contained in NIP and NIP$_n$.

\subsubsection{Distality and \texorpdfstring{$n$}{n}-Distality}

Distality was introduced by Simon in \cite{SIMON2013}. In a certain sense, the notion of distality is meant to capture the ``purely unstable'' NIP structures. A theory is distal only if it admits no infinite stable quotient. All $o$-minimal structures are known to be distal, but there are a lot more natural examples of distal structures. There is prolific literature on distality and its applications, and many equivalent definitions of distality are used. A concise one that will be used in our analysis is the following:

\begin{definition} \label{Definition: explicit witness of non-distality}
    The theory $T$ is said \emph{distal} if for every indiscernible sequence $(a_{i})_{i\in\mathbb{Q}}$ in $\mathbb{M}$ and every tuple $b\in \mathbb{M}^{\vert b \vert}$ such that~$(a_{i})_{i\in \mathbb{Q}\setminus \{ 0 \}}$ is indiscernible over $b$ we have that $(a_{i})_{i\in \mathbb{Q}}$ is indiscernible over $b$.
\end{definition}

For other equivalent definitions, and more details on distality, we direct the reader to the work of Aschenbrenner, Chernikov, Gehret, and Ziegler \cite{ACGZ22}. 

\begin{example}\label{ex:distal} We give below some basic examples of distal and non-distal theories:
    \begin{itemize}
        \item Total linear orders are distal.
        \item Meet-trees $(T,\leq \wedge)$ are not distal, in general. For instance, the complete theory of a dense meet-tree is not distal: let $r\in T$ and $(a_i)_{i\in \mathbb{Q}}$ are elements such that $a_i \wedge a_j = r$ for every $i\neq j$. Then $(a_i)_i$ is totally indiscernible (over $r$).  
    \end{itemize}
\end{example}

We will also consider a recent generalisation of distality, called \emph{$n$-distality}, introduced by Walker in \cite{Wal23}. First, we introduce some terminology.

Let $\sI$ be an indiscernible sequence $(b_i: i \in I) \subseteq U$ indexed by an infinite linear order $(I, <)$. Suppose $\sI_0 + \cdots + \sI_n$ is a partition of $\sI$ corresponding to a Dedekind partition $I_0+ \cdots +I_n$ of $I$. Let $A$ be a sequence $(a_0,\ldots,a_{n-1}) \subseteq U$. Assume $\lvert b_i \lvert =\lvert a_j\lvert $ for all $i\in I$ and $j<n$. 

We say that $A$ \emph{inserts (indiscernibly) into} $\sI_0+\cdots+\sI_n$ if the sequence remains indiscernible after inserting each $a_i$ at the $i^{\text{th}}$-cut, i.e., the sequence 
    \[
        \sI_0 + a_0 + \sI_1 + a_1 + \cdots + \sI_{n-1}+ a_{n-1}+ \sI_n
    \]
is indiscernible. Moreover, for any $A' \subseteq A$, we say that $A'$ \emph{inserts (indiscernibly) into} $\sI_0+\cdots+\sI_n$ if the sequence remains indiscernible after inserting each $a_i\in A'$ at the $i^{\text{th}}$-cut. For simplicity, we may say that \emph{$A$ (or $A'$) inserts into $\sI$} when the partition of $\sI$ under consideration is clear.

\begin{definition}
    For $m>0$ and an indiscernible sequence $\sI$, we say that the Dedekind partition $\sI_0+\cdots +\sI_{m+1}$ is \emph{$m$-distal} if every sequence $A=(a_0, \ldots, a_{m}) \subseteq U$ which does not insert into $\sI$ contains some $m$-element subsequence which does not insert into $\sI$. A theory is \emph{$m$-distal} if all Dedekind partitions of indiscernible sequences in the monster model are $m$-distal.  
\end{definition}

One can see that a theory is $1$-distal if, and only if, it is distal. Moreover, as shown in \cite{Wal23}, every $n$-distal theory is NIP$_n$.

\subsubsection{Monadic NIP}\label{Subsubsec:Monadic NIP} 
Another differently flavoured strengthening of NIP is the notion of \emph{monadic NIP}. Monadic NIP was introduced by Baldwin and Shelah in \cite{BaldwinShelah1985}, but the study of monadically NIP theories has seen a resurgence in recent years, both from the point of view of pure model theory (see, for instance, \cite{BL21}), and from the point of view of theoretical computer science (see, for instance, \cite{BGOdMSTT2022}).

\begin{definition}[Monadic NIP]
    We say that $T$ is \emph{monadically NIP} if, for every $\sM\vDash T$ and every expansion $\sN$ of $\sM$ by unary predicates, we have that $\sN$ is NIP. 
\end{definition}

It is not immediately clear from the definition above that if $\sM$ is such that every expansion of $\sM$ by unary predicates is NIP, then $\Th(\sM)$ is monadically NIP. This follows from \Cref{thm:characterisations-of-monadic-NIP}, below. Before we state that theorem, we need to introduce some further model-theoretic background.

\begin{definition}[Indiscernible-triviality {\cite[Definition~3.8]{BL21}}]
We say that $T$ has \emph{indiscernible-triviality} for any order-indiscernible sequence $(a_i:i\in \omega)$, and every set $B$ of parameters, if $(a_i:i\in \omega)$ is indiscernible over each $b \in B$ then $(a_i:i\in \omega)$ is indiscernible over $B$.
\end{definition}

The following definition originating from \cite{She14} is a strengthening of NIP:

\begin{definition}[dp-rank, dp-minimality]\label{def:dp-rank}
Fix $n\in\omega$. An \emph{ICT-pattern of depth $n$} consists of a sequence of formulas $(\vphi_i(\bar x,\bar y):i\leq n)$ and an array of parameters $(\bar a_{i}^j:i\in\omega,j\leq n)$ such that for all $\eta:[n]\rightarrow \omega$ we have that:
\[
    \left\{\vphi\left(\bar x,\bar a^j_{\eta(j)}\right):j\leq n\right\}\cup\left\{\lnot\vphi\left(\bar x,\bar a_i^j\right):j\leq n, i\neq\eta(j)\right\}
\]
is consistent. We say that $T$ has \emph{dp-rank $n$} if there is an ICT-pattern of depth $n$, but no ICT pattern of depth $n+1$ (in the monster model of $T$). We say that $T$ is \emph{dp-minimal} if it has dp-rank $1$.
\end{definition}

These notions have been intensively studied see e.g. \cite{CS19}, \cite{KOU13}. The reader will find in the literature various alternative ways of defining dp-minimality. For instance, in \cite{OU_2011}, it is shown that $T$ is dp-minimal if and only if it is \emph{inp-minimal} and NIP. We now can state one of the central theorems of \cite{BL21}, which includes a ``Shelah-style forbidden-configuration'' characterisation of monadic NIP.

\begin{fact}[{\cite[Theorem 4.1]{BL21}}]\label{thm:characterisations-of-monadic-NIP}
Let $T$ be a first-order theory. Then, the following are equivalent:
\begin{enumerate}
    \item $T$ is monadically NIP.
    \item \label{coding-configuration} 
    For all $\sM\vDash T$, we cannot find a  $\lL$-formula $\vphi(\bar{x},\bar{y},z)$, sequences of tuples $(\bar a_i:i\in\omega),(\bar b_j:j\in\omega)$, and a sequence of singletons $(c_{k,l}:k,l\in\omega)$, from $\sM$ such that:
    \[
        \sM\vDash\vphi(\bar a_i,\bar b_j,c_{k,l})\text{ if, and only if, } (i,j) = (k,l),
    \]
    \item \label{it+dp} $T$ is dp-minimal and has indiscernible-triviality.
\end{enumerate}
\end{fact}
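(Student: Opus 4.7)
The plan is to prove the three implications $(1) \Rightarrow (2) \Rightarrow (3) \Rightarrow (1)$, with the first being straightforward and the last being the main obstacle.

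For $(1) \Rightarrow (2)$, I would prove the contrapositive. Suppose we have $\vphi(\bar x, \bar y, z)$ together with sequences $(\bar a_i)$, $(\bar b_j)$, $(c_{k,l})$ witnessing the coding configuration. Expand $\sM$ by a single unary predicate $P$, interpreted as $\{c_{k,l} : (k,l) \in S\}$ for an arbitrary $S \subseteq \omega\times\omega$. Then the formula $\psi(\bar x, \bar y) := \exists z\,(P(z) \wedge \vphi(\bar x, \bar y, z))$ satisfies $\psi(\bar a_i, \bar b_j) \Leftrightarrow (i,j) \in S$. Enumerating $\omega^2$ as $\omega$ and varying $S$, one extracts the independence property, so $T$ is not monadically NIP.

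For $(2) \Rightarrow (3)$, I would again argue contrapositively by showing that failure of either $\dprk(T) \leq 1$ or indiscernible-triviality yields a coding configuration. If $\dprk(T) \geq 2$, take an ICT-pattern of depth $2$ with formulas $\vphi_0(\bar x; \bar y_0)$, $\vphi_1(\bar x; \bar y_1)$ and arrays $(\bar a_i^0)_i$, $(\bar a_i^1)_i$. After extracting mutually indiscernible sequences (using Ramsey and compactness, and the indexing of suitable Fraïssé limits as in \Cref{thm:Ramsey class collapse-intro}), one arranges that the realiser $\bar c_{k,l}$ of the $\{\vphi_0(\bar x, \bar a^0_k), \vphi_1(\bar x, \bar a^1_l)\} \cup \{\neg\vphi_0(\bar x, \bar a^0_i) : i\neq k\} \cup \{\neg \vphi_1(\bar x, \bar a^1_j) : j \neq l\}$ can be uniformly chosen; the formula $\vphi_0(z, \bar y) \wedge \vphi_1(z, \bar y')$ then witnesses the coding configuration in \labelcref{coding-configuration}. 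If indiscernible-triviality fails, take an indiscernible $(a_i)$, a set $B$ with $(a_i)$ indiscernible over each $b\in B$ but not over $B$; pulling the failure down to a finite tuple from $B$ gives a formula $\vphi$ and indices on which the failure is visible, and rearranging this as an array provides the required coding.

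For $(3) \Rightarrow (1)$, which I expect to be the hard step, suppose for contradiction that some monadic expansion $\sN$ of a model $\sM \vDash T$ has the independence property via $\psi(\bar x, \bar y)$. Extract in $\sN$ an $\lL$-indiscernible sequence $(\bar d_i)_{i \in \Qbb}$ witnessing IP for the expanded formula, together with the colouring by unary predicates on each $\bar d_i$. The plan is to use dp-minimality in $T$ to argue that, when the unary colouring is forgotten, the sequence $(\bar d_i)$ has a very rigid one-dimensional behaviour (this is where dp-rank $1$ is essential: roughly, each coordinate of $\bar d_i$ can only ``disperse'' along one cut as $i$ varies). Then apply indiscernible-triviality in $T$ to the witnessing parameter $\bar b$ from the IP configuration: since $(\bar d_i)$ is $T$-indiscernible over each coordinate of $\bar b$, it is indiscernible over $\bar b$, so the IP has to come purely from the unary colouring. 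A pigeonhole on the colouring classes, combined with the dp-minimal structure, forces a contradiction with NIP of $T$ itself.

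The main obstacle is the last implication, specifically the combinatorial argument showing that dp-minimality prevents the unary colourings from ``interacting nontrivially'' with the $T$-type of the indiscernible sequence. This is essentially the content of the Braunfeld--Laskowski analysis, and its key insight is that dp-minimality forces any IP-witnessing array to already be encoded, up to unary colouring, by a one-dimensional piece of the sequence, which indiscernible-triviality then collapses.
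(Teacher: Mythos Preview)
The paper does not give a proof of this statement: it is stated as a \emph{Fact} and attributed to \cite[Theorem~4.1]{BL21}, with no argument supplied. So there is no ``paper's own proof'' to compare against; any comparison has to be to the Braunfeld--Laskowski argument itself.

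That said, your sketch has a genuine gap. In condition~\labelcref{coding-configuration} the variable $z$ is a \emph{singleton}, and the elements $c_{k,l}$ are explicitly required to be singletons. Your argument for ``not dp-minimal $\Rightarrow$ coding configuration'' produces witnesses $\bar c_{k,l}$ that are \emph{tuples} realising the ICT-pattern, and you have not explained how to reduce to a single coordinate. This is not a triviality: the whole point of condition~\labelcref{coding-configuration} (and the reason it is strictly stronger than, say, having IP or dp-rank $\geq 2$) is precisely that the third slot is unary. In the actual Braunfeld--Laskowski proof this reduction is obtained indirectly, via an intermediate characterisation (the ``f.s.\ dichotomy'' / endless indiscernible decompositions), not by a direct manipulation of an ICT-pattern. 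Similarly, your treatment of ``not indiscernible-trivial $\Rightarrow$ coding configuration'' is only a gesture (``rearranging this as an array provides the required coding''); it is not at all clear how a failure of indiscernible-triviality over a finite set $B$ yields an $\omega\times\omega$ array coded by singletons.

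Your $(1)\Rightarrow(2)$ is fine and standard. Your $(3)\Rightarrow(1)$ is honestly labelled as the main obstacle, but the plan you describe is vague at the crucial step (why dp-minimality forces the unary colouring to be ``one-dimensional'' along the sequence). The aside invoking \Cref{thm:Ramsey class collapse-intro} is out of place: that result concerns coding Ramsey classes and has nothing to do with extracting mutually indiscernible sequences for ICT-patterns.
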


Suppose that $\sM$ is a sufficiently saturated structure with IP. Then it is a well-known fact that we can find a formula $\vphi(x,\bar y)$, where $|x| = 1$, that witnesses IP in $\sM$. If we are allowed to use unary predicates (or to add parameters \cite{Simon2021}), then the following result, from \cite[Lemma~$8.1.3$]{BaldwinShelah1985} shows that we can achieve more:

\begin{fact}\label{thm:baldwin-shelah}
        If $\sM$ is a sufficiently saturated structure with IP, then there is a monadic expansion $\sN$ of $\sM$ and a formula $\vphi(x,y)$ with $|x|=|y|=1$ which witnesses that $\sN$ has IP.
\end{fact}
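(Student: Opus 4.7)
I would prove this via a two-stage reduction of the arities in the IP-witnessing formula.

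\textbf{First stage: reducing the $\bar y$-side to a singleton (no monadic expansion needed).} Suppose $\psi(\bar x; \bar y)$ witnesses IP in $\sM$. A classical argument (see e.g. \cite{Sim15}) shows that one of the formulas obtained by fixing all but one of the $y$-variables to specific parameters still has IP; iterating, we obtain a formula $\vphi(\bar x; y)$ with $|y|=1$ witnessing IP, with singleton witnesses $(a_i)_{i\in\omega}$ and tuples $(\bar b_I)_{I\subseteq\omega}$, where $\bar b_I = (b_I^1, \ldots, b_I^n)$. This reduction uses only saturation and does not require any expansion of the language.

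\textbf{Second stage: reducing the $\bar x$-side via a monadic expansion.} The plan is to encode each tuple $\bar b_I$ by its first coordinate, using unary predicates. Using saturation and a Ramsey-type extraction of indiscernibles, one passes to a subfamily $\mathcal F \subseteq \mathcal P(\omega)$ of cardinality $2^{\aleph_0}$ such that the first coordinates $(b_I^1)_{I\in\mathcal F}$ are pairwise distinct (and disjoint from the $a_i$'s). We then expand $\sM$ to $\sN$ by adding unary predicates $Q_k := \{b_I^k : I \in \mathcal F\}$ for each $k = 2, \ldots, n$, and define
\[
\vphi'(x; y) := \exists x_2 \ldots x_n \left(\bigwedge_{k=2}^n Q_k(x_k) \wedge \vphi(x, x_2, \ldots, x_n; y)\right).
\]
By construction, $\vphi'(b_I^1; a_i)$ holds whenever $i \in I$, by witnessing the existential quantifier with $x_k = b_I^k$.

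\textbf{The main obstacle} is to show the converse: that $\vphi'(b_I^1; a_i)$ implies $i \in I$. A priori, the existential quantifier in $\vphi'$ could be satisfied by a ``mixed'' tuple $(b_I^1, b_{J_2}^2, \ldots, b_{J_n}^n)$ with some $J_k \neq I$, yielding spurious instances of $\vphi'$ beyond those of $\vphi$. Overcoming this requires either a careful choice of the subfamily $\mathcal F$, together with additional auxiliary unary predicates in $\sN$, that make the decoding $b_I^1 \mapsto \bar b_I$ unique modulo the truth value of $\vphi$, or an inductive procedure reducing the arity one coordinate at a time through a sequence of monadic expansions. The Baldwin--Shelah argument \cite[Lemma~8.1.3]{BaldwinShelah1985} proceeds by such an induction on $n$, and this inductive step, using saturation to force uniqueness of the decoding at each stage, is the technical heart of the proof.
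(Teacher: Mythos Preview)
The paper does not give its own proof of this statement: it is recorded as a \emph{Fact} and attributed directly to \cite[Lemma~8.1.3]{BaldwinShelah1985}, with no argument supplied. So there is no ``paper's proof'' to compare against; any comparison must be with the original Baldwin--Shelah argument you yourself cite.

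Your outline is structurally sound---the reduction of the $\bar y$-side to a singleton is standard, and the idea of naming the coordinate sets by unary predicates is exactly the right move---but as you candidly acknowledge, the crux is the ``mixed tuple'' obstacle in the second stage, and you do not resolve it. You describe what a resolution would have to accomplish (unique decoding, or an induction on $n$) and then point back to \cite{BaldwinShelah1985} for the actual execution. That makes your proposal a roadmap rather than a proof: the one nontrivial step is precisely the one you defer to the reference. If you want a self-contained argument, you should carry out the induction on $|\bar x|$ explicitly: at each step, use saturation to find, among the continuum-many $\bar b_I$, a subfamily whose first coordinates are pairwise distinct, and add enough unary predicates so that the existential witness in your $\vphi'$ is forced (up to the truth value of $\vphi$) to be the intended one. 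Without that step written out, the proposal is incomplete in exactly the place where the real work lies.
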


In particular, the following corollary follows immediately from the \namecref{thm:baldwin-shelah} above and the fact that monadic expansions of monadic expansions are monadic expansions:

\begin{corollary}
    If $T$ is not monadically NIP, then there is a monadic expansion of some $\sM\vDash T$ which has IP witnessed by a formula $\vphi(x,y)$ with $|x|=|y|=1$.
\end{corollary}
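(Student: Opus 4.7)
The plan is to combine \Cref{thm:baldwin-shelah} with the observation that monadic expansions compose: if $\sN$ is obtained from $\sM$ by adding unary predicates and $\sN'$ is obtained from $\sN$ by adding further unary predicates, then $\sN'$ is itself a monadic expansion of $\sM$. This observation is the ``transitivity'' property referred to in the parenthetical before the \namecref{thm:baldwin-shelah}, and it is what allows the two-step argument to collapse into the desired single statement.

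More precisely, suppose $T$ is not monadically NIP. Then, by definition, there exists some $\sM_0 \vDash T$ admitting a monadic expansion $\sM_0'$ which has IP. First, I would pass to a sufficiently saturated elementary extension: take $\sM'$ to be a $\kappa$-saturated elementary extension of $\sM_0'$ (for some suitably large $\kappa$), and let $\sM := \sM' \restriction \lL$ be its $\lL$-reduct. Since elementary extensions preserve $T$, we have $\sM \vDash T$; and since $\sM'$ was obtained from $\sM_0'$ by adding unary predicates, $\sM'$ is a monadic expansion of $\sM$. Moreover, IP is preserved under elementary extension, so $\sM'$ still has IP and is sufficiently saturated for the next step.

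Now I apply \Cref{thm:baldwin-shelah} to $\sM'$: there exists a monadic expansion $\sN$ of $\sM'$ and a formula $\vphi(x,y)$ with $|x|=|y|=1$ witnessing that $\sN$ has IP. By the composition observation, $\sN$ is a monadic expansion of $\sM$ as well (the full collection of additional unary predicates being those used to pass from $\sM$ to $\sM'$ together with those used to pass from $\sM'$ to $\sN$). Since $\sM \vDash T$, this produces the required $\sM \vDash T$ and monadic expansion $\sN$ of $\sM$ with IP witnessed by a formula $\vphi(x,y)$ where $|x|=|y|=1$.

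There is no serious obstacle: the content is entirely in \Cref{thm:baldwin-shelah}, and the corollary only packages it slightly. The only mild subtlety worth being careful about is that \Cref{thm:baldwin-shelah} requires sufficient saturation of the starting structure, which is why I pass to a saturated elementary extension first and then note that this extension is still (as an $\lL$-structure) a model of $T$ and still a monadic expansion target.
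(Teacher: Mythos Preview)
Your proposal is correct and follows essentially the same route as the paper: the paper simply states that the corollary is immediate from \Cref{thm:baldwin-shelah} together with the fact that monadic expansions of monadic expansions are monadic expansions, and you have unpacked exactly this, with appropriate care about the saturation hypothesis. One small wording slip: you write ``since $\sM'$ was obtained from $\sM_0'$ by adding unary predicates,'' but $\sM'$ is an elementary \emph{extension} of $\sM_0'$ in the same language; the correct reason $\sM'$ is a monadic expansion of $\sM$ is that $\sM'$ lives in the language $\lL$ together with the unary predicates already present in $\sM_0'$, and $\sM$ is by definition its $\lL$-reduct.
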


\begin{example}\label{ExampleMonadicNIP} We conclude this section by revisiting \Cref{ex:distal}, to give some examples that one can keep in mind when discussing monadically NIP structures.
    \begin{itemize}
        \item Linear orders are monadically NIP.
        \item Meet-trees $(T,\leq \wedge)$ are monadically NIP.  This follows from \cite[Proposition~$4.7$]{Sim11}, where it is shown that coloured meet-trees (that is, monadic expansion of meet-trees) are dp-minimal, so in particular NIP. 
    \end{itemize}
\end{example}

\section{Generalised Indiscernibles and Coding Configurations}\label{sec:Generalised Indiscernibles and Coding Configurations}
\subsection{Generalised Indiscernibility}\label{sub:generalised-indiscernibles}
    
\emph{Generalised indiscernible sequences} were introduced by Shelah \cite[Section VII.2]{She90}, as a tool for studying the tree property. These notions have proven an important tool in classification theory and the study of tree properties (see, for instance, \cite{KK2011} and \cite{KKS_2013}). More recently, generalised indiscernibles have been used by Guigona, Hill, and Scow \cite{GHS17} as a means for producing new dividing lines. As the name suggests, these generalise the classical notion of order-indiscernible sequences. We recall the definition:
    
\begin{definition}[Generalised indiscernibles]\label{def:generalised-indiscernibles}
    Let $\lL^\prime$ be a first-order language and $\sI$ an $\lL^\prime$-structure. Given an $\sI$-indexed sequence of tuples $ (\bar a_i)_{i\in\sI}$ from the monster model of $T$, and a small subset $A$ of the monster, we say that $(\bar a_i)_{i}$ is an \emph{$\sI$-indiscernible sequence\footnote{In the literature, what we refer to as an $\sI$-indiscernible sequence is often called an \emph{$\sI$-indexed indiscernible sequence}.} over $A$} if for all positive integer $n$ and all sequences $i_1,\dots,i_n,j_1,\dots,j_n$ from $\sI$ we have that if $\qftp_\sI^{\lL^\prime}(i_1,\dots,i_n) = \qftp_\sI^{\lL^\prime}(i_j,\dots,j_n)$ then $\tp(\bar a_{i_1},\dots,\bar a_{i_n}/A) =  \tp(\bar a_{j_1},\dots,\bar a_{j_n}/A).$ If $A=\emptyset$, we say that $(\bar a_i)_{i}$ is simply an \emph{$\sI$-indiscernible sequence}.
\end{definition} 

Of course, not all structures are created equal. Some structures, as it turns out are more suitable for indexing generalised indiscernible sequences than others. Indeed, a key property of (ordered) sequences is that their EM-type can always be realised by an order-indiscernible sequence. This fact is sometimes referred to as the \emph{``Standard Lemma''} (see \cite[Lemma~5.1.3]{TZ12}). Its proof shows a very tight connection between order-indiscernibles and \emph{Ramsey's theorem}. This connection carries through to generalised indiscernibles, this time with structural Ramsey theory (in the sense of \Cref{sub:standard-structs}). 

 The following definition captures the essence of the standard lemma idea for generalised indiscernibles: 
\begin{definition}[The Modelling Property]\label{def:mp}
    Let $\lL^\prime$ be a first-order language, $\sI$ an $\lL^\prime$-structure, and $ (\bar a_i)_{i\in\sI}$ be an $\sI$-indexed sequence of tuples.  Given an $\sI$-indexed sequence of tuples $(\bar b_i)_{i\in\sI}$ from the monster, we say that $(\bar b_i)_{i}$ is \emph{(locally) based on} $(\bar a_i)_{i}$ if for all finite sets of $\lL$-formulas $\Delta\subseteq\lL$, all $n\in\mathbb{N}$ and all $i_1,\dots,i_n$ from $\sI$ there is some $j_1,\dots,j_n$ from $\sI$ such that $\qftp_\sI^{\lL^\prime}(i_1,\dots,i_n) = \qftp_\sI^{\lL^\prime}(j_1,\dots,j_n)$ and $\tp_\Delta(\bar b_{i_1},\dots,\bar b_{i_{n}}) = \tp_\Delta(\bar a_{j_1},\dots,\bar a_{j_{n}})$.

    We say that $\sI$ has the \emph{modelling property in $T$} if for each $\sI$-indexed sequence $ (\bar a_i)_{i\in\sI}$ of tuples from the monster model of $T$ there exists an $\sI$-indiscernible sequence $ (\bar b_i)_{i\in\sI}$ based on $ (\bar a_i)_{i\in\sI}$. We say that $\sI$ has the \emph{modelling property} if, for all first-order theories $T$, $\sI$ has the modelling property in $T$.

\end{definition}

For a more detailed discussion of these concepts, the reader can refer to the work of the first two authors \cite{MP22}. We recall a  result of that paper which generalises a theorem of Scow, and one of it's consequences:

\begin{fact}[{\cite[Theorem A]{MP22}}]\label{rc mp}
    Let $\lL'$ be a first-order language, $\cC$ a class of finite $\lL'$-structures, and $\sI$ an infinite locally finite $\lL'$-structure such that $\age(\sI) = \cC$. Then, the following are equivalent:
    \begin{enumerate}
        \item $\cC$ has the Ramsey property.
        \item $\sI$ has the modelling property.
    \end{enumerate}
\end{fact}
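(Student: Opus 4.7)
The plan is to prove the two directions separately, both through standard compactness arguments exploiting the tight link between Ramsey phenomena and indiscernibility.

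For the direction $(1) \Rightarrow (2)$, given any $\sI$-indexed sequence $(\bar a_i)_{i\in\sI}$ in the monster of a theory $T$, I would form the partial type $\Sigma((\bar y_i)_{i\in\sI})$ over $\emptyset$ expressing simultaneously that $(\bar y_i)_{i\in\sI}$ is $\sI$-indiscernible and locally based on $(\bar a_i)_{i\in\sI}$. Compactness reduces the existence of a realization to finite satisfiability; each finite fragment of $\Sigma$ mentions only finitely many $\lL$-formulas $\Delta$ and finitely many indices from some finite $\sI_0 \in \cC$. The finitary task becomes this: for every qf-type $p$ of $n$-tuples appearing in $\sI_0$, find one embedding $f:\sI_0 \hookrightarrow \sI$ making the $\Delta$-type under $(\bar a_i)_i$ of each $n$-tuple of qf-type $p$ in $f(\sI_0)$ the same. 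Colouring the embeddings of the $\lL'$-structure enumerated by $p$ into $\sI$ by their $\Delta$-type under $(\bar a_i)_i$ yields a finite colouring; the Ramsey property furnishes $C\in\cC$ with $C\to(\sI_0)^A_k$, where $A$ is the structure enumerated by $p$ and $k$ bounds the colour count. Because $\age(\sI)=\cC$ and $\sI$ is locally finite, $C$ embeds in $\sI$; restricting the colouring yields a monochromatic copy of $\sI_0$ inside $\sI$. Iterating (product Ramsey-style) over the finitely many qf-types of $n$-tuples in $\sI_0$ gives the required embedding, and the compactness argument goes through.

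For the direction $(2) \Rightarrow (1)$, I would argue contrapositively. Suppose the Ramsey property fails, witnessed by $A\subseteq B$ in $\cC$: for each $C\in\cC$ there is a 2-colouring $\chi_C:\binom{C}{A}\to 2$ admitting no monochromatic copy of $B$. A standard compactness argument, leveraging the local finiteness of $\sI$ to write it as a directed union of finite substructures, produces a single symmetric $2$-colouring $\chi:\binom{\sI}{A}\to 2$ without monochromatic copy of $B$ in $\sI$. Let $\lL=\{R\}$ with $R$ a symmetric $|A|$-ary relation, let $\sM$ be the $\lL$-structure with underlying set $\sI$ and $R$ encoding $\chi$, and set $T=\Th(\sM)$. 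Take the $\sI$-indexed sequence of singletons $(\bar a_i)_{i\in\sI}=(i)_{i\in\sI}$. If the modelling property were to hold there would exist an $\sI$-indiscernible sequence $(\bar b_i)_{i\in\sI}$ locally based on $(\bar a_i)_i$. Indiscernibility forces the truth value of $R(\bar b_{\bar i})$ to depend only on $\qftp_\sI(\bar i)$; applying the ``based on'' clause with $\Delta=\{R(\bar x)\}$ to the qf-type of an enumeration of $B$ then yields a copy of $B$ inside $\sI$ on which all $A$-subtuples receive the same $\chi$-value, contradicting the choice of $\chi$.

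The main obstacle I expect is the compactness step in $(2)\Rightarrow(1)$ producing the global colouring $\chi$ on $\sI$ from the local data $(\chi_C)_{C\in\cC}$: the argument exploits local finiteness crucially to close the property ``no monochromatic copy of $B$'' under directed limits of finite substructures, e.g. via an ultraproduct or a compactness-type gluing along the directed system of finite subsets of $\sI$. A secondary subtlety is that $\sI$ is not assumed ultrahomogeneous, so in both directions one works only with embeddings of finite substructures justified by $\age(\sI)=\cC$, not with automorphisms of $\sI$; this affects exactly how the Ramsey arrow is invoked in $(1)\Rightarrow(2)$.
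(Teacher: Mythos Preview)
The paper does not prove this statement: it is recorded as a \emph{Fact} and attributed to \cite[Theorem~A]{MP22}, with no argument given in the present paper. So there is nothing here to compare your proposal against.

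That said, your outline is the standard argument and is essentially correct. A couple of small points worth tightening. In $(1)\Rightarrow(2)$, your ``iterating (product Ramsey-style)'' step deserves one more sentence: after homogenising for one qf-type $p$ inside a copy of $\sI_0$, you must re-embed and homogenise for the next qf-type \emph{while preserving the colour constancy already achieved}; this is exactly where the Ramsey property is applied repeatedly to successively larger $B$'s. In $(2)\Rightarrow(1)$, your compactness step for the global bad colouring is fine once phrased as: the space $2^{\binom{\sI}{A}}$ is compact, and for each finite substructure $D\subseteq\sI$ the set of colourings whose restriction to $D$ admits no monochromatic $B$ is closed and nonempty (local finiteness gives $D\in\cC$, whence $\chi_D$ exists); these closed sets have the finite intersection property since finite unions of finite substructures are again finite substructures. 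Finally, when you encode $\chi$ by a relation $R$, note that $\chi$ colours \emph{embeddings} of $A$, not unordered $|A|$-subsets, so $R$ should be an $|A|$-ary relation on ordered tuples recording $\chi$ on those tuples that enumerate a copy of $A$; this is harmless but should be said.
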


Observe that if $\sI$ and $\sJ$ are two structures such that $\sJ$ is a quantifier-free reduct of $\sI$, then, automatically, any $\sJ$-indiscernible sequence will be an $\sI$-indiscernible sequence. Of course, the converse need not always hold. For instance, in an arbitrary theory $T$, not every order-indiscernible sequence is totally indiscernible. On the other hand, if $T$ is stable, then every order-indiscernible sequence is totally indiscernible, and this implication in fact characterises stability \cite[Theorem~II.2.13]{She90}. As we will discuss in detail later in the paper, this sort of phenomenon, which is made precise in the following definition, can be used as an alternative definition for several dividing lines.

\begin{definition}\label{def:indiscernible collapsing}
    Let $\sI$ and $\sJ$ be two structures such that $\sJ$ is a reduct of $\sI$. We say that a theory $T$ \emph{collapses indiscernibles from $\sI$ to $\sJ$} if every $\sI$-indiscernible sequence in the monster model is a $\sJ$-indiscernible sequence. We say that $T$ \emph{collapses $\sI$-indiscernibles} if it collapses any $\sI$-indiscernible sequence to $\sJ$-indiscernible sequence, where $\sJ$ is a strict quantifier-free reduct of $\sI$, i.e. $\sI \centernot\reducesqf \sJ$.
\end{definition}

In \cite{GH19}, the following variant of a definition was given for non-collapsing indiscernibles:
\begin{definition}[{non-collapsing indiscernibles (for finite languages) \cite[Definition 3.2]{GH19}}]\label{def:indiscernible collapsing qftp}
     Let $\sI$ be a structure and let $T$ be a theory. A sequence $(a_i)_{i\in \sI}$ of element in a model $\sM$ of $T$ is \emph{non-collapsing indiscernibles} if 
     \[
     \qftp(i_1,\dots, i_n) = \qftp(j_1,\dots, j_n) \iff \tp(a_{i_1},\dots, a_{i_n}) = \tp(a_{j_1},\dots, a_{j_n}),
     \]
     for all $i_1,\dots,i_n, j_1,\dots, j_n\in \sI$.
\end{definition}
The definition above was given under the assumption $\sI$ is an ultrahomogeneous structure in a finite relational language. The following \namecref{uncollapsed finite iff uncollapsed} shows that in this context, the two definitions coincide. \Cref{example: Guingona equivalence fails in infinite language} shows that \Cref{uncollapsed finite iff uncollapsed} fails in general.

\begin{lemma}\label{uncollapsed finite iff uncollapsed}
    Let $\sI$ be an ultrahomogeneous $\aleph_0$-categorical structure in a countable language, let $\sM$ be some structure and $(a_i)_{i\in\sI}\in \sM$ be an $\sI$-indiscernible sequence. Then the following are equivalent:
    \begin{enumerate}
        \item \label{item:uncollapse fin lang}
        $(a_i)_{i\in\sI}$ is non-collapsing, according to \Cref{def:indiscernible collapsing qftp}.
        \item \label{item:uncollapse reduct}
        For all $\sJ\reduces \sI$ such that $(a_i)_{i\in\sI}$ is  $\sJ$-indiscernible, $\sJ\reducesbothqf\sI$.
    \end{enumerate}

    \begin{proof} Let $\lL_{\sI},\lL_{\sM}$ be the respective languages of $\sI,\sM$. Observe that since $\sI$ is $\aleph_0$-categorical, every $\sJ\reduces \sI$ must be $\aleph_0$-categorical too.
        \begin{itemize}
            \item[(\labelcref{item:uncollapse fin lang}$\implies$\labelcref{item:uncollapse reduct})] 
            Let $\sJ\reduces\sI$ such that $(a_i)_{i\in\sI}$ is  $\sJ$-indiscernible. We need to show that, in this case, for every $\lL_{\sI}$-formula $\vphi(x_1,\dots,x_n)$ there exists a quantifier-free $\lL_\sJ$-formula $\psi(x_1,\dots,x_n)$ such that for all $i_1,\dots,i_n\in\sI$ we have that $\sI\vDash\vphi(i_1,\dots,i_n)$ if, and only if, $\sJ\vDash\psi(i_1,\dots,i_n)$. 
            
            By \Cref{item:uncollapse fin lang} and $\sJ$-indiscernibility we have that: 
            \begin{equation}\label{eq: qftp J qftp I}
                \qftp_{\sJ}(i_1,\dots, i_n) = \qftp_{\sJ}(j_1,\dots, j_n) \implies \qftp_{\sI}(i_1,\dots, i_n) = \qftp_{\sI}(j_1,\dots, j_n),
            \end{equation} 
            for all $i_1,\dots,i_n, j_1,\dots, j_n\in \sI$. Given $\vphi(x_1,\dots,x_n)$, by Ryll-Nardzewski, there are finitely many complete $n$-types $p_1,\dots, p_k\in S_{n}^{\sI}(\emptyset)$ such that $\sI\models\vphi(\bar{x})\leftrightarrow \bigvee_{i=1}^k p_i(\bar{x})$. By quantifier elimination in $\sI$, we may assume $p_1,\dots, p_k$ are quantifier-free complete types. 
            
            By \Cref{eq: qftp J qftp I} and by $\aleph_0$-categoricity of $\sJ$, for each $p_i$, there are complete quantifier-free types $q_{i,1},\dots, q_{i,l_i}$ in $\sJ$ such that 
            $\sJ\models \bigvee_{j=1}^{l_i} q_{i,j}(i_1,\dots, i_n)\iff \sI\models p_i(i_1,\dots, i_n)$ for all $i_1,\dots, i_n\in \sI$. In conclusion, there are finitely many quantifier-free types
            $q_1,\dots, q_m$ such that 
            \begin{equation}\label{eq:qftp disjunction J iff vphi I}
                \sJ\models \bigvee_{j=1}^m q_j(i_1,\dots, i_n)\iff \sI\models \vphi(i_1,\dots, i_n)
            \end{equation}
             for all $i_1,\dots, i_n\in \sI$.
            Now, since $\sJ$ is $\aleph_0$-categorical, we may replace each $q_j$ in \Cref{eq:qftp disjunction J iff vphi I} with an isolating formula for it, so $\vphi$ is $\emptyset$-quantifier-free-definable in $\sJ$, and the result follows.
            
            \item[(\labelcref{item:uncollapse reduct}$\implies$\labelcref{item:uncollapse fin lang})] 
            We need to show that given any $\sI$-indiscernible sequence $(a_i)_{i\in\sI}$ we have that:
            \[
                \qftp(i_1,\dots, i_n) = \qftp(j_1,\dots, j_n) \iff \tp(a_{i_1},\dots, a_{i_n}) = \tp(a_{j_1},\dots, a_{j_n}),
             \]
            for all $i_1,\dots,i_n, j_1,\dots, j_n\in \sI$.
            
            To this end, we will start by constructing a reduct $\sJ\reduces\sI$. Let $\lL_{\sJ}$ be the language consisting of an $n$-ary relation symbol $R_{\vphi}$ for every $\lL_{\sM}$-formula $\vphi$ in $n$ free variables.
            Let $\sJ$ be the structure with the same universe as $\sI$ such that for every $\lL_{\sM}$-formula $\vphi$ and every $i_1,\dots, i_n\in \sI$ we set $\sJ\models R_{\vphi}(i_1,\dots, i_n)\iff \sM\models\vphi(a_{i_1},\dots, a_{i_n})$. By construction, $(a_i)_{i\in \sI}$ is $\sJ$-indiscernible. 
            
            Furthermore, as in the previous implication, by $\sI$-indiscernibility of $(a_i)_{i\in \sI}$ and by $\aleph_0$-categoricity of $\sI$, for every $\lL_{\sM}$-formula $\vphi$, there are $\lL_{\sI}$-formulas $\psi_{\vphi,1},\dots, \psi_{p,k(\vphi)}$ such that:
            \[
            \sI\models \bigvee_{j=1}^{k(\vphi)} \psi_{\vphi,j}(i_1,\dots, i_n)\iff \sJ\models R_{\vphi}(i_1,\dots, i_n).
            \]
            So $\sJ\reduces\sI$. Therefore, by \Cref{item:uncollapse reduct}, $\sJ\reducesbothqf\sI$. Finally, let $i_1,\dots, i_n,j_1,\dots, j_n\in \sI$ be such that $\tp(a_{i_1},\dots, a_{i_n}) = \tp(a_{j_1},\dots, a_{j_n})$. 
            Then, by definition of $\sJ$, $\qftp_{\sJ}({i_1},\dots, {i_n}) = \qftp_{\sJ}({j_1},\dots, {j_n})$.  Since $\sI\reducesqf\sJ$, this implies that $\tp_{\sI}({i_1},\dots, {i_n}) = \tp_{\sI}({j_1},\dots, {j_n})$, and hence $\qftp_{\sI}({i_1},\dots, {i_n}) = \qftp_{\sI}({j_1},\dots, {j_n})$, as required.
        \end{itemize}
    \end{proof}
\end{lemma}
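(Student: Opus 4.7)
My plan is to prove the two implications separately, leveraging $\aleph_0$-categoricity (and hence Ryll-Nardzewski) at both steps.

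For the direction (\labelcref{item:uncollapse fin lang}$\implies$\labelcref{item:uncollapse reduct}), I would fix a reduct $\sJ\reduces\sI$ for which $(a_i)_{i\in\sI}$ is $\sJ$-indiscernible and argue that every $\lL_\sI$-formula $\vphi(x_1,\dots,x_n)$ is quantifier-free $\emptyset$-definable in $\sJ$ (the reverse inclusion $\sJ\reducesqf\sI$ is automatic since $\sI$ is ultrahomogeneous $\aleph_0$-categorical and hence, after possible Morleyisation, has quantifier elimination). The key observation is that combining non-collapsing with $\sJ$-indiscernibility yields the implication $\qftp_\sJ(\bar i)=\qftp_\sJ(\bar j)\Rightarrow\qftp_\sI(\bar i)=\qftp_\sI(\bar j)$. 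Then, by Ryll–Nardzewski applied to $\sI$, $\vphi$ is equivalent to a finite disjunction of complete (quantifier-free) $n$-types of $\sI$; each such type pulls back to a finite union of complete quantifier-free $n$-types of $\sJ$; and each of those is $\emptyset$-definable in $\sJ$ by Ryll–Nardzewski applied to $\sJ$ (which is $\aleph_0$-categorical because $\sJ\reduces\sI$).

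For the direction (\labelcref{item:uncollapse reduct}$\implies$\labelcref{item:uncollapse fin lang}), the trick is to construct a canonical reduct built from the sequence itself. Introduce, for each $\lL_\sM$-formula $\vphi(x_1,\dots,x_n)$, a fresh $n$-ary relation symbol $R_\vphi$, and let $\sJ$ be the structure on $\sI$'s universe defined by $\sJ\vDash R_\vphi(i_1,\dots,i_n)$ iff $\sM\vDash\vphi(a_{i_1},\dots,a_{i_n})$. By construction, $(a_i)_{i\in\sI}$ is $\sJ$-indiscernible. To see that $\sJ\reduces\sI$, use $\sI$-indiscernibility together with Ryll–Nardzewski on $\sI$: each $R_\vphi$ defines a union of $\sI$-orbits on $n$-tuples, and there are only finitely many such orbits, each $\emptyset$-definable. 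The hypothesis then gives $\sJ\reducesbothqf\sI$, so in particular $\qftp_\sI(\bar i)$ determines $\qftp_\sJ(\bar i)$, which by the very definition of $\sJ$ determines $\tp_\sM(a_{i_1},\dots,a_{i_n})$, yielding non-collapsing.

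The step I expect to require most care is the forward direction's use of $\aleph_0$-categoricity on both $\sI$ and $\sJ$ simultaneously: one must confirm that a reduct of an $\aleph_0$-categorical structure is itself $\aleph_0$-categorical (clear, since $\mathrm{Aut}(\sI)\leq\mathrm{Aut}(\sJ)$ so $\sJ$ has at most as many orbits on $n$-tuples as $\sI$), and that reducing a complete type to a finite disjunction of quantifier-free types is possible — this is where the extra assumption that $\sI$ is ultrahomogeneous (not just $\aleph_0$-categorical) pays off, since it ensures that complete $n$-types in $\sI$ are already quantifier-free. Minor bookkeeping is also needed to justify passing from a countable language to the finite collection of types relevant to a given formula, which is standard.
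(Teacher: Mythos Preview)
Your approach is essentially identical to the paper's, including the construction of the canonical reduct $\sJ$ via relations $R_\vphi$ in the backward direction. However, the final deduction in your (\labelcref{item:uncollapse reduct}$\implies$\labelcref{item:uncollapse fin lang}) is written in the wrong direction. You conclude that $\qftp_\sI(\bar i)$ determines $\qftp_\sJ(\bar i)$, which in turn determines $\tp_\sM(a_{\bar i})$; but this chain only re-proves $\sI$-indiscernibility, which was the hypothesis. Non-collapsing in the sense of \Cref{def:indiscernible collapsing qftp} requires the \emph{converse}: from $\tp_\sM(a_{\bar i})=\tp_\sM(a_{\bar j})$ one must deduce $\qftp_\sI(\bar i)=\qftp_\sI(\bar j)$.

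The fix is immediate with the ingredients you already have, but uses the \emph{other} half of $\sJ\reducesbothqf\sI$. If $\tp_\sM(a_{\bar i})=\tp_\sM(a_{\bar j})$, then by construction of $\sJ$ you get $\qftp_\sJ(\bar i)=\qftp_\sJ(\bar j)$; now use $\sI\reducesqf\sJ$ (not $\sJ\reducesqf\sI$) to conclude $\qftp_\sI(\bar i)=\qftp_\sI(\bar j)$. This is exactly how the paper finishes the argument.
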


The following example shows that, in general, a collapsing $\sI$-indiscernible sequence may collapse only to a $\sJ$-indiscernible sequence where $\sJ$ is not a strict reduct of $I$, but a quantifier-free reduct. 

\begin{example}
    Let $\str{BG}^E=(G,R,E)$ be the countable random bipartite graph $\str{BG}=(G,R)$ equipped with a two-class equivalence relation for the partition. We may embed $\str{BG}$ (as a graph) into the countable random graph $\sG$. Then the sequence $(g)_{g\in \str{BG}^E}$ is not uncollapsing, for it is a $\str{BG}$-indiscernible sequence. One can see, by quantifier elimination in the random graph, that the sequence doesn't collapse further, that is $(g)_{g\in \str{BG}}$ is an uncollapsing $\str{BG}$-indiscernible sequence.
\end{example}

The following example shows that, in \Cref{uncollapsed finite iff uncollapsed}, one needs indeed to assume that the language is countable:
\begin{example}\label{example: Guingona equivalence fails in infinite language}
    Let $\sN$ be the structure on $\bN$ in the \emph{full set-theoretic language} $\lL_F$, i.e., the language which for every $A\subseteq \bN^{n}$, contains a relation symbol $R_A\in \Sig(\lL_F)$, naturally interpreted as $R_A(\sN) = A$. Notice that $\sN$ is an ultrahomogeneous $\aleph_0$-categorical structure. Let $\sN_S$ be the reduct of $\sN$ to the language $\lL_S\subset\lL_F$ consisting only of unary predicates for the singleton sets, i.e., $\lL_S:=\{R_{\{a\}} : a\in \bN \}$. Then $\sN$ codes any countable structure $\sM$, and it is in particular unstable.  
    On the other hand, $\sN_S$ is strongly minimal, and in particular, stable. So 
    $\sN_S\reduces\sN$ but $\sN\centernot\reduces\sN_S$ .
    \begin{enumerate}
        \item letting $a_i:=i$, $(a_i)_{i\in \sN}$ is a non-collapsing $\sN$-indiscernible sequence in $\mathcal{N}$, in the sense of \Cref{def:indiscernible collapsing qftp}.
        \item $\sN$ collapses $\sN$-indiscernibles to $\sN_S$ indiscernibles according to \Cref{def:indiscernible collapsing}. (In fact, every $\sN$-indiscernible sequence in any model of any theory is also an $\sN_S$-indiscernible sequence.) 
    \end{enumerate}
\end{example}

An analogue of Shelah's theorem characterising stable theories as those that collapse order-indiscernible sequences to indiscernible sets was proved by Scow for NIP in \cite{Scow2012} and her result was generalised in \cite{CPT19} to NIP$_n$, for $n>1$. 

\begin{fact}[{\cite[Theorem~5.4]{CPT19}}]\label{thm:nip-k-collapse}
    Let $\cla{OH}_{n+1}$ be the ordered random $(n+1)$-hypergraph. Then, the following are equivalent for a first-order theory $T$:
    \begin{enumerate}
        \item $T$ is NIP$_n$.
        \item $T$ collapses $\cla{OH}_{n+1}$-indiscernibles into order-indiscernibles.
    \end{enumerate}
\end{fact}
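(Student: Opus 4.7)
The plan is to prove the two implications by contrapositive, using the structural Ramsey property of $\cla{OH}_{n+1}$ (which, via \Cref{rc mp}, equips its Fra\"iss\'e limit with the modelling property) together with the universal property of the ordered random $(n+1)$-hypergraph. The common thread in both directions is a grid-like configuration in $\cla{OH}_{n+1}$ mirroring the shape of an IP$_n$-pattern.

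For $(2) \Rightarrow (1)$, I would assume $T$ has IP$_n$, witnessed by a formula $\vphi(\bar x; \bar y_1, \ldots, \bar y_n)$, sequences $(\bar a_i^j)_{i \in \omega}$ for $j = 1, \ldots, n$, and parameters $(\bar b_I)_{I \subseteq \omega^n}$. First, I embed this data into $\cla{OH}_{n+1}$ as follows: take vertices $p_i^j$ and $q_I$ totally ordered so that every $p_i^j$ precedes every $q_I$, and impose hyperedges $(q_I, p_{i_1}^1, \ldots, p_{i_n}^n)$ exactly when $(i_1, \ldots, i_n) \in I$, and no others. This ordered $(n+1)$-hypergraph is countable and embeds into $\cla{OH}_{n+1}$ by universality. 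I then define an $\cla{OH}_{n+1}$-indexed sequence by placing $\bar a_i^j$ at $p_i^j$ and $\bar b_I$ at $q_I$ (and arbitrary tuples elsewhere), and apply the modelling property to obtain an $\cla{OH}_{n+1}$-indiscernible sequence based on this one. Because the IP$_n$ configuration is a finite-formula phenomenon preserved under basing, the resulting indiscernible sequence still detects whether $(i_1, \ldots, i_n) \in I$ via $\vphi$, hence distinguishes tuples with the same order-type but different hyperedge structure, so it fails to be order-indiscernible.

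For $(1) \Rightarrow (2)$, I would assume there is an $\cla{OH}_{n+1}$-indiscernible sequence $(c_\eta)_{\eta \in \cla{OH}_{n+1}}$ that is not order-indiscernible, and aim to exhibit an IP$_n$-pattern in $T$. Since the quantifier-free type of a tuple in $\cla{OH}_{n+1}$ is determined by its order-type together with the hyperedge pattern on its $(n+1)$-element subsets, the failure of order-indiscernibility must be registered by a formula whose truth depends on hyperedge data. After a reduction using indiscernibility and Boolean combinations, I would isolate a formula $\vphi(x_0; x_1, \ldots, x_n)$ such that, for a fixed relative order-type, $\vphi(c_{\eta_0}; c_{\eta_1}, \ldots, c_{\eta_n})$ is controlled by whether $\{\eta_0, \ldots, \eta_n\}$ forms a hyperedge. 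Invoking universality, I realise in the index set the same grid configuration with vertices $p_i^j$ and $q_I$ as above, and then read off an IP$_n$-pattern for $\vphi$ by setting $\bar b_I := c_{q_I}$ and $\bar a_i^j := c_{p_i^j}$.

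The main technical hurdle, I anticipate, will be the reduction in $(1) \Rightarrow (2)$ to a formula whose behaviour depends on a \emph{single} hyperedge rather than on an unwieldy Boolean combination of several. A priori the witnessing formula may depend on many $(n+1)$-subsets simultaneously, and one must use $\cla{OH}_{n+1}$-indiscernibility together with the freedom afforded by the free amalgamation in $\age(\cla{OH}_{n+1})$ to peel off irrelevant hyperedges one by one, retaining a formula sensitive to exactly one critical hyperedge. Once such a clean formula is in hand, the grid construction and universality of $\cla{OH}_{n+1}$ immediately deliver the desired IP$_n$-configuration, contradicting NIP$_n$.
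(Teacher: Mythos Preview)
The paper does not prove this statement at all; it is recorded as a \emph{Fact} with a citation to \cite[Theorem~5.4]{CPT19}, so there is no in-paper proof to compare against. Your sketch follows essentially the argument of that reference: use the modelling property for $\cla{OH}_{n+1}$ (via the Ne\v{s}et\v{r}il--R\"odl theorem) to pass from an $\mathrm{IP}_n$-pattern to a non-order-collapsing $\cla{OH}_{n+1}$-indiscernible, and conversely isolate a formula whose truth toggles on a single hyperedge and read off $\mathrm{IP}_n$ from a grid configuration realised inside $\cla{OH}_{n+1}$.

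Your outline is sound, and you have correctly located the only genuinely delicate step: in the direction $(1)\Rightarrow(2)$, reducing from a formula that depends on an arbitrary Boolean combination of hyperedges on an $m$-tuple to one that is sensitive to a \emph{single} hyperedge. In \cite{CPT19} this is handled by an induction on the number of hyperedge-subsets appearing, using free amalgamation in $\age(\cla{OH}_{n+1})$ to vary one hyperedge while freezing the others; your description of ``peeling off irrelevant hyperedges one by one'' is exactly this. One minor point worth flagging in the other direction: when you place $\bar a_i^j$ at $p_i^j$ and $\bar b_I$ at $q_I$, these tuples may have different lengths, so you should pad to a common length before invoking the modelling property; this is routine but should be said.
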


\subsection{\texorpdfstring{$\cK$}{K}-Configurations}\label{sec:K-configurations}

We will develop the study of transfer principles with respect to the notion of \emph{$\cK$-configurations}. Forbidding $\cK$-configurations offers a unified way of describing tameness conditions that arise from coding combinatorial configurations, such as the \emph{Order Property} and the \emph{Independence Property}, and provides an interesting scheme of dividing lines. The notion of $\cK$-configurations originates from the work of Guingona and Hill \cite{GH19}, and has been developed further in \cite{GPS21}. In this section, we will fix a relational first-order language $\lL_0$, and an arbitrary first-order language $\lL$. Throughout, we will use $\cK$ to denote a class of finite $\lL_0$-structures closed under isomorphism.

\begin{definition}[$\cK$-configuration, {\cite[Definition 5.1]{GPS21}}]\label{def: K-config}
    An $\lL$-structure $\sM$ \emph{admits (or codes) a $\cK$-~configuration} if there are an integer $n$, a function $I:\Sig(\lL_0) \rightarrow \lL$ and a sequence of functions $(f_A: A\in \cK)$ such that: 
    \begin{enumerate}
        \item for all $A\in \cK$, $f_A:A \rightarrow \sM^n$,
        \item for all $R\in \Sig(\lL_0)$, for all $A\in \cK$, for all $a\in A^{\mathsf{arity}(R)}$,
            \[
                A \models R(a) \Leftrightarrow \sM \models I(R)(f_A(a)).
            \]    
    \end{enumerate}
        
    A theory $T$ admits a $\cK$-configuration if some model $\sM\models T$ admits a $\cK$-configuration. We denote by $\codingcla{\cK}$ the class of theories which  admit a $\cK$-configuration, and by $\ncodingcla{\cK}$ the class of theories which \emph{do not} admit a $\cK$-configuration.
\end{definition}

\begin{example}
    Recall that $\cla{LO}$ and $\cla{CO}$ denote respectively the class of finite linear orders and finite cyclic orders, and $\cla{COD}$ denotes the class of finite cyclically ordered $D$-relation. One can observe that any structure admits an $\cla{CO}$-configuration if, and only if, it admits $\cla{LO}$-configuration. In particular $\ncodingcla{\cla{CO}}=\ncodingcla{\cla{LO}}$. This is of course due to the fact that, after naming one constant,  a dense cyclic order is inter-definable with a standard linear order. Similarly, any structure admits an $\cla{COD}$-configuration if and only if if admits $\cla{OC}$-configuration. Therefore $\ncodingcla{\cla{COD}}=\ncodingcla{\cla{OC}}$.    
\end{example}

\begin{proposition}\label{age coding iff structure coding}
    Let $\cK$ be a class of structures. Then 
        $\codingcla{\cK} = \codingcla{\age(\cK)} = \codingcla{\HC(\cK)}$. In particular, 
        $\codingcla{\sM}=\codingcla{\age(\sM)}$ for a structure $\sM$.
    \begin{proof} 
    Since $\age(\cK)=\age(\HC(\cK))$, it suffices to show the first equality (as we may replace $\cK$ with $\HC(\cK)$ to obtain the second). 

    For the inclusion $\codingcla{\cK}\subseteq \codingcla{\age(\cK)}$, suppose that $T \in \codingcla{\{\sM\}}$. Fix $I$ and $(f_{\sM})_{\sM\in \cK}$ as promised. For every $A\in \age(\cK)$ fix some $\sM_A\in \cK$ and embedding $e_{A}:A\hookrightarrow\sM_A$; then letting $f_{A}:=f_{\sM}\circ e_{A}$, we get an $\age({\sM})$-configuration. 
    
    For the inclusion $\codingcla{\age(\cK)}\subseteq\codingcla{\cK}$, suppose that $T\in \codingcla{\age(\cK)}$. Fix $I$ and $(f_{A} : A\in \age(\cK))$ as promised; then, for all $\sM\in \cK$, the type in $\vert \sM \vert$-variables
    \begin{align*}
        T &\cup \{\ \ I(R)(x_{a_1},\dots, x_{a_n}): R\in \lL(\cK),\  \sM\models\ \  R(a_1,\dots, a_n) \} \\
        & \cup \{\neg I(R)(x_{a_1},\dots, x_{a_n}) : R\in \lL(\cK),\  \sM\models\neg R(a_1,\dots, a_n) \} 
    \end{align*}is finitely satisfiable, and therefore satisfiable by compactness.
    \end{proof}
\end{proposition}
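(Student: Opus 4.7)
The plan is to reduce everything to the first equality, observing that $\age(\cK) = \age(\HC(\cK))$ (the finitely generated substructures are the same in both cases), so the equality $\codingcla{\cK} = \codingcla{\age(\cK)}$, applied with $\cK$ replaced by $\HC(\cK)$, yields the second equality. The ``in particular'' clause is immediate by taking $\cK = \{\sM\}$ and noting that $\age(\{\sM\}) = \age(\sM)$.

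For the inclusion $\codingcla{\cK} \subseteq \codingcla{\age(\cK)}$, I would argue directly: given a coding $(I,(f_B)_{B\in\cK})$ of $\cK$ in some model of $T$, for each $A \in \age(\cK)$ pick some $B_A \in \cK$ together with an embedding $e_A \colon A \hookrightarrow B_A$, and define $f_A := f_{B_A} \circ e_A$. Since embeddings preserve and reflect atomic formulas in a relational language, the witnessing condition (2) of \Cref{def: K-config} transfers from $B_A$ to $A$ using the same interpretation function $I$ and the same arity $n$. This is the easy direction.

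The reverse inclusion $\codingcla{\age(\cK)} \subseteq \codingcla{\cK}$ is where the real content lies, and it is a standard compactness argument. Fix $(I,(f_A)_{A\in\age(\cK)})$ witnessing that $T$ codes $\age(\cK)$ in some model $\sM \vDash T$. Given an arbitrary $B \in \cK$, I introduce a tuple of variables $(x_b)_{b\in B}$ (of length $|B|$, possibly infinite) and consider the partial type
\[
    \Sigma_B(x) := \{\,I(R)(x_{b_1},\dots,x_{b_k}) : R\in\Sig(\lL_0),\ B\vDash R(b_1,\dots,b_k)\,\}\cup\{\,\neg I(R)(x_{b_1},\dots,x_{b_k}) : R\in\Sig(\lL_0),\ B\vDash \neg R(b_1,\dots,b_k)\,\}.
\]
Any finite subset of $\Sigma_B$ mentions only finitely many indices $b \in B$, and hence corresponds to finitely many atomic diagrams on a finite substructure $A \subseteq B$; since $A \in \age(\cK)$ by definition, the tuple $(f_A(a))_{a\in A}$ realises this finite subset in $\sM$. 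By compactness, $\Sigma_B$ is realised in an elementary extension $\sM' \succeq \sM$, and the realising tuple yields a function $f_B \colon B \to \sM'$ satisfying condition (2) for $B$. Doing this uniformly across $B \in \cK$ (working in a sufficiently saturated model, or passing to further elementary extensions) produces a $\cK$-configuration in a model of $T$, completing the proof.

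The main (minor) obstacle is the bookkeeping in the compactness step: one must be careful that the same interpretation $I$ and the same tuple-arity $n$ work for all $B \in \cK$ simultaneously, which is automatic since $I$ and $n$ were fixed once and for all by the original $\age(\cK)$-configuration.
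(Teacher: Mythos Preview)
Your proposal is correct and follows essentially the same approach as the paper: the same reduction via $\age(\cK)=\age(\HC(\cK))$, the same restriction-along-embeddings argument for $\codingcla{\cK}\subseteq\codingcla{\age(\cK)}$, and the same compactness argument for the reverse inclusion. Your write-up is in fact slightly more explicit than the paper's about why finite fragments of $\Sigma_B$ are realised (identifying the relevant finite substructure $A\in\age(\cK)$) and about working in a common elementary extension, but there is no substantive difference.
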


In light of \Cref{age coding iff structure coding} above, given a structure $\sM$, we overload notation, writing $\codingcla{\sM}$ for $\codingcla{\age(\sM)}=\codingcla{\{\sM\}}$, and $\ncodingcla{\sM}$ for $\ncodingcla{\age(\sM)}=\ncodingcla{\{\sM\}}$.

\begin{remark}\label{rmk:coding-qf-definitions}
    Let $\sM$ be an $\lL$-structure and assume that $\sN$ is an $\lL'$-structure which admits an $\sM$-configuration, witnessed by $I,f$. Given a quantifier-free $\lL$-formula $\vphi(\bar x) = \bigwedge_{i\in I}\bigvee_{j\in J}\psi_{i,j}(\bar x)$, where each $\psi_{i,j}(\bar x)$ is an atomic or negated atomic formula. In particular, for each $\psi_{i,j}(\bar x)$, we can define $I(\psi_{i,j})$ to be $I(\psi_{i,j})$ if $\psi_{i,j}$ is a relation symbol, and $\lnot I(\psi_{i,j})$ if $\psi_{i,j}$ is a negated relation symbol.  Then, by definition:
    \[
        \sM\vDash\vphi(a) \text{ if, and only if, } \sN\vDash \bigwedge_{i\in I}\bigvee_{j\in J} I(\psi_{i,j})(f(a)),
    \]
    for all $a\in \sM$.
\end{remark}

A somewhat different approach to coding $\cK$-configurations is the focus of \cite{Wal21}. There, the relevant notion is called \emph{trace definability} and it is seen as a weak form of interpretability. We will not be discussing trace definability in this paper. We will only take note that, from the point of view of trace definability, the following lemma is immediate. We include a proof here for coding $\cK$-configurations, for completeness.

\begin{proposition}\label{ncoding is closed under interpretation}
    Let $\sN$ be an $\lL_\sN$-structure. If an $\lL_\sM$-structure $\sM$ is interpretable in $\sN$ and $\sM$ admits a $\cK$-configuration, then $\sN$ also admits a $\cK$-configuration. In particular, the class $\ncodingcla{\cK}$ is closed under bi-interpretability. 
\end{proposition}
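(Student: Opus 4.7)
The plan is to unpack the definition of interpretability and transport the coding data from $\sM$ to $\sN$. Suppose $\sM$ admits a $\cK$-configuration witnessed by an integer $n$, a function $I\colon \Sig(\lL_0)\to \lL_\sM$, and a family $(f_A\colon A\to \sM^n)_{A\in\cK}$ as in \Cref{def: K-config}. Since $\sM$ is interpretable in $\sN$, there exist $k\in\Nbb$, an $\emptyset$-definable set $D\subseteq \sN^k$, an $\emptyset$-definable equivalence relation $E$ on $D$, and a surjection $\pi\colon D\twoheadrightarrow\sM$ inducing a bijection $D/E\to\sM$, such that the preimage under $\pi$ of every $\emptyset$-definable subset of any $\sM^m$ is $\emptyset$-definable in $\sN$.

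The first step is to pull back the coding formulas. For each $R\in\Sig(\lL_0)$ of arity $r$, the formula $I(R)(\bar x_1,\ldots,\bar x_r)$ in $\sM$ (where each $\bar x_i$ has length $n$) pulls back along $\pi$ to an $\lL_\sN$-formula $\tilde I(R)(\bar y_1,\ldots,\bar y_r)$, where now each $\bar y_i$ is a tuple of length $nk$ from $D^n$. Explicitly, $\tilde I(R)$ is the formula asserting that each $\bar y_i$ lies in $D^n$ and that $I(R)$ holds of the corresponding tuple in $\sM^n$; this is $\emptyset$-definable in $\sN$ by the interpretation.

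Next, I lift the maps $f_A$. For each $A\in\cK$ and each $a\in A$, the tuple $f_A(a)\in\sM^n$ has, coordinate by coordinate, preimages in $D\subseteq \sN^k$. Pick, using the axiom of choice, any such preimage tuple and call it $g_A(a)\in D^n\subseteq \sN^{nk}$. Set $n':=nk$ and $I'(R):=\tilde I(R)$. Then for all $R\in\Sig(\lL_0)$ and all $a_1,\ldots,a_r\in A$:
\[
A\vDash R(a_1,\ldots,a_r)\ \Longleftrightarrow\ \sM\vDash I(R)\bigl(f_A(a_1),\ldots,f_A(a_r)\bigr)\ \Longleftrightarrow\ \sN\vDash I'(R)\bigl(g_A(a_1),\ldots,g_A(a_r)\bigr),
\]
where the first equivalence is the $\cK$-configuration in $\sM$ and the second is the defining property of $\tilde I(R)$. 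Hence $(n',I',(g_A)_{A\in\cK})$ is a $\cK$-configuration in $\sN$.

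For the ``in particular'' clause, suppose $\sM$ and $\sN$ have bi-interpretable theories, and $\Th(\sM)\in\ncodingcla{\cK}$. If $\Th(\sN)\in\codingcla{\cK}$, then passing to a model witnessing this and applying the first part to the interpretation of $\sN$ in $\sM$ yields a $\cK$-configuration in a model of $\Th(\sM)$, a contradiction. The only delicate point in the whole argument is handling tuple lengths correctly: each element of $\sM^n$ unfolds to an $nk$-tuple from $D$, and one must ensure the pulled-back formulas $\tilde I(R)$ encode $R$-holding on the $\pi$-images regardless of the choice of $E$-class representatives, which is automatic from the interpretation.
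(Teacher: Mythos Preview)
Your proof is correct and follows essentially the same approach as the paper's: pull back the coding formulas $I(R)$ along the interpretation to obtain formulas in $\lL_\sN$, and use a choice of representatives (what the paper calls a section $s\colon\sM\to\sN^m$) to lift the maps $f_A$ to maps $g_A\colon A\to\sN^{nk}$. Your treatment is in fact slightly more explicit about the ``in particular'' clause and about the irrelevance of the choice of $E$-class representatives.
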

\begin{proof}
    Assume that $\sN$ interprets $\sM$. There is a definable set $D\subseteq \sN^m$, a definable equivalence relation $\sim$ on $D$ and a bijection $\sM \simeq D / \sim$ which interprets $\sM$. This induces a map $*: \lL_\sM \rightarrow \lL_\sN$ which associates to any $\lL_\sN$-formula $\vphi(x)$ an $\lL_\sM$-formula $\vphi^*(x^*)$ interpreting it (where $\vert x^* \vert= m\times \vert x\vert$). Let $s: \sM \rightarrow \sN^m$ a section of the natural projection $\mathcal{D}\rightarrow \mathcal{D}/\sim$ (where we identify $\sM$ with $D/\sim$). 
        
    If $\sM$ admits a $\cK$-configuration via $\left(I:\Sig(\lL_0) \rightarrow \lL_\sM,(f_A: A \rightarrow \sM^n : {A\in \cK})\right)$, then $\sN$ also admits a $\cK$-configuration, via $\left( * \circ I:\Sig(\lL_0) \rightarrow \lL_\sN ,(s\circ f_A:A \rightarrow \sM^{nm} :{A\in \cK})\right)$. Indeed, by unravelling the definitions, we have that:
    \[
        A \models R(a) \Leftrightarrow \sM \models I(R)(f_A(a))\Leftrightarrow  \sN \models  I(R)^*(s(f_A(a))),
    \]
    for all $R\in \Sig(\lL_0)$ for all $A\in \cK$ for all $a\in A^{\mathsf{arity}(R)}$.
\end{proof}
    
We immediately deduce that the class of dp-minimal structures, and in particular, the class of monadically NIP structures cannot be of the form $\ncodingcla{\cK}$, for any $\cK$, as neither is closed under bi-interpretatability. For instance, an infinite set $\sS$ in the language of pure equality is bi-interpretable with the full product $\sS\boxtimes \sS$ which has of dp-rank $2$. Similarly, it is well-known that distality is not preserved under taking reducts. Thus the class of distal structures is not of the form $\ncodingcla{\cK}$ for any $\cK$.

A version of the following \namecref{order on classes} appears in \cite[Proposition 5.4(4)]{GP23}, under the assumption the classes are strong amalgamation classes, and $\sM$ is a Fra\"iss\'e limit of such a class. For our purposes, we omit this requirement and introduce an elementary proof.
\begin{lemma}\label{order on classes}
    Let $\sM$ be a structure with quantifier elimination and $\cK$ a class of structures. Then the following are equivalent:
    \begin{enumerate}
        \item $\sM\in \codingcla{\cK}$.
        \item $\codingcla{\sM}\subseteq \codingcla{\cK}$.
    \end{enumerate} 
\end{lemma}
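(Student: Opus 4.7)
The plan is as follows. For the easy direction $(2)\Rightarrow (1)$, I would note that $\sM$ trivially admits an $\age(\sM)$-configuration on itself: take $n=1$, let $I$ be the identity on $\Sig(\lL_\sM)$, and for each $A\in \age(\sM)$ choose an embedding $f_A:A\hookrightarrow \sM$ (which exists by definition of the age). This gives $\sM\in \codingcla{\sM}$, and invoking (2) yields $\sM\in \codingcla{\cK}$.

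For $(1)\Rightarrow (2)$, fix $T\in \codingcla{\sM}=\codingcla{\age(\sM)}$ (using \Cref{age coding iff structure coding}); the goal is to show $T\in \codingcla{\cK}$ by composing two given configurations. From $T\in \codingcla{\age(\sM)}$, obtain an $\age(\sM)$-configuration on some $\sN\models T$: an integer $n_1$, a map $I_1:\Sig(\lL_\sM)\to \lL_\sN$, and functions $(f_A:A\to \sN^{n_1})_{A\in \age(\sM)}$. From (1), after passing to a sufficiently saturated model of $\Th(\sM)$ (and, if necessary, using compactness to pull the configuration back to $\sM$ itself), obtain a $\cK$-configuration on $\sM$: an integer $n_2$, a map $I_2:\Sig(\lL_\cK)\to \lL_{\sM}$, and functions $(g_B:B\to \sM^{n_2})_{B\in \cK}$.

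The key step uses quantifier elimination in $\sM$ to bridge the two configurations. For each $R\in \Sig(\lL_\cK)$, pick a quantifier-free $\lL_\sM$-formula $\psi_R(\bar x)$ equivalent to $I_2(R)(\bar x)$ in $\Th(\sM)$, and write $\psi_R$ as a Boolean combination of atomic subformulas $R'(\bar x')$ with $R'\in \Sig(\lL_\sM)$. Following the recipe of \Cref{rmk:coding-qf-definitions}, define $I(R)(\bar y)$, where $|\bar y|=\arity(R)\cdot n_2\cdot n_1$, by substituting each atomic $R'(\bar x')$ with $I_1(R')(\bar y')$, where $\bar y'$ is the sub-tuple of $\bar y$ obtained by the $n_1$-fold expansion of $\bar x'$. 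For each $B\in \cK$, let $A_B$ be the (finitely generated) substructure of $\sM$ generated by the image of $g_B$, so $A_B\in \age(\sM)$, and define $h_B:B\to \sN^{n_2\cdot n_1}$ by applying $f_{A_B}$ componentwise to $g_B$. Verification is then a chain of equivalences: for $R\in \Sig(\lL_\cK)$ and $\bar b\in B^{\arity(R)}$,
\[
    B\models R(\bar b) \iff \sM\models I_2(R)(g_B(\bar b)) \iff \sM\models \psi_R(g_B(\bar b)) \iff A_B\models \psi_R(g_B(\bar b)) \iff \sN\models I(R)(h_B(\bar b)),
\]
where the third equivalence uses absoluteness of quantifier-free formulas for substructures and the fourth applies the $\age(\sM)$-configuration to each atomic subformula of $\psi_R$ individually.

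The main obstacle I anticipate is not the composition itself, which is essentially diagram-chasing, but the uniformity needed to ensure that (1) produces a $\cK$-configuration on $\sM$ itself (rather than merely on some $\sM'\equiv \sM$), and that the substructure $A_B$ genuinely lies in $\age(\sM)$. Both are handled by passing up front to a sufficiently saturated model of $\Th(\sM)$, for which QE ensures that $\age$ is invariant under elementary equivalence and every finite partial quantifier-free type consistent with $\Th(\sM)$ is realised. Once this uniform setup is in place, the QE-based composition above goes through without further difficulty.
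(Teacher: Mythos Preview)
Your proof is correct and follows essentially the same route as the paper's: the easy direction uses $\sM\in\codingcla{\sM}$, and the other direction composes the two configurations, using quantifier elimination in $\sM$ to turn each $I_2(R)$ into a quantifier-free formula that can be pushed through the $\age(\sM)$-configuration via \Cref{rmk:coding-qf-definitions}. The paper's write-up is simply terser (``$g\circ f$ and $J\circ I$ witness that $\sN$ admits a $\cK$-configuration''), while you spell out the substitution explicitly. Your saturation worry is legitimate but can be dispatched more cleanly than you indicate: if (1) only gives a $\cK$-configuration on some $\sM'\equiv\sM$, just run the whole argument with $\sM'$ in place of $\sM$, since quantifier elimination forces $\age(\sM)=\age(\sM')$ and hence $\codingcla{\sM}=\codingcla{\sM'}$; there is no need to pull anything back to $\sM$ itself.
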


\begin{proof} 
    For $(1\Rightarrow2)$ suppose that $\sM\in\codingcla{\cK}$ and $\sN\in\codingcla{\sM}$. Let $I,f$ witness that $\sM$ admits a $\cK$-configuration, and $J,g$ witness that $\sN$ admits an $\sM$-configuration. By quantifier elimination, we may assume that $I(R)$ is quantifier-free for all $R\in\Sig(\cK)$. It is clear, using \Cref{rmk:coding-qf-definitions}, that $g\circ f$ and $J\circ I$ witness that $\sN$ admits a $\cK$-configuration. For $(2\Rightarrow1)$, observe that clearly $\sM\in \codingcla{\sM}$ and therefore, by assumption $\sM\in \codingcla{\cK}$.
\end{proof}

\Cref{fact: equality of closedness} below was originally stated for classes of \emph{finite} structures. However, notice that:
\begin{itemize}
    \item $\age(\cK_1\fullprod\cK_2) = \age(\cK_1)\fullprod\age(\cK_2)$.
    \item  $\age(\cK_1[\cK_2]) = \age(\cK_1)[\age(\cK_2)]$.
    \item $\age(\cK_1\sqcup\cK_2) = \age(\cK_1)\sqcup\age(\cK_2)$.
    \item If $\age(\cK_1)$ and $\age(\cK_2)$ contain structures of size $n$ for every $n\in \bN$, then \\
    $\age(\cK_1)\suppos\age(\cK_2) = \age(\cK_1\suppos\cK_2)$.
\end{itemize}

Therefore, by \Cref{age coding iff structure coding}, we may phrase \Cref{fact: equality of closedness} in the fullest generality:

\begin{fact}[{\cite[Theorem 4.27]{GPS21}}]\label{fact: equality of closedness}
    Let $\cK_1,\cK_2$ be classes of structures in respective finite disjoint relational language, $\lL_1,\lL_2$. Then
    \[
        \codingcla{\cK_1\fullprod\cK_2} = \codingcla{\cK_1[\cK_2]} = \codingcla{\cK_1\disjointunion\cK_2} = \codingcla{\cK_1}\cap \codingcla{\cK_2}.
    \]
    
    If, additionaly, $\age(\cK_1)$ and $\age(\cK_2)$ both contain structures of size $n$ for every $n\in \bN$, then all of the above are also equal to $\codingcla{\cK_1\suppos K_2}$.
\end{fact}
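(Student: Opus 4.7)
My plan is to reduce the whole chain of equalities to the pivot identity $\codingcla{\cK_1 \disjointunion \cK_2} = \codingcla{\cK_1} \cap \codingcla{\cK_2}$ and then upgrade this identity to the other product classes in turn.

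For the pivot identity, the forward inclusion is immediate by restriction: a $\cK_1 \disjointunion \cK_2$-configuration $(I, (f_A))$ in some $\sM\models T$ yields $\cK_i$-configurations by restricting $I$ to $\Sig(\lL_i)$ and restricting the family to those $A \in \cK_1 \disjointunion \cK_2$ whose $\cK_{3-i}$-part is empty. For the reverse direction I would combine the given $\cK_i$-configurations $(I_i, (f_{A_i}^{(i)}))$, with target dimensions $n_i$, into a single configuration for $\cK_1 \disjointunion \cK_2$ of dimension $n_1+n_2+1$ using a ``colour coordinate''. Namely, for $A = A_1 \sqcup A_2$ and $a \in A_i$, place $f_{A_i}^{(i)}(a)$ into the correct $n_i$-block, pad the other block arbitrarily, and put a distinguished element $c_i \in \sM$ (with $c_1 \neq c_2$) into the last coordinate. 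For $R \in \lL_i$, let $I(R)(\bar x_1, \dots, \bar x_k)$ be the pullback of $I_i(R)$ along the appropriate coordinate projections, conjoined with the formula asserting that the last coordinate of each argument equals $c_i$. The disjointness of $\lL_1$ and $\lL_2$ is the key point: $\lL_i$-atoms of a disjoint union $A_1 \sqcup A_2$ are false on mixed tuples, exactly mirroring the colour condition becoming false in that case.

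The full product and lexicographic product cases then follow from the pivot identity. In each direction, one inclusion is immediate since each $\cK_i$ embeds into $\cK_1 \fullprod \cK_2$ and into $\cK_1[\cK_2]$ by pairing with a one-point member of the other class, so any configuration for the product restricts to one for each factor. For the reverse direction, given $\cK_i$-configurations, I would set $f_{A_1 \fullprod A_2}(a_1, a_2) := (f_{A_1}^{(1)}(a_1), f_{A_2}^{(2)}(a_2)) \in \sM^{n_1+n_2}$ and interpret the $\lL_i$-relations (and the projection function symbols of the product) via the natural coordinate blocks. The relative quantifier-elimination results of \Cref{fact: QE relative in full product} and \Cref{thm:QuantifierEliminationLexSum} are the structural input that guarantees every atomic relation of the product reduces to a formula in a single factor, so no further coding is needed. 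The superposition case under the size hypothesis is analogous: the hypothesis ensures that every $A_i \in \cK_i$ participates in some superposition, which is needed both for the restriction direction and for the coordinatewise construction in the reverse direction.

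The main obstacle I anticipate is bookkeeping rather than conceptual. The colour-coordinate trick uses a parameter from $\sM$, and one needs to verify that the definition of $\cK$-configuration comfortably admits such parameters (or else pass to a constant expansion of $\sM$, which does not change its coding class). A related subtlety is that the full product and lexicographic product are multisorted, so one must fix a conventional way of listing sorts into $\sM^n$ in order to fit them into the single-sorted template of \Cref{def: K-config}; and for the superposition case one must also check that the size condition is really essential, by producing a counterexample when one of the ages misses infinitely many sizes (otherwise, one would have no $\cK_1 \suppos \cK_2$-structure to which to apply a given $A_i \in \cK_i$).
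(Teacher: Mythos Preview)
The paper does not actually prove this statement: it is recorded as a Fact cited from \cite{GPS21}, with a short paragraph of commentary noting that the size hypothesis is only needed for the superposition case (the full product and lexicographic product cases being \cite[Corollaries~5.16 and~5.12]{GPS21}), and that the disjoint-union equality ``is almost trivial''. So there is no in-paper proof to compare against; your sketch is an independent argument.

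Your overall strategy is sound and would succeed. A few remarks. First, the paper's ``almost trivial'' suggests that $\cK_1\sqcup\cK_2$ is meant as the disjoint union of the \emph{classes} (each $\cK_i$-structure viewed as an $\lL_1\cup\lL_2$-structure with trivial $\lL_{3-i}$-part), not as the class of disjoint unions of structures. Under that reading no colour coordinate is needed: one simply pads the two codings to a common target dimension and takes $I=I_1\cup I_2$. Second, your worry about parameters in the colour-coordinate construction is legitimate but harmless: the paper itself handles the analogous issue in the proof of \Cref{coding reducts is coding join} by concatenating the two codings into $\sM^{n_1+n_2}$ and composing each $I_i$ with the appropriate projection, which avoids parameters entirely; the same device works here. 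Third, be aware that the full product and lexicographic product, as defined in \Cref{def:fullproduct} and \Cref{DefinitionLexicographicProduct2}, are multisorted with function symbols (the projections, resp.\ $v$), so to fit them into the relational template of \Cref{def: K-config} one implicitly passes to a one-sorted relational presentation; this is exactly the bookkeeping you anticipated, and \Cref{ncoding is closed under interpretation} guarantees it does not affect the coding class.
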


The \namecref{fact: equality of closedness} was originally phrased under a slightly weaker assumption that $\cK_1$ and $\cK_2$ both containing structures of size $n$, for every $n\in \bN$. We note that this requirement is only needed when considering free superpositions: for the full product and lexicographic product it is precisely \cite[Corollary 5.16]{GPS21} and \cite[Corollary 5.12]{GPS21}, respectively. The \namecref{fact: equality of closedness} above also did not include the disjoint union, however, this equality is almost trivial.

The following \namecref{coding reducts is coding join} can be seen as a generalisation of \Cref{fact: equality of closedness}.

\begin{theorem}\label{coding reducts is coding join}
    Let $\sN$ be a structure and let $\sN_1,\sN_2$ be {quantifier-free} reducts of $\sN$ such that $\sN = \sN_1\vee\sN_2$ in the lattice of quantifier-free reducts of $\sN$, up to interdefinability (i.e. $\aut(\sN) = \aut(\sN_1)\cap\aut(\sN_2)$). Then $\codingcla{\sN_1}\cap \codingcla{\sN_2} = \codingcla{\sN}$.
\end{theorem}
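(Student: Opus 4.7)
The plan is to prove the two inclusions by building $\cK$-configurations out of each other, leveraging the quantifier-free reduct structure. For the easy inclusion $\codingcla{\sN}\subseteq\codingcla{\sN_1}\cap\codingcla{\sN_2}$: since each $\sN_i$ is a quantifier-free reduct of $\sN$, every atomic $R\in\Sig(\lL_{\sN_i})$ is defined in $\sN$ by some quantifier-free $\lL_\sN$-formula $\psi_R$, and this definition is preserved on every substructure. Given an $\sN$-configuration $\bigl(I,(f_A)_{A\in\age(\sN)}\bigr)$ of $T$ and $A_i\in\age(\sN_i)$, choose an $\lL_\sN$-enrichment $A\in\age(\sN)$ whose $\lL_{\sN_i}$-reduct is $A_i$ and set $f_{A_i}:=f_A$, while $I_i(R):=I(\psi_R)$ is obtained by extending $I$ to quantifier-free formulas as in \Cref{rmk:coding-qf-definitions}. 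This yields a witnessing $\sN_i$-configuration of $T$.

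For the reverse (and main) inclusion, suppose $T\in\codingcla{\sN_1}\cap\codingcla{\sN_2}$, with witnessing configurations of respective arities $n_1,n_2$:
\[
    \bigl(I_1,(f^1_{A_1})_{A_1\in\age(\sN_1)}\bigr)\qquad\text{and}\qquad\bigl(I_2,(f^2_{A_2})_{A_2\in\age(\sN_2)}\bigr).
\]
The hypothesis $\sN=\sN_1\vee\sN_2$ in the lattice of quantifier-free reducts unpacks to this: every atomic $R\in\Sig(\lL_\sN)$ admits a quantifier-free definition $\vphi_R(\bar x)$ over the combined language $\lL_{\sN_1}\cup\lL_{\sN_2}$, and, being quantifier-free, this definition transfers to all substructures. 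The natural move is to concatenate the two configurations into one of arity $n_1+n_2$: for $A\in\age(\sN)$ with $\lL_{\sN_i}$-reduct $A_i$, set
\[
    f_A(a):=\bigl(f^1_{A_1}(a),\,f^2_{A_2}(a)\bigr)\in\mM^{n_1+n_2},
\]
and for atomic $R\in\Sig(\lL_\sN)$ put $\vphi_R$ in disjunctive normal form $\bigvee_i\bigwedge_j\psi_{i,j}$, with each literal $\psi_{i,j}$ lying in exactly one of $\lL_{\sN_1}$ or $\lL_{\sN_2}$. Then define $I(R)$ by replacing each $\lL_{\sN_k}$-literal $\psi_{i,j}$ with $I_k(\psi_{i,j})$ fed the appropriate (first $n_1$ or last $n_2$) coordinates of the relevant $f_A$-tuples.

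The verification that $(I,(f_A))$ is an $\sN$-configuration is then a direct unravelling: for $\bar a\in A^{\arity(R)}$ one has $A\models R(\bar a)$ iff the combined $(\lL_{\sN_1}\cup\lL_{\sN_2})$-reduct of $A$ satisfies $\vphi_R(\bar a)$---this is exactly where quantifier-freeness of $\vphi_R$ is used---iff the corresponding disjunction of conjunctions of $I_k(\psi_{i,j})$-instances holds in $\mM$, which by construction is $\mM\models I(R)(f_A(\bar a))$. The main obstacle, and the key use of the hypothesis, is ensuring that each atomic $\lL_\sN$-formula is \emph{quantifier-free} definable from $\lL_{\sN_1}\cup\lL_{\sN_2}$; this is precisely why the join must be taken in the lattice of quantifier-free reducts rather than in the coarser lattice of first-order reducts, where the translation would fail to descend to finite substructures and the concatenation of configurations would not witness membership in $\codingcla{\sN}$.
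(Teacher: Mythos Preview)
Your proof is correct and takes essentially the same approach as the paper: both directions are handled by composing/concatenating the coding maps, with the key step being the concatenation $f_A(a)=(f^1_{A_1}(a),f^2_{A_2}(a))$ together with the quantifier-free translation of $\lL_\sN$-atoms into $\lL_{\sN_1}\cup\lL_{\sN_2}$. The only cosmetic differences are that the paper first replaces $\sN$ by a qf-interdefinable structure in the language $\lL_{\sN_1}\cup\lL_{\sN_2}$ (so that $I(R)$ is defined on atoms directly, avoiding your DNF unwinding), works with a single map $f:N\to\sM^{n_1+n_2}$ on the whole structure via \Cref{age coding iff structure coding} rather than with $(f_A)_{A\in\age(\sN)}$, and explicitly passes to a common elementary extension where you implicitly sit in a monster model.
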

\begin{proof}
    The inclusion $\codingcla{\sN}\subseteq \codingcla{\sN_1}\cap \codingcla{\sN_2}$ follows almost immediately, from the fact that $\sN_i\reducesqf\sN$, for $i\in[2]$, and does not require the assumption that $\sN = \sN_1\vee\sN_2$. Suppose that $T\in\codingcla{\sN}$ and let $\sM\vDash T$ be a model admiting an $\sN$-configuration, witnessed by $I,f$. Since $\sN_1\reducesqf\sN$ for each $R\in\lL_{\sN_1}$, there is a quantifier-free $\lL_{\sN}$-formula $\vphi_R$ such that $\sN\vDash\vphi_R(a)$ if, and only if, $\sN_1\vDash R(a)$, for all $a\in R$. Let $I(\vphi_R)$ be as in \Cref{rmk:coding-qf-definitions} and define 
    \[
    \begin{aligned}
        J:\Sig(\lL_{\sN_1}) &\to        \lL_{\sM}\\
                R           &\mapsto     I(\vphi_R).
    \end{aligned}
    \]
    It is clear that $f,J$ witness that $\sM\in\codingcla{\sN_1}$. Similarly, $\sM\in\codingcla{\sN_2}$, and the result follows.

    For the other inclusion, observe that since $\sN = \sN_1\vee\sN_2$, there is a structure $\sN'$ which is interdefinable with $\sN$ such that $\lL_{\sN'} = \lL_{\sN_1}\cup\lL_{\sN_2}$. By \Cref{ncoding is closed under interpretation} we may assume that $\sN = \sN'$.
     Let $\sM_i\models T$ and let $I_i, f_i$ be a coding of $\sN_i$ in $\sM_i$, for $i\in\{1,2\}$. By taking a common elementary extension, we may assume $\sM_1=\sM_2=\sM$. Then $f_i:N_i\to \sM^{n_i}$ for some $n_1,n_2\in \bN$. Let $f:N\to \sM^{n_1+n_2}$ be defined as $f_1\frown f_2$, i.e., $f(a):=f_1(a)\frown f_2(a)$ for all $a\in \mathcal{N}$. Let $\pi_i$ be the obvious projection of $\sM^{n_1+n_2}$ onto $\sM^{n_i}$, for $i\in \{1,2\}$. Then, for every $i\in \{1,2\}$ and every $R\in \lL_i$, let $I(R):= I_i\circ \pi_i$. 
     By construction, $I,f$ is an $\sN$-configuration in $\sM$.
\end{proof}

\Cref{coding reducts is coding join} can be seen as a generalisation of \Cref{fact: equality of closedness} in the context of the following observation: 
\begin{observation}
Let $\cK$ is a class of structures, and let $\cla{E}$ be the class of all finite sets. Then: 
    \[
    \codingcla{\cK} = \codingcla{\cK\fullprod\cla{E}} = \codingcla{\cK[\cla{E}]} = \codingcla{\cla{E}[\cK]} = \codingcla{\cK\sqcup \cla{E}}.
    \]
    If, additionally, $\cK$ contains only finite structures, and it contains structures of size $n$ for every $n\in \bN$, then the above are also equal to $\codingcla{\cK\suppos\cla{E}}$. We can then deduce all the equalities in \Cref{fact: equality of closedness} using that, up to $\reducesbothqf$, we have in the lattice of quantifier-free reducts we have the following:
 \begin{itemize}
     \item $\sM[\cla{E}] \vee \cla{E}[\sN]= \sM[\sN]$,
     \item $\sM \boxtimes \cla{E} \vee \cla{E}\boxtimes \sN= \sM\boxtimes \sN$,
     \item $\sM \sqcup \cla{E} \vee \cla{E}\sqcup \sN= \sM\sqcup \sN$,
     \item  $\sM \suppos \cla{E} \vee \cla{E}\suppos \sN= \sM\suppos \sN$.
 \end{itemize}   
\end{observation}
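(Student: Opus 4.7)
The plan is to reduce every equality in the first display to the triviality that $\cla{E}$ has empty signature, so all the ``products with $\cla{E}$'' add only bookkeeping sorts/projections but no new relations to code. For the backward inclusions $\codingcla{\cK\fullprod\cla{E}} \subseteq \codingcla{\cK}$, and similarly for $[\cla{E}]$, $\cla{E}[\cdot]$, $\sqcup\cla{E}$, I would take a coding $(f_{A\fullprod B}, I)$ of some $A\fullprod B$ (with $A\in\cK$, $B\in\cla{E}$) in $\sM\vDash T$ and restrict to a single fiber $A\fullprod\{*\}$ for a chosen $*\in B$, identifying it with $A$ via the obvious bijection and setting $f_A(a):=f_{A\fullprod B}(a,*)$. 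For the forward inclusions, starting from a coding $(f_A,I)$ of $A\in\cK$, I would work in an elementary extension $\sM'\succ \sM$ large enough to contain $|B|$ distinct elements (existing by compactness/saturation) in order to encode the $B$-projection by distinct representatives, then use $(f_A, I)$ on the $A$-component; the symbols of $\cK\fullprod\cla{E}$, $\cK[\cla{E}]$, $\cla{E}[\cK]$, $\cK\sqcup\cla{E}$ reduce via the appropriate projection to relations on $A$ alone, so the same $I$ (suitably padded with dummy variables) discharges all coding constraints.

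For the free-superposition case, $\cK\suppos\cla{E}$ requires pairing $A\in\cK$ with some $B\in\cla{E}$ \emph{of the same underlying size}, which is exactly what the extra hypothesis on $\cK$ guarantees; once this matching is available, the coding arguments carry through verbatim, since again $\cla{E}$ contributes no atomic relations. Next, I would verify the four lattice identities by a direct automorphism-group computation. For $\sM[\cla{E}] \vee \cla{E}[\sN] = \sM[\sN]$, an element of $\aut(\sM[\cla{E}]) \cap \aut(\cla{E}[\sN])$ must preserve the projection $v$ and act as an automorphism of $\sM$ on the quotient (being in $\aut(\sM[\cla{E}])$) \emph{and} preserve the $\sN$-structure inside each fiber (being in $\aut(\cla{E}[\sN])$), which is precisely the defining condition of $\aut(\sM[\sN])$; the reverse inclusion is immediate. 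The analogous automorphism computations for $\fullprod$, $\sqcup$, and $\suppos$ are essentially the same, with the sort-projections (resp.~sort-indicator predicates) playing the role of $v$.

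With the first part of the observation and the four lattice identities in hand, the equalities of \Cref{fact: equality of closedness} are immediate: applying \Cref{coding reducts is coding join} to $\sM[\sN] = \sM[\cla{E}] \vee \cla{E}[\sN]$ yields
\[
    \codingcla{\sM[\sN]} = \codingcla{\sM[\cla{E}]} \cap \codingcla{\cla{E}[\sN]} = \codingcla{\sM}\cap\codingcla{\sN},
\]
where the second equality is the first part of the observation applied to $\cK = \{\sM\}$ and $\cK=\{\sN\}$. The cases of $\fullprod$, $\sqcup$, and $\suppos$ are handled identically. The main technical hurdle is a careful bookkeeping of the multisorted nature of the constructions: one has to make precise in what sense ``coding a multisorted structure'' is meant (for instance by Morleyising into a one-sorted relational language containing sort predicates and projections), and then verify that the $\cla{E}$-component contributes no new relations in that presentation. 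In the $\suppos$ case there is the additional subtlety that the size-matching hypothesis must be uniformly available over all $A\in\cK$, which is precisely the role of the extra assumption.
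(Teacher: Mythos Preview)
The paper states this as an observation without proof, so there is nothing to compare against directly; your outline is essentially the argument the paper has in mind and is correct. Two small remarks that would tighten it.

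First, for the lattice identities you propose an automorphism-group computation. That works, but a more direct (and safer, since the equivalence between ``join in the reduct lattice'' and ``intersection of automorphism groups'' is really only clean in the $\aleph_0$-categorical setting) route is to note that in each case the language of $\sM\star\sN$ is literally the union of the languages of $\sM\star\cla{E}$ and $\cla{E}\star\sN$, for $\star\in\{[\cdot],\boxtimes,\sqcup,\ast\}$. This is exactly the form used inside the proof of \Cref{coding reducts is coding join} (``we may assume $\lL_{\sN'}=\lL_{\sN_1}\cup\lL_{\sN_2}$''), so you can plug in directly without passing through automorphism groups.

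Second, your forward-inclusion argument for the first display (pad with $|B|$ distinct representatives in an elementary extension) is fine in spirit but slightly overcomplicated. Since $\cla{E}$ has empty signature, after Morleyising the multisorted product into a one-sorted relational language the only \emph{new} atomic relations beyond those of $\cK$ are the sort predicates and (graphs of) projections; these are coded by equality and padding coordinates, so a single infinite model already suffices and no saturation is needed. Your acknowledgement of the multisorted bookkeeping as the main hurdle is exactly right.
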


\section{Structural Ramsey theory through collapsing indiscernibles}\label{sec: Collapsing indiscernible as a witness for the Ramsey property}

The main result of this \namecref{sec: Collapsing indiscernible as a witness for the Ramsey property} is \Cref{reduct of higher arity is not Ramsey}, which roughly says that (ages of) ``higher-arity'' reducts of an $\aleph_0$-categorical ultrahomogeneous structure cannot have the Ramsey property. The proof of the theorem is not combinatorial but uses exclusively model-theoretic tools, notably the notion of collapsing indiscernibles. 

Let us start by recalling that, under some mild hypotheses, the notions of $\ncodingcla{\cK}$ and collapsing generalised indiscernibles are closely related. More precisely: 

\begin{fact}[{\cite[Theorem 3.14]{GH19}}]\label{fact: noncoding iff collapsing Ramsey}
    Let $\sI$ be the Fraïssé limit of a Ramsey class with the strong amalgamation property, in a finite relational language. Then the following are equivalent for a theory $T$:
    \begin{enumerate}
        \item\label{item:T noncoding obsolete} $T\in \ncodingcla{\sI}$.
        \item\label{item:T collapses obsolete} $T$ collapses $\sI$-indiscernibles.
    \end{enumerate}
\end{fact}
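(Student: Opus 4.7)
The plan is to prove both directions by contraposition, beginning with $(1) \Rightarrow (2)$. Suppose $T$ does not collapse $\sI$-indiscernibles, so there is a model $\sM \vDash T$ containing an $\sI$-indiscernible sequence $(a_i)_{i \in \sI}$ which does not collapse to any strict quantifier-free reduct of $\sI$. Since $\sI$ is an $\aleph_0$-categorical structure in a finite relational language, \Cref{uncollapsed finite iff uncollapsed} applies and gives that $(a_i)$ is non-collapsing in the sense of \Cref{def:indiscernible collapsing qftp}, i.e.\ $\qftp_{\sI}(\bar i) = \qftp_{\sI}(\bar j) \iff \tp_{\sM}(a_{\bar i}) = \tp_{\sM}(a_{\bar j})$, where $a_{\bar i} := (a_{i_1}, \ldots, a_{i_k})$. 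I would then use this sequence to build an $\sI$-configuration in $\sM$ directly. Set $f(i) := a_i$; for each $R \in \Sig(\lL_{\sI})$ of arity $k$, $\aleph_0$-categoricity of $\sI$ provides only finitely many qftps of $k$-tuples, which the non-collapse equivalence translates into two disjoint finite sets of complete $\sM$-types, $T_R^+ := \{\tp(a_{\bar i}) : \sI \vDash R(\bar i)\}$ and $T_R^- := \{\tp(a_{\bar i}) : \sI \vDash \neg R(\bar i)\}$. A routine separation argument—for each $(p,q) \in T_R^+ \times T_R^-$ pick $\vphi_{p,q} \in p$ with $\neg\vphi_{p,q} \in q$, and then set $\psi_R := \bigvee_{p \in T_R^+} \bigwedge_{q \in T_R^-} \vphi_{p,q}$—produces a formula $I(R) := \psi_R$ satisfying $\sI \vDash R(\bar i) \iff \sM \vDash I(R)(f(\bar i))$ for every $\bar i \in \sI^k$. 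The pair $(I, f)$ is the desired $\sI$-configuration, showing $T \in \codingcla{\sI}$.

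For the converse direction $(2) \Rightarrow (1)$, suppose $T \in \codingcla{\sI}$ is witnessed by a model $\sM \vDash T$ together with $f : \sI \to \sM^n$ and $I : \Sig(\lL_{\sI}) \to \lL_{\sM}$. Since $\sI$ is the Fraïssé limit of a Ramsey class, \Cref{rc mp} yields that $\sI$ has the modelling property; applying this to the sequence $(f(i))_{i \in \sI}$ produces, in a suitable elementary extension of $\sM$, an $\sI$-indiscernible sequence $(b_i)_{i \in \sI}$ based on $(f(i))_{i \in \sI}$. The key step is then to pull the coding property through the basing relation: for every $R \in \Sig(\lL_{\sI})$ and every tuple $\bar i$ from $\sI$, applying the basing property with $\Delta = \{I(R)\}$ gives some $\bar j$ with $\qftp_{\sI}(\bar j) = \qftp_{\sI}(\bar i)$ such that $I(R)(b_{\bar i}) \iff I(R)(f(\bar j)) \iff R(\bar j) \iff R(\bar i)$, where the middle equivalence is the coding property and the last one uses that $R$ belongs to the qftp. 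Distinct qftps therefore produce distinct complete $\sM$-types for the $b_{\bar i}$, so $(b_i)$ is non-collapsing in the sense of \Cref{def:indiscernible collapsing qftp}; by \Cref{uncollapsed finite iff uncollapsed} it cannot collapse to any strict quantifier-free reduct of $\sI$, and therefore $T$ does not collapse $\sI$-indiscernibles.

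The only substantive technical step in this argument is the bridge between the paper's notion of non-collapse (\Cref{def:indiscernible collapsing}) and the more combinatorial one (\Cref{def:indiscernible collapsing qftp}), provided by \Cref{uncollapsed finite iff uncollapsed}; once this equivalence is in hand, both directions essentially assemble from the type-separation construction and the modelling property. I would also observe that the strong amalgamation hypothesis in the statement of the Fact plays no role in either direction, which suggests a natural strengthening of the Fact to arbitrary $\aleph_0$-categorical Fraïssé limits of Ramsey classes—which, looking ahead to \Cref{thm:Ramsey class collapse-intro}, is precisely what the authors aim to establish next.
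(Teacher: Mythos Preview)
Your proof is correct and follows the same overall shape as the paper's proof of its generalisation, \Cref{thm:Ramsey class collapse}: both directions are by contraposition, the non-collapse direction bridges via \Cref{uncollapsed finite iff uncollapsed} and $\aleph_0$-categoricity to manufacture a coding, and the coding direction runs through the modelling property (\Cref{rc mp}) applied to $(f(i))_{i\in\sI}$, then pushes the coding equivalence along the basing relation.

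The one notable difference is in the first direction. Where you use a direct finite type-separation argument (build $\psi_R$ from finitely many pairwise-separating formulas $\vphi_{p,q}$), the paper takes a longer route: it expands $\sM$ by a predicate $R$ naming the range of the sequence to form a structure $\sM_{\Ifrak}$, proves a claim that the sequence remains indiscernible in this expansion, deduces that the induced structure $\Ifrak$ on the range is itself $\aleph_0$-categorical with quantifier elimination, and only then extracts isolating formulas $\psi_{q_j}$. Your argument is more elementary and achieves the same conclusion; the paper's detour seems to be written with an eye toward the more general countable-language setting of \Cref{thm:Ramsey class collapse}, but even there your separation argument would go through, since $\aleph_0$-categoricity alone guarantees finiteness of $T_R^+$ and $T_R^-$. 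Your closing observation that SAP is unused is exactly right and anticipates the paper's own generalisation.
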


We will use this second characterisation later, in order to prove our transfer principles. For instance, the following is immediate from \Cref{thm:nip-k-collapse} combined with \Cref{fact: noncoding iff collapsing Ramsey}:

\begin{fact}\label{fact:nip-n-nc-k}
    Let $\cla{H}_{n+1}$ the class of finite $(n+1)$-hypergraphs. Then the following are equivalent for a first-order theory $T$:
        \begin{enumerate}
            \item $T$ is NIP$_n$.
            \item $T$ is $\ncodingcla{\cla{H}_{n+1}}$.
        \end{enumerate}
\end{fact}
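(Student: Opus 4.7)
The plan is to chain Fact \ref{thm:nip-k-collapse} with Fact \ref{fact: noncoding iff collapsing Ramsey}, applied to the ordered random $(n+1)$-hypergraph $\sI := \mathrm{Flim}(\cla{OH}_{n+1})$. By \cite{NR1977}, $\cla{OH}_{n+1}$ is a Ramsey class with strong amalgamation in a finite relational language, so $\sI$ satisfies the hypotheses of Fact \ref{fact: noncoding iff collapsing Ramsey}; moreover $\cla{LO}$ is a strict quantifier-free reduct of $\sI$. Therefore, if $T$ is NIP$_n$, Fact \ref{thm:nip-k-collapse} gives that every $\sI$-indiscernible sequence in the monster model of $T$ is $\cla{LO}$-indiscernible, so $T$ collapses $\sI$-indiscernibles in the sense of \Cref{def:indiscernible collapsing}; by Fact \ref{fact: noncoding iff collapsing Ramsey}, this yields $T\in\ncodingcla{\sI}=\ncodingcla{\cla{OH}_{n+1}}$.

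To identify $\ncodingcla{\cla{OH}_{n+1}}$ with $\ncodingcla{\cla{H}_{n+1}}$, I would appeal to Fact \ref{fact: equality of closedness}, which gives $\codingcla{\cla{LO}\fullprod\cla{H}_{n+1}} = \codingcla{\cla{LO}}\cap\codingcla{\cla{H}_{n+1}}$; since an ordered hypergraph is coded exactly when both its order and its hyperedge reduct are simultaneously coded, this equals $\codingcla{\cla{OH}_{n+1}}$. It then suffices to verify the inclusion $\codingcla{\cla{H}_{n+1}}\subseteq\codingcla{\cla{LO}}$. For this, I would note that any coding of $\cla{H}_{n+1}$ specialises (via the $(n+1)$-partite $(n+1)$-uniform hypergraphs sitting inside $\cla{H}_{n+1}$) to an IP$_n$ witness, and since by the preliminaries NIP$_1\Rightarrow$NIP$_n$ (equivalently IP$_n\Rightarrow$IP$_1$), $T$ is unstable and in particular has the order property, hence $T\in\codingcla{\cla{LO}}$. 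This yields $\codingcla{\cla{H}_{n+1}}=\codingcla{\cla{OH}_{n+1}}$, closing the forward implication.

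For the converse, I would argue contrapositively using Fact \ref{thm:nip-k-collapse}: if $T$ is not NIP$_n$, then there is an $\sI$-indiscernible sequence $(a_i)_{i\in\sI}$ that fails to be $\cla{LO}$-indiscernible, witnessed by a formula $\vphi$ and two tuples $\bar i, \bar j$ with the same $\cla{LO}$-type but with $\vphi(\bar a_{\bar i})\neq\vphi(\bar a_{\bar j})$. By quantifier elimination in $\cla{OH}_{n+1}$, the discrepancy is localised to the hyperedge relation, so combining the modelling property (Fact \ref{rc mp}) with the observation that every finite $(n+1)$-hypergraph embeds into $\sI$ once a linear order is chosen, the formula $\vphi$ yields a uniform $\cla{H}_{n+1}$-coding. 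The main obstacle I anticipate is symmetrising the extracted formula over the $(n+1)!$ permutations of its arguments so that it reflects the (symmetric) hyperedge relation uniformly on unordered tuples; I expect this to be handled by conjoining with all $S_{n+1}$-permutations and exploiting that $\aut(\sI)$ acts sufficiently transitively on hyperedges to preserve the detection property under the indiscernibility assumption.
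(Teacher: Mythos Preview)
Your approach aligns with the paper's: the paper simply asserts the result follows from \Cref{thm:nip-k-collapse} and \Cref{fact: noncoding iff collapsing Ramsey}, together with a pointer to \cite[Proposition~5.2]{CPT19}, and gives no further detail. You correctly go further, noting that one must pass between $\ncodingcla{\cla{OH}_{n+1}}$ and $\ncodingcla{\cla{H}_{n+1}}$ --- a step the paper leaves implicit --- and your use of \Cref{fact: equality of closedness} (with $\cla{OH}_{n+1}=\cla{LO}\suppos\cla{H}_{n+1}$) together with $\codingcla{\cla{H}_{n+1}}\subseteq\codingcla{\cla{LO}}$ handles this.

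Two remarks. First, there is a redundancy: the sub-lemma in your second paragraph (``coding $\cla{H}_{n+1}$ yields IP$_n$'') \emph{is} the contrapositive of (1)$\Rightarrow$(2), so once it is in hand, paragraph~1 is superfluous. Second, for the converse you are working harder than needed. Rather than extracting a coding from a non-$\cla{LO}$-collapsing $\cla{OH}_{n+1}$-indiscernible (where, as you correctly flag, symmetrising over $S_{n+1}$ is genuinely delicate), it is cleaner to build the $\cla{H}_{n+1}$-configuration directly from the IP$_n$ witness: given $\vphi(\bar x;\bar y_1,\dots,\bar y_n)$ with arrays $(\bar a^j_i)$ and $(\bar b_I)$, send each vertex $v$ of a finite $(n{+}1)$-hypergraph $A$ to the concatenated tuple $f_A(v):=\bar b_{I_v}\frown\bar a^1_v\frown\cdots\frown\bar a^n_v$, where $I_v:=\{(i_1,\dots,i_n):R^A(v,v_{i_1},\dots,v_{i_n})\}$. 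The formula $\psi(\bar z_0,\dots,\bar z_n):=\vphi(\pi_0(\bar z_0);\pi_1(\bar z_1),\dots,\pi_n(\bar z_n))$ then satisfies $\psi(f_A(u_0),\dots,f_A(u_n))\leftrightarrow R^A(u_0,\dots,u_n)$ for \emph{every} ordering of the arguments, since the symmetry of $R^A$ is already baked into the definition of the $I_v$'s. This is essentially the route taken in \cite{CPT19} and sidesteps the symmetrisation obstacle entirely.
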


The reader will find a similar statement in \cite[Proposition 5.2]{CPT19}. 

In the context of this \namecref{sec: Collapsing indiscernible as a witness for the Ramsey property}, our approach is to use \Cref{fact: noncoding iff collapsing Ramsey} as a sort of criterion to witness the Ramsey property in a given class of structures. More precisely, if given an ultrahomogeneous Fraïssé limit $\sI$, if there is a theory $T$ which codes $\sI$ but collapses $\sI$-indiscernibles, then we must conclude that $\sI$ is not Ramsey.

Notice that the underlying assumption in \cite{GH19} is that all classes are in a \emph{finite relational language}. We give a more general version of \Cref{fact: noncoding iff collapsing Ramsey}, as we don't require that the language is finite and that the age of the structure $\sI$ has the \emph{strong} amalgamation property.

\begin{theorem}\label{thm:Ramsey class collapse}
    Let $\sI$ be an $\aleph_0$-categorical Fraïssé limit in a countable language. Then, for any theory $T$, $T\in \ncodingcla{\sI}$ implies that $T$ collapses $\sI$-indiscernibles.
    
    Moreover, if $\age(\sI)$ is Ramsey, these are equivalent.
\end{theorem}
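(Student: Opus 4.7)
My plan is to prove both implications by contrapositive, using that \Cref{uncollapsed finite iff uncollapsed} equates the two flavours of ``non-collapse'' (from \Cref{def:indiscernible collapsing} and \Cref{def:indiscernible collapsing qftp}) in the present setting, since $\sI$ is ultrahomogeneous and $\aleph_0$-categorical in a countable language (and in particular has quantifier elimination). I expect the main obstacle to be the first implication: producing a \emph{single} $\lL$-formula $I(R)$ (rather than a scheme or an infinite disjunction) without assuming $T$ is $\aleph_0$-categorical. The resolution will be that only finitely many $\lL$-types of the tuples $a_{\bar{\imath}}$ are realized (thanks to $\aleph_0$-categoricity of $\sI$ together with non-collapse), which renders a standard separation trick available.

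For the first direction, ``$T\in\ncodingcla{\sI}\implies T$ collapses $\sI$-indiscernibles'', I contrapose and start from a non-collapsing $\sI$-indiscernible sequence $(a_i)_{i\in \sI}$ in some $\sM\vDash T$. For each $R\in\Sig(\lL_\sI)$ of arity $k$, the set $\{\bar{\imath}\in\sI^k : \sI\vDash R(\bar{\imath})\}$ is a union of finitely many $\qftp_\sI$-classes (by $\aleph_0$-categoricity of $\sI$), and by non-collapse these correspond bijectively to a finite collection of complete $\lL$-types of the tuples $a_{\bar{\imath}}$. A finite type-separation argument -- for each pair $(p,q)$ of realized $\lL$-types with $p$ coming from ``$R$ holds'' and $q$ coming from ``$R$ fails'', pick a distinguishing formula $\vphi_{p,q}\in p$ with $\neg \vphi_{p,q}\in q$, and set $I(R)(\bar x):=\bigvee_{p}\bigwedge_q \vphi_{p,q}(\bar x)$ -- produces a single $\lL$-formula satisfying $\sI\vDash R(\bar{\imath})\iff\sM\vDash I(R)(a_{\bar{\imath}})$. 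Fixing, for each $A\in\age(\sI)$, an embedding $e_A\colon A\hookrightarrow \sI$ and setting $f_A(x):=a_{e_A(x)}$ then yields the desired $\sI$-configuration $(I,(f_A))$ in $\sM$, showing $T\in\codingcla{\sI}$.

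For the moreover direction, assume $\age(\sI)$ is Ramsey and $T\in\codingcla{\sI}$, witnessed by $(I,(f_A)_{A\in\age(\sI)})$ with $f_A\colon A\to \sM^n$ in some $\sM\vDash T$. I first produce an auxiliary sequence indexed by $\sI$ in an elementary extension, and then apply the modelling property. Consider the partial type $p((x_i)_{i\in\sI})$ consisting of the literals $I(R)(x_{\bar{\imath}})$ when $\sI\vDash R(\bar{\imath})$ and $\neg I(R)(x_{\bar{\imath}})$ otherwise; any finite fragment mentions only a finite substructure $X\in\age(\sI)$, and is satisfied in $\sM$ by $(f_X(x))_{x\in X}$, so $p$ is finitely satisfiable and realized by some $(b_i)_{i\in \sI}$ in $\sM'\succeq \sM$. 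Then \Cref{rc mp} converts the Ramsey hypothesis into the modelling property for $\sI$, supplying an $\sI$-indiscernible sequence $(c_i)_{i\in\sI}$ in a model of $T$ locally based on $(b_i)$. To verify non-collapse, take $\qftp_\sI(\bar{\imath})\neq \qftp_\sI(\bar{\jmath})$; quantifier elimination in $\sI$ yields a relation $R$ distinguishing them, and basedness with $\Delta=\{I(R)\}$, combined with the defining property of $(b_i)$, forces $I(R)(c_{\bar{\imath}})$ and $\neg I(R)(c_{\bar{\jmath}})$, hence $\tp(c_{\bar{\imath}})\neq\tp(c_{\bar{\jmath}})$, so $T$ does not collapse $\sI$-indiscernibles.
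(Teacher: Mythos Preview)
Your proof is correct and follows the same overall strategy as the paper. Your first direction is in fact more direct: you build $I(R)$ by a finite type-separation argument (the Boolean combination $\bigvee_p\bigwedge_q\vphi_{p,q}$), whereas the paper takes a detour through naming the sequence by a predicate, proving the sequence remains indiscernible in this expansion, and then arguing via $\aleph_0$-categoricity of the induced structure on $\{\bar a_i:i\in\sI\}$ that the relevant types are isolated along the sequence; both routes produce the same formulas, but yours avoids the auxiliary expansion entirely. For the second direction the arguments coincide, with your version making the compactness step explicit while the paper leaves it implicit via \Cref{age coding iff structure coding}. One small point to tighten: when you write ``quantifier elimination in $\sI$ yields a relation $R$ distinguishing them'', the distinguishing atomic formula could in principle be an equality rather than a relation symbol from $\Sig(\lL_\sI)$; this is easily handled by ensuring the sequence $(b_i)$ is injective (add inequalities $x_i\neq x_j$ to the type $p$, after arranging the $f_A$ to be injective by padding with an extra coordinate), so that basedness with $\Delta=\{x=y\}$ propagates distinctness to $(c_i)$. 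The paper's proof has the same tacit gap.
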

 The theorem may fail when uncountable languages are involved. (See \Cref{example-Guingona equivalence fails in infinite language 2} below.) 

\begin{proof}
 Let $\lL_T, \lL_\sI$ be the respective languages of $T, \sI$.

For the first part of the theorem, we argue via contraposition. Suppose that $T$ does not collapse $\sI$-indiscernibles. Then, by definition there is some $\sM\vDash T$ and a non-collapsing $\sI$-indiscernible $(\bar a_i:i\in \sI)$, in $\sM$. Without loss of generality, we may assume that $\sM$ is Morleyised. We wish to show that $T\in\codingcla{\Ical}$, so we must find some $m\in\Nbb$, a function $J:\Sig(\Lcal_{\Ical})\to \Lcal$ and a function $f:I\to M^n$ such that:
    
        \[
            \sI \vDash R(a) \iff \sM\vDash J(R)(f(a)),
        \]
    
        for all $R\in\mathsf{Sig}(\lL_\sI)$ and all $a\in I^{\mathsf{arity}(R)}$.
        
        The idea is to code the configuration along the non-collapsing sequence $(\bar a_i:i\in\sI)$, that is, to use the function $J:i\mapsto \bar a_i$, for $i\in I$ (so $m=|\bar a_i|)$.
        
        To this end, let $R\in \lL_{\sI}$ be an $n$-ary relation symbol. Since $\sI$ is $\aleph_0$-categorical and finitely homogeneous it has quantifier-elimination, so there are finitely many complete quantifier-free types $p_1,\dots, p_k\in S_{n}^{\sI}(\emptyset)$, such that: 
    
        \begin{equation}\label{eq:isolating-types}
            \sI\vDash R(i_1,\dots, i_n)\iff \sI\vDash \bigvee_{j=1}^k p_j(i_1,\dots, i_n)
        \end{equation}
    
        for all $i_1,\dots, i_n\in \sI$.
        
        At this point, by \Cref{uncollapsed finite iff uncollapsed}, we get that $(\bar a_i:i\in \sI)$ is non-collapsing, in the sense of \Cref{def:indiscernible collapsing qftp}. Explicitly, we have that:
        
        \begin{equation}\label{eq:non-collapsing}
            \begin{aligned}
                \qftp(i_1,\dots, i_n)=\qftp&(j_1,\dots, j_n)\iff \\ &\tp(\bar a_{i_1},\dots, \bar a_{i_n})=\tp(\bar a_{j_1},\dots, \bar a_{j_n})   
            \end{aligned}
        \end{equation}
    
        for all $i_1,\dots, i_n, j_1,\dots, j_n\in \sI$. 
        
        The point is that for each quantifier-free type $p_j$ in \labelcref{eq:isolating-types} there is a complete type $q_j\in S_n^{\sM}(\emptyset)$ such that:
        
        \begin{equation}\label{eq:isolating-types-replaced}
            \sI \vDash R(i_1,\dots, i_n)\iff (\bar a_{i_1},\dots, \bar a_{i_n}) \vDash \bigvee_{j=1}^k q_j(\bar x_1,\dots,\bar x_n),
        \end{equation}
    
        for all $i_1,\dots, i_n\in \sI$.
    
        Let $\sM_\Ifrak$ be the structure $\sM$ expanded by an $m$-ary relation symbol $R$, where $m=|\bar a_i|$, naming precisely the tuples in the sequence $(\bar a_i:i\in\sI)$, and let $\lL_\Ifrak := \Lcal\cup\{R\}$. We start with the following claim:
        
        \begin{claim}\label{claim:naming-an-ind}
            The sequence $(\bar a_i:i\in\sI)$ remains indiscernible in the structure $\sM_\Ifrak$. In particular, it is a non-collapsing indiscernible.
        \end{claim}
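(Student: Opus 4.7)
The plan is to exhibit automorphisms of $\sM_\Ifrak$ that permute the sequence $(\bar a_i)_{i\in I}$ via lifts of automorphisms of $\sI$. The crucial observation is that $R^{\sM_\Ifrak} = \{\bar a_i : i\in I\}$ is automatically setwise-invariant under any such permutation, so every such lift is automatically an automorphism of the expansion $\sM_\Ifrak$. I would first reduce to the case that $\sM$ is sufficiently saturated and strongly homogeneous by passing to a monster model of $T$ if necessary; note that the interpretation of $R$ is unchanged under this passage, since $R$ is defined by naming the specific set $\{\bar a_i : i\in I\}$.

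Next, fix $\bar i, \bar j\in I^n$ with $\qftp_\sI(\bar i) = \qftp_\sI(\bar j)$. By ultrahomogeneity of the Fraïssé limit $\sI$, I obtain some $\sigma\in \Aut(\sI)$ with $\sigma(\bar i) = \bar j$. The $\sI$-indiscernibility of $(\bar a_i)_{i\in I}$ in $\sM$ yields that the partial map $\tau_0\colon \bar a_i \mapsto \bar a_{\sigma(i)}$ (for $i \in I$) is partial elementary in $\lL$; by strong homogeneity of $\sM$, $\tau_0$ extends to some $\tau\in \Aut(\sM)$. Since $\sigma$ is a bijection on $I$, the automorphism $\tau$ maps $R^{\sM_\Ifrak}$ to itself setwise, so $\tau\in \Aut(\sM_\Ifrak)$. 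As $\tau(\bar a_{\bar i}) = \bar a_{\sigma(\bar i)} = \bar a_{\bar j}$, one concludes $\tp_{\sM_\Ifrak}(\bar a_{\bar i}) = \tp_{\sM_\Ifrak}(\bar a_{\bar j})$, establishing $\sI$-indiscernibility of $(\bar a_i)_{i\in I}$ in $\sM_\Ifrak$.

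For the ``in particular'' clause, the non-collapsing property (in the sense of \Cref{def:indiscernible collapsing qftp}) is a biconditional: equality of $\qftp_\sI$ if and only if equality of type in $\sM_\Ifrak$. The forward direction is exactly the indiscernibility just proved; for the reverse, equal types in the expansion $\sM_\Ifrak$ trivially imply equal types in $\sM$, and the non-collapsing property of $(\bar a_i)$ in $\sM$ closes the argument. The only real delicacy is the saturation step needed to extend $\tau_0$ to $\tau$; once handled, the rest is essentially tautological from ultrahomogeneity of $\sI$, $\sI$-indiscernibility in $\sM$, and the setwise $\tau$-invariance of $R$.
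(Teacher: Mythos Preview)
Your argument is correct but takes a genuinely different route from the paper's. The paper proves the claim by a direct induction on the complexity of $\lL_\Ifrak$-formulas: the only nontrivial step is a formula of the form $\exists \bar y\,(\bar y\in R\land\psi(\bar x_1,\dots,\bar x_n,\bar y))$, where ultrahomogeneity of $\sI$ is used to transport a witness $\bar a_k$ (with $k\in I$) along an automorphism of $\sI$ sending $\bar i$ to $\bar j$. Your approach instead lifts each $\sigma\in\Aut(\sI)$ to a partial elementary map $\bar a_i\mapsto\bar a_{\sigma(i)}$ (using $\sI$-indiscernibility in $\sM$), extends it to a full $\tau\in\Aut(\sM)$ by strong homogeneity, and observes that such $\tau$ fixes $R^{\sM_\Ifrak}$ setwise, hence lies in $\Aut(\sM_\Ifrak)$.

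Two small remarks. First, the extension step uses strong homogeneity (not saturation), and since $\lvert I\rvert=\aleph_0$ you need $\sM$ to be strongly $\aleph_1$-homogeneous; this is harmless here because the non-collapsing sequence may be taken in the monster model of $T$ to begin with, so the passage you describe is really a without-loss-of-generality assumption rather than a genuine reduction. Second, your argument tacitly uses that $i\mapsto\bar a_i$ is injective (so that $\tau_0$ is well-defined and a bijection of $\{\bar a_i:i\in I\}$); this follows from non-collapsing, since $i\neq j$ implies $\qftp_\sI(i,j)\neq\qftp_\sI(i,i)$ and hence $\bar a_i\neq\bar a_j$. With these points noted, your automorphism argument is cleaner and more conceptual than the paper's syntactic induction, at the minor cost of requiring the ambient model to be sufficiently homogeneous; the paper's proof works in an arbitrary $\sM\vDash T$.
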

    
        \begin{claimproof}
            Suppose not. Then there are $i_1,\dots,i_n,j_1,\dots,j_n\in\sI$ and an $\lL_\Ifrak$-formula $\phi(\bar x_1,\dots,\bar x_n)$ such that:
            \[
                \qftp(i_1,\dots, i_n)=\qftp(j_1,\dots, j_n)
            \]
            and
            \[
                \sM_\Ifrak\vDash\phi(\bar a_{i_1},\dots, \bar a_{i_n})\land\lnot\phi(\bar a_{j_1},\dots, \bar a_{j_n}). 
            \]
            We prove that this is impossible by induction on the complexity of $\phi(\bar x_1,\dots,\bar x_n)$. It is obviously impossible for quantifier-free formulas, and, in fact, it suffices to check that it is impossible when $\phi(\bar x_1,\dots,\bar x_n)$ is of the form:
            \[
                \exists \bar y(\bar y\in R\land \psi(\bar x_1,\dots,\bar x_n,y)).
            \]
            Suppose that there is such a formula and $i_1,\dots,i_n,j_1,\dots,j_n\in\Ical$ such that:
            \[
                \qftp(i_1,\dots, i_n)=\qftp(j_1,\dots, j_n)
            \]
            and:
            \[
                \sM_\Ifrak\vDash \exists \bar y(\bar y\in R\land \psi(\bar a_{i_1},\dots, \bar a_{i_n},y))\land\lnot\exists \bar y(\bar y\in R\land \psi(\bar a_{j_1},\dots,\bar  a_{j_n},y)) 
            \]
            In particular, there is some $k\in\Ical$ such that:
            \[
                \sM_\Ifrak\vDash \psi(\bar a_{i_1},\dots, \bar a_{i_n},\bar a_k).
            \]
            Since $\sI$ is ultrahomogeneous there is an automorphism $\sigma\in\Aut(\sI)$ sending $i_l$ to $j_l$ for all $l\in\{1,\dots,n\}$. Then:
            \[
                \qftp(i_1,\dots,i_n,k) = \qftp(j_1,\dots,j_n,\sigma(k)),
            \]
            and, by induction, we have that:
            \[
                \sM_\Ifrak\vDash\psi(\bar a_{j_1},\dots, \bar a_{j_n},\bar a_{\sigma(k)})), 
            \]
            contradicting the fact that:
            \[
                \sM_\Ifrak\vDash \lnot\exists \bar y(\bar y\in R\land \psi(a_{j_1},\dots, a_{j_n},y)).
            \]
            The claim then follows.
        \end{claimproof}
        
        In particular, by the claim above, \Cref{eq:non-collapsing,eq:isolating-types-replaced} hold in $\sM_\Ifrak$.
        
        Now, let $\Ifrak$ be the structure induced by $\sM_\Ifrak$ on the set $\{\bar a_i:i\in \Ical\}$. It is easy to check that $\Ifrak$ has quantifier-elimination.
        
        Since $\sI$ is $\aleph_0$-categorical, it has finitely many complete (quantifier-free) $n$-types, for all $n\in\Nbb$, and thus by \labelcref{eq:non-collapsing}, and since $\Ifrak$ has quantifier-elimination it follows that types in $\sM_\Ifrak$ determine (quantifier-free) types in $\Ifrak$. So $\Ifrak$ has finitely many complete $n$-types for all $n\in\Nbb$, and thus $\Ifrak$ is also $\aleph_0$-categorical. Therefore, by replacing the types $q_j$ in \labelcref{eq:isolating-types-replaced} by complete types in the induced structure $\Ifrak$, we can assume that each $q_j$ is isolated (along the sequence), by some $\Lcal$-formula in $\psi_{q_j}$. Replacing each $q_j$ with $\psi_{q_j}$ in \labelcref{eq:isolating-types-replaced} we get that:
        \[
            \sI\vDash R(i_1,\dots, i_n)\iff \Ifrak\vDash \bigvee_{j=1}^k q_j(a_{i_1},\dots, a_{i_n}),
        \]
        for all $i_1,\dots, i_n\in \Ical$.

        Since the domain of $\Ifrak$ is precisely the range of the function $f:\sI\to \sM^{|\bar a_i|}$, it follows that:
        \[
            \sI\vDash R(i_1,\dots, i_n)\iff \sM\vDash \bigvee_{j=1}^k q_j(f(i_1),\dots, f(i_n)),
        \]
        Thus, setting $J(R):= \bigvee_{j=1}^k \psi_{q_j}$, gives us that $\sM\in\codingcla{\sI}$, as required.\footnote{Observe that the Ramsey assumption was not used in this implication. It will, however, be used in the converse.}
    
    Now, for the ``moreover'' part, assume in addition that $\sI$ is Ramsey. Let $T\in\codingcla{\sI}$, let $\sM\vDash T$, and let $(a_i)_{i\in \sI}$ be an $\sI$-indexed sequence from $\sM$. Since $T\in\codingcla{\sI}$, let $I:\Sig(\lL_{\sI})\to \lL_T$, as in \Cref{def: K-config}. In particular, we have that
        \[
        \tp_{\lL_{T}}(a_{i_1},\dots, a_{i_n})=\tp_{\lL_{T}}(a_{j_1},\dots, a_{j_n})\implies \qftp_{\lL_{\sI}}(i_1,\dots, i_n)=\qftp_{\lL_{\sI}}(j_1,\dots, j_n),
        \]
        for all $i_1,\dots, i_n, j_1,\dots, j_n\in \sI$. 
        Notice that this implication is expressible in the language $\lL_T$ and is in the EM-type of the sequence $(a_i)_I$.
        Since $\sI$ is Ramsey, we can find, in some elementary extension $\sM'\succ\sM$, an $\sI$-indiscernible sequence $(b_i)_{i\in \sI}\in \sM'\succ \sM$ based on $(a_i)_{i\in \sI}$, by \Cref{rc mp}. This new sequence also satisfies the previous implication. By definition of $\sI$-indiscernibility, we also have that 
        \[
        \qftp_{\lL_{\sI}}(i_1,\dots, i_n)=\qftp_{\lL_{\sI}}(j_1,\dots, j_n) \implies
        \tp_{\lL_{T}}(b_{i_1},\dots, b_{i_n})=\tp_{\lL_{T}}(b_{j_1},\dots, b_{j_n})
        \]
        for all $i_1,\dots, i_n, j_1,\dots, j_n\in \sI$. In particular, combining the two implications, we see that the sequence $(b_i)_{i\in\sI}$ is non-collapsing, and hence, by \Cref{uncollapsed finite iff uncollapsed}, $T$ does not collapse $\sI$-indiscernibles.
\end{proof}

We revisit \Cref{example: Guingona equivalence fails in infinite language} to show an obstruction when the indexing structure is not Ramsey and the language is not countable. Of course, we will be primarily interested in Ramsey structures in the sequel, and thus this non-example should not be a cause for concern. Nonetheless, it illustrates the necessity of working in a countable language, for the coding/indexing structure.

\begin{example}\label{example-Guingona equivalence fails in infinite language 2}
    Let $\sN$ be the set $\mathbb{N}$ equipped with its full set-theoretic structure, and $\sN_S$ be its reduct to unary predicates for singleton. 
   We can see that $\sN_S$ doesn't collapse $\sN$-indiscernible (in the sense of \Cref{def:indiscernible collapsing qftp}) but we have clearly that $\sN_S \in \ncodingcla{\sN}$.     
\end{example}

We will observe now that an $n$-ary structure automatically collapses indiscernibles indexed by a higher arity structure. First, we recall the definitions:
\begin{definition}
Let $\sM$ be a relational $\lL$-structure. We say that $\sM$ is:
    \begin{enumerate}
        \item \emph{$n$-ary}, for $n\in\bN$, if it admits quantifier elimination in a relational language that consists only of relation symbols of arity at most $n$. 
        \item \emph{irreflexive} if for all relation symbols $P$ in $\lL$ and tuples $\bar{i}\in \sM^{\mathsf{arity}(P)}$, we have that $\sM\vDash\neg P(\bar{i})$ if an element occurs twice in $\bar{i}$.
    \end{enumerate}
\end{definition}

The following is clear: 
\begin{remark}
    Given a relational language $\lL'$ and an $\lL'$-structure $\sI$, we may always find a quantifier-free interdefinable language $\lL''$ such that, as an $\lL''$-structure, $\sI$ is irreflexive. Moreover it can be done in such a way that the notion of quantifier-free type remains the same: for all tuples $\bar{i},\bar{j}$ we have $\qftp^{\lL'}(\bar{i})=\qftp^{\lL'}(\bar{j})$ if, and only if, $\qftp^{\lL''}(\bar{i}) = \qftp^{\lL''}(\bar{j})$.
\end{remark}

\begin{proposition}\label{Prop:NaryCollapsingHigherArityStructure}
    Let $n$ be a positive integer and consider a relational language $\lL'$, and let $\lL_{\leq n}'$ be the sublanguage consisting of symbols of arity less or equal to $n$. 
    Let $\sI$ be an irreflexive $\lL'$-structure and  $\sJ$ the reduct to $\lL_{\leq n}'$. Then any $n$-ary structure $\sM$ collapses $\sI$-indiscernibles to $\sJ$-indiscernibles.
\end{proposition}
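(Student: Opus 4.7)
The plan is to unwind the definitions directly. I would start by fixing an $\sI$-indiscernible sequence $(a_i)_{i\in\sI}$ in a model of $\Th(\sM)$, together with tuples $\bar{\imath}=(i_1,\dots,i_k)$ and $\bar{\jmath}=(j_1,\dots,j_k)$ in $\sI$ satisfying $\qftp^{\lL'_{\leq n}}_\sJ(\bar{\imath}) = \qftp^{\lL'_{\leq n}}_\sJ(\bar{\jmath})$, aiming to conclude that $\tp(a_{i_1},\dots,a_{i_k}) = \tp(a_{j_1},\dots,a_{j_k})$. The strategy would be a three-step reduction.

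First, I would use that $\sM$ is $n$-ary to invoke quantifier elimination in a relational language of arity at most $n$; this implies that a complete type in $\sM$ is determined by the equality pattern together with the complete types of all of its subtuples of length at most $n$. This would reduce the goal to establishing $\tp(a_{i_{l_1}},\dots,a_{i_{l_m}}) = \tp(a_{j_{l_1}},\dots,a_{j_{l_m}})$ for every pair of corresponding $\leq n$-subtuples. Second, by $\sI$-indiscernibility, I would reduce this further to showing $\qftp^{\lL'}_\sI(i_{l_1},\dots,i_{l_m}) = \qftp^{\lL'}_\sI(j_{l_1},\dots,j_{l_m})$ for every such subtuple.

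The third step, which is the heart of the argument, would be to show that for tuples of length $m\leq n$, the quantifier-free $\lL'$-type in $\sI$ is already determined by the quantifier-free $\lL'_{\leq n}$-type in $\sJ$. This is where irreflexivity enters: any atomic $\lL'$-formula $R(x_{s_1},\dots,x_{s_p})$ whose relation symbol has arity $p>n$, applied to a tuple of $m\leq n<p$ variables, must by pigeonhole repeat some variable, and hence when evaluated on the irreflexive structure $\sI$ always yields false. So atomic formulas involving symbols of arity greater than $n$ contribute no information beyond what is already encoded in $\lL'_{\leq n}$ at the level of $\leq n$-subtuples, and the two quantifier-free types coincide.

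Chaining the three reductions together, the hypothesis $\qftp^{\lL'_{\leq n}}_\sJ(\bar{\imath}) = \qftp^{\lL'_{\leq n}}_\sJ(\bar{\jmath})$ transfers to equality of $\lL'_{\leq n}$-quantifier-free types on all $\leq n$-subtuples, matches the corresponding $\lL'$-quantifier-free types in $\sI$ by the irreflexivity step, upgrades to equality of complete $\sM$-types of the corresponding $a$-subtuples by indiscernibility, and finally lifts to equality of the full complete types by $n$-arity. The main obstacle, such as it is, is the bookkeeping needed to ensure that the equality pattern is preserved through the reductions and that the pigeonhole–irreflexivity step is applied correctly; I do not expect any conceptual difficulty.
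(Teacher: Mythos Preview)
Your proposal is correct and follows essentially the same approach as the paper: use quantifier elimination in a language of arity at most $n$ to reduce to $\leq n$-tuples, then invoke irreflexivity and pigeonhole to see that on such short tuples the $\lL'_{\leq n}$-quantifier-free type already determines the full $\lL'$-quantifier-free type, and conclude by $\sI$-indiscernibility. The paper's write-up is organised slightly differently (it checks indiscernibility one atomic $\sM$-formula $R(\bar{x})$ at a time, so the tuple length is automatically $\arity(R)\leq n$ from the start), but the content is the same.
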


\begin{proof}
    Denote by $\lL$ a relational language with symbols of arity at most $n$, in which $\sM$ admits quantifier elimination. Let $(a_i)_{i\in I}$ be an $\sI$-indiscernible sequence of elements in $\sM$. We need to show that  $(a_i)_{i\in I}$ is $\sJ$-indiscernible. By quantifier elimination, it is enough to check that, for any relation $R\in \lL$, $(a_i)_{i\in I}$ is $\sJ$-indiscernible with respect to the formula $R(\bar{x})$. Consider two $\arity(R)$-tuples $\bar{i}$ and $\bar{j}$ such that $\qftp^{\lL_{\leq n}'}(i)=\qftp^{\lL_{\leq n}'}(j)$.
    By irreflexivity of $\sI$, and since $\bar{i}$ has fewer than $n$ elements, we have $\qftp^{\lL_{\leq n}'}(\bar{i})\vdash \qftp^{\lL'}(\bar{i})$, and similarly for $\bar{j}$. In particular, it follows that $\qftp^{\lL'}(\bar{i}) = \qftp^{\lL'}(\bar{j})$, and thus, by $\sI$-indiscernibility,  $\sM \models R(a_{\bar{i}}) \leftrightarrow R(a_{\bar{j}})$, and the result follows.
\end{proof}

\begin{example}\label{exa:BinaryStructureCollapseConvexilyOrderedCrelation}
    We illustrate \Cref{Prop:NaryCollapsingHigherArityStructure} with some useful examples:
    \begin{enumerate}
        \item\label{item: ordered C relation not coded by binary} Let $\sI=\mathrm{Flim}(\cla{OC})$ be the binary branching $C$-relation equipped with a convex order. Then any binary structure $\sM$ collapses $\sI$-indiscernibles to order-indiscernibles. Since $\cla{OC}$ is Ramsey, by \Cref{fact: noncoding iff collapsing Ramsey}, $\sM\in \ncodingcla{\cla{OC}}$.
        \item Let $\sI=\mathrm{Flim}(\cla{COD})$ be the binary branching $D$-relation equipped with a convex cyclic order. Then any ternary structure $\sM$ collapses $\sI$-indiscernible to cyclically ordered-indiscernibles.
    \end{enumerate}
\end{example}

\begin{theorem}\label{reduct of higher arity is not Ramsey}
    Let $\sJ$ be an $n$-ary $\aleph_0$-categorical ultrahomogeneous structure in a relational language $\lL$, and let $\sI$ be a non-$n$-ary reduct of $\sJ$ which is ultrahomogeneous in a finite relational language $\lL'$. Then $\age(\sI)$ is not a Ramsey class.
\end{theorem}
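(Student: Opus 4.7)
The plan is to derive a contradiction from the assumption that $\age(\sI)$ is Ramsey by combining \Cref{thm:Ramsey class collapse} with \Cref{Prop:NaryCollapsingHigherArityStructure}. Suppose for contradiction that $\age(\sI)$ is Ramsey. Since $\sI$ is ultrahomogeneous in the finite relational language $\lL'$ and is a reduct of the $\aleph_0$-categorical structure $\sJ$, it is itself $\aleph_0$-categorical, and hence an $\aleph_0$-categorical Fraïssé limit in a countable relational language. Thus \Cref{thm:Ramsey class collapse} applies to $\sI$, giving the equivalence, for every first-order theory $T$, between $T\in\ncodingcla{\sI}$ and $T$ collapsing $\sI$-indiscernibles.

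I then apply this equivalence to $T:=\Th(\sJ)$ and aim to show that $T$ satisfies both sides of a contradiction. On the one hand, because $\sI$ is a $\emptyset$-definable reduct of $\sJ$, the identity map $I\to J$ together with the interpretation sending each $R\in\Sig(\lL')$ to its defining $\lL$-formula in $\sJ$ witnesses $\Th(\sJ)\in\codingcla{\sI}$, so $\Th(\sJ)\notin\ncodingcla{\sI}$.

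On the other hand, $\sJ$ collapses $\sI$-indiscernibles. After replacing $\lL'$ by a quantifier-free interdefinable irreflexive language (which is harmless: it preserves $\aut(\sI)$, $\age(\sI)$, and the notion of quantifier-free type), let $\sI_0$ be the reduct of $\sI$ to $\lL'_{\leq n}$. Since $\sJ$ is $n$-ary, \Cref{Prop:NaryCollapsingHigherArityStructure} applied inside any model of $\Th(\sJ)$ yields that every $\sI$-indiscernible sequence is automatically $\sI_0$-indiscernible. Because $\sI$ is non-$n$-ary and admits quantifier elimination in $\lL'$ (being ultrahomogeneous in a finite relational language), $\sI_0$ is a \emph{strict} quantifier-free reduct of $\sI$: otherwise every symbol of $\lL'$ would be quantifier-free definable from $\lL'_{\leq n}$, making $\sI$ itself $n$-ary. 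Hence $\Th(\sJ)$ collapses $\sI$-indiscernibles in the sense of \Cref{def:indiscernible collapsing}, and \Cref{thm:Ramsey class collapse} forces $\Th(\sJ)\in\ncodingcla{\sI}$, contradicting the preceding paragraph.

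The main point of care is the language bookkeeping: one must pass to an irreflexive presentation of $\lL'$ so that \Cref{Prop:NaryCollapsingHigherArityStructure} literally applies, and verify that the restriction to arity $\leq n$ really produces a strict quantifier-free reduct. Both of these are clean consequences of ultrahomogeneity of $\sI$ together with the hypothesis that $\sI$ is not $n$-ary; once they are in place, the theorem is a direct two-line contradiction between the Ramsey-class-collapse equivalence and the fact that $n$-ary structures collapse higher-arity indiscernibles.
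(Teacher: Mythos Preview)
Your proof is correct and follows essentially the same approach as the paper: assume $\age(\sI)$ is Ramsey, note that $\sJ\in\codingcla{\sI}$ since $\sI\reduces\sJ$, and derive a contradiction via \Cref{Prop:NaryCollapsingHigherArityStructure}, which shows that the $n$-ary structure $\sJ$ collapses $\sI$-indiscernibles to the strict quantifier-free reduct $\restriction{\sI}{\lL'_{\leq n}}$. The only cosmetic difference is that you invoke \Cref{thm:Ramsey class collapse} rather than \Cref{fact: noncoding iff collapsing Ramsey}; this is in fact the cleaner choice, since the hypotheses of the theorem do not include strong amalgamation for $\age(\sI)$.
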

\begin{proof}
    As a reduct of $\sJ$, $\sI$ is also $\aleph_0$-categorical ultrahomogeneous. Assume $\sI$ is Ramsey. Let $\lL_{\leq n}'$ be the sublanguage of $\lL'$ consisting of symbols of arity less or equal to $n$. Since $\sI\reduces\sJ$, clearly $\sJ\in \codingcla{\sI}$, therefore, by \Cref{fact: noncoding iff collapsing Ramsey}, $\sJ$ does not collapse $\sI$-indiscernibles. On the other hand, by \Cref{Prop:NaryCollapsingHigherArityStructure}, $\sJ$ collapses $\sI$-indiscernibles to $\restriction{\sI}{\lL'_{\leq n}}$. By assumption, since $\sI$ is not $n$-ary, $\restriction{\sI}{\lL'_{\leq n}}\reducesneq \sI$, and thus we have a contradiction.
\end{proof}

For the remainder of this section, for $n\in\Nbb$ let $\lL_{\sH_n^o}=\{R,<\}$ be a language with a binary relation symbol $<$ and and an $n$-ary relation symbol $R$. By an $\lL_{\sH_n^o}$-structure we shall always mean a structure in which $<$ is a (total) linear order and $R$ is a uniform symmetric $n$-ary relation (i.e. $\lL_{\sH_n^o}$ structures are ordered $n$-uniform hypergraphs, in the sense of \Cref{sub:standard-structs}).

\begin{proposition}\label{reducts of ordered hypergraph are not n-ary}
    Let $n\geq 2$ and let $\sM$ be an $\lL_{\sH_n^o}$-structure and $\sN$ be a reduct of $\sM$. If $\sM\upharpoonright\Set{<}\reducesneq\sN\reducesneq\sM$ then $\sN$ is not $n$-ary.
\end{proposition}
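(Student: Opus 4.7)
The plan is to argue by contradiction. Suppose $\sN$ is $n$-ary, and choose a relational language $\lL'$, whose symbols all have arity at most $n$, in which $\sN$ admits quantifier elimination; since $<$ is $\emptyset$-definable in $\sN$, we may assume $<\in\lL'$. Because $\sM\upharpoonright\Set{<}\reducesneq\sN$, there must be some $S\in\lL'$, of arity $k\leq n$, whose interpretation in $\sN$ is not $\emptyset$-definable from $<$ alone. The goal is to deduce that $R$ itself is then $\emptyset$-definable from $<$ and $S$, contradicting $\sN\reducesneq\sM$.

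The key input is the algebraic rigidity of the signature $\lL_{\sH_n^o}=\Set{<,R}$: aside from the binary order, the only relation is $R$, which is symmetric and of arity exactly $n$. Consequently, no $R$-atomic appears in any quantifier-free $\lL_{\sH_n^o}$-formula of arity strictly less than $n$, and, by symmetry of $R$, the unique $n$-ary $R$-atomic (up to permutation of variables) is $R(x_1,\dots,x_n)$ itself. Reducing, if necessary, to a sufficiently saturated elementary extension of $\sM$ so that the analysis of $\emptyset$-definable sets can be carried out at the quantifier-free level, one concludes that every $\emptyset$-definable relation of arity strictly below $n$ is $<$-definable (so $S$ must have arity exactly $n$) and that on the set of tuples of $n$ distinct elements with a fixed $<$-configuration $\chi$, the relation $S$ restricts to one of the four options $\top$, $\bot$, $R(\bar x)$, or $\neg R(\bar x)$.

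Since $S$ is not $<$-definable, there is some $<$-configuration $\chi_0$ on which $S$ coincides with $R$ or with $\neg R$. Given any $n$-subset $A\subseteq M$ of distinct elements, one uses $<$ alone to produce the unique $n$-tuple $\bar a_{\chi_0}$ enumerating $A$ according to $\chi_0$; applying $S$ to $\bar a_{\chi_0}$ (and possibly negating) then recovers $R(A)$. Therefore $R$ is $\emptyset$-definable from $<$ and $S$, hence in $\sN$, which is the desired contradiction with $\sN\reducesneq\sM$.

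The main obstacle is the passage to an arbitrary $\lL_{\sH_n^o}$-structure $\sM$, which need not admit quantifier elimination in $\Set{<,R}$: the symmetry-and-order dichotomy that pins each $<$-configuration of an $n$-ary $\emptyset$-definable relation to one of $\top$, $\bot$, $R$, or $\neg R$ is fundamentally a statement about quantifier-free types, and extending this analysis to general ordered $n$-uniform hypergraphs requires a careful reduction (to a sufficiently saturated, ultrahomogeneous or generic elementary extension of $\sM$) which preserves both the reduct lattice and the local structure of atomic $n$-types of tuples of distinct elements.
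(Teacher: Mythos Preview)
Your overall strategy matches the paper's: show that every $\emptyset$-definable relation on $\sN$ of arity at most $n$ is already $<$-definable, contradicting $\sM\upharpoonright\Set{<}\reducesneq\sN$. The paper runs this through automorphisms rather than formulas: from $\sN\reducesneq\sM$ it extracts some $g\in\aut(\sN)$ that does not preserve $R$, then composes $g$ with suitable elements of $\aut(\sM)$ to show that $\aut(\sN)$ acts transitively on increasing $n$-tuples; since definable sets are automorphism-invariant, every $\sN$-definable $n$-ary relation is then a union of $<$-types. Your syntactic route (classify $\emptyset$-definable $n$-ary relations on $\sM$ as boolean combinations of $<$-configurations and the single atom $R(\bar x)$, then recover $R$ from any non-$<$-definable $S$) is the natural dual. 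The extra reconstruction step is not strictly needed: once you know every $\sM$-definable $n$-ary relation restricted to a fixed $<$-configuration is one of $\top,\bot,R,\lnot R$, you can argue directly (as the paper does) that those which are $\sN$-definable must avoid $R$ and $\lnot R$ altogether, since $R$ is not $\sN$-definable.

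The obstacle you flag in your last paragraph is genuine, but your proposed fix does not work. Passing to a saturated or ``generic'' elementary extension cannot manufacture quantifier elimination if $\Th(\sM)$ lacks it, and the reduct lattice is a property of the theory rather than of the chosen model, so nothing is gained. What both arguments actually require is that $\sM$ be ultrahomogeneous (or at least that $\aut(\sM)$ be transitive on increasing $n$-tuples of a given $R$-type, and that proper reducts correspond to strictly larger closed automorphism groups). The paper's proof uses this implicitly when it chooses $f_1,f_2\in\aut(\sM)$ sending an arbitrary increasing $n$-tuple to a prescribed one, and when it passes from $\sN\reducesneq\sM$ to $\aut(\sN)\not\subseteq\aut(\sM)$. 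In the only application (to reducts of $\cla{OH}_n$) this hypothesis is satisfied, so the gap is harmless there; as a freestanding statement about arbitrary ordered $n$-uniform hypergraphs, neither argument is complete.
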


\begin{proof}
    Since $\sN$ is a proper reduct of $\sM$ and $\aut(\sN)$ preserves $<$, we must have that $\aut(\sN)$ does not preserve $R$. Therefore, there are $a_1,\dots,a_n\in \sM$ and $g\in \aut(\sN)$ such that 
    \[
        \sM\models R(a_1,\dots, a_n)\land \neg R(g(a_1),\dots, g(a_n)).
    \]
    By symmetry, we may assume that $a_1<\dots<a_n$. Since $g$ preserves $<$ we have that $g(a_1)<\dots <g(a_n)$. Observe that for all $x_1,\dots, x_n;y_1,\dots, y_n\in \sM$ such that $x_1<\dots<x_n$ and $y_1<\dots y_n$, there is some $f\in \aut(\sN)$ such that $y_i = f(x_i)$ for all $1\leq i\leq n$. Indeed, letting $f_1, f_2\in \aut(\sM)$ such that $f_1(x_i)=a_i$ and $f_2(g(a_i))=y_i$ for all $1\leq i\leq n$, we have that $f=f_2\circ g\circ f_1$ is as stated. So the only non-trivial definable relation in $\sN$ of arity $\leq n$ is $<$. As $\sN\centernot\reducesboth\sM\upharpoonright\Set{<}$, this implies $\sN$ is not $n$-ary.
\end{proof}

\begin{theorem}\label{reducts of ordered hypergraph are not Ramsey}
       Let $n\in\Nbb$ and $\sM$ be a non-trivial reduct of $\cla{OH}_n$, other than DLO. Then $\sM$ is not Ramsey.
\end{theorem}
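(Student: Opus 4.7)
My plan is to combine the two results immediately preceding the statement: I first use a ``rigidity'' argument to force any Ramsey proper reduct of $\cla{OH}_n$ to define the order $<$, then invoke \Cref{reducts of ordered hypergraph are not n-ary} together with \Cref{reduct of higher arity is not Ramsey} to obtain a contradiction. Note that the statement is implicitly about \emph{proper} reducts, since $\cla{OH}_n$ itself is Ramsey; I will treat it as saying that no reduct $\sM$ with $\cla{E}\reducesneq\sM\reducesneq\cla{OH}_n$ other than DLO can be Ramsey.

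For the rigidity step I would argue as follows. Suppose towards a contradiction that $\sM$ is Ramsey. By Ne\v{s}et\v{r}il's rigidity lemma (equivalently, the Kechris--Pestov--Todorcevic correspondence), every structure in a Ramsey Fra\"iss\'e class is rigid. Since $\sM$ is $\aleph_0$-categorical and finitely homogeneous (in a suitable expansion), this forces $\sM$ to carry a $\emptyset$-definable linear order. Because $\aut(\sM)\supseteq \aut(\cla{OH}_n)$, any such order is preserved by $\aut(\cla{OH}_n)$; but the only $\aut(\cla{OH}_n)$-invariant linear orders on the universe are $<$ and its reverse $>$, which are interdefinable. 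Hence $\sM$ defines $<$, so the reduct $\restriction{\cla{OH}_n}{\{<\}}$ is interdefinable with a reduct of $\sM$.

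With this in hand, the two exclusions ``$\sM\neq\text{DLO}$'' and ``$\sM$ is a proper reduct'' combine to give the strict chain
\[
    \restriction{\cla{OH}_n}{\{<\}}\;\reducesneq\;\sM\;\reducesneq\;\cla{OH}_n.
\]
By \Cref{reducts of ordered hypergraph are not n-ary}, $\sM$ is not $n$-ary. Since $\cla{OH}_n$ is an $n$-ary $\aleph_0$-categorical ultrahomogeneous structure in the finite relational language $\{<,R\}$, I can now apply \Cref{reduct of higher arity is not Ramsey} with $\sJ:=\cla{OH}_n$ and $\sI:=\sM$ to conclude that $\age(\sM)$ is not Ramsey, contradicting the assumption.

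The main technical point to verify is the finite-language ultrahomogeneity of $\sM$ required by \Cref{reduct of higher arity is not Ramsey}. For $n=2$ this follows from the classification of reducts of the ordered random graph in \cite{BPP2015}, and in general one can pass, without loss, to the canonical relational expansion of $\sM$ by the finitely many $\emptyset$-definable relations of each arity guaranteed by $\aleph_0$-categoricity, which is quantifier-free interdefinable with $\sM$, has the same age, and is ultrahomogeneous in a finite relational language. This is the step I expect to require the most care; once it is in place, the rest of the argument reduces cleanly to the two tools cited above.
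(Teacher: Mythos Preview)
Your overall strategy matches the paper's exactly: assume $\sM$ is Ramsey, extract a definable linear order, conclude that $\restriction{\cla{OH}_n}{\{<\}}\reducesneq\sM\reducesneq\cla{OH}_n$, apply \Cref{reducts of ordered hypergraph are not n-ary}, and finish with \Cref{reduct of higher arity is not Ramsey}. Two points deserve attention.

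First, the passage from ``Ramsey implies rigid'' to ``$\sM$ carries a $\emptyset$-definable linear order'' is not as immediate as you suggest: rigidity of finite substructures does not by itself produce a formula defining a linear order. The paper invokes \Cref{rc order mp} (Theorem~B of \cite{MP22}), which gives the linear order directly as a union of quantifier-free $2$-types; it then checks by hand (the case analysis in \Cref{claim:order is order}) that the only such unions defining a linear order in $\cla{OH}_n$ are $<$ and $>$. Your alternative route via ``the only $\aut(\cla{OH}_n)$-invariant linear orders are $<$ and $>$'' is fine once you actually have a definable order, but you should cite the appropriate result rather than rigidity alone.

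Second, and more seriously, your argument for finite-language ultrahomogeneity of $\sM$ has a gap. The canonical relational expansion by the finitely many $\emptyset$-definable $k$-ary relations for \emph{each} $k$ is ultrahomogeneous, but in an \emph{infinite} relational language (one block of relations per arity), so it does not meet the hypothesis of \Cref{reduct of higher arity is not Ramsey}. Restricting to arities $\leq N$ for some fixed $N$ need not yield an ultrahomogeneous structure without further argument. The paper handles this by citing \cite[Theorem~2.7]{Tho96}, which shows that every reduct of $\cla{OH}_n$ is finitely homogeneous; this is precisely the nontrivial ingredient you flagged as ``the step I expect to require the most care'', and it is not delivered by the Morleyisation trick you propose.
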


The proof of \Cref{reducts of ordered hypergraph are not Ramsey} will make use of the following \namecref{rc order mp}:

\begin{fact}[{\cite[Theorem B]{MP22}}]\label{rc order mp}
    Let $\cla{C}$ be a Ramsey class of $\lL^\prime$-structures and $\Ncal$ an $\lL^\prime$-structure such that $\age(\Ncal) = \cla{C}$. Then there is an $\Aut(\Ncal)$-invariant linear order on $\Ncal$ which is the union of quantifier-free types. 
    More explicitly, there is a (possibly infinite) Boolean combination of atomic and negated atomic $\lL^\prime$-formulas $\Phi(x,y):=\bigvee_{i\in I}\bigwedge_{j\in J_i} \vphi_{j_i}^{(-1)^{n_{j_i}}}(x,y)$, such that $\Phi$ is a linear order for every structure in $\cla{C}$.
\end{fact}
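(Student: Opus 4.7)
The plan is to produce the $\Aut(\Ncal)$-invariant linear order $\Phi$ on $\Ncal$, expressible as a union of quantifier-free $2$-types, via three steps: establish rigidity of $2$-element substructures, define a candidate order as a disjunction of chosen qftps, and use Ramsey plus compactness to make the choice transitive.

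First, I would recall Ne\v{s}et\v{r}il's classical observation that every structure in a Ramsey class is rigid: if some $A \in \cla{C}$ admitted a non-trivial automorphism $\sigma$, one could $2$-colour embeddings of a single-point substructure into a Ramsey witness for $A$ so as to preclude a monochromatic copy of $A$, contradicting the Ramsey property. Applied to $2$-element substructures of $\Ncal$, this rigidity forces $\qftp^{\lL'}(a,b) \neq \qftp^{\lL'}(b,a)$ for every distinct $a,b \in \Ncal$: otherwise the $\emptyset$-interchange of $a$ and $b$ would be an automorphism of $\{a,b\}$. Thus the set $T_2$ of qftps realised by ordered pairs of distinct elements of $\Ncal$ partitions into disjoint pairs $\{p, p^{-1}\}$ under the coordinate swap.

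Any choice of a set $P^+$ of representatives from these pairs yields a candidate
\[
\Phi_{P^+}(x,y) \;:=\; \bigvee_{p \in P^+} p(x,y),
\]
which is automatically antisymmetric, total on distinct elements, invariant under all partial elementary maps of $\Ncal$ (in particular $\Aut(\Ncal)$), and of the explicit Boolean form required by the statement. The only non-trivial property to verify is transitivity, and for this I would argue in the compact product space $X := \prod_{\{p,p^{-1}\}} \{p, p^{-1}\}$ equipped with the product topology: for each finite $A \subseteq \Ncal$, the set of $P^+ \in X$ such that $\Phi_{P^+}$ restricts to a linear order on $A$ is closed, so by compactness it suffices to show these closed sets are non-empty and form a filtered family. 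Non-emptiness on a given $A$ is ensured by the Ramsey property applied to triples inside a sufficiently large Ramsey witness $C \to (A')^3_k$ (where $A' \supseteq A$): colouring triples of distinct points by the orientation pattern they realise, a monochromatic copy produces a local orientation with no $3$-cycles, yielding a transitive choice of $P^+$ on the qftps appearing in $A$.

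The main obstacle is precisely this Ramsey-compactness step: naively chosen representatives tend to produce $3$-cycles (a Condorcet-style paradox), so the Ramsey property must be used crucially to guarantee that locally transitive choices always exist and can be coherently patched. Once the finite intersection property is secured, compactness in $X$ delivers a global $P^+$, and $\Phi := \Phi_{P^+}$ is the desired $\Aut(\Ncal)$-invariant linear order given explicitly as the (possibly infinite) Boolean combination of atomic and negated atomic formulas described in the statement.
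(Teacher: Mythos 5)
Your overall strategy is sensible: rigidity of Ramsey classes gives $\qftp(a,b)\neq\qftp(b,a)$ on distinct elements, a candidate order is a choice of one qftp from each swap-pair, and transitivity should follow from Ramsey plus compactness over the profinite space $X$ of choices. The compactness reduction is set up correctly (the conditions are clopen and the family of finite-level constraints is directed). But the crucial non-emptiness step is not actually a proof, and the gap is not a small one.

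The problem is that the coloring you describe is not well defined. You propose to "colour triples of distinct points by the orientation pattern they realise," but at this stage of the argument there is no orientation on the Ramsey witness $C$ from which such a pattern could be read off: constructing an orientation is the whole point. (The notation $C\to (A')^3_k$ also does not parse: the superscript in the structural Erd\H{o}s--Rado arrow must be a \emph{structure}, and the $3$-element substructures of $A'$ need not all be isomorphic.) Moreover, even granting a monochromatic copy for some coloring, you assert without argument that this "produces a local orientation with no $3$-cycles"; that implication is precisely what has to be established, and it does not follow formally from monochromaticity alone. The missing idea is to introduce an auxiliary reference order: fix an arbitrary linear order $<_C$ on the Ramsey witness $C$, and for each isomorphism type $T$ of $2$-element (not $3$-element) substructure of $A$ with a fixed enumeration $(t_1,t_2)$, colour embeddings $e:T\hookrightarrow C$ by whether $e(t_1)<_C e(t_2)$ or $e(t_2)<_C e(t_1)$. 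Iterating Ramsey over the finitely many such $T$ yields a copy $A'$ of $A$ on which each type has a single colour; setting $\epsilon(\qftp(t_1,t_2))=+$ when that colour is $<_C$-increasing, transitivity of $\Phi_\epsilon$ on $A'$ is \emph{inherited} from transitivity of $<_C$: if $a\prec b$ and $b\prec c$ then $a<_C b<_C c$, hence $a<_C c$, and since the embedding $(a,c)$ of its $2$-element type into $A'$ is $<_C$-increasing the monochromatic colour forces $\epsilon(\qftp(a,c))=+$, i.e.\ $a\prec c$. That reference order and the passage through transitivity of $<_C$ are what your sketch omits; without them the Ramsey step does not get off the ground.

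A smaller inaccuracy: your sketch of Ne\v{s}et\v{r}il's rigidity lemma ($2$-colouring embeddings of a single-point substructure) is not how that argument goes -- it too requires fixing an auxiliary order on the Ramsey witness and colouring embeddings of $A$ itself (or of a $2$-point substructure moved by the automorphism) by whether they are $<_C$-monotone. Since rigidity is a standard result you are entitled to cite it, but it is worth noting that the very same device (a reference linear order on $C$) is what is missing in both places. Finally, since this statement is a citation to MP22 and the present paper gives no proof of it, I cannot compare your route to "the paper's"; I note only that a shorter proof is available via the modelling property (Theorem A of the same reference, \Cref{rc mp}): take any injection of $\Ncal$ into a model of DLO, extract an $\Ncal$-indiscernible sequence locally based on it, and pull back the ambient order -- transitivity then comes for free rather than requiring a separate Ramsey argument.
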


\begin{proof}[Proof of \Cref{reducts of ordered hypergraph are not Ramsey}]
    Assume towards contradiction $\sN$ is Ramsey. Then, by \Cref{rc order mp}, there is a linear order $\triangleleft$ on $\sN$ which is a union of quantifier-free types in $\sN$. As $\sN$ is a quantifier-free reduct of $\sM$, we see that $\triangleleft$ is a union of quantifier-free types in $\sM$.

    \begin{claim}\label{claim:order is order} 
        The order $\triangleleft$ is either $<$ or $>$.
    \end{claim}       
    \begin{claimproof}
        This is clear when $n\geq 3$, since $R$ is uniform and thus the only quantifier-free types in two variables in $\sM$ are $x<y$ and $x>y$. As $\triangleleft$ is antisymmetric, it cannot be equivalent to $x<y\lor x>y$.

        We now discuss the case where $n=2$.  In this case, the only quantifier-free types in $\sM$ are:
                \begin{enumerate}
                    \item $x<y \land R(x,y)$.
                    \item $x>y \land R(x,y)$.
                    \item $x<y \land \neg R(x,y)$.
                    \item $x>y \land \neg R(x,y)$.
                \end{enumerate}
                It is left as an exercise to the reader to verify that the only unions of the types above that define a linear order in the random graph are the following:
                \[
                    (x<y \land R(x,y))\lor (x<y \land \neg R(x,y))\text{ and } (x>y \land R(x,y))\lor (x>y \land \neg R(x,y)),
                \]
                again, it follows that $\triangleleft$ is either $<$ or $>$.
\end{claimproof}
     By \Cref{claim:order is order}, we have that $\sM\upharpoonright \Set{<}\reducesneq \sN\reducesneq  \sM$. Therefore, by \Cref{reducts of ordered hypergraph are not n-ary}, $\sN$ is not $n$-ary. By \cite[Theorem~2.7]{Tho96}, we know that $\sN$ is finitely homogeneous, therefore by \Cref{reduct of higher arity is not Ramsey}, $\sN$ is not Ramsey.
\end{proof}

\begin{remark}
    In fact, a slightly more careful analysis of the proof of \Cref{reducts of ordered hypergraph are not Ramsey} allows us to deduce a slightly more general result. We state it, and explain how to deduce it from the proof of \Cref{reducts of ordered hypergraph are not Ramsey} below.
\end{remark}

\begin{corollary}
     Let $n\geq 2$ and let $\sM$ be an $\aleph_0$-categorical ultrahomogeneous $\lL_{\sH_n^o}$-structure. If $\sM$ is not interdefinable with $(M,<_1,<_2)$, where $<_1$ and $<_2$ are two (independent) linear orders, then every proper quantifier-free reduct $\sN$ of $\sM$, which is not interdefinable with $\sM\upharpoonright\Set{<}$ is not Ramsey.
\end{corollary}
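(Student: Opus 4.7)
The plan is to combine the classification of quantifier-free definable linear orders in $\sM$ with the higher-arity collapsing principle from \Cref{Prop:NaryCollapsingHigherArityStructure} to derive a contradiction with \Cref{thm:Ramsey class collapse}. Assume towards contradiction that $\sN$ is Ramsey. By \Cref{rc order mp}, there is an $\aut(\sN)$-invariant linear order $\triangleleft$ on $M$ which is a union of quantifier-free types in $\sN$, and hence a union of quantifier-free $\lL_{\sH_n^o}$-types in $\sM$.

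The first task is to show $\triangleleft \in \{<, >\}$. For $n \geq 3$, this follows exactly as in \Cref{claim:order is order}, since the only non-equality quantifier-free $2$-types in $\sM$ are $x<y$ and $x>y$. For $n = 2$, antisymmetry and totality restrict $\triangleleft$ to one of $<$, $>$, or one of the two ``switch'' orders, e.g.\ $\triangleleft_R := (x<y \wedge R(x,y)) \vee (x>y \wedge \neg R(x,y))$. In each switch case, $R(x,y)$ becomes quantifier-free definable from $<$ and $\triangleleft$ (it holds iff $<$ and $\triangleleft$ agree on $\{x,y\}$), so $\sM$ would be interdefinable with $(M, <, \triangleleft)$, two linear orders on the same set, contradicting the hypothesis. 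Hence $\triangleleft \in \{<, >\}$ and $\sM\upharpoonright\{<\} \reducesqf \sN$.

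The second task is to show that $\sN$ is not $n$-ary. By ultrahomogeneity of $\sM$ in $\lL_{\sH_n^o}$ together with the symmetry and uniformity of $R$, there are at most two $\aut(\sM)$-orbits on strictly $<$-increasing $n$-tuples, distinguished by whether $R$ holds. Since $\aut(\sN) \supsetneq \aut(\sM)$ preserves $<$ but cannot preserve $R$ (else $\sN$ would be interdefinable with $\sM$), $\aut(\sN)$ must merge these two orbits, yielding transitivity of $\aut(\sN)$ on $<$-increasing $n$-tuples. An analogous (easier) argument applies to $k$-tuples for $k < n$. Consequently, every $\aut(\sN)$-invariant relation of arity at most $n$ is a Boolean combination of $<$ and $=$; since $\sN$ is not interdefinable with $\sM\upharpoonright\{<\}$, $\sN$ must admit a quantifier-free definable relation of arity strictly greater than $n$ not expressible using $<$ alone, so $\sN$ is not $n$-ary.

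Finally, by \Cref{Prop:NaryCollapsingHigherArityStructure} (passing to an irreflexive quantifier-free interdefinable presentation of $\sN$ if needed), the $n$-ary structure $\sM$ collapses $\sN$-indiscernibles to indiscernibles with respect to the $(\leq n)$-ary reduct of $\sN$, which is a strict quantifier-free reduct of $\sN$ because $\sN$ is not $n$-ary. Hence $\Th(\sM)$ collapses $\sN$-indiscernibles in the sense of \Cref{def:indiscernible collapsing}, so by the ``moreover'' direction of \Cref{thm:Ramsey class collapse} applied to the Ramsey structure $\sN$, we obtain $\Th(\sM) \in \ncodingcla{\sN}$. This contradicts $\Th(\sM) \in \codingcla{\sN}$, which holds trivially since $\sN$ is a reduct of $\sM$. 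The main subtlety lies in the first step for $n = 2$, where the hypothesis that $\sM$ is not interdefinable with two linear orders is precisely what rules out the ``switch'' orders as candidates for $\triangleleft$.
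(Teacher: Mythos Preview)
Your proof is correct and follows essentially the same route as the paper: assume $\sN$ Ramsey, extract an invariant order via \Cref{rc order mp}, rule out the two ``switch'' orders in the $n=2$ case using the hypothesis on $\sM$, conclude $\sM\upharpoonright\{<\}\reducesqf\sN$, show $\sN$ is not $n$-ary (you reprove \Cref{reducts of ordered hypergraph are not n-ary} inline), and derive a contradiction from the collapsing/coding dichotomy.

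The one genuine difference is in the final step. The paper (implicitly, by reusing the proof of \Cref{reducts of ordered hypergraph are not Ramsey}) invokes Thomas's theorem \cite[Theorem~2.7]{Tho96} to obtain that $\sN$ is ultrahomogeneous in a \emph{finite} relational language, so that \Cref{reduct of higher arity is not Ramsey} applies. You instead work with a countable (Morleyised, irreflexive) presentation of $\sN$ and appeal directly to \Cref{Prop:NaryCollapsingHigherArityStructure} together with the ``moreover'' part of \Cref{thm:Ramsey class collapse}, which only needs a countable language. This is a small but real simplification: it removes the external dependence on Thomas's result, and it shows that the finite-language hypothesis in \Cref{reduct of higher arity is not Ramsey} is not essential for this application. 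The price is that you must check (as you implicitly do) that $\sN$ in this presentation is an $\aleph_0$-categorical Fra\"iss\'e limit, which is immediate from Morleyisation.
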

\begin{proof}
    In the case $n=2$, in the proof of \Cref{claim:order is order}, the only other unions of quantifier-free types that we would need to consider are:
    \[
        (x<y\land R(x,y))\lor (x>y\land \lnot R(x,y))
    \text{ and }
        (x>y\land R(x,y))\lor (x<y\land \lnot R(x,y)).
    \]
    But, if either of the formulas above defines a linear order on $\sM$, then $\sM$ is interdefinable with a structure $(M,<_1,<_2)$, where $<_1$ and $<_2$ are independent linear orders, which is a contradiction.
\end{proof}

By \cite[Proposition 2.23]{Bodirsky2015}, the class of finite cyclically ordered binary branching $D$-relations $\cla{COD}$ is not Ramsey, since no total order is definable in $\mathsf{FLim}(\cla{COD})$. We can see that, adding a generic order will not suffice to turn the class into a Ramsey class:
\begin{example}\label{Example:CODisnotRamsey}
    The free superposition $\cla{LO}*\cla{COD}$ is an amalgamation class, but is not a Ramsey class.  
    Indeed,  $\mathsf{FLim}(\cla{LO}*\cla{COD})$ is a reduct of $\mathsf{FLim}(\cla{LO}*\cla{OC})$. The latter is ternary, while the former is not. Therefore, by \Cref{reduct of higher arity is not Ramsey}, the former is not Ramsey.
\end{example}

\section{Transfer principles}\label{sec:transfer}
We study transfer principles for products, full and lexicographic, with respect to monadic NIP, distality, and $\ncodingcla{\cK}$. The common point of these properties is that they admit characterisations involving indiscernibility (and potentially some other notions). Our main tool we will thus be a description of indiscernible sequences, both for full and lexicographic product.

\subsection{Transfers in full products}

In this section, we will observe that the full product behaves nicely with respect to the notion collapsing indiscernible to a specified reduct, but not necessarily for the dividing line arising from coding structures. It is fairly easy to describe indiscernible sequences in a full product $\sM_1 \boxtimes \sM_2$ (see next Proposition \ref{PropositionCharacterisationIndiscernibleFullProduct}). This characterisation has some interesting consequences that we will collect here for the sake of completeness.

We first observe that the full product of structures $\sM_1\boxtimes \sM_2$ is almost never monadically NIP (or, for that matter, monadically anything). 

\begin{proposition}
    Assume $\sM_i$ is NIP for $i\in \{1,2\}$. We have $\sM_1 \boxtimes \sM_2$ is NIP of dp-rank $\dprk(\sM_1)+\dprk(\sM_2)$.
\end{proposition}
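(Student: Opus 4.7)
The plan is to prove both inequalities $\dprk(\sM_1\boxtimes\sM_2)\leq\dprk(\sM_1)+\dprk(\sM_2)$ and $\dprk(\sM_1\boxtimes\sM_2)\geq\dprk(\sM_1)+\dprk(\sM_2)$; the upper bound, combined with the NIP hypothesis on both factors, will automatically yield that $\sM_1\boxtimes\sM_2$ is NIP, since boundedness of dp-rank characterises NIP. The main tool in both directions is the relative quantifier elimination \Cref{fact: QE relative in full product}, which we combine with the standard reformulation of \Cref{def:dp-rank}: $\dprk(T)\geq\kappa$ if, and only if, one can find in a sufficiently saturated model a family $(I_\alpha)_{\alpha<\kappa}$ of mutually $\emptyset$-indiscernible sequences together with a tuple $\bar b$ such that no $I_\alpha$ is $\bar b$-indiscernible (see, e.g., \cite{Sim15}).

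For the upper bound, I would start from such a witnessing configuration $(I_\alpha)_{\alpha<\kappa}$ and $\bar b$ in the main sort of $\sM_1\boxtimes\sM_2$, and apply the projection $\pi_i$ to obtain sequences $(\pi_i(I_\alpha))_{\alpha<\kappa}$ and a tuple $\pi_i(\bar b)$ in $\sM_i$ for each $i\in\{1,2\}$. Since $\pi_i$ is $\emptyset$-definable, the projected sequences remain mutually $\emptyset$-indiscernible in $\sM_i$. If $I_\alpha$ is not indiscernible over $\bar b$, then by \Cref{fact: QE relative in full product} the witnessing $\lL_{\sM_1\boxtimes\sM_2}$-formula is equivalent to a Boolean combination of $\lL_{\sM_i}$-formulas applied to projections, so at least one of $\pi_1(I_\alpha)$ or $\pi_2(I_\alpha)$ must fail to be indiscernible over $\pi_1(\bar b)$ or $\pi_2(\bar b)$ respectively. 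Partitioning $\kappa$ as $S_1\cup S_2$ according to which factor witnesses this failure, we obtain $|S_i|\leq\dprk(\sM_i)$, hence $\kappa\leq\dprk(\sM_1)+\dprk(\sM_2)$.

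For the lower bound, let $(I_\alpha^i)_{\alpha<n_i}$ and $\bar b^i$ witness $\dprk(\sM_i)\geq n_i$ for $i\in\{1,2\}$. Fix arbitrary $c_0\in M_1$ and $c_0'\in M_2$, and lift each $I_\alpha^1$ to the main sort by pairing each entry with the constant $c_0'$, producing a sequence $J_\alpha$; symmetrically, lift each $I_\alpha^2$ to $J_{n_1+\alpha}$ by pairing with $c_0$. Build a main-sort tuple $\bar b$ whose $\pi_1$-projection covers $\bar b^1$ and whose $\pi_2$-projection covers $\bar b^2$ (padding unused coordinates with $c_0$ or $c_0'$ as needed). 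By \Cref{fact: QE relative in full product}, any question about the type of a tuple of main-sort elements reduces to its counterparts in the two factors; mutual indiscernibility of the $J_\beta$'s therefore follows because the $\pi_i$-projections form the original mutually indiscernible family together with some constant sequences, which remain mutually indiscernible. Each $J_\beta$ fails to be indiscernible over $\bar b$, since its nontrivial projection is the original $I_\alpha^i$, which by construction already fails to be indiscernible over $\pi_i(\bar b)\supseteq\bar b^i$; this produces $n_1+n_2$ witnesses as required.

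The main technical point to get right is the consistent treatment of the multi-sorted witness tuple: $\bar b$ must live in the main sort yet simultaneously ``see'' the splitting behaviour in both factors. Relative quantifier elimination makes this step essentially transparent by reducing every indiscernibility question in the main sort to two independent indiscernibility questions in $\sM_1$ and $\sM_2$; the rest is bookkeeping.
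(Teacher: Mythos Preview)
Your overall strategy is correct, and the upper bound argument via projections is exactly right (it is essentially \Cref{PropositionCharacterisationIndiscernibleFullProduct} applied to mutually indiscernible families). Note that the paper itself does not give a proof of this \namecref{thm:characterisations-of-monadic-NIP} but simply cites \cite[Proposition~1.24]{Tou23}, so there is no in-paper argument to compare against; your outline is the standard one.

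There is, however, a genuine (if minor) gap in your lower bound. You lift $I_\alpha^1$ to the main sort by pairing with a fixed constant $c_0'\in M_2$, and symmetrically for $I_\alpha^2$ with $c_0\in M_1$, and then assert that ``the $\pi_i$-projections form the original mutually indiscernible family together with some constant sequences, which remain mutually indiscernible.'' This last claim is false in general: adjoining a constant sequence to a mutually indiscernible family amounts to asking that each original sequence be indiscernible over the extra constant, and an arbitrary $c_0\in M_1$ need not satisfy this. For a concrete counterexample, take $\sM_1=(\Qbb,<)$, a single increasing sequence $I^1_0$, and $c_0$ lying strictly between two of its terms; then $I^1_0$ is not indiscernible over $c_0$. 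Consequently the family $(J_\beta)_\beta$ you build need not be mutually indiscernible.

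The fix is easy and does not disturb your approach. Since $\sM_1$ and $\sM_2$ are themselves sorts of $\sM_1\boxtimes\sM_2$, simply leave the witnessing sequences $I_\alpha^i$ in their native sorts and take $\bar b=\bar b^1\frown\bar b^2$ as a tuple in $M_1^{|\bar b^1|}\times M_2^{|\bar b^2|}$. Relative quantifier elimination (\Cref{fact: QE relative in full product}) then shows directly that the combined family is mutually indiscernible (parameters from $\sM_2$ are invisible to $\lL_{\sM_1}$-formulas and vice versa) and that each sequence fails to be indiscernible over $\bar b$. Alternatively, run the lower bound via the ICT-pattern characterisation of \Cref{def:dp-rank}: an ICT-pattern of depth $n_1$ in $\sM_1$ and one of depth $n_2$ in $\sM_2$ combine, via $\vphi\mapsto\vphi\circ\pi_i$, into an ICT-pattern of depth $n_1+n_2$ in the main sort, with consistency of each $\eta$-type witnessed by pairing realisers from the two factors.
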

One can find a proof in \cite[Proposition 1.24]{Tou23}. We deduce immediately the following remark:

\begin{corollary}
    A full product $\sM_1\boxtimes \sM_2$ of infinite structures is never monadically NIP, as it has burden at least $2$ by the previous proposition. Indeed, $\sM_1\boxtimes \sM_2$ is monadically NIP precisely when one of the structures is finite and the other is monadically NIP. 
\end{corollary}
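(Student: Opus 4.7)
The plan is to first establish the opening assertion---that $\sM_1\boxtimes\sM_2$ is never monadically NIP when both factors are infinite---and then to deduce the dichotomy. Suppose $\sM_1,\sM_2$ are both infinite. If one of them, say $\sM_1$, fails NIP, then any formula witnessing IP in $\sM_1$ already witnesses IP in $\sM_1\boxtimes\sM_2$, since $\sM_1$ is a (named) sort of the full product; thus the product is not NIP, and \emph{a fortiori} not monadically NIP. Otherwise both factors are NIP, and each has $\dprk(\sM_i)\geq 1$, so the preceding Proposition yields $\dprk(\sM_1\boxtimes\sM_2)\geq 2$. But by \Cref{thm:characterisations-of-monadic-NIP}, any monadically NIP theory is dp-minimal, contradicting $\dprk\geq 2$.

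For the forward direction of the equivalence, suppose $\sM_1\boxtimes\sM_2$ is monadically NIP. The first part forces one of the factors, say $\sM_2$, to be finite. Then $\sM_1$ is a named sort of $\sM_1\boxtimes\sM_2$, and every monadic expansion of $\sM_1$ can be realised as the restriction to the $\sM_1$-sort of a suitable monadic expansion of $\sM_1\boxtimes\sM_2$ (by treating the added unary predicates as predicates on the $\sM_1$-sort). Since NIP descends to reducts, $\sM_1$ is monadically NIP.

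The converse is the main content. Assume $\sM_2$ is finite and $\sM_1$ is monadically NIP, and let $\sM$ be an arbitrary expansion of $\sM_1\boxtimes\sM_2$ by unary predicates. Since $M_2$ is finite, every unary predicate $U\subseteq M_1\times M_2$ on the main sort decomposes as $U=\bigsqcup_{e\in M_2}U_e\times\{e\}$, where $U_e:=\{a\in M_1:(a,e)\in U\}\subseteq M_1$. Adding all such $U_e$ (together with the predicates already supported on the $\sM_1$-sort) to $\sM_1$ produces a monadic expansion $\sM_1'$; adding the singleton predicates $\{e\}$ for $e\in M_2$ (together with the predicates on the $\sM_2$-sort) to $\sM_2$ produces a still-finite structure $\sM_2'$. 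Using \Cref{fact: QE relative in full product}, one verifies that $\sM$ is a reduct of $\sM_1'\boxtimes\sM_2'$ via the explicit formula $U(x)\equiv\bigvee_{e\in M_2}\bigl(\pi_{M_2}(x)=e\wedge U_e(\pi_{M_1}(x))\bigr)$. Since $\sM_1$ is monadically NIP, $\sM_1'$ is NIP; since $\sM_2'$ is finite, it is NIP. The preceding Proposition then yields that $\sM_1'\boxtimes\sM_2'$ is NIP, and hence so is its reduct $\sM$.

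The only real subtlety is bookkeeping when potentially infinitely many unary predicates are added at once; this is handled by appealing to the fact that NIP is a local property, so any failure of NIP for the expansion would be witnessed by a single formula involving only finitely many of the added predicates, to which the decomposition above applies directly.
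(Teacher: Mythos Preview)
Your proof is correct and follows the approach implicit in the paper, which treats this corollary as immediate from the preceding dp-rank proposition together with \Cref{thm:characterisations-of-monadic-NIP}. The paper does not spell out the ``precisely when'' characterisation at all; your argument for the converse---decomposing each unary predicate on the main sort into finitely many slices indexed by $M_2$ and absorbing these into a monadic expansion of $\sM_1$---is the natural way to fill this gap, and is sound.
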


Another negative result is that $\ncodingcla{\cK}$ is not always stable under full products. We illustrate this in the following \namecref{example-full-product-bad}:

\begin{example}\label{example-full-product-bad}
     Recall that we denote by $\cla{OG}$ the class of finite ordered graphs, and by $\cla{OC}$ the class of finite convexly ordered binary branching $C$-relations. Let us consider the product $\cla{OG}\boxtimes \cla{OC}$ of the two classes. The ordered random graph $\sM$ is $\ncodingcla{\cla{OC}}$ because the random graph is a binary structure, and therefore collapses $\cla{OC}$-indiscernibles to order-indiscernibles (see \Cref{exa:BinaryStructureCollapseConvexilyOrderedCrelation}). In particular, $\sM\in \ncodingcla{\cla{OG}\boxtimes \cla{OC}}$. The generic $C$-relation $\sN$ induced by a binary tree is $\ncodingcla{\cla{OG}}$ as it is NIP, so again, $\sN\in \ncodingcla{\cla{OG}\boxtimes \cla{OC}}$. However, one can observe that $\sM\boxtimes\sN\notin\ncodingcla{\cla{OG}\boxtimes \cla{OC}}$, since $\sM\boxtimes\sN$ is the Fraïssé limit of $\cla{OG}\boxtimes \cla{OC}$.
     
    Intuitively, $\ncodingcla{\cla{OG}\boxtimes \cla{OC}}$ fails to be stable by full products because $\cla{OG}\boxtimes \cla{OC}$-indexed sequences can collapse to two orthogonal notions of indiscerniblity, namely $\sN$-indiscernibles and $\sM$-indiscernibles. 
\end{example}

\Cref{example-full-product-bad} above shows in particular that the hierarchy of $\codingcla{\cK}$-configuration is not linearly ordered. By the above, we have indeed that  $\cla{OG} \notin \codingcla{\cla{OC}}$ and $\cla{OC} \notin \codingcla{\cla{OG}}$. Using \Cref{order on classes}, we obtain:

\begin{center}
    
\begin{tikzpicture}
    \draw (-1.5,0) node{$\codingcla{\cla{E}}$};
    \draw (-0.5,0) node{\scalebox{1.5}{$\supset$}};
    \draw (0.4,0) node{$\codingcla{\cla{LO}}$};
    \draw (0.9,0.9) node
        {\scalebox{1.7}{$\mathrel{\rotatebox[origin=c]{45}{$\supset$}}$}};
    \draw (1.7,1.4) node{$\codingcla{\cla{OG}}$};
    \draw (0.9,-0.9) node
    {\scalebox{1.7}{$\mathrel{\rotatebox[origin=c]{-45}{$\supset$}}$}};
    \draw (1.7,-1.4) node{$\codingcla{\cla{OC}}$};
    \draw (2.5,0.9) node
    {\scalebox{1.7}{$\mathrel{\rotatebox[origin=c]{-45}{$\supset$}}$}};
    \draw (2.5,-0.9) node
    {\scalebox{1.7}{$\mathrel{\rotatebox[origin=c]{45}{$\supset$}}$}};
    \draw (3.5,0) node{$\codingcla{\cla{OG}\boxtimes \cla{OC}}$};
    \draw (4.5,0) node{\scalebox{1.5}{$\supset$}};
    \draw (5.4,0) node{$\cdots$};

    \draw (1.7,0.3) node{\scalebox{1.7}{$\mathrel{\rotatebox[origin=c]{90}{$\nsubseteq$}}$}};
    \draw (1.7,-0.3) node{\scalebox{1.7}{$\mathrel{\rotatebox[origin=c]{90}{$\nsupseteq$}}$}};

\end{tikzpicture}

\end{center}

Above $\cla{LO}$ is the class of linear orders, and $\cla{E}$ is the class of finite (unstructured) sets. The key is to observe that:
\begin{itemize}
    \item $\codingcla{\cla{OG}}\not\subseteq\codingcla{\cla{OC}}$, which follows from \Cref{item: ordered C relation not coded by binary} of \Cref{exa:BinaryStructureCollapseConvexilyOrderedCrelation} together with \Cref{order on classes}.
    \item $\codingcla{\cla{OC}}\not\subseteq\codingcla{\cla{OG}}$ which follows from the fact that, as discussed above, the generic $C$-relation is NIP, so, in particular it belongs to $\ncodingcla{\cla{OG}}$, but of course it also belongs to $\codingcla{\cla{OC}}$.
\end{itemize}
This provides a negative answer to the question asked in  \cite[Section 7]{GP23} and \cite[Question 5.6]{GPS21} about linearity of the hierarchy.

Now, we will obtain positive results, using the following proposition:

\begin{proposition}\label{PropositionCharacterisationIndiscernibleFullProduct}
    A sequence $(\bar{c}_i)_{i\in I }$ in $\sM_1 \boxtimes \sM_2$ is $\sI$-indexed  indiscernible if and only if $(\pi_{M_1}(\bar{c}_i))_{i\in I}$ is an $\sI$-indiscernible sequence in $\sM_1$ and $(\pi_{M_2}(\bar{c}_i))_{i\in I}$ is an $\sI$-indiscernible sequence in $\sM_2$.
\end{proposition}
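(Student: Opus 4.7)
The plan is to derive both implications directly from the relative quantifier elimination provided by \Cref{fact: QE relative in full product}. The forward direction is essentially bookkeeping: the projections $\pi_{M_1}$ and $\pi_{M_2}$ are function symbols of $\lL_{\sM_1\boxtimes\sM_2}$, so for any $\lL_{\sM_k}$-formula $\psi(\bar y_1,\dots,\bar y_n)$ and any tuples $i_1,\dots,i_n\in\sI$, the formula $\psi(\pi_{M_k}(\bar x_1),\dots,\pi_{M_k}(\bar x_n))$ is an $\lL_{\sM_1\boxtimes\sM_2}$-formula. Thus $\sI$-indiscernibility of $(\bar c_i)_{i\in\sI}$ in $\sM_1\boxtimes\sM_2$ immediately yields $\sI$-indiscernibility of $(\pi_{M_k}(\bar c_i))_{i\in\sI}$ in $\sM_k$ for each $k\in\{1,2\}$.

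For the converse, I would argue as follows. Assume both projected sequences are $\sI$-indiscernible in their respective factors, and fix an arbitrary $\lL_{\sM_1\boxtimes\sM_2}$-formula $\varphi(\bar x_1,\dots,\bar x_n)$ together with tuples $i_1,\dots,i_n$ and $j_1,\dots,j_n$ from $\sI$ with $\qftp_\sI(i_1,\dots,i_n)=\qftp_\sI(j_1,\dots,j_n)$. By \Cref{fact: QE relative in full product}, $\varphi$ is equivalent in $\sM_1\boxtimes\sM_2$ to a quantifier-free (in the main sort) Boolean combination of atomic $\lL_{\sM_1\boxtimes\sM_2}$-formulas. Moreover, exactly as in the proof of that fact, each atomic formula can be rewritten, via the decomposition of equality in the main sort into conjunctions of equalities in the factors, as either $\psi(\pi_{M_1}(\bar x_{l_1}),\dots,\pi_{M_1}(\bar x_{l_p}))$ for some $\lL_{\sM_1}$-formula $\psi$ or $\theta(\pi_{M_2}(\bar x_{m_1}),\dots,\pi_{M_2}(\bar x_{m_q}))$ for some $\lL_{\sM_2}$-formula $\theta$. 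Hence $\varphi(\bar x_1,\dots,\bar x_n)$ is equivalent to a Boolean combination of formulas, each of which only involves one of the two projected tuples.

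By hypothesis, the sequences $(\pi_{M_1}(\bar c_i))_{i\in\sI}$ and $(\pi_{M_2}(\bar c_i))_{i\in\sI}$ are $\sI$-indiscernible, so each such building block has the same truth value on $(\bar c_{i_1},\dots,\bar c_{i_n})$ as on $(\bar c_{j_1},\dots,\bar c_{j_n})$. Consequently so does their Boolean combination, and therefore so does $\varphi$. This establishes the $\sI$-indiscernibility of $(\bar c_i)_{i\in\sI}$ in $\sM_1\boxtimes\sM_2$.

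The only delicate point I foresee is the handling of equality in the main sort: a literal $\bar x_l = \bar x_m$ does not, a priori, factor through a single projection, but it decomposes as $\pi_{M_1}(\bar x_l)=\pi_{M_1}(\bar x_m)\wedge \pi_{M_2}(\bar x_l)=\pi_{M_2}(\bar x_m)$, which is exactly what allows us to place all atomic subformulas in one factor or the other. Once this is observed, the argument reduces to the formal manipulation outlined above.
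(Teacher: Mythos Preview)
Your proposal is correct and follows essentially the same approach as the paper: both argue via the relative quantifier elimination of \Cref{fact: QE relative in full product}, reduce every formula to a Boolean combination of formulas that factor through a single projection (handling equality in the main sort by splitting it into the two coordinate equalities), and then read off the equivalence of indiscernibility. Your write-up is in fact a bit more explicit than the paper's about the forward direction and the treatment of equality, but the underlying argument is the same.
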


To clarify the notation, if $\Bar{c}=c_1,\dots,c_n$ is a tuple of $\sM_1 \boxtimes \sM_2$, the components can be in any of the three sorts, and we denote by $\pi_{M_1}(\Bar{c})$ the tuple of size at most $n$ consisting of the projections $\pi_{M_1}(c_i)$ if $c_i\in \sM_1 \boxtimes \sM_2$, and of the elements $c_i$ if $c_i\in \sM_1$. In particular, we removed the components $c_i\in \sM_2$. 

 \begin{proof}
     By relative quantifier elimination, a formula $\vphi(\bar{x})$ in $\lL_{\sM_1\boxtimes\sM_2}$ is equivalent modulo the theory of $\sM_1\boxtimes\sM_2$ to a union of formulas of the form:
     \[\vphi_1(\pi_{M_1}(\bar{x}))\wedge \vphi_2(\pi_{M_2}(\bar{x})).\]
     where $\vphi_1(\bar{x}_1)$ is an $\lL_{\sM_1}$-formula and $\vphi_2(\bar{x}_2)$ is an  $\lL_{\sM_2}$-formula. Let $(\bar{c}_i)_{i\in I}$ be a sequence in $\sM_1\boxtimes\sM_2$, where $I$ is an ordered set. To check whether $(\bar{c}_i)_{i\in I}$ is an $\sI$-indexed  indiscernible sequence, it is necessary and sufficient to check if $(\bar{c}_i)_{i \in I}$ is $\sI$-indiscernible with respect to the formulas $\vphi_i(\pi_{M_i}(x))$, $i\in \{1,2\}$. But this is exactly to check whether the sequence $(\pi_{M_1}(\bar{c}_i))_i$ is an $\sI$-indexed  indiscernible sequence in  $\sM_1$ and the sequence $(\pi_{M_2}(\bar{c}_i))_i$ is an $\sI$-indexed  indiscernible sequence in $\sM_2$.  

 \end{proof}
 
 We immediately deduce transfer principles for full products with respect to some properties which can easily be described with indiscernibles.  
 
\begin{corollary}\label{cor:collapse-indiscernibles-full-product}
        Consider $\sI$ and $\sJ$ two structures such that $\sJ$ is a reduct of $\sI$. The full product ${\sM_1 \boxtimes \sM_2}$ collapses $\sI$-indiscernibles to $\sJ$-indiscernibles if and only if $\sM_1$ and $\sM_2$ collapse $\sI$-indiscernibles to $\sJ$-indiscernibles.
\end{corollary}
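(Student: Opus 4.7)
The strategy is to use \Cref{PropositionCharacterisationIndiscernibleFullProduct} in both directions; indeed, that proposition is stated for $\sI$-indiscernibility but its proof goes through the relative quantifier elimination of \Cref{fact: QE relative in full product}, which makes no use of the particular structure indexing the sequence. So the same characterisation applies verbatim to $\sJ$-indexed sequences, and the corollary will follow essentially formally.

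For the forward direction, assume $\sM_1\boxtimes \sM_2$ collapses $\sI$-indiscernibles to $\sJ$-indiscernibles, and let $(\bar a_i)_{i\in \sI}$ be an $\sI$-indiscernible sequence of tuples from $\sM_1$. Since the sort $\sM_1$ inside $\sM_1\boxtimes\sM_2$ carries exactly its original $\lL_{\sM_1}$-structure, we may view $(\bar a_i)_{i\in \sI}$ as a sequence of tuples from that sort. Its projection to $\sM_1$ is itself, hence $\sI$-indiscernible; its projection to $\sM_2$ is empty, hence trivially $\sI$-indiscernible. By \Cref{PropositionCharacterisationIndiscernibleFullProduct}, $(\bar a_i)_{i\in \sI}$ is $\sI$-indiscernible in $\sM_1\boxtimes\sM_2$, so by hypothesis it is $\sJ$-indiscernible there. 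Since $\sJ$-indiscernibility in the product restricted to a sequence of $\sM_1$-tuples is the same as $\sJ$-indiscernibility in $\sM_1$, we conclude that $(\bar a_i)_{i\in \sI}$ is $\sJ$-indiscernible in $\sM_1$. The argument for $\sM_2$ is symmetric.

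For the reverse direction, assume that both $\sM_1$ and $\sM_2$ collapse $\sI$-indiscernibles to $\sJ$-indiscernibles, and let $(\bar c_i)_{i\in \sI}$ be an $\sI$-indiscernible sequence in $\sM_1\boxtimes \sM_2$. By \Cref{PropositionCharacterisationIndiscernibleFullProduct}, the projected sequences $(\pi_{M_1}(\bar c_i))_{i\in \sI}$ and $(\pi_{M_2}(\bar c_i))_{i\in \sI}$ are $\sI$-indiscernible in $\sM_1$ and $\sM_2$ respectively. By hypothesis, each projected sequence is in fact $\sJ$-indiscernible. Applying \Cref{PropositionCharacterisationIndiscernibleFullProduct} again, this time reading it with $\sJ$ as the indexing structure, we conclude that $(\bar c_i)_{i\in \sI}$ is $\sJ$-indiscernible in $\sM_1\boxtimes \sM_2$, as required.

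There is essentially no obstacle here beyond making clear that \Cref{PropositionCharacterisationIndiscernibleFullProduct} is insensitive to the particular indexing structure (its proof only uses relative quantifier elimination in the full product), and that a sequence of tuples lying in a single sort of the multisorted product is indiscernible in the product if and only if it is indiscernible in the factor carrying that sort. Both points are immediate from the setup.
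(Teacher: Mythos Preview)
Your proof is correct and follows exactly the approach the paper intends: the corollary is stated there without proof, as an immediate consequence of \Cref{PropositionCharacterisationIndiscernibleFullProduct}, and your argument simply spells out that deduction in both directions. The only point one might add for completeness is that every model of $\Th(\sM_1\boxtimes\sM_2)$ is itself a full product (by relative quantifier elimination), so passing to monster models---as the definition of collapsing requires---causes no difficulty; the paper makes this remark explicitly in the proof of \Cref{cor:transfer-ind-triv-full}.
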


The following \namecref{cor:nip-n-transfer-full} is immediate from \Cref{cor:collapse-indiscernibles-full-product} together with the characterisation of NIP$_n$ via an indiscernible collapse (\Cref{thm:nip-k-collapse}).

\begin{corollary}[NIP$_n$ transfer for full products]\label{cor:nip-n-transfer-full}
   For all $n\in\Nbb$, the full product $\sM_1 \boxtimes \sM_2$ is NIP$_n$ if, and only if, both $\sM_1$ and $\sM_2 $ are NIP$_n$.
\end{corollary}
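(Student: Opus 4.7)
The proof is essentially a direct combination of two results already established in the paper, so the plan is short. The key observation is that \Cref{thm:nip-k-collapse} gives a purely indiscernibility-based characterisation of NIP$_n$: a theory $T$ is NIP$_n$ if, and only if, $T$ collapses $\cla{OH}_{n+1}$-indiscernibles into order-indiscernibles, where $\cla{OH}_{n+1}$ is the Fraïssé limit of the class of finite ordered $(n+1)$-uniform hypergraphs and the linear order $(\Qbb,<)$ is a (quantifier-free) reduct of $\cla{OH}_{n+1}$. So the NIP$_n$ property is of exactly the ``collapse $\sI$-indiscernibles to $\sJ$-indiscernibles'' form to which \Cref{cor:collapse-indiscernibles-full-product} applies.

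First, I would specialise \Cref{cor:collapse-indiscernibles-full-product} by taking $\sI = \cla{OH}_{n+1}$ and $\sJ = (\Qbb,<)$. This yields that $\sM_1 \boxtimes \sM_2$ collapses $\cla{OH}_{n+1}$-indiscernibles to order-indiscernibles if, and only if, both $\sM_1$ and $\sM_2$ collapse $\cla{OH}_{n+1}$-indiscernibles to order-indiscernibles. Then I would apply \Cref{thm:nip-k-collapse} three times — once to the full product and once to each factor — to translate each occurrence of ``collapses $\cla{OH}_{n+1}$-indiscernibles to order-indiscernibles'' into the assertion ``is NIP$_n$''. The equivalence
\[
    \Th(\sM_1 \boxtimes \sM_2) \text{ is NIP}_n \iff \Th(\sM_1) \text{ and } \Th(\sM_2) \text{ are NIP}_n
\]
is then immediate.

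There is essentially no obstacle in this argument; the work is packaged into \Cref{PropositionCharacterisationIndiscernibleFullProduct} (which underlies \Cref{cor:collapse-indiscernibles-full-product}) and \Cref{thm:nip-k-collapse}. The only very minor point to verify explicitly is that the language of $\sM_1 \boxtimes \sM_2$ interpreted in the sense of \Cref{def:fullproduct} supports the application of \Cref{cor:collapse-indiscernibles-full-product} with sequences of tuples from the product sort; but this is exactly the setting in which \Cref{PropositionCharacterisationIndiscernibleFullProduct} is stated, since projections to $\sM_1$ and $\sM_2$ are part of the language, and an $\sI$-indiscernible sequence in $\sM_1 \boxtimes \sM_2$ is nothing other than a sequence whose projections to each factor are $\sI$-indiscernible.
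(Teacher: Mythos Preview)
Your proposal is correct and matches the paper's approach exactly: the paper states that the corollary ``is immediate from \Cref{cor:collapse-indiscernibles-full-product} together with the characterisation of NIP$_n$ via an indiscernible collapse (\Cref{thm:nip-k-collapse})'', which is precisely the two-step combination you describe.
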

    
\begin{proposition}
    The full product $\sM_1 \boxtimes \sM_2$ is distal if, and only if, $\sM_1 $ is distal and $\sM_2 $ is distal.
\end{proposition}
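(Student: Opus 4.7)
The plan is to reduce everything to \Cref{PropositionCharacterisationIndiscernibleFullProduct}: distality is phrased entirely in terms of linearly-ordered indiscernible sequences and a parameter tuple inserted at a single cut, and the proposition says that such indiscernibility in $\sM_1\boxtimes\sM_2$ holds if and only if it holds componentwise. Since ``$(a_i)_{i\in \mathbb{Q}}$ is indiscernible over $b$'' is the same as the sequence $(a_i\,\frown\,b)_{i\in\mathbb{Q}}$ being indiscernible, both hypothesis and conclusion of \Cref{Definition: explicit witness of non-distality} decompose componentwise, and each direction should be a direct application.

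For the forward direction, assume $\sM_1\boxtimes\sM_2$ is distal. Take an indiscernible sequence $(a_i)_{i\in\mathbb{Q}}$ in $\sM_1$ and a tuple $b$ from $\sM_1$ such that $(a_i)_{i\in\mathbb{Q}\setminus\{0\}}$ is indiscernible over $b$. Regarding the $a_i$ and $b$ as tuples in the $\sM_1$-sort of $\sM_1\boxtimes\sM_2$, \Cref{PropositionCharacterisationIndiscernibleFullProduct} (with the $\pi_{M_2}$-projections empty) immediately transfers both indiscernibility hypotheses to $\sM_1\boxtimes\sM_2$. Distality of the product then yields indiscernibility of $(a_i)_{i\in\mathbb{Q}}$ over $b$ there, which by \Cref{PropositionCharacterisationIndiscernibleFullProduct} again descends to $\sM_1$. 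The argument for $\sM_2$ is symmetric.

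For the backward direction, assume $\sM_1$ and $\sM_2$ are both distal and let $(\bar c_i)_{i\in\mathbb{Q}}$ be indiscernible in $\sM_1\boxtimes\sM_2$, with $\bar d$ a tuple such that $(\bar c_i)_{i\in\mathbb{Q}\setminus\{0\}}$ is indiscernible over $\bar d$. By \Cref{PropositionCharacterisationIndiscernibleFullProduct} applied to $(\bar c_i\,\frown\,\bar d)_{i\in\mathbb{Q}\setminus\{0\}}$, the projected sequence $(\pi_{M_k}(\bar c_i))_{i\in\mathbb{Q}\setminus\{0\}}$ is indiscernible over $\pi_{M_k}(\bar d)$ in $\sM_k$ for $k\in\{1,2\}$, and the same proposition gives full indiscernibility of $(\pi_{M_k}(\bar c_i))_{i\in\mathbb{Q}}$ in $\sM_k$. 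Distality of each $\sM_k$ then closes the cut at $0$, so $(\pi_{M_k}(\bar c_i))_{i\in\mathbb{Q}}$ is indiscernible over $\pi_{M_k}(\bar d)$ in $\sM_k$. A final application of \Cref{PropositionCharacterisationIndiscernibleFullProduct} glues the two factors and yields that $(\bar c_i)_{i\in\mathbb{Q}}$ is indiscernible over $\bar d$ in $\sM_1\boxtimes\sM_2$, as required.

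The only real bookkeeping point, and the part I would be most careful about, is that each $\bar c_i$ or $\bar d$ may have components in any of the three sorts of $\sM_1\boxtimes\sM_2$, so I would explicitly invoke the convention preceding \Cref{PropositionCharacterisationIndiscernibleFullProduct} (components from $\sM_1$ go into $\pi_{M_1}$, components from $\sM_2$ are dropped) to justify that the rewriting ``indiscernible over $\bar d$ $\Leftrightarrow$ $(\bar c_i\,\frown\,\bar d)_i$ indiscernible'' is compatible with the projections on each side. Beyond that, both implications are essentially a transparent double application of \Cref{PropositionCharacterisationIndiscernibleFullProduct}.
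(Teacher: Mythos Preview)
Your proof is correct and follows essentially the same approach as the paper: both directions reduce to \Cref{PropositionCharacterisationIndiscernibleFullProduct}, splitting indiscernibility (over parameters) componentwise, applying distality in each factor, and reassembling. Your explicit use of the rewriting ``indiscernible over $\bar d$ $\Leftrightarrow$ $(\bar c_i\frown\bar d)_i$ indiscernible'' and your care with the multi-sorted projection convention are in fact slightly more detailed than the paper, which treats the forward direction as immediate and applies the characterisation to ``indiscernible over $B$'' directly.
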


\begin{proof}
     If $\sM_1 \boxtimes \sM_2$ is distal, we see easily that $\sM_1 $ and $\sM_2 $ are distal. To prove the right-to-left implication, assume that $\sM_1$ and $\sM_2$ are both distal, and let $((a_i,b_i))_{i\in \mathbb{Q}}$ be a sequence in $\sM_1 \boxtimes \sM_2$, and $B$ a set of parameters in $\sM_1 \boxtimes \sM_2$ such that:
     \begin{enumerate}
         \item $((a_i,b_i))_{i\in \mathbb{Q}\setminus \{0\} }$ is indiscernible over $B$,
         \item $((a_i,b_i))_{i\in \mathbb{Q}}$ is indiscernible.
     \end{enumerate}
     By the characterisation in Proposition \ref{PropositionCharacterisationIndiscernibleFullProduct}, we have 
\begin{enumerate}
         \item $(a_i)_{i\in \mathbb{Q}\setminus \{0\} }$ is indiscernible over $\pi_{\sM_1}(B)$,
         \item $(a_i)_{i\in \mathbb{Q}}$ is indiscernible,
         \item $(b_i)_{i\in \mathbb{Q}\setminus \{0\} }$ is indiscernible over $\pi_{\sM_2}(B)$,
         \item $(b_i)_{i\in \mathbb{Q}}$ is indiscernible.
     \end{enumerate}
     By distality in $\sM_1$ and $\sM_2$, we have:
     \begin{enumerate}
         \item $(a_i)_{i\in}$ is indiscernible over $\pi_{\sM_1}(B)$,
        \item $(b_i)_{i\in \mathbb{Q} }$ is indiscernible over $\pi_{\sM_2}(B)$,
     \end{enumerate}
    Again by the characterisation, $((a_i,b_i))_{i\in \mathbb{Q}}$ is indiscernible over $B$. This shows that  $\sM_1 \boxtimes \sM_2$ is distal and concludes the proof.
\end{proof}
Similarly one can show more generally that $m$-distality also transfers to the full product. 
\begin{corollary}\label{cor:tranfer-n-distal-full}
    The full product $\sM_1 \boxtimes \sM_2$ is $n$-distal if, and only if, $\sM_1 $ is $n$-distal and $\sM_2 $ is $n$-distal.
\end{corollary}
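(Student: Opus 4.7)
The plan is to extend the argument used for distality in the previous proposition to the $n$-distal setting, relying fundamentally on \Cref{PropositionCharacterisationIndiscernibleFullProduct}. The key technical ingredient is a corresponding characterisation of insertion in full products: given an indiscernible sequence $\sI$ in $\sM_1 \boxtimes \sM_2$ with a Dedekind partition $\sI_0 + \cdots + \sI_{n+1}$, and any subsequence $A' \subseteq A = (a_0, \dots, a_n)$ of tuples from the product, $A'$ inserts indiscernibly into $\sI$ if and only if $\pi_{M_1}(A')$ inserts indiscernibly into $\pi_{M_1}(\sI)$ (with the induced Dedekind partition) and $\pi_{M_2}(A')$ inserts indiscernibly into $\pi_{M_2}(\sI)$. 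This is immediate from \Cref{PropositionCharacterisationIndiscernibleFullProduct} applied to the sequence obtained by carrying out the insertion: the extended sequence is indiscernible in the product if and only if both of its coordinate projections are indiscernible in their respective factors.

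For the forward direction, suppose $\sM_1 \boxtimes \sM_2$ is $n$-distal. Let $\sI = (a_i)_{i \in I}$ be an indiscernible sequence in $\sM_1$ with a Dedekind partition into $n+2$ pieces and $A = (\alpha_0, \dots, \alpha_n)$ a sequence in $\sM_1$ failing to insert. Fix any $b \in \sM_2$ and form the paired sequences $\tilde{\sI} = ((a_i, b))_{i \in I}$ and $\tilde{A} = ((\alpha_j, b))_{j \leq n}$. By the insertion characterisation, $\tilde{A}$ fails to insert into $\tilde{\sI}$, so $n$-distality of the product yields an $n$-element subsequence of $\tilde{A}$ that fails to insert. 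Projecting back gives the required subsequence of $A$, and the argument for $\sM_2$ is symmetric.

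For the backward direction, suppose both $\sM_1$ and $\sM_2$ are $n$-distal. Given an indiscernible sequence $\sI$ in $\sM_1 \boxtimes \sM_2$ with a Dedekind partition and a sequence $A = (a_0, \dots, a_n)$ of tuples failing to insert into $\sI$, the insertion characterisation implies that at least one projection, say $\pi_{M_1}(A)$, fails to insert into $\pi_{M_1}(\sI)$. By $n$-distality of $\sM_1$, there exists an $n$-element subsequence $A' \subseteq A$ such that $\pi_{M_1}(A')$ fails to insert into $\pi_{M_1}(\sI)$. Applying the insertion characterisation once more, $A'$ itself fails to insert into $\sI$, establishing $n$-distality of the product.

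The main obstacle is stating and verifying the insertion characterisation precisely; once this is in place, the argument proceeds in direct analogy to the distality transfer. A minor point to be careful about is that dropping one element $a_j$ from $A$ and inserting the remaining $n$ elements at the original cuts (leaving the $j^{\text{th}}$ cut empty) must produce a sequence whose indiscernibility in the product can be tested factor by factor; this is exactly what \Cref{PropositionCharacterisationIndiscernibleFullProduct} delivers.
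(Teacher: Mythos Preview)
Your proof is correct and takes essentially the approach the paper intends: the paper leaves this proof to the reader, indicating it is the natural extension of the distality argument via \Cref{PropositionCharacterisationIndiscernibleFullProduct}, and your insertion characterisation together with the two directions is exactly that extension. One minor simplification for the forward direction: since $\sM_1$ and $\sM_2$ are sorts of the multisorted structure $\sM_1\boxtimes\sM_2$ and, by relative quantifier elimination, carry exactly their original structure, any failure of $n$-distality in $\sM_1$ is already a failure of $n$-distality in $\sM_1\boxtimes\sM_2$; but your explicit pairing with a constant works just as well.
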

We leave the proof to the reader. We will discuss again the notion of $m$-distality in \Cref{prop:m-distality-lex-transfer}, where we show the transfer for lexicographic products.

\begin{proposition}\label{cor:transfer-ind-triv-full}
    The full product $\sM_1 \boxtimes \sM_2$ has indiscernible-triviality if, and only if, $\sM_1 $ has indiscernible-triviality and $\sM_2 $ has indiscernible-triviality.
\end{proposition}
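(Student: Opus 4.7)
The plan is to leverage \Cref{PropositionCharacterisationIndiscernibleFullProduct} together with a routine extension of it to parameters. Since indiscernible-triviality is a property about how sequences indiscernible over each singleton fit together into sequences indiscernible over the whole parameter set, and since indiscernibility in a full product decouples into indiscernibility of projections in the factors, the transfer should be an essentially formal consequence.

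First, I would formulate the parametrised version of \Cref{PropositionCharacterisationIndiscernibleFullProduct}: for any parameter set $B \subseteq \sM_1 \boxtimes \sM_2$, a sequence $(\bar c_i)$ is indiscernible over $B$ in the product if, and only if, $(\pi_{M_1}(\bar c_i))$ is indiscernible over $\pi_{M_1}(B)$ in $\sM_1$ and $(\pi_{M_2}(\bar c_i))$ is indiscernible over $\pi_{M_2}(B)$ in $\sM_2$. The argument is identical to the proof already given: by \Cref{fact: QE relative in full product}, every $\lL_{\sM_1 \boxtimes \sM_2}$-formula with parameters from $B$ is a Boolean combination of formulas $\varphi_1(\pi_{M_1}(\bar x), \pi_{M_1}(\bar b))$ and $\varphi_2(\pi_{M_2}(\bar x), \pi_{M_2}(\bar b))$, so indiscernibility over $B$ in the product reduces to indiscernibility of each projection over the respective projected parameters.

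For the ``only if'' direction, suppose $\sM_1 \boxtimes \sM_2$ has indiscernible-triviality and let $(a_i)_{i\in\omega}$ be a sequence in $\sM_1$ that is indiscernible over each $b$ in some $B \subseteq \sM_1$. Viewing $(a_i)$ in the $\sM_1$-sort of the product, the parametrised characterisation gives that $(a_i)$ is indiscernible in the product over each $b \in B$, so by the hypothesis it is indiscernible over $B$ in the product, and hence (applying the characterisation in reverse) over $B$ in $\sM_1$. The same argument applies to $\sM_2$.

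For the ``if'' direction, let $(\bar c_i)_{i\in\omega}$ be indiscernible in $\sM_1 \boxtimes \sM_2$ over each $b \in B$. By the parametrised characterisation, $(\pi_{M_1}(\bar c_i))$ is indiscernible over each $\pi_{M_1}(b)$ in $\sM_1$, and similarly in $\sM_2$. Using indiscernible-triviality in each factor, $(\pi_{M_j}(\bar c_i))$ is then indiscernible over the full set $\pi_{M_j}(B)$ in $\sM_j$ for $j \in \{1,2\}$, and one final application of the characterisation yields that $(\bar c_i)$ is indiscernible over $B$ in the product. The only point requiring any care is ensuring that the characterisation extends to indiscernibility over parameters, but, as noted above, this is immediate from the same relative quantifier elimination argument, so no genuine obstacle arises.
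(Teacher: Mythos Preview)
Your proposal is correct and follows essentially the same approach as the paper: both directions rely on the decoupling of indiscernibility in the product into indiscernibility of the projections, via \Cref{PropositionCharacterisationIndiscernibleFullProduct}. The only cosmetic difference is that the paper, rather than stating a parametrised version of the characterisation, encodes parameters by appending them as constant tuples to the sequence (so that indiscernibility over $B$ becomes indiscernibility of $(c_i\frown B)_i$) and phrases indiscernible-triviality in the equivalent two-set form (indiscernible over $B$ and over $C$ implies indiscernible over $B\cup C$); your explicit parametrised formulation is equally valid and arguably cleaner.
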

\begin{proof}
    Remark that any model of the theory of $\sM_1 \boxtimes \sM_2$ is a full product. We may thus assume that $\sM_1 \boxtimes \sM_2$ is a monster model. The left-to-right implication follows from the definition: an indiscernible sequence of $\sM_1$ (or of $\sM_2$) is in particular an indiscernible sequence of $\sM_1 \boxtimes \sM_2$.\\
    For the right-to-left implication, let $(c_i)$ be a sequence of tuples in $\sM_1\boxtimes \sM_2$ which is indiscernible over a set of parameters $B$ and over another set of parameters $C$. Since the projection are in the language, we may assume without loss that $B=B_1\times B_2 \subset \mathcal{M}_1 \times \mathcal{M}_2$ and $C=C_1\times C_2 \subset \mathcal{M}_1 \times \mathcal{M}_2$. By the characterisation, $(\pi_{M_1}(c_i)\frown B_1)$ and $(\pi_{M_1}(c_i)\frown C_1)$ are indiscernible in $\sM_1$. By indiscernible-triviality of $\sM_1$, $(\pi_{M_1}(c_i)\frown B_1\frown C_1)$ is indiscernible in $\sM_1$. Similarly, $(\pi_{M_2}(c_i)\frown B_2\frown C_2)$ is indiscernible in $\sM_2$ and by the characterisation, $(c_i \frown B \frown C)_{i\in I}$ is indiscernible in $\sM_1 \boxtimes \sM_2$. This shows indiscernible-triviality of $\sM_1 \boxtimes \sM_2$.
\end{proof}

\subsection{Transfers in lexicographic sums}
Through this subsection, we consider the lexicographic sum $\sS$ of an $\lL_\sM$-structure $\sM$ and a class of $\lL_\mathfrak{N}$-structures $\mathfrak{N}:= \{\sN_a\}_{a\in \sM}$. We will always assume the following:
\begin{align}
     \tag{$*$} \text{For every sentence $\vphi \in \lL_\mathfrak{N}$, the set $\{a\in \sM \ \vert \ \sN_a \models \vphi \}$ is $\emptyset$-definable in $\sM$.}
\end{align} 
This technical assumption ensures that the induced structure on the sort for $M$ in $\sS$ is exactly the $\lL_\sM$-structure $\sM$ (this is a consequence of Theorem \ref{thm:QuantifierEliminationLexSum}). Concretely, this means that we will often work with more colours on the structure $\sM$ than initially intended. Since most of the notions that we will consider in our application are preserved under taking reducts, this will be a harmless assumption.
  
We should, as for the full product, describe the generalised indiscernible sequences of $\sM[\Nfrak]$ in terms of generalised indiscernibility in $\sM$ and $\sN_i$. But unlike the full product, some assumptions will have to be made on the kinds of indexing structures we consider. We start with the following motivating example:

\begin{example}\label{ex:lex-product-bad}
    Let $\sM$ be an infinite set, in the language of pure equality, say $\sM=\omega$ and $\sN$ be an infinite set equipped with an equivalence relation with two infinite equivalence classes, say $\sN = R\sqcup B$, where $R$ and $B$ are disjoint copies of $\omega$. The lexicographic product $\sM[\sN]$ is precisely an infinite set equipped with two equivalence relations $E_1,E_2$ such that all $E_i$-classes for $i=1,2$ are infinite, $E_2$ refines $E_1$ and each $E_1$-class is refined by exactly two $E_2$-classes. To fix notation, we may write
    \[
    \sM[\sN] = \bigsqcup(\{i\}\times(\{r_i^j:j\in\omega\}\sqcup\{b_i^j:j\in\omega\})),
    \]
    and for $(w,x),(y,z)\in \sM[\sN]$ we have:
    \begin{itemize}
        \item $(w,x)E_1(y,z)$ if, and only if, $w=y$.
        \item $(w,x)E_2(y,z)$ if, and only if $w=y$ and either $(x=r_w^j)\land(z=r_w^k)$ or $(x=r_w^j)\land(z=r_w^k)$, for some $j,k\in\omega$.
    \end{itemize} 
    
    Let $\sJ = \sM[\sN]$ and $\sI$ be the reduct of $\sM[\sN]$ to $\lL_\sJ = \{E_1\}$, i.e. an infinite set equipped with an equivalence relation with infinitely many infinite equivalence classes. 
    
    Consider the sequence: 
    \[
    A = (((i,r_i^j):j\in\omega)\frown((i,b_i^j) :j\in\omega ):i\in\omega).
    \] 
    We may view this as a $\sJ$-indexed sequence in $\sM[\sN]$, where $\sJ=\sM[\sN]$ is taken with the same enumeration as $A$. By quantifier elimination in $\sM[\sN]$, this sequence is $\sJ$-indiscernible. On the other hand, it is easy to see that if we view this sequence as an $\sI$-indexed sequence, with the same enumeration, it is not $\sI$-indiscernible.

    Let us consider the ``factors'' of the sequence $A$, that is, the sequences:
    \begin{itemize}
        \item  $v(A)$, consisting of the first coordinates of the elements appearing in $A$, with the same enumeration, so:
        \[
            v(A) = ((i:j\in\omega)\frown(i:j\in\omega ):i\in\omega).
        \]
        \item $A_i$, for $i\in\omega$, consisting of the second coordinantes of elements in $A$ whose first coordinate is $i$, so:
        \[
            A_i = (r_i^j:j\in\omega)\frown(b_i^j :j\in\omega)
        \]
    \end{itemize}
    We can view both sequences above as $\sI$-indexed sequences in $\sM$ and $\sN$, respectively. In both cases we can make sure that the sequences are $\sI$-indiscernible (for $v(A)$ we make sure that whenever two elements are equal they are in the same $E_1$-class, and for $A_i$ we make sure that the $E_1$-classes refine the equivalence relation of $\sN$).
\end{example}

The upshot of the previous example is that it is possible to have a $\sJ$-indexed sequence in a lexicographic product whose components are $\sJ$-indiscernible, but which is not $\sJ$-indiscerinble. We investigate conditions on $\sI$ that allow us to formulate a relatively simple characterisation of $\sI$-indiscernibles in a lexicographic sum in terms of $\sI$-indiscernibles in the summands. 

First, to fix some terminology, given a structure $\sI$ We define the following graphs on $\{0,1\}\times \sI$: 
\[
    C:= \{((\epsilon,i),(1-\epsilon,i))  : i\in \sI, \epsilon\in \{0,1\}\},
\]
and
\[
    D:= \{((\epsilon,i),(\epsilon,j))  : i,j\in \sI, \epsilon \in\{0,1\}\}.
\]
These are shown below:
\begin{figure}[ht]
\begin{center}\begin{tikzpicture}
\draw (7.2,1) node {$\{1\}\times \sI$};
\draw (7.2,0) node {$\{0\}\times \sI$};
\foreach \i in {0,...,5}{
    \fill (8+\i,1) circle (2pt);
    \fill (8+\i,0) circle (2pt);
    }
\foreach \i in {0,...,5}{
    \draw (8+\i,1) -- (8+\i,0);
    }
    \draw (7.2,-0.66) node {$\phantom{a}$};
\end{tikzpicture}
\hspace{1cm}
\begin{tikzpicture}
        \draw (-0.7,1) node {$\{1\}\times \sI$};
        \draw (-0.7,0) node {$\{0\}\times \sI$};
        \draw (5.3,0) node {$\cdots$};
        \draw (5.3,1) node {$\cdots$};

        \foreach \i in {0,...,4}{
            \fill (\i,1) circle (2pt);
            \fill (\i,0) circle (2pt);
            \foreach \j in {\i,...,5}{
                \draw (\i,0) .. controls (\i/2+\j/2, \i/6-\j/6) .. (\j,0);
                \draw (\i,1) .. controls (\i/2+\j/2, \j/6-\i/6+1) .. (\j,1);
            }
        }
        \draw (6,0.5);
    \end{tikzpicture}
\end{center}
\caption{The graphs $C$ and $D$}
\end{figure}
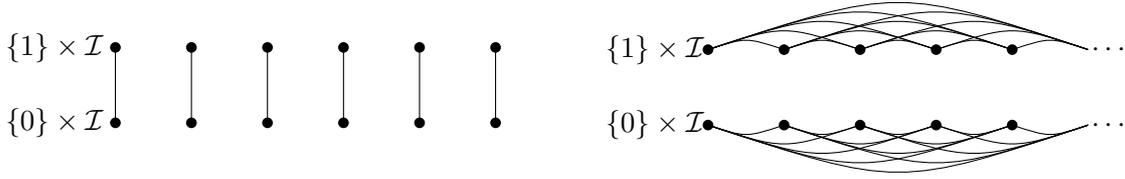

\begin{definition}\label{DefinitionReasonableIndexingStructures}
Let $\sI$ be a structure. 
The structure $\sI$ is called \emph{a reasonable indexing structure} or simply \emph{reasonable} if 
for every graph $R$ on $\{0,1\}\times I$ such that $R((\epsilon_0,i),(\epsilon_1,j))$ depends only on $\qftp_{\sI}(i,j)$ and on $\epsilon_0,\epsilon_1\in \{0,1\}$, one of the following holds:
\begin{enumerate}
    \item\label{1.} $R$ is connected (i.e. there is a path joining any two vertices of $\{0,1\}\times I$),
    \item\label{2.} $R$ is contained in $C$ (i.e. it is a subgraph of $(\{0,1\}\times I,C)$), 
    \item\label{3.} $R$ is contained in $D$ (i.e. it is a subgraph of $(\{0,1\}\times I,D)$).
\end{enumerate}
\end{definition}
    
It is clear that the structure $\sM$ from \Cref{ex:lex-product-bad} is not reasonable. Of course, that structure does not have the modelling property. The next example shows that the modelling property on $\sI$ does not imply that $\sI$ is a reasonable indexing structure. 

\begin{example}
    An infinite linear order $(\sM,<)$ equipped with an equivalence relation $E$ with infinitely many, infinite, convex equivalence classes is not a reasonable indexing structure. Indeed, consider the graph $(\{0,1\}\times M,R)$ where for $a,b\in M$ and $\epsilon_0,\epsilon_1\in \{0,1\}$, $R((\epsilon_0,a),(\epsilon_1,b))$ if, and only if, $aEb$ and $\epsilon_0\neq\epsilon_1$. 

    \begin{center}
    
\begin{tikzpicture}[rotate=90]

\begin{scope}
    \draw (0,0)  ellipse (0.5 and 1.5) ;
    \fill (0,1) circle (2pt);
    \fill (0,0) circle (2pt);
    \fill (0,-1) circle (2pt);
\end{scope}

\draw (0,1) -- (1.5,0);
\draw (0,0) -- (1.5,0);
\draw (0,-1) -- (1.5,0);
\draw (0,1) -- (1.5,1);
\draw (0,0) -- (1.5,1);
\draw (0,-1) -- (1.5,1);
\draw (0,1) -- (1.5,-1);
\draw (0,0) -- (1.5,-1);
\draw (0,-1) -- (1.5,-1);
\begin{scope}[xshift=1.5cm]
    \draw (0,0)  ellipse (0.5 and 1.5) ;
    \fill (0,1) circle (2pt);
    \fill (0,0) circle (2pt);
    \fill (0,-1) circle (2pt);
\end{scope}

\begin{scope}[yshift=-3.5cm]
\begin{scope}
    \draw (0,0)  ellipse (0.5 and 1.5) ;
    \fill (0,1) circle (2pt);
    \fill (0,0) circle (2pt);
    \fill (0,-1) circle (2pt);
\end{scope}

\draw (0,1) -- (1.5,0);
\draw (0,0) -- (1.5,0);
\draw (0,-1) -- (1.5,0);
\draw (0,1) -- (1.5,1);
\draw (0,0) -- (1.5,1);
\draw (0,-1) -- (1.5,1);
\draw (0,1) -- (1.5,-1);
\draw (0,0) -- (1.5,-1);
\draw (0,-1) -- (1.5,-1);
\begin{scope}[xshift=1.5cm]
    \draw (0,0)  ellipse (0.5 and 1.5) ;
    \fill (0,1) circle (2pt);
    \fill (0,0) circle (2pt);
    \fill (0,-1) circle (2pt);
\end{scope}
\end{scope}
\draw (0,3) node {$\{0\}\times \sM$};
\draw (1.5,3) node {$\{1\}\times \sM$};
\draw (0,-5.5) node {$\cdots$};
\draw (1.5,-5.5) node {$\cdots$};
\end{tikzpicture}

\end{center}

One sees by definition that the edge relation of this graph depends only on the quantifier-free type of the nodes, but this graph does not satisfy any of the conditions \ref{1.},\ref{2.} and \ref{3.} of \Cref{DefinitionReasonableIndexingStructures}. 
\end{example}

The definition of reasonability may seem rather ad hoc, at first sight. We will show that modulo some mild assumptions, $\sI$ is reasonable precisely when it is \emph{primitive}. To this end, let us recall some terminology from permutation group theory. We say that a permutation group $G$ acting on a set $\Omega$ acts \emph{primitively} if the only $G$-invariant equivalence relations on $\Omega$ are trivial (i.e.\ \emph{equality} and \emph{universality}). In the context of first-order structures, we have the following definition:

\begin{definition}[Primitivity]
    We say that an $\lL$-structure $\sM$ is \emph{primitive} if $\Aut(\sM)$ acts primitively on $\sM$. Explicitly, $\sM$ is primitive if, and only if, the only $\Aut(\sM)$-invariant equivalence relations on $\sM$ are trivial.
\end{definition}

\begin{proposition}\label{prop:reasonable-iff-primitive}
Let $\sI$ be an infinite ultrahomogeneous structure in a finite relational language. Then, the following are equivalent:
    \begin{enumerate}
        \item $\sI$ is reasonable,
        \item $\sI$ is primitive.
    \end{enumerate}
\end{proposition}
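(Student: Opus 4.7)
The plan is to argue each direction separately. For ``reasonable $\Rightarrow$ primitive'', I argue contrapositively. If $E$ is a nontrivial $\Aut(\sI)$-invariant equivalence relation on $I$, then, since $\sI$ is ultrahomogeneous, $E$ is a union of $\Aut(\sI)$-orbits on $I^2$ and is in particular determined by $\qftp_\sI$. Defining the graph $R$ on $\{0,1\} \times I$ by
\[ R((\epsilon_0, i), (\epsilon_1, j)) \iff \epsilon_0 \neq \epsilon_1 \text{ and } iEj \]
therefore gives a graph with the invariance property required in the definition of reasonability. A short verification shows that $R$ violates all three alternatives: $R$ is not connected, because paths in $R$ alternate fibres and preserve the $E$-class of the second coordinate, and $E$ is not universal; $R \not\subseteq C$, because $E$ is not equality, so there is some $i \neq j$ with $iEj$, giving the cross edge $((0,i),(1,j))$; and $R \not\subseteq D$, because the reflexive cross edges $((0,i),(1,i))$ are always present.

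For the converse, I fix an $R$ satisfying the hypothesis and study its connected components. Define equivalence relations $E_\epsilon$ on $I$ for $\epsilon \in \{0,1\}$ by $iE_\epsilon j$ if and only if $(\epsilon, i)$ and $(\epsilon, j)$ lie in the same connected component of $R$. Both $E_0$ and $E_1$ are $\Aut(\sI)$-invariant, so by primitivity each is equality or universal. If both are universal then $R$ is either connected (alternative 1) or has no cross edges (alternative 3). If exactly one of them is universal, say $E_0$, then the set of $j$ such that $(1,j)$ lies in the unique component containing $\{0\} \times I$ is $\Aut(\sI)$-invariant and so empty or all of $I$; the latter would force $E_1$ to be universal too, a contradiction, so no cross edges exist and $R \subseteq D$.

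The heart of the proof, and the main obstacle, is the remaining case $E_0 = E_1 = \text{equality}$. Here every component has at most one element from each fibre, so components have size at most two, and the cross edges form a partial $\Aut(\sI)$-invariant matching between the two fibres. An invariance argument on its domain shows the matching is either empty (so $R = \emptyset \subseteq C$ and we are done) or a bijection $f : I \to I$ such that the only edges of $R$ are the cross edges $((0,i),(1,f(i)))$ and $f$ commutes with every element of $\Aut(\sI)$. If $f = \mathrm{id}$ then $R \subseteq C$; the obstacle is to rule out $f \neq \mathrm{id}$. For this, I will observe that the partition of $I$ into $f$-orbits is $\Aut(\sI)$-invariant, so by primitivity it is trivial. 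Since $f = \mathrm{id}$ is excluded, $I$ must be a single (necessarily infinite) $f$-orbit. Fixing $i_0 \in I$ and identifying $I$ with $\bZ$ via $n \mapsto f^n(i_0)$ forces every $\sigma \in \Aut(\sI)$ to act by translation on $\bZ$. But then the equivalence relation ``same residue modulo $2$'' on $I$ under this identification is a proper, non-equality, $\Aut(\sI)$-invariant equivalence relation on $I$, contradicting primitivity.
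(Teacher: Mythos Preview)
Your proof is correct and follows essentially the same strategy as the paper's: both directions use the connectivity relations $E_\epsilon$ on the fibres, reduce by primitivity to the case where both are equality, extract the $\Aut(\sI)$-equivariant bijection $f$ from the resulting matching, and finish by considering $f$-orbits and then the orbits of $f^2$ (your ``mod $2$'' relation). The only differences are cosmetic: the paper argues $(2)\Rightarrow(1)$ by contraposition rather than directly, and in $(1)\Rightarrow(2)$ it uses the graph $R((\epsilon_0,a),(\epsilon_1,b))\Leftrightarrow aEb$ rather than your bipartite variant.
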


\begin{proof} \,
\begin{itemize}
    \item[$(1 \Rightarrow 2)$] Assume $\sI$ is reasonable, and let $E$ be a non-trivial $\Aut(\sI)$-invariant equivalence relation on $\sI$. Since $\sI$ is ultrahomogeneous, in a finite relational language, $E$ must be quantifier-free definable. Now, we may define the graph $R$ on $\{0,1\}\times M$ as follows: For $a,b\in \sI$ and $\epsilon_0,\epsilon_1 \in \{0,1\}$, 
    \[
        R((\epsilon_0,a),(\epsilon_1,b)) \text{ if and only if } aEb.
    \]
    
    By definition, the edges between the nodes $(\epsilon_0,a),(\epsilon_1,b)$ depend only on the quantifier-free type of $a,b$, and the $\epsilon_i$. Since $E$ is non-trivial there is a class with at least two elements; thus, the graph is not contained in the graphs $C,D$ from \Cref{DefinitionReasonableIndexingStructures}. It follows that the graph must be connected. But then, by transitivity of $E$, we must have that $aEb$ for all $a,b\in I$. Thus, $E$ is the universality equivalence relation, contradicting our initial assumption.
    
    \item[$(2 \Rightarrow  1)$] Assume $\sI$ is not reasonable. Then, by definition, we can find a graph $R$ on $\{0,1\}\times \sI$ such that $R((\epsilon_0,a),(\epsilon_1,b))$ depends only on $\epsilon_0,\epsilon_1$ and on the quantifier-free type of $a,b$, which is not connected, and not included in the graphs $C$ and $D$ from \Cref{DefinitionReasonableIndexingStructures}. We can define two equivalence relations $E_\epsilon$, for $\epsilon\in \{0,1\}$, by setting $aE_{\epsilon}b$ if, and only if, $(\epsilon, a)$ and $(\epsilon, b)$ are connected. By assumption on $R$, it follows that $E_\epsilon$ is $\Aut(\sI)$-invariant. If either $E_0$ or $E_1$ is not trivial, then we are done. Thus, we may assume, without loss of generality, that they both are trivial.
    
    We first show that $E_1$ and $E_2$ cannot both have a single equivalence class. Suppose not, then there are no edges of the form $R((0,a),(1,b))$, for otherwise the graph would be connected. But then, the graph would be included in $D$, which is absurd. 
    
    Assume now that $E_1$ has one equivalence class and $E_0$ has only singleton classes, it is easy to deduce from the conditions on $R$ that there must be exactly one element $(0,i)$ which connects with some elements $(1,j)$ on the top row. 
    \begin{center}
    \begin{tikzpicture}
        \draw (-0.7,1) node {$\{1\}\times \sI$};
        \draw (-0.7,0) node {$\{0\}\times \sI$};
        \draw (4.4,0) node {$\dots$};
        \draw (0,1) -- (3,0);
        \draw (2,0.5) node {$\cdots$};
        \draw (3,1) -- (3,0);
        \draw (4,1) -- (3,0);
        \draw (4.4,1) node {$\dots$};
        \foreach \i in {0,...,4}{
            \fill (\i,1) circle (2pt);
            \fill (\i,0) circle (2pt);
        }
        \foreach \j in {0,...,3}{
            \draw (\j,1) -- (\j+1,1);  
        };
        \draw (3,0) circle (3pt);
        \draw (3,0) node[below] {$(i,0)$} ;
    \end{tikzpicture}   
    \end{center}
    Then, we have a non trivial  $\Aut(\sI)$-equivalence relation $E$ with given by 
    \[
        E(j',j) \  \text{ if and only if } j=j'=i \text{ or } (j\neq i \text{ and } j'\neq i).
    \]
    By symmetry, we may assume now that both $E_0$ and $E_1$ are trivial with singleton classes. In particular, this means that $R$ is the edge relation of a bipartite graph. 
    
    Let $a,b\in\sI$. We say there is a \emph{jump} from $a$ to $b$ if $R((0,a),(1,b))$.  Observe that, by assumption on $R$, for any  $\sigma\in\Aut(\sI)$ and any $a,b\in\sI$, if there is a jump from $a$ to $b$ then there will also be a jump from $\sigma(a)$ to $\sigma(b)$.

    \begin{center}
    \begin{tikzpicture}
        \draw (-0.7,1) node {$\{1\}\times \sI$};
        \draw (-0.7,0) node {$\{0\}\times \sI$};
        \draw (4.3,0) node {$\cdots$};
        \draw (1,0) -- (2,1);
        \draw (4.3,1) node {$\cdots$};
        \foreach \i in {0,...,4}{
            \fill (\i,1) circle (2pt);
            \fill (\i,0) circle (2pt); 
        };
        \draw (1,0) circle (3pt);
        \draw (1,0) node[below] {$(0,a)$};
        \draw (2,1) circle (3pt);
        \draw (2,1) node[above] {$(1,b)$};
    \end{tikzpicture}   
    \end{center}  
    
    We see that, given that $E_1$ is trivial with singleton classes, an element can jump to at most one element. Indeed, suppose not then there is some vertex with two different edges to $\{1\}\times \sI$, say $R((0,a),(1,b))$ and $R((0,a),(1,b'))$, but then $(1,b)$ and $(1,b')$ lie on the same path, which is a contradiction.

    \begin{center}
    \begin{tikzpicture}
        \draw (-0.9,1) node {$\{1\}\times \sI$};
        \draw (-0.9,0) node {$\{0\}\times \sI$};
        \draw (4.3,0) node {$\cdots$};
        \draw (0,0) -- (2,1);
        \draw (0,0) -- (3,1);
        \draw (4.3,1) node {$\cdots$};
        \foreach \i in {0,...,4}{
            \fill (\i,1) circle (2pt);
            \fill (\i,0) circle (2pt); 
        };
        \draw (0,0) circle (3pt);
        \draw (2,1) node[above] {$(1,b')$};
        \draw (2,1) circle (3pt);
        \draw (3,1) node[above] {$(1,b')$};
        \draw (3,1) circle (3pt);
        \draw (0,0) node[below] {$(0,a)$};
    \end{tikzpicture}   
    \end{center}
    
    If an element does not jump to any other element, we can define a two-class $\Aut(\sI)$-invariant equivalence relation as follows: one class for jumping elements and one class for non-jumping elements. Similarly, if there is some $b\in\sI$ such that no element jumps to $b$, we can again define an analogous two-class $\Aut(\sI)$-invariant equivalent relation.
    
    Thus, without loss of generality, we may assume that we have a well-defined bijection $j: \sI \rightarrow \sI$ given by $a\mapsto b$, when $a$ jumps to $b$. We have the following $\Aut(\sI)$-invariant equivalence relation $\sim_j$ given by: 
    \[
        a\sim_j b  \text{ if, and only if, there is an integer } n\in \mathbb{Z} \text{ such that } j^n(a)=b. 
    \]

    Clearly, there must exist elements $a\neq b$ such that $a$ jumps to $b$, otherwise $R$ would be contained in the graph $C$ from \Cref{DefinitionReasonableIndexingStructures}. Thus, the equivalence relation $\sim_j$ has a non-singleton class. If $\sim_j$ is not universal, i.e.  has at least two classes, then we are done. Otherwise, there is exactly one class, and we may consider the equivalence relation $\sim_{j^2}$ given by the squared function $j^2$, and which has exactly 2 disjoint equivalence classes. We showed that $\sI$ is not primitive.
\end{itemize}
\end{proof}

\begin{lemma}\label{LemmaIIndiscernible}
    If $\sI$ is a transitive Fra\"iss\'e limit of a free amalgamation class $\cC$, possibly endowed with a generic order, then $\sI$ is a reasonable indexing structure. 
\end{lemma}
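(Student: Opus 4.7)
The plan is to establish a key auxiliary connectedness result for $\sI$ itself, and then perform a short case analysis on what edges $R$ can contain.

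\textbf{Key step.} For any realised quantifier-free $2$-type $p(x,y)$ of distinct elements in $\sI$, I will show that the graph $\Gamma_p$ on $\sI$ with edges $\{a,b\}$ such that $\qftp_\sI(a,b)\in\{p,p^{\mathrm{op}}\}$ is connected. Given distinct $a,b\in\sI$ with $\qftp_\sI(a,b)=q$, the strategy is to build a $\Gamma_p$-path $a-c_1-c_2-c_3-b$ by iterated free amalgamation in $\age(\sI)$: starting with $\{a,b\}$ of type $q$, freely adjoin $c_1$ with $\qftp(a,c_1)=p$ and all other new pairs free; then freely adjoin $c_3$ with $\qftp(c_3,b)=p$ and all other new pairs free; finally adjoin $c_2$ by free-amalgamating over $\{c_1,c_3\}$ the triangle $\{c_1,c_2,c_3\}$ (itself built by freely amalgamating two copies of $p$ over $c_2$), which forces $\qftp(c_1,c_2)=\qftp(c_2,c_3)=p$ with the remaining pairs free. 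The resulting $5$-element structure lies in $\age(\sI)$, so by ultrahomogeneity it embeds into $\sI$ fixing $a$ and $b$, yielding the path. In the ordered case, one may have to use $p^{\mathrm{op}}$ on some edges to keep the order constraints consistent with the relative position of $a$ and $b$; this is always possible because $\Gamma_p$ is symmetric in $p$ and $p^{\mathrm{op}}$, and the position of each freshly added element in the order can be chosen by density.

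\textbf{Case analysis on $R$.} Assume $R\not\subseteq C$ and $R\not\subseteq D$. Since $R\not\subseteq D$, $R$ has some cross-row edge $R((0,a),(1,b))$. If some such edge has $a\neq b$, set $p=\qftp_\sI(a,b)$; by invariance $R((0,i),(1,j))$ holds whenever $\qftp_\sI(i,j)=p$. Then for any $(0,i),(0,j)\in\{0\}\times\sI$, a $5$-element free amalgamation analogous to the key step produces $k_1,k_2,l\in\sI$ with $\qftp(i,k_1)=\qftp(l,k_1)=\qftp(l,k_2)=\qftp(j,k_2)=p$ and the remaining pairs free (in particular preserving $\qftp_\sI(i,j)$), giving the $R$-path $(0,i)-(1,k_1)-(0,l)-(1,k_2)-(0,j)$. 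Hence $\{0\}\times\sI$ is internally $R$-connected; symmetrically for $\{1\}\times\sI$; the existing cross-row edges then link the two rows. Otherwise every cross-row edge of $R$ is vertical ($a=b$), so by transitivity of $\sI$ one gets $C\subseteq R$; since $R\not\subseteq C$, $R$ must contain an intra-row edge $R((\epsilon,a),(\epsilon,b))$ with $a\neq b$, and letting $p=\qftp_\sI(a,b)$, the intra-row edges in row $\epsilon$ realise exactly $\Gamma_p$ lifted to that row, which is connected by the key step; the $C$-edges connect the other row to it, and $R$ is again connected.

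\textbf{Main obstacle.} The delicate point will be verifying that the iterated free amalgamations always yield structures in $\age(\sI)$ and admit the required embedding into $\sI$ by ultrahomogeneity, especially in the presence of the generic order; careful bookkeeping of the order relations and the direction ($p$ versus $p^{\mathrm{op}}$) of each added edge, combined with density of the order, is what makes each step go through.
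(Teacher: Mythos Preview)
Your proposal is correct and follows essentially the same strategy as the paper's proof: both build a length-four path between two arbitrary vertices of row $0$ by a pair of free amalgamations (the paper's elements $i_0,i_2,j_0,i_2',i_0'$ match your $i,k_1,l,k_2,j$ with the same type constraints), and both handle the ordered case by doing the amalgamation in the $\lL$-reduct and then placing the new points using density of the generic order. The only real difference is organisational: you isolate the connectedness of $\Gamma_p$ as a separate lemma and invoke it cleanly for the ``all cross-row edges are vertical'' case, whereas the paper treats that case with a brief ``the argument works identically'' remark; conversely, the paper spells out the order bookkeeping a bit more explicitly than your final paragraph does. One small wording issue: in your Case~1 the path $(0,i)-(1,k_1)-(0,l)-(1,k_2)-(0,j)$ already leaves row $0$, so ``internally $R$-connected'' is a slight misnomer, but the argument is sound since that very path hits row $1$ and hence links the two rows.
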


\begin{proof}
Let $G$ be a graph on $\{0,1\}\times \sI$ with edge relation $R((\epsilon_0,i),(\epsilon_1,j))$ which only depends on the quantifier-free type of $(i,j)$ and $\epsilon_0,\epsilon_1\in \{0,1\}$. Assume it is not included in $C$ nor $D$. We need to show that $G$ is connected. Let $i_0,i_0' \in \sI$, we will show that there is a path between $(0,i_0)$ and $(0,i_0')$. The other paths from $(\epsilon_0,i_0)$ and $(\epsilon_1,i_0')$ for $\epsilon_0,\epsilon_1\in \{0,1\}$ can be deduced similarly. Since $G$ is not included in $C$ nor $D$, there exists some $i\neq i_1\in I$ such that there is a path of length at most $2$ from $(0,i)$ $(1,i_1)$. 

We claim that such a path could have one of the following two forms:

\begin{center}
    \begin{tikzpicture}
        \foreach \i in {-1,0,1,2,4,5,6,7}{
            \fill (\i,1) circle (2pt);
            \fill (\i,0) circle (2pt);
        };
        \draw (1,0) node[below] {$(1,i_1)$};
        \draw (0,1) node[above] {$(0,i)$};        
        \draw (5,1) node[above] {$(0,i)$};
        \draw (6,0) node[below] {$(1,i_1)$};
        
        \draw (3,0.5) node {or};
        \draw (0,1)--(1,0);
        \draw (5,1)--(5,0)--(6,0);
        \draw (1,-1) node{(Case 1)};
        \draw (6,-1) node{(Case 2)};
        \end{tikzpicture}   
\end{center}  

Suppose we are not in the first case. Then $G$ contains no edges of the form $R((0,i),(1,j))$, for $i\neq j$. Since $G$ is not contained in either $C$ nor $D$ it must contain at least one edge of the form $R((0,i),(0,j))$ and at least one edge of the form $R((0,i),(1,i))$. But in this case, it must contain all edges of the form $R((0,i),(1,i))$, since by transitivity $\qftp_{\sI}(i)$ is constant, for all $i\in\sI$, and thus it contains the second path.
 
We assume that we are in the first case since the argument below will work identically for the second. By transitivity, we may assume that $i_0 = i$. By transitivity, again, there is some $i_1'$ with $\qftp_{\sI}(i_0,i_1)=\qftp_{\sI}(i_0',i_1')$. Without loss of generality, in what follows we assume that $i_1<i_0$, in $\sI$, and the case $i_0<i_1$ follows by an almost identical argument. 

By freely amalgamating $\{i_0,i_0',i_1\}$ and $\{i_0,i_0',i_1'\}$ over $\{i_0,i_0'\}$ we can find a structure $\{k_0,k_0^\prime,k_1,k_1^\prime\}$ in $\cla{C}$ such that $\qftp_\lL(i_0,i_0^\prime) = \qftp_\lL(k_0,k_0^\prime)$ and $\qftp_{\lL}(i_0,i_1)=\qftp_{\lL}(k_0, k_1) = \qftp_{\lL}(k_0',k_1')$ and there are no relations between $k_1$ and $k_1^\prime$. Now, since $\sI$ is ultrahomogeneous, there is some $\sigma\in\Aut(\sI\vert_\lL)$ such that $\sigma(k_0) = i_0$ and $\sigma(k_0^\prime) = i_0^\prime$. We let $i_2:=\sigma(k_1)$ and $i_2^\prime:=\sigma(k_1^\prime)$. It is now immediate, by construction, that $\qftp_{\sI\vert_\lL}(i_0,i_1)=\qftp_{\sI\vert_\lL}(i_0, i_2) = \qftp_{\sI\vert_\lL}(i_0',i_2')$, and that there are no relations between $i_2$ and $i_2^\prime$. Since the order in $\sI$ is generic, we can choose, in $\sI$, $i_2,i_2'$ so that $i_2<i_0$ and $i_2'<i_0'$. In particular, this means that $\qftp_{\sI}(i_0,i_1)=\qftp_{\sI}(i_0, i_2) = \qftp_{\sI}(i_0',i_2')$. So, by assumption on $R$, we have that $R((0,i_0),(1,i_2))$ and that $R((0,i_0'),(1,i_2'))$.

We now use free amalgamation again, this time amalgamating $\{i_0,i_2\}$ and $\{i_0',i_2'\}$ over $i_0$, which we identify with $i_0^\prime$, since they have the same quantifier-free type, and we obtain a structure $\{p_0,p_2,p_2^\prime\}\in\cla{C}$ where $\qftp_\lL(p_0,p_2) = \qftp_\lL(i_0,i_2)$, $\qftp_\lL(p_0,p_2^\prime) = \qftp_\lL(i_0,i_2^\prime)$ and there are no relations between $p_2$ and $p_2^\prime$. But observe that this means that $\qftp_{\lL}(i_2,i_2^\prime) = \qftp_{\lL}(p_2,p_2^\prime)$, so there is an automorphism $\sigma\in\Aut(\sI\vert_\lL)$ such that $\sigma(p_2) = i_2$ and $\sigma(p_2^\prime) = i_2^\prime$. Let $j_0 := \sigma(p_0)$. 
\begin{center}
    \begin{tikzpicture}
        \foreach \i in {0,1,2,4,6,7,8}{
            \fill (\i,1) circle (2pt);
            \fill (\i,0) circle (2pt);
        };
        \draw (1,1) node[above] {$(1,i_1)$};
        \draw (0,0) node[below] {$(0,i_0)$};        
        \draw (2,1) node[above] {$(1,i_2)$};
        
        \draw (0,0)--(1,1);
        \draw (0,0)--(2,1);

        \draw (7,1) node[above] {$(1,i_1')$};
        \draw (6,0) node[below] {$(0,i_0')$};        
        \draw (8,1) node[above] {$(1,i_2')$};
        
        \draw (6,0)--(7,1);
        \draw (6,0)--(8,1);

        \draw (4,0) node[below] {$(0,j)$};

        \draw[dotted] (2,1)--(4,0);
        \draw[dotted] (8,1)--(4,0);
        
        \draw (3,0.5) node {\dots};
        \draw (5,0.5) node {\dots};
        \draw (-1,0) node{$\{0\}\times\sI$};
        \draw (-1,1) node{$\{1\}\times\sI$};
        \end{tikzpicture}   
\end{center}  

Then we have that that $\qftp_{\sI\vert_\lL}(i_0,i_1)=\qftp_{\sI\vert_\lL}(j_0, i_2) = \qftp_{\sI\vert_\lL}(j_0,i_2')$, and, again, since the order is generic, we may assume that $j_0>i_2,i_2'$. It follows that $\qftp_{\sI}(i_0,i_1)=\qftp_{\sI}(j_0, i_2) = \qftp_{\sI}(j_0,i_2')$. Thus $G$ contains the path:
$R((0,i_0),(1,i_2))$, $R((1,i_2),(0,j_0))$, $R((0,j_0),(1,i_2'))$, and $R((1,i_2'),(0,i_0'))$ as required.
    \end{proof}

\begin{remark}
    In particular, combining \Cref{prop:reasonable-iff-primitive} and \Cref{LemmaIIndiscernible} we extend \cite[Lemma~2.6]{MT2011}, which states that transitive Fra\"iss\'e limits of free amalgamation classes are primitive, to transitive Fra\"iss\'e limits of free amalgamation classes with a generic order.
\end{remark}

Under this assumption of reasonable index sequence, we can describe the $\sI$-indiscernible sequences in the lexicographic sum $\sS$:
\begin{proposition}\label{PropositionCharacterisationIndiscernibleLexicographicProduct}
    Let $\sM$ be an $\lL_\sM$-structure,  $\mathfrak{N}=\{\sN_a\}_{a\in \sM}$ be a collection $\mathfrak{N}$ of $\lL_\mathfrak{N}$-structures indexed by $\sM$ and let $\sS = \sM[\mathfrak{N}]$. Let $\sI$ be a reasonable indexing structure, and let $(c_i)_{i\in \sI}$ be a sequence of tuples of size $\lambda$ (possibly infinite) in $\sS$. For $a\in \sM$, denote by $(c^a_i)_{i\in I}$ the sequence of subtuples $c^a_i:=(c_{\kappa_0,i},c_{\kappa_1,i}, \cdots)\subseteq c_i$ consisting of all the elements $c_{\kappa_j,i}$ of $c_i$ such that $v(c_{\kappa_j,i})$ is equal to $a$. Then:
    \begin{itemize}
        \item[(A)] For all $\kappa<\lambda$ and all distinct $i,j\in \sI$ if $v(c_{\kappa,i}) = v(c_{\kappa,j})$, then the sequence $(v(c_{\kappa,i}))_{i\in \sI}$ is constant. 
    \end{itemize}
   We denote by $A \subseteq \sM$ the subset of elements $a\in \sM$ such that for some $\kappa<\lambda$ and all $i\in \sI$, $v(c_{\kappa,i})=a$. In this notation we have that:
   \begin{itemize}
       \item[(B)] The following are equivalent:
    \begin{enumerate}
        \item The sequence $(c_i)_{i\in \sI}$ is $\sI$-indiscernible over $B\subseteq \sS$.
        \item The following conditions hold:
        \begin{enumerate}
             \item For all $a\in A$, the sequence $(c_i^a)_{i\in I}$ is $\sI$-indiscernible in $\sN_a$ over $B\cap \sN_a$.
            \item The sequence $(v(c_i))_{i\in I}$, where $v(c_i)$ denote the tuple $v(c_{\kappa,i})_{\kappa<\lambda}$, is $\sI$-indiscernible over $v(B)$ in $\sM$ and $\tp(c_i)$ is constant.
    \end{enumerate}
    \end{enumerate}
   \end{itemize} 
\end{proposition}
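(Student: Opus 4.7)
The plan is to derive (A) from the reasonability of $\sI$ via a graph argument on $\{0,1\}\times\sI$, and then to prove (B) by appeal to the relative quantifier elimination of the lexicographic sum supplied by \Cref{thm:QuantifierEliminationLexSum}, which will reduce $\sI$-indiscernibility in $\sS$ to separate indiscernibility conditions on the projections $(v(c_i))_{i\in\sI}$ in $\sM$ and on the fibres $(c_i^a)_{i\in\sI}$ in each $\sN_a$.

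For (A), working implicitly under the assumption that $(c_i)_{i\in\sI}$ is $\sI$-indiscernible (the setting in which (A) feeds into (B)), I fix $\kappa<\lambda$ and suppose $v(c_{\kappa,i_0})=v(c_{\kappa,j_0})$ for some distinct $i_0,j_0\in\sI$. I would define a graph $R$ on $\{0,1\}\times\sI$ by declaring $((\epsilon,i),(\epsilon',j))$ (with $(\epsilon,i)\neq(\epsilon',j)$) to be an edge exactly when $v(c_{\kappa,i})=v(c_{\kappa,j})$. Since $v$ lies in $\lL_{\sS}$, $\sI$-indiscernibility ensures this edge relation depends only on $\qftp_\sI(i,j)$ and is independent of $\epsilon,\epsilon'$. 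The edge $((0,i_0),(0,j_0))$ shows $R\not\subseteq C$, while $((0,i_0),(1,j_0))$ shows $R\not\subseteq D$, so reasonability of $\sI$ forces $R$ to be connected. Since $R$ is the pullback (to $\{0,1\}\times \sI$) of an equivalence relation on $\sI$, connectedness means this equivalence has a single class; hence $(v(c_{\kappa,i}))_{i\in\sI}$ is constant.

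For the forward direction $(1)\Rightarrow(2)$ of (B), the inclusions are immediate: $v$ and the symbols $P_\bullet$ lie in $\lL_{\sS}$, so restricting $\sI$-indiscernibility of $(c_i)_{i\in\sI}$ to tuples of projections yields (b) (and constancy of $\tp(c_i)$), while restricting to the fibres $c_i^a$---which by definition of $A$ live entirely in the sort $\sN_a$---yields (a). For the converse $(2)\Rightarrow(1)$, I would invoke \Cref{thm:QuantifierEliminationLexSum} to express every $\lL_{\sS}$-formula as a Boolean combination of atomic $\lL_{\bullet,\mathfrak{N}}$-formulas in the main sort (including equalities) and of $\lL_\sM$-formulas applied to tuples of projections. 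The latter class is handled directly by (b), while for an atomic formula $P_\bullet(z_1,\dots,z_m)$ (the entries drawn from components of the $c_i$ and from $B$), its truth requires the coincidence of all $v(z_k)$---controlled by (b)---together with the $\sN_a$-truth $\sN_a\vDash P(z_1,\dots,z_m)$, which is controlled by (a) provided the common value $a$ lies in $A$.

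The main obstacle is showing precisely this last point: whenever an atomic formula forces a common projection value $a=v(c_{\kappa_1,i})=v(c_{\kappa_2,j})$, possibly with $\kappa_1\neq\kappa_2$, one must have $a\in A$. The strategy is to re-run the graph argument of (A) in this mixed setting: using (b) together with the homogeneity of $\sI$ reflected in reasonability, I would find an auxiliary index $i'\neq i$ with $\qftp_\sI(i',j)=\qftp_\sI(i,j)$, so that by (b) the $\kappa_1$-column takes the value $a$ at two distinct indices; (A) then forces the $\kappa_1$-column to be constant, placing $a\in A$. Making this reduction clean---in particular dealing with degenerate cases where the relevant orbit in $\sI$ is too small to produce such an $i'$---is the technical heart of the proof and is precisely where the reasonability hypothesis on $\sI$ is essential.
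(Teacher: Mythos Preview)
Your outline is essentially the paper's proof: the graph argument for (A), relative quantifier elimination for (B), and a second application of reasonability inside the $(2)\Rightarrow(1)$ direction to pin down the common fibre. The one place where you diverge from the paper is precisely the ``main obstacle'' you flag, and there your proposed fix is more fragile than it needs to be.

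You want to produce an auxiliary index $i'\neq i$ with $\qftp_\sI(i',j)=\qftp_\sI(i,j)$ and then invoke (A) for the single coordinate $\kappa_1$. As you note, it is not clear such an $i'$ always exists from reasonability alone, and chasing the ``degenerate orbit'' cases is unpleasant. The paper sidesteps this entirely by running the reasonability argument once more, but with a \emph{mixed} graph: set
\[
R\bigl((\epsilon_0,i),(\epsilon_1,j)\bigr)\ \Longleftrightarrow\ v(c_{\kappa_{\epsilon_0},i})=v(c_{\kappa_{\epsilon_1},j}),
\]
so that row $0$ tracks the coordinate $\kappa_0$ and row $1$ tracks $\kappa_1$. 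By (2)(b) this depends only on $\qftp_\sI(i,j)$ and $\epsilon_0,\epsilon_1$. The hypothesis $v(c_{\kappa_0,i_0})=v(c_{\kappa_1,i_1})$ with $i_0\neq i_1$ gives a single edge that lies in neither $C$ nor $D$, so the graph is connected and \emph{both} columns $(v(c_{\kappa_0,i}))_i$ and $(v(c_{\kappa_1,i}))_i$ are constant equal to $a$; in particular $a\in A$. No auxiliary index is needed, and the induction on the arity $n$ is then immediate.

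Two small omissions to patch: you should treat separately the case where all indices $i_0,\dots,i_n$ coincide, which the paper handles using the constancy of $\tp(c_i)$ from (2)(b); and you should record explicitly that (A) is being invoked under hypothesis (2)(b) rather than (1), which is legitimate since the graph of (A) only needs $\sI$-indiscernibility of the projections $(v(c_i))_i$ in $\sM$.
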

    
\begin{proof}\,
\begin{itemize}
    \item[$(A)$] Consider the graph $G$ on $\{0,1\}\times \sI$ whose edge relation is defined as follows:
    \[
        R((\epsilon_0,i)(\epsilon_1,j)) \text{ if, and only if, } v(c_i) = v(c_j). 
    \]
    Clearly, in $G$ whether $R((\epsilon_0,i),(\epsilon_1,j))$ holds depends only on $\qftp(i,j)$ by indiscernibility of $(c_i)_{i\in \sI}$. Thus, since $\sI$ is reasonable if this graph contains at least one edge of the form $R((0,i),(0,j))$, for some $i,j\in\sI$ then it must be connected and by construction, this means that $(v(c_{\kappa,i}))_{i\in \sI}$ is constant. 
   \item[$(B)$] It is easy to verify $1\implies 2$, and the proof is left to the reader.
   
   We show that $2\implies 1$. Let $(c_i)_{i\in I}$ be a sequence of tuples of size $\lambda$ satisfying $2$. Let $\vphi(\textbf{x}_{0},\dots, \textbf{x}_{n})$ be an $\lL_{\sM[\mathfrak{N}]}$-formula, where $\vert \textbf{x}_0 \vert = \cdots = \vert \textbf{x}_{n-1}\vert = \lambda$. We denote by $\textbf{x}$ the tuple $\textbf{x}_0,\dots, \textbf{x}_{n-1}$. By relative quantifier elimination (\Cref{thm:QuantifierEliminationLexSum}), we may Morleyise the structures $\sM$ and $\sN_{\rho}$ for $\rho\in S_1^{\Th(\sM)}$ and assume that $\vphi(\textbf{x}_1,\dots, \textbf{x}_n)$ is equivalent to a Boolean combination of 
    \begin{itemize}
        \item $v(\mathbf{x}_{\kappa,l})=v(\mathbf{x}_{\kappa',m})$ for some $m<l<n, \kappa,\kappa'<\lambda$, 
        \item $P_\bullet(\hat{\mathbf{x}})$, for $P\in \lL_\mathfrak{N}$ and subtuples $\hat{\mathbf{x}}$ of $\mathbf{x}$,
        \item $P(v(\hat{\mathbf{x}}))$, for $P\in \lL_\sM$ and subtuples $\hat{\mathbf{x}}$ of $\mathbf{x}$.
    \end{itemize}
    So it suffices to check indiscernibility with respect to each formula $\psi(\textbf{x}_1,\dots,\textbf{x}_n)$ as above. More precisely, for all $i_0,\dots,i_n\in\sI$ and all $i'_0,\dots,i_n'\in\sI$ such that $\qftp(\bar i) =\qftp(\bar{i'})$ we have to show that $\sS\models\psi(\bar c_{\bar i})\leftrightarrow\psi(\bar c_{\bar i'})$.
            
    We start with the following, easy, claim:    
    \begin{claim}
        Fix $\kappa_0,\dots,\kappa_n <\lambda$. Let $i_0,\dots,i_n\in I$ not all equal. If $v(c_{\kappa_0,i_0})=\cdots=v(c_{\kappa_n,i_n})=a$ for some $a\in \sM$, then for all $i\in I$,  $v(c_{\kappa_0,i})=\cdots=v(c_{\kappa_n,i})=a$.
    \end{claim}
    \begin{claimproof}
        For $n=1$, the result follows immediately from Definition \ref{DefinitionReasonableIndexingStructures} applied to the graph $G$ on $\{0,1\}\times I$, where $R((\epsilon_0,i),(\epsilon_1,j))$ if, and only if, $v(c_{\kappa_{\epsilon_0},i}) = v(c_{\kappa_{\epsilon_1},j})$. Clearly, in $G$ whether $R((\epsilon_0,i),(\epsilon_1,j))$ holds depends only on $\qftp(i,j)$ and $\epsilon_0,\epsilon_1$, by indiscernibility of $(c_i)_{i\in I}$. The statement for $n>1$ follows easily by induction.
    \end{claimproof}
    
    If $i_0,\dots,i_n$ are all equal to some $i$, then, since $(\tp(c_i))_{i\in\sI}$ is constant, for any formula $\psi(\mathbf{x}_0,\dots,\mathbf{x}_n)$, $\psi(c_{\kappa_0,i},\dots, c_{\kappa_n,i})$ holds if, and only if, $\psi(c_{\kappa_0,i'},\dots, c_{\kappa_n,i'})$ holds for all $i'\in I$. We may assume now that $i_0,\dots,i_n$ are not all equal.
    
    First, we check $\sI$-indiscernibility with respect to a formula $v(\mathbf{x}_{\kappa_0,i_0})=v(\mathbf{x}_{\kappa_1,i_1})$. We assumed that $i_0\neq i_1$ and by the previous claim, for any $\kappa_0,\kappa_1<\lambda$ we have that $v(c_{\kappa_0,i_0})=v(c_{\kappa_1,i_1})=a$ for some $a\in \sM$ if, and only if, for all $i'\in I$ ,  $v(c_{\kappa_0,i'})=v(c_{\kappa_1,i'})=a$. Thus, this case follows.
    
    We check now $\sI$-indiscernibility with respect to a formula $P_\bullet(c_{\kappa_0,i_0}, \dots ,c_{\kappa_n,i_n})$. If $v(c_{\kappa_0,i_0}),\dots,v(c_{\kappa_{n},i_{n}})$ are not all equal, then by the previous claim, for all $i_0',\dots,i_n'\in I$ with same quantifier-free type in $\sI$, $v(c_{\kappa_0,i_0'}), \dots, v(c_{\kappa_{n},i_{n}'})$ are not all equal. In particular, for any $i_0',\cdots,i_n'$ with the same quantifier-free type as $i_0,\cdots,i_n$ ,  $P_\bullet(c_{\kappa_0,i_0'}, \dots ,c_{\kappa_n,i_n'})$ doesn't hold. 
    
    Thus, we may assume that there is $a\in \sM$ such that $v(c_{\kappa_1,i})=\cdots=v(c_{\kappa_n,i})=a$ for all $i\in I$. This means that $c_{\kappa_1,i},\dots,c_{\kappa_n,i} \in c_i^a$. Then, since, by assumption, $(c_i^a)$ is $\sI$-indiscernible in $\sN_a$, we have that:
    \[
    \sN_a\models P(c_{\kappa_0,i_0}, \dots ,c_{\kappa_n,i_n})\text{ if, and only if, } \sN_a\models P(c_{\kappa_0,i'_0}, \dots ,c_{\kappa_n,i'_n}),
    \]
    for any $i_0',i_1',\dots,i_n'$ with same quantifier-free type in $\sI$. This shows that $(c_i)_i$ is $\sI$-indiscernible with respect to the formula $P_\bullet(c_{\kappa_0,i_0}, \dots ,c_{\kappa_n,i_n})$, as required.

    Finally, $\sI$-indiscernibility with respect to the formula $P(v(c_{\kappa_0,i_0}),\cdots, v(c_{\kappa_n,i_n}))$ where $P\in \lL_{\sM}$ is clear by the assumption that $(v(c_{i_1}))_{i\in \sI}$ is $\sI$-indiscernible in $\sM$. This concludes our proof.
\end{itemize}
\end{proof}

    In the literature, it is sometimes the case that indiscernibles indexed by non-primitive (i.e. non-reasonable) structures are considered (see for instance \cite[Theorem 5.8]{GHS17} for a nice characterisation of NTP$_2$ theories). We have obtained partial results in this direction and leave a full characterisation for future work.

From the \namecref{PropositionCharacterisationIndiscernibleLexicographicProduct}, we obtain the following easy corollary.

\begin{corollary}\label{CorollaryTransferCollaspingIndiscernibleForLexicographicProduct}
    Let $\sM$ be an $\lL_\sM$-structure,  $\mathfrak{N}=\{\sN_a\}_{a\in \sM}$ be a collection $\mathfrak{N}$ of $\lL_\mathfrak{N}$-structures indexed by $\sM$ and let $\sS = \sM[\mathfrak{N}]$. Consider $\sI$ and $\sJ$ two reasonable indexing structures, such that $\sJ$ is a reduct of $\sI$. Then, the following are equivalent:
    \begin{enumerate}
        \item $\sS$ collapses $\sI$-indiscernibles to $\sJ$-indiscernibles.
        \item For all $\rho\in S_1^{\Th(\sM)}$,  $\sN_\rho$ collapses $\sI$-indiscernibles to $\sJ$-indiscernibles and $\sM$ collapses $\sI$-indiscernibles to $\sJ$-indiscernibles.
    \end{enumerate}

\end{corollary}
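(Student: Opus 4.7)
The plan is to deduce both directions of the corollary as essentially immediate consequences of the characterisation of generalised indiscernibles in lexicographic sums given by Proposition~\ref{PropositionCharacterisationIndiscernibleLexicographicProduct}, applied once for $\sI$ and once for $\sJ$. Note that the condition ``$\sJ$ is a reasonable indexing structure'' is needed precisely to allow us to invoke that characterisation for $\sJ$ as well as for $\sI$; similarly, the hypothesis that $\sJ \reduces \sI$ ensures that any $\sI$-indiscernible sequence is automatically $\sJ$-indiscernible, so the notion of collapse makes sense, and the set $A$ appearing in the characterisation does not depend on whether we view the sequence as being indexed by $\sI$ or by $\sJ$.

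For the backward direction $(2)\Rightarrow(1)$, I would fix an $\sI$-indiscernible sequence $(c_i)_{i \in \sI}$ in $\sS$ and apply Proposition~\ref{PropositionCharacterisationIndiscernibleLexicographicProduct} to decompose it: the projection $(v(c_i))_{i}$ is $\sI$-indiscernible in $\sM$, the restrictions $(c_i^a)_i$ are $\sI$-indiscernible in $\sN_a$ for each $a\in A$, and $\tp(c_i)$ is constant. Applying the collapse hypotheses for $\sM$ and for each $\sN_a$ (noting $\tp_{\Th(\sM)}(a) \in S_1^{\Th(\sM)}$, so the hypothesis on $\sN_\rho$ specialises to $\sN_a$) upgrades the first two clauses to $\sJ$-indiscernibility, while the third is unchanged. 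The characterisation of Proposition~\ref{PropositionCharacterisationIndiscernibleLexicographicProduct}, now read with $\sJ$ in place of $\sI$, then assembles these pieces into $\sJ$-indiscernibility of $(c_i)_i$ in $\sS$.

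For the forward direction $(1)\Rightarrow(2)$, the strategy is to embed indiscernible sequences of the factors inside $\sS$ (or an elementary extension thereof) and use the converse direction of the characterisation. Given an $\sI$-indiscernible sequence $(b_i)_i$ in $\sM$, one picks any compatible element $n \in \sN_a$ for some $a \in \sM$ (assumed non-empty, else trivial), and forms $c_i := (b_i, n_i)$ where $n_i \in \sN_{b_i}$ is chosen so that $\tp(c_i)$ is constant — this is easy by saturation, or one can work with tuples rather than singletons. By Proposition~\ref{PropositionCharacterisationIndiscernibleLexicographicProduct}, $(c_i)_i$ is $\sI$-indiscernible in $\sS$; collapse in $\sS$ makes it $\sJ$-indiscernible, and the characterisation (applied to $\sJ$) then forces $(b_i)_i = (v(c_i))_i$ to be $\sJ$-indiscernible. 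For an $\sI$-indiscernible sequence $(n_i)_i$ in $\sN_\rho$ with $\rho \in S_1^{\Th(\sM)}$, one passes to an elementary extension $\tilde{\sM}[\tilde{\mathfrak{N}}] \succ \sS$ containing a realisation $\tilde a \models \rho$ with $\tilde\sN_{\tilde a} \succeq \sN_\rho$; the sequence $c_i := (\tilde a, n_i)$ is $\sI$-indiscernible in the extension by the characterisation (the projection is constant, hence trivially indiscernible), and the same argument as before yields $\sJ$-indiscernibility of $(n_i)_i$ in $\tilde\sN_{\tilde a}$, which descends to $\sN_\rho$ since $\sJ$-indiscernibility is a property of the type of the sequence.

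The main conceptual obstacle is purely bookkeeping: one must confirm that the set $A$, defined in terms of ``constant value of $v(c_{\kappa,\cdot})$ on all of $\sI$'', coincides with the analogous set for $\sJ$, so that the two applications of the characterisation are compatible. Since $\sJ$ has the same underlying set as $\sI$, this is immediate. The only other point requiring minor care is the passage to elementary extensions in the $\sN_\rho$ case, for which the explicit description of elementary extensions of $\sM[\mathfrak{N}]$ recalled just before Fact~\ref{thm:QuantifierEliminationLexSum} is exactly what is needed.
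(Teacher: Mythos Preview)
Your proposal is correct, and for the direction $(2)\Rightarrow(1)$ it is exactly the paper's argument: decompose via Proposition~\ref{PropositionCharacterisationIndiscernibleLexicographicProduct}, apply the collapse hypothesis in each factor, and reassemble using the characterisation for $\sJ$. The paper in fact writes out only this direction, treating $(1)\Rightarrow(2)$ as routine.

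For $(1)\Rightarrow(2)$ your argument works, but is more elaborate than needed for the $\sM$ part. Since $\sM$ is a sort of $\sS$ and, under the standing assumption $(*)$, the induced structure on that sort is precisely the $\lL_\sM$-structure $\sM$, any $\sI$-indiscernible sequence of tuples from $\sM$ is already $\sI$-indiscernible in $\sS$; collapse in $\sS$ then gives $\sJ$-indiscernibility in $\sM$ immediately, with no need to lift into the main sort. Your lift $c_i=(b_i,n_i)$ is also valid, but requires some bookkeeping (arranging $\tp(c_i)$ constant, dealing with possible repeats among the $b_i$). For the $\sN_\rho$ part, one small correction: the description of elementary extensions before Fact~\ref{thm:QuantifierEliminationLexSum} only gives $\tilde\sN_{\tilde a}\equiv\sN_\rho$, not $\tilde\sN_{\tilde a}\succeq\sN_\rho$. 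This is harmless, since collapsing $\sI$-indiscernibles to $\sJ$-indiscernibles is a property of the theory; alternatively, work directly in a monster model of $\sS$, where the fibre over a realisation of $\rho$ is itself sufficiently saturated.
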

\begin{proof}
As a monster model of $\Th(\sS)$ is still a lexicographic sum of models of $\Th(\sN_\rho)$ where $\rho\in S_1^{\Th(\sM)}$, we may assume that $\sS=\sM[\mathfrak{N}]$ is a monster model. Let $(c_i)_{i}$ be an $\sI$-indiscernible sequence in $\sS$. By Proposition \ref{PropositionCharacterisationIndiscernibleLexicographicProduct}, and using the same notation, we have that:
    \begin{enumerate}
        \item For all $a\in A$, the sequence $(c_i^a)_{i\in I}$ is $\sI$-indiscernible in $\sN_a$ over $B\cap \sN_a$. 
        \item The sequence $(v(c_i))_{i\in I}$, where $v(c_i)$ denote the tuple $v(c_{\kappa,i})_{\kappa<\lambda}$, is $\sI$-indiscernible over $v(B)$ in $\sM$ and $\tp(c_i)$ is constant.
    \end{enumerate}

By assumption, $\sM$ and $\sN_a$ for all $a\in \sM$ collapse $\sI$-indiscernible sequences to $\sJ$-indiscernible sequences. In particular, we have
    \begin{enumerate}
        \item For all $a\in A$, $(c_i^a)_{i\in I}$ is $\sJ$-indiscernible in $\sN_a$ over $B\cap \sN_a$. 
        \item The sequence $(v(c_i))_{i\in I}$ is $\sJ$-indiscernible over $v(B)$ in $\sM$ and $\tp(c_i)$ is constant.
    \end{enumerate}
    By Proposition \ref{PropositionCharacterisationIndiscernibleLexicographicProduct}, $(c_i)_{i}$ is a $\sJ$-indiscernible sequence in $\sS$. This concludes our proof.
\end{proof}

Observe that $\ncodingcla{\cK}$ is stable under reducts and thus \Cref{example-full-product-bad} also applies to lexicographic products. More precisely:
    
\begin{remark}
    Let $\sM$ be the ordered random graph and $\sN$ the convexly ordered $C$-relation, as in \Cref{example-full-product-bad}. Then $\sM[\sN]$ is $\codingcla{\cla{OG}\boxtimes\cla{OC}}\overset{\Cref{fact: equality of closedness}}{=}\codingcla{\cla{OG}[\cla{OC}]}$, but, as we have already seen $\sM$ is $\ncodingcla{\cla{OC}}\subset \ncodingcla{\cla{OG}\boxtimes\cla{OC}}$ and $\sN$ is $\ncodingcla{\cla{OG}}\subset \ncodingcla{\cla{OG}\boxtimes\cla{OC}}$.
\end{remark}

We can also apply Corollary \ref{CorollaryTransferCollaspingIndiscernibleForLexicographicProduct} to the specific case of $\cla{H}_{n+1}$-indiscernibles where $\cla{H}_{n+1}$ is the ordered random $(n+1)$-hypergraph since the latter is primitive. We obtain:

\begin{corollary}[$n$-NIP transfer for lexicographic sum]\label{cor:nip-n-transfer-lex}
    Let $\sM$ be an $\lL_\sM$-structure,  $\mathfrak{N}=\{\sN_a\}_{a\in \sM}$ be a collection $\mathfrak{N}$ of $\lL_\mathfrak{N}$-structures indexed by $\sM$ and let $\sS = \sM[\mathfrak{N}]$. The lexicographic product  $\sS$ is $n$-NIP if, and only if, for all $\rho\in S_1^{\Th(\sM)}$,  $\sN_\rho$ is $n$-NIP and $\sM$ is $n$-NIP.
\end{corollary}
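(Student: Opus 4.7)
The plan is to reduce the statement directly to the transfer principle for collapsing generalised indiscernibles, \Cref{CorollaryTransferCollaspingIndiscernibleForLexicographicProduct}, via the characterisation of NIP$_n$ as collapsing hypergraph-indiscernibles to order-indiscernibles, \Cref{thm:nip-k-collapse}. Concretely, I will take $\sI := \cla{OH}_{n+1}$, the ordered random $(n+1)$-uniform hypergraph, and let $\sJ$ be its reduct to the language $\{<\}$, which is (isomorphic to) $(\mathbb{Q},<)$. By \Cref{thm:nip-k-collapse}, for any theory $T$ one has that $T$ is NIP$_n$ if, and only if, $T$ collapses $\sI$-indiscernibles to $\sJ$-indiscernibles; applying this to each of $\Th(\sS)$, $\Th(\sM)$ and $\Th(\sN_\rho)$ converts the claim into the corresponding transfer principle for indiscernible-collapse.

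\paragraph{Reasonability of the index structures.} To be allowed to apply \Cref{CorollaryTransferCollaspingIndiscernibleForLexicographicProduct}, both $\sI$ and $\sJ$ must be reasonable indexing structures in the sense of \Cref{DefinitionReasonableIndexingStructures}. For $\sI$: its age is the class of finite ordered $(n+1)$-uniform hypergraphs, which is a free amalgamation class equipped with a generic linear order, and $\sI$ is a transitive Fra\"\i ss\'e limit, so reasonability follows at once from \Cref{LemmaIIndiscernible}. For $\sJ = (\mathbb{Q},<)$: this is an infinite ultrahomogeneous structure in a finite relational language, and since any $\Aut(\mathbb{Q},<)$-invariant equivalence relation on $\mathbb{Q}$ must be either equality or the total relation, $(\mathbb{Q},<)$ is primitive, hence reasonable by \Cref{prop:reasonable-iff-primitive}.

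\paragraph{Combining the ingredients.} Given the above, \Cref{CorollaryTransferCollaspingIndiscernibleForLexicographicProduct} applied to $\sI$ and $\sJ$ yields that $\sS$ collapses $\sI$-indiscernibles to $\sJ$-indiscernibles if, and only if, $\sM$ does and $\sN_\rho$ does for every $\rho \in S_1^{\Th(\sM)}$. Re-reading both sides through \Cref{thm:nip-k-collapse} produces exactly the stated equivalence. I do not foresee any serious obstacle: the proof is essentially a matching of hypotheses, and the only slightly delicate point is checking that the two index structures are reasonable, which is immediate from \Cref{LemmaIIndiscernible} and \Cref{prop:reasonable-iff-primitive}.
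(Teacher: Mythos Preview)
Your proof is correct and follows the same route as the paper: reduce NIP$_n$ to the collapsing-indiscernibles characterisation via \Cref{thm:nip-k-collapse}, then invoke \Cref{CorollaryTransferCollaspingIndiscernibleForLexicographicProduct} with $\sI=\cla{OH}_{n+1}$ and $\sJ=(\mathbb{Q},<)$. The only cosmetic difference is that the paper justifies reasonability of $\sI$ by appealing directly to its primitivity (via \Cref{prop:reasonable-iff-primitive}), whereas you go through \Cref{LemmaIIndiscernible}; your version is in fact slightly more careful in that you also explicitly verify that $\sJ$ is reasonable, which the paper leaves implicit.
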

    
Since for positive integer $k>k'$, $k'$-NIP structures are $k$-NIP, we have that a lexicographic product $\sM[\sN]$ of an $m$-NIP structure $\sM$ with an $n$-NIP structure $\sN$ is $\max(n,m)$-NIP.

\begin{proposition}\label{prop:TransferTrivialIndiscernibilityLexicographicSum}
    Assume that $\sM$ has indiscernible-triviality and that for all $\rho\in S_1^{\Th(\sM)}$, $\sN_\rho$ has indiscernible-triviality. Then so does $\sS$.
\end{proposition}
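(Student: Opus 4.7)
The plan is to apply the characterisation of $\sI$-indiscernible sequences in lexicographic sums, \Cref{PropositionCharacterisationIndiscernibleLexicographicProduct}, in both directions, taking $\sI=(\mathbb{Q},<)$. Since $\mathsf{DLO}$ is primitive and ultrahomogeneous in the finite relational language $\{<\}$, it is a reasonable indexing structure by \Cref{prop:reasonable-iff-primitive}, so the characterisation applies.

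Fix an order-indiscernible sequence $(c_i)_{i\in \mathbb{Q}}$ in $\sS$ and a parameter set $B\subseteq \sS$ such that $(c_i)_i$ is indiscernible over $\{b\}$ for every $b\in B$. I first apply the forward implication of \Cref{PropositionCharacterisationIndiscernibleLexicographicProduct} to each single parameter $\{b\}$. This yields, for every $b\in B$, that $(v(c_i))_i$ is indiscernible over $v(b)$ in $\sM$, and, for every $a\in A$ (the set of $\sM$-elements appearing as a constant $v$-coordinate of the $c_i$), that $(c_i^a)_i$ is indiscernible over $\{b\}\cap \sN_a$ in $\sN_a$.

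Next, I leverage indiscernible-triviality. In $\sM$, the sequence $(v(c_i))_i$ is indiscernible over each element of $v(B)=\{v(b):b\in B\}$, so indiscernible-triviality of $\sM$ gives indiscernibility of $(v(c_i))_i$ over $v(B)$. For the fibers, fix $a\in A$ and let $\rho=\tp(a)\in S_1^{\Th(\sM)}$: by the standing assumption $(*)$ of the subsection, an $\lL_\mathfrak{N}$-sentence $\vphi$ holds in $\sN_a$ iff a corresponding $\emptyset$-definable formula $\psi_\vphi$ lies in $\rho$, which in turn is equivalent to $\vphi$ holding in $\sN_\rho$; hence $\Th(\sN_a)=\Th(\sN_\rho)$, so $\sN_a$ itself inherits indiscernible-triviality from $\sN_\rho$. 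Since $(c_i^a)_i$ is indiscernible over each element of $B\cap \sN_a$ in $\sN_a$, indiscernible-triviality then promotes this to indiscernibility of $(c_i^a)_i$ over $B\cap \sN_a$ in $\sN_a$.

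Finally, $\tp(c_i)$ is constant because $(c_i)_i$ is already indiscernible over $\emptyset$, so conditions (a) and (b) in clause (2) of \Cref{PropositionCharacterisationIndiscernibleLexicographicProduct} are both met. The backward implication of that proposition then yields that $(c_i)_{i\in\mathbb{Q}}$ is indiscernible over $B$ in $\sS$, as required. The only points requiring care — verifying that $(\mathbb{Q},<)$ qualifies as reasonable and that $\Th(\sN_a)$ coincides with $\Th(\sN_{\tp(a)})$ — are essentially settled by \Cref{prop:reasonable-iff-primitive} and by assumption $(*)$, respectively, so no genuine obstacle arises.
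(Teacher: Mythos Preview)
Your proof is correct and follows essentially the same approach as the paper: both apply the characterisation of indiscernibles in lexicographic sums (\Cref{PropositionCharacterisationIndiscernibleLexicographicProduct}) in both directions, invoking indiscernible-triviality of $\sM$ and of the fibres $\sN_a$ in between. The only cosmetic difference is that the paper phrases the argument by concatenating two parameters $d,d'$ into the tuples (reducing to the case $|B|=2$), whereas you keep an arbitrary parameter set $B$ throughout; your explicit justification that $(\mathbb{Q},<)$ is reasonable and that $\Th(\sN_a)=\Th(\sN_{\tp(a)})$ via $(*)$ is a nice touch the paper leaves implicit.
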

\begin{proof}
    We may assume that $\sS$ is $\vert \lL_{\sM} \vert^+$-saturated. Let $(c_i)_{i\in I}$ be an indiscernible sequence in $\sS$. Then $(c_i)_i$ is indiscernible over an element $a$ if and only if $(c_i\frown a)$ is indiscernible. 
    
    Let $d,d'\in \sS$ and assume that $(c_i\frown d)$ and $(c_i\frown d')$ are indiscernible. For $a\in \sM$ denote by $d^a$ the singleton $d$ if $\val(d)=a$ and the empty word $\emptyset$ if not. By Proposition \ref{PropositionCharacterisationIndiscernibleLexicographicProduct}, and using the same notation, we have:
    \begin{enumerate}
        \item for all $a\in A$, $(c_i^a\frown d^a)_{i\in I}$ $(c_i^a\frown {d'}^a )_{i\in I}$ are indiscernible in $\sN_a$ . 
         \item $(v(c_i)\frown v(d))_{i\in I}$ and $(v(c_i)\frown v(d'))_{i\in I}$ are indiscernible and $(\tp(c_i\frown d))_{i\in \sI}$ and $(\tp(c_i\frown d'))_{i\in\sI}$ are constant.
    \end{enumerate}
    By assumption, the $\sN_a$'s and $\sM$ have trivial indiscernibility. It follows that:
        \begin{enumerate}
             \item for all $a\in A$, $(c_i^a\frown d^a \frown {d'}^a)_{i\in I}$ is indiscernible in $\sN_a$.
             \item $(v(c_i)\frown v(d) \frown v(d'))_{i\in I}$ is indiscernible.
         \end{enumerate}
    It is also easy to check, by quantifier elimination and trivial-indiscernibility, that $\tp(c_i\frown d \frown d')$ is constant. By the same characterisation, we get that $(c_i\frown d \frown d')_i$ is indiscernible, in other words that $(c_i)_i$ is indiscernible over $d$ and $d'$. This concludes our proof.
\end{proof}

Combining the previous propositions, we will conclude a transfer principle for monadic NIP structures. We need one more ingredient to ensure that the lexicographic sum has dp-rank one:
\begin{fact}\cite[Theorem 2.16]{Tou21} \label{Fact:BurdenLexicographicProduct}
    We have:
        \[
        \bdn(\sS)=\sup\left(\bdn(\sM), \bdn(\sN_\rho), \rho \in S_1^{\Th(\sM)}\right).
        \]
    In particular, $\sS$ is inp-minimal if, and only if, $\sM$ is inp-minimal and $\sN_\rho$ is inp-minimal for every $\rho \in S_1^{\Th(\sM)}$.
    \end{fact}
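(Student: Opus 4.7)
The plan is to prove the two inequalities. For the lower bound $\bdn(\sS) \geq \sup\{\bdn(\sM), \bdn(\sN_\rho) : \rho \in S_1^{\Th(\sM)}\}$, I would argue by lifting inp-patterns. Any inp-pattern $(\vphi_i(y, \bar{z}_i), (\bar{b}_{i,j}))$ in $\sM$ of depth $\kappa$ yields an inp-pattern $(\vphi_i(v(x), \bar{z}_i), (\bar{b}_{i,j}))$ in $\sS$ of the same depth, with $x$ taken in the main sort. An inp-pattern in $\sN_\rho$ of depth $\kappa$ lifts by passing to a sufficiently saturated elementary extension $\tilde{\sM}[\tilde{\mathfrak{N}}] \succeq \sS$ realizing $\rho$ via some $\tilde{a}$; since $\tilde{\sN}_{\tilde{a}} \equiv \sN_\rho$, translating the pattern through the $P_\bullet$-predicates (with $x$ constrained to the fiber above $\tilde{a}$) produces an inp-pattern of depth $\kappa$ in $\tilde{\sM}[\tilde{\mathfrak{N}}]$, hence in $\Th(\sS)$.

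For the reverse inequality, I would fix an inp-pattern $(\vphi_i(x, \bar{y}_i), (\bar{a}_{i,j})_{i < \kappa, j < \omega})$ in $\sS$ and, by standard extraction, assume the array $(\bar{a}_{i,j})$ is mutually indiscernible. By relative quantifier elimination (\Cref{thm:QuantifierEliminationLexSum}), each $\vphi_i$ is equivalent modulo $\Th(\sS)$ to a Boolean combination of $\lL_\sM$-formulas in $v$-projections, equalities of the form $v(z) = v(z')$ between components of $x\bar{y}_i$, and $P_\bullet$-predicates applied to subtuples whose $v$-coordinates agree. Applying \Cref{PropositionCharacterisationIndiscernibleLexicographicProduct} row-by-row, the parameters $(\bar{a}_{i,j})_j$ of each row either have constant $v$-values in $j$ (in which case they lie in a single fiber $\sN_{a_i}$ and the row's inconsistency is witnessed within this fiber) or have varying $v$-values (in which case, by mutual indiscernibility, the inconsistency is detected by its $v$-projection, yielding an inp-row in $\sM$).

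The key observation, explaining why the supremum appears in place of a sum, is that any fiber-type row forces every consistent path-realization $x$ to satisfy $v(x) = a_i$. Thus distinct fiber-type rows with different $a_i$'s yield inconsistent paths, forcing all fiber-type rows to share a common fiber $\rho$, and hence to assemble into an inp-pattern in $\sN_\rho$ of depth at most $\bdn(\sN_\rho)$. Dually, if no fiber-type row is present the whole pattern projects down to an inp-pattern in $\sM$ via $v$ of depth at most $\bdn(\sM)$. In the mixed case, once $v(x)$ is pinned to some fixed $a$ by the fiber-type rows, any $\sM$-type row, whose inp-row structure was predicated on varying $v(x)$, collapses and contributes no additional depth. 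Consequently $\kappa \leq \max(\bdn(\sM), \bdn(\sN_\rho))$, and the ``in particular'' statement follows by taking $\kappa = 1$.

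The main obstacle will be justifying this last point, namely that $\sM$-type and fiber-type rows cannot combine additively. This requires a careful analysis of how the constraint $v(x) = a$ (imposed by fiber-type rows) interacts with the row-wise inconsistency of $\sM$-type rows, and ultimately exploits that burden for $\sS$ is computed with a single main-sort variable, so that $v(x)$ serves as a single coordinate supporting only one source of branching at a time. The remaining technical work is to make this collapse precise using mutual indiscernibility and the assumption $(*)$ on $\emptyset$-definability of the predicates $P_\vphi$, which guarantees that the induced structure on the $\sM$-sort is exactly $\sM$ and therefore that no hidden $\sN_\rho$-information can smuggle additional $\sM$-branching into the pattern.
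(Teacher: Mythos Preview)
The paper does not prove this statement: it is recorded as a \emph{Fact} with a citation to \cite[Theorem~2.16]{Tou21} and is used as a black box to deduce \Cref{cor:TransferPrincipleMonadicNIP}. There is therefore no proof in the paper to compare your proposal against.

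That said, a few comments on your outline. The lower bound is fine. For the upper bound, the sketch has real content but also genuine gaps. First, you assume without justification that burden is computed on a single main-sort variable; this reduction is part of the work in \cite{Tou21} and is not automatic. Second, your dichotomy into ``fiber-type'' versus ``$\sM$-type'' rows is too crisp: after relative quantifier elimination each $\vphi_i$ is a Boolean combination mixing $\lL_\sM$-atoms in $v$-projections with $P_\bullet$-atoms, so a row need not be purely of one kind. The case split that actually drives the argument is on the realization side: for a witness $c$ of a path, either $v(c)$ coincides with some $v$-coordinate appearing in the array (and one shows the whole pattern descends to that fiber), or $v(c)$ is new (and one shows the pattern descends to $\sM$). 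Third, your assertion that a fiber-type row ``forces $v(x)=a_i$'' is not correct as stated: a formula whose parameters all lie in $\sN_{a_i}$ can still constrain $v(x)$ without pinning it (e.g.\ via $v(x)\neq a_i$). What is true, and what you are gesturing at, is that if the $k$-inconsistency of a row is invisible after projecting by $v$, then any path-realization must share the fiber of the parameters; isolating this cleanly is exactly the delicate step that your ``mixed case'' paragraph acknowledges but does not carry out.
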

    Then, we have by \Cref{thm:characterisations-of-monadic-NIP} and \Cref{prop:TransferTrivialIndiscernibilityLexicographicSum}:
    
\begin{corollary}[Transfer Principle For Monadic NIP Structures] \label{cor:TransferPrincipleMonadicNIP}
    Assume that $\sM$ is monadicaly NIP\footnote{Notice that the assumption (*) is not required here, as the expansion of $\sM$ by an unary predicates for the set $\set{ a\in \sM | \sN_a\models \vphi}$ will be automatically monadic NIP, by definition.} and that for all $\rho\in S_1^{\Th(\sM)}$, $\sN_\rho$ is monadicaly NIP. Then so is $\sM[\mathfrak{N}]$.
\end{corollary}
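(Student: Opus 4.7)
The plan is to invoke the characterisation of monadic NIP from \Cref{thm:characterisations-of-monadic-NIP}, namely that a theory is monadically NIP if, and only if, it is dp-minimal and has indiscernible-triviality. Two ingredients are then needed: dp-minimality of $\sM[\mathfrak{N}]$ and indiscernible-triviality of $\sM[\mathfrak{N}]$. The second ingredient is essentially for free: since $\sM$ and each $\sN_\rho$ are monadically NIP, they have indiscernible-triviality (again by \Cref{thm:characterisations-of-monadic-NIP}), and \Cref{prop:TransferTrivialIndiscernibilityLexicographicSum} transfers this property to the lexicographic sum.

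For dp-minimality of $\sM[\mathfrak{N}]$, I would first show that $\sM[\mathfrak{N}]$ is NIP. This is the $n=1$ instance of \Cref{cor:nip-n-transfer-lex}: since every monadically NIP theory is NIP, both $\sM$ and each $\sN_\rho$ are NIP, and the corollary then gives NIP of the lexicographic sum. Next, I would use \Cref{Fact:BurdenLexicographicProduct} to compute the burden. Since monadically NIP theories are dp-minimal and hence, being NIP, have burden equal to dp-rank, both $\sM$ and each $\sN_\rho$ have burden $1$. The supremum formula in \Cref{Fact:BurdenLexicographicProduct} then gives $\bdn(\sM[\mathfrak{N}])=1$, that is, $\sM[\mathfrak{N}]$ is inp-minimal. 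Combining this with NIP yields dp-minimality of $\sM[\mathfrak{N}]$.

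Having established both dp-minimality and indiscernible-triviality of $\sM[\mathfrak{N}]$, the characterisation in \Cref{thm:characterisations-of-monadic-NIP} immediately concludes that $\sM[\mathfrak{N}]$ is monadically NIP.

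There is not really a hard step here, as all the work has already been done in the preceding subsections; the proof is essentially an assembly of \Cref{thm:characterisations-of-monadic-NIP}, \Cref{cor:nip-n-transfer-lex}, \Cref{Fact:BurdenLexicographicProduct}, and \Cref{prop:TransferTrivialIndiscernibilityLexicographicSum}. The only subtle point to verify is that one may freely pass between ``inp-minimal + NIP'' and ``dp-minimal'', which is standard and was already noted after \Cref{def:dp-rank}.
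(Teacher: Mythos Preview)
Your proposal is correct and follows essentially the same approach as the paper: combine \Cref{thm:characterisations-of-monadic-NIP} with \Cref{prop:TransferTrivialIndiscernibilityLexicographicSum} for indiscernible-triviality and \Cref{Fact:BurdenLexicographicProduct} for dp-minimality. You are in fact slightly more explicit than the paper in that you spell out the passage from inp-minimality to dp-minimality via NIP (using \Cref{cor:nip-n-transfer-lex}), a step the paper leaves implicit.
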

    
\begin{example}
    Given two meet-trees $T_1$ and $T_2$, the lexicographic product $T_1[T_2]$ is a meet-tree with a convex equivalence relation. By Theorem \ref{cor:TransferPrincipleMonadicNIP} and Example \ref{ExampleMonadicNIP}, it is monadically NIP. More generally, any meet-trees with finitely many convex equivalence relations $E_0, \dots, E_n$ such that $E_{s+1}$ refines $E_s$ for $s<n$ is monadically NIP.
\end{example}

\begin{proposition}[$m$-Distality transfer for lexicographic product]\label{prop:m-distality-lex-transfer}
    Assume that $\sM$ is $m$-distal and that for all $\rho\in S_1^{\Th(\sM)}$, $\sN_\rho$ are $m$-distal. Then $\sM[\mathfrak{N}]$ is $m$-distal.
\end{proposition}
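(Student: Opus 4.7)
The plan is to apply \Cref{PropositionCharacterisationIndiscernibleLexicographicProduct} to decompose indiscernibility in the lexicographic sum $\sS = \sM[\mathfrak{N}]$ into indiscernibility of the value-projection in $\sM$ together with indiscernibility of each constant-value fiber in the appropriate $\sN_a$. Fix an indiscernible sequence $(c_i)_{i\in I}$ in $\sS$ with Dedekind partition $I = I_0 + \cdots + I_{m+1}$, let $A' \subseteq \sM$ denote the associated set of constant values of $(c_i)_i$, and let $A = (a_0, \dots, a_m)$ be a sequence in $\sS$ that does not insert indiscernibly into $(c_i)_i$ at the corresponding cuts. We wish to produce an $m$-element subsequence of $A$ that still fails to insert.

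The key observation, combining \Cref{PropositionCharacterisationIndiscernibleLexicographicProduct} with relative quantifier elimination (\Cref{thm:QuantifierEliminationLexSum}), is that the failure of indiscernibility of the extended sequence must localise to at least one of two places: either (i) the value-projection $(v(c_i))_i$ extended by $(v(a_j))_j$ fails to be indiscernible in $\sM$, or (ii) the value-projection does insert indiscernibly but, for some $a \in A'$, the fiber sequence $(c_i^a)_i$ extended by $(a_j^a)_j$ fails to be indiscernible in $\sN_a$. The argument proceeds by splitting into these two cases and applying $m$-distality in the corresponding factor structure.

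In case (i), applying $m$-distality of $\sM$ to the Dedekind partition on $(v(c_i))_i$ and the sequence $(v(a_0), \dots, v(a_m))$ will yield an $m$-element subsequence $(v(a_{j_1}), \dots, v(a_{j_m}))$ that fails to insert indiscernibly in $\sM$. The corresponding subsequence $(a_{j_1}, \dots, a_{j_m}) \subseteq A$ then cannot insert indiscernibly into $(c_i)_i$ in $\sS$, since otherwise \Cref{PropositionCharacterisationIndiscernibleLexicographicProduct} would give indiscernibility of the value-projection. In case (ii), the fact that the value-projection extends indiscernibly forces $v(a_{j, \kappa}) = v(c_{\kappa, i})$ at every constant-value position $\kappa$, so that $a_j^a$ and $c_i^a$ have equal arity and insertion is well-defined in $\sN_a$. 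Since $\sN_a$ is elementarily equivalent to $\sN_\rho$ for $\rho = \tp(a/\emptyset) \in S_1^{\Th(\sM)}$, which is $m$-distal by hypothesis, we obtain an $m$-element subsequence $(a_{j_1}^a, \dots, a_{j_m}^a)$ that fails to insert into $(c_i^a)_i$, and the corresponding $A$-subsequence again fails to insert into $(c_i)_i$ in $\sS$.

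The main technical obstacle will be verifying the dichotomy above cleanly. The auxiliary condition ``$\tp(c_i)$ is constant'' appearing in the characterisation must also be traced through the extended sequence; by relative quantifier elimination it decomposes into formulas of the form $v(x_\kappa) = v(x_{\kappa'})$, $P(v(\hat x))$ with $P \in \lL_\sM$, and $P_\bullet(\hat x)$ with $P \in \lL_\mathfrak{N}$, each of which is controlled either by the value-projection or by a single fiber, so no additional failure mode arises. Once this reduction is in place, the two $m$-distality hypotheses are invoked in a single, uniform step.
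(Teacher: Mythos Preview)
Your approach is essentially the paper's: apply \Cref{PropositionCharacterisationIndiscernibleLexicographicProduct} to the extended sequence and split into cases according to which part of condition (2) fails, then invoke $m$-distality in the relevant factor. The paper does exactly this, but with \emph{three} cases rather than two: (1) some constant fiber $(\sI')^a$ is not indiscernible in $\sN_a$; (2) the value projection $v(\sI')$ is not indiscernible in $\sM$; (3) the types along $\sI'$ are not constant, i.e.\ some inserted $a_j$ has $\tp(a_j)\neq\tp(c_i)$.

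Your proposed dichotomy (i)/(ii) covers the paper's cases (2) and (1), and you attempt to absorb case (3) with the remark that the type condition ``decomposes into formulas \dots\ each of which is controlled either by the value-projection or by a single fiber, so no additional failure mode arises.'' This is where there is a genuine gap. A formula $P_\bullet(\hat x)$ evaluated at coordinates $\kappa$ whose values $v(c_{\kappa,i})$ are \emph{non-constant} along $\sI$ is \emph{not} controlled by the value projection, nor by any fiber $\sN_a$ with $a\in A'$: the relevant fiber $\sN_{v(c_{\kappa,i})}$ moves with $i$. Concretely, take $\sM$ a pure infinite set, $\sN$ a two-element set with a unary predicate $Q$ distinguishing the points, and $c_i=(i,0)$ with all $i$ distinct; then $A'=\emptyset$, the value projection of any $a_0=(a,1)$ inserts perfectly, yet $Q_\bullet(a_0)\neq Q_\bullet(c_i)$, so $a_0$ does not insert. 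Your cases (i) and (ii) are both vacuous here.

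The fix is exactly the paper's Case 3 and is immediate: if $\tp(a_j)\neq\tp(c_i)$ for some $j$, then the singleton $\{a_j\}$ already fails to insert, hence so does any $m$-element subsequence containing $a_j$. So your plan is correct once you add this third case rather than trying to argue it away.
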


\begin{proof}
    Without loss, we may assume that  $\sS=\sM[\mathfrak{N}]$ is a monster model.  Let $\sI= \sI_0+\cdots+\sI_{m+1}$ be an indiscernible sequence partitioned in $m+2$ subsequences. Let $(c_i)_{i\leq m}$ be a sequence of $m+1$ elements which does not insert indiscernibly into $\sI$. By definition, the sequence
    \[
    \sI' = \sI_0+ c_0  + \cdots+ c_{m}+\sI_{m+1}
    \]
    is not indiscernible in $\sS$. By Proposition \ref{PropositionCharacterisationIndiscernibleLexicographicProduct}, and using the same notation, we have the following cases to consider:
    \begin{itemize}
        \item[]\emph{Case 1}. For some $a\in A$, ${(\sI')}^a$ is not indiscernible in $\sN_a$. Then, since $\sN_a$ is $m$-distal, there is a $m$-subtuple of $(c_i)_{i\leq m}$ which does no insert in $\sI^a$ (as indiscernible sequence in $\sN_a$). 
   
        \item[]\emph{Case 2}. The sequence $v(\sI)$ is not indiscernible in $\sM$. Then, since $\sM$ is $m$-distal, there is an $m$-subtuple of $(v(c_i))_{i\leq m}$ which does not insert in $v(\sI)$.
  
        \item[] \emph{Case 3}. The sequence of types of elements in $\sI'$ is not constant. This means that there is an element $c_i$, $i\leq m$, which does not have the same type as the elements in $\sI$. 
    \end{itemize}
    In all cases, there is an $m$-subtuples of $(c_i)_{i\leq m}$ which do not insert in $\sI$. Indeed, in the first two cases this is by Proposition \ref{PropositionCharacterisationIndiscernibleLexicographicProduct}, and in the third case, we find in fact a single $c_i$ which does not insert in $\sI$.
    \end{proof}    

\section{Ultraproducts and Twin-width}\label{sec:twin-width}
We now apply our results to construct new algorithmically tame hereditary (i.e. closed under induced subgraphs) classes of graphs from given ones. The main notion of study here is that of \emph{bounded twin-width}. For a general introduction and precise definitions, we refer the reader to \cite{BKTW2021}, where the notion of twin-width was introduced. For basic background in parametrised complexity theory, we refer the reader to \cite{FG2006}.

In the next \namecref{prop:ultraproducts} we show that an ultraproduct of the class of lexicographic sums of two classes is isomorphic to a lexicographic sum of ultraproducts of these classes. More precisely:
    
\begin{proposition}\label{prop:ultraproducts}
    Fix a cardinal $\kappa$ and $\mathcal{U}$ an ultraproduct on $\kappa$. Let $\cC_1$ and $\cC_2$ be two classes of (not necessarily finite) structures. 
    Let $(G^i[H_g^i])_{i<k}$ be a sequence of lexicographic sum in $\cC_1[\cC_2]$. 
    Then we have:
    \[\prod_\mathcal{U} \left( G^i\left[H_g^i\right] \right) \simeq \left(\prod_\mathcal{U} G^i \right)  \left[ \mathcal{H}_g\right],\]
    where, for each $g = [(g_i)]_\mathcal{U}$, $\mathcal{H}_g= \prod_\mathcal{U} H_{g_i}^i$.
\end{proposition}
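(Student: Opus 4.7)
The plan is to exhibit the natural map $\Phi$ between the two multisorted structures explicitly and then verify structure-preservation via Łoś's theorem. On the bottom sort, $\Phi$ is the identity on $\prod_\mathcal{U} G^i$. On the main sort, $\Phi$ sends $[(g_i, h_i)]_\mathcal{U}$ to $\left([(g_i)]_\mathcal{U}, [(h_i)]_\mathcal{U}\right)$, using the fact that for $\mathcal{U}$-a.e.\ $i$ the second coordinate $h_i$ lies in $H_{g_i}^i$, so $[(h_i)]_\mathcal{U}$ is an element of $\mathcal{H}_{[(g_i)]_\mathcal{U}} := \prod_\mathcal{U} H_{g_i}^i$.

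First I would check that $\Phi$ is a well-defined bijection on each sort. Well-definedness and injectivity on the main sort reduce to the observation that coordinate-wise ultrafilter equality of pairs $(g_i, h_i)$ is the conjunction of the ultrafilter equalities on each coordinate, which holds because $\mathcal{U}$-large sets are closed under finite intersection. For surjectivity, given $(g, h)$ on the right-hand side with $g = [(g_i)]_\mathcal{U}$ and $h \in \mathcal{H}_g = \prod_\mathcal{U} H_{g_i}^i$, pick any representative $(h_i)_{i < \kappa}$ of $h$ with $h_i \in H_{g_i}^i$ for $\mathcal{U}$-a.e.\ $i$ (modifying on a $\mathcal{U}$-null set if necessary); then $[(g_i, h_i)]_\mathcal{U}$ is a preimage.

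Next I would verify that $\Phi$ preserves each symbol of $\lL_{\sM[\mathfrak{N}]}$. Compatibility with the projection $v$, and with the symbols of $\lL_\sM$ restricted to the bottom sort, is immediate since $\Phi$ is the identity on the bottom sort and $v$ is computed coordinate-wise on each side. For an $n$-ary relation $P_\bullet \in \lL_{\bullet, \mathfrak{N}}$, the definition of the lexicographic sum (\cref{DefinitionLexicographicProduct2}) gives that $G^i[H_g^i] \models P_\bullet((g_i^1, h_i^1), \dots, (g_i^n, h_i^n))$ precisely when $g_i^1 = \dots = g_i^n =: a_i$ and $H_{a_i}^i \models P(h_i^1, \dots, h_i^n)$. Łoś's theorem applied once to $\prod_\mathcal{U} G^i[H_g^i]$, and then separately to $\prod_\mathcal{U} G^i$ and to the fibre ultraproduct $\prod_\mathcal{U} H_{g_i}^i$, matches this condition exactly with the defining condition for $P_\bullet$ in $(\prod_\mathcal{U} G^i)[\mathcal{H}_g]$. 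The function-symbol case is analogous: the ``undetermined'' branch of $f_\bullet$ is triggered by an equality condition in the bottom sort, which transfers across the ultraproduct.

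The argument is essentially a diagram chase combined with Łoś's theorem, so I do not anticipate a serious obstacle. The one subtlety worth flagging is that $\mathcal{H}_g$ depends only on the $\mathcal{U}$-class $g = [(g_i)]_\mathcal{U}$ (two representatives agree $\mathcal{U}$-a.e., so their fibrewise ultraproducts are canonically identified), which makes the right-hand side unambiguously defined; once this is noted, all the verifications above are routine.
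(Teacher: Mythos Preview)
Your proposal is correct and follows essentially the same approach as the paper: both arguments use \Los's theorem to verify that the natural coordinate-wise map identifies the ultraproduct with the lexicographic sum of ultraproducts, with the same observation that $\mathcal{H}_g$ is independent of the choice of representative. The paper phrases this by analysing the definable equivalence relation $\sim$ induced by $v$ and identifying the quotient and the fibres separately, while you construct the bijection $\Phi$ explicitly and check each language symbol, but these are two presentations of the same computation.
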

\begin{proof}
    First, notice that $\mathcal{H}_g$ doesn't depend on the choice of representative $(g_i)_i$ of $g$.
    Each $G^i[H_g^i]$ for $i\in \kappa$ carries a definable (through the projection $v$) equivalence relation $\sim$ where two elements $(g,h),(g',h')$ are equivalent if, and only if $g=g'$.  We remark by \Los that $\sim$ is also an equivalence relation on $\sM=\prod_\mathcal{U} \left ( G^i[H_g^i] \right)$, and we have more precisely that for $a=[((g_i,h_i))_i]_\mathcal{U}$ and $b=[((g_i',h_i'))_i]_\mathcal{U}$ in $\sM$, 
    \begin{align*}
        \sM\models a\sim b 
        & \Leftrightarrow \text{ for $\mathcal{U}$-many $i$, } G^i[H_g^i]\models (g_i,h_i) \sim (g_i',h_i')  \\
        & \Leftrightarrow (g_i)_i=(g_i')_i \mod \mathcal{U}.  
    \end{align*} 
    Similarly, if $P\in \lL_{\cC_1}$, we have for any tuples $a=[((g_i,h_i))_i]_\mathcal{U} \in \sM:$
    \begin{align*}
        \sM\models P(v(a)) 
        & \Leftrightarrow \text{ for $\mathcal{U}$-many $i$, } G^i[H_g^i]\models P(v(g_i,h_i) )  \\
        & \Leftrightarrow \text{ for $\mathcal{U}$-many $i$, } G^i\models P(g_i).  
    \end{align*} 
    It follows that $\sM/\sim$ is isomorphic to $ \prod_\mathcal{U} G^i$.
    It remains to show that each equivalence class is an ultraproduct of structures in $\cC_2$. Fix $a= [(g_i,h_i)]_\mathcal{U} \in \sM$.  By the above, an elements $b$ in $\sM$ is equivalent to $a$ if and only if it has a representative in 
    $\prod_{i\in \kappa} \{g_i\}\times H_{g_i}^i$.
    Thus the class $[a]_\sim$ of $a$ is an isomorphic copy of $\mathcal{H}_g= \prod_\mathcal{U} H_{g_i}^i$.
\end{proof}

\begin{definition}
    A class $\cC$ of structures is called \emph{monadically NIP} if any ultraproduct of structures in $\cC$ is monadically NIP. 
\end{definition}

\begin{corollary}\label{cor:monadic-NIP-sums-of-classes}
    Let $\cC_1$ and $\cC_2$ be two classes of structures that are monadically NIP. Then their lexicographic sum is monadically NIP.
\end{corollary}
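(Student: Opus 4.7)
The plan is to combine \Cref{prop:ultraproducts} with the transfer principle in \Cref{cor:TransferPrincipleMonadicNIP}. Unpacking the definition of a monadically NIP class, I need to show that an arbitrary ultraproduct $\prod_\mathcal{U}(G^i[H^i_g])$ of lexicographic sums from $\cC_1[\cC_2]$ is a monadically NIP structure.

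First, I would apply \Cref{prop:ultraproducts} to rewrite this ultraproduct as a single lexicographic sum $\left(\prod_\mathcal{U} G^i\right)[\mathcal{H}_g]$, where each rib $\mathcal{H}_g$ is itself an ultraproduct of structures from $\cC_2$. Second, I would verify the two hypotheses of \Cref{cor:TransferPrincipleMonadicNIP}: (i) the base $\prod_\mathcal{U} G^i$ is monadically NIP, which is immediate since $\cC_1$ is a monadically NIP class; and (ii) for every $\rho \in S_1^{\Th(\prod_\mathcal{U} G^i)}$, the rib-ultraproduct $\mathcal{H}_\rho = \prod_\rho \mathcal{H}_g$ is monadically NIP.

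The main (and only) delicate point is (ii): here $\mathcal{H}_\rho$ is an ultraproduct of ultraproducts of structures in $\cC_2$, which does not immediately fit the hypothesis that $\cC_2$ is a monadically NIP class. I would handle this in either of two equivalent ways. One option is to invoke the forbidden-coding-configuration characterisation of monadic NIP in \Cref{thm:characterisations-of-monadic-NIP}: this characterisation amounts to a countable conjunction of first-order sentences and is therefore preserved under ultraproducts by \Los, so since each $\mathcal{H}_g$ is already monadically NIP, the further ultraproduct $\mathcal{H}_\rho$ is too. Alternatively, one uses the standard fact that an iterated ultraproduct is elementarily equivalent to a single ultraproduct over a suitable index set, which reduces $\mathcal{H}_\rho$ to an honest ultraproduct of structures in $\cC_2$ whose monadic NIP is then immediate from the hypothesis on $\cC_2$.

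Granted (i) and (ii), \Cref{cor:TransferPrincipleMonadicNIP} yields that $\left(\prod_\mathcal{U} G^i\right)[\mathcal{H}_g]$ is monadically NIP, which completes the proof. Since $\mathcal{U}$ and the initial sequence were arbitrary, this shows that the class of lexicographic sums of structures from $\cC_1$ and $\cC_2$ is monadically NIP.
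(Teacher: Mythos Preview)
Your overall strategy coincides with the paper's: rewrite the ultraproduct as a single lexicographic sum via \Cref{prop:ultraproducts}, then invoke \Cref{cor:TransferPrincipleMonadicNIP}. You also spell out the one point the paper glosses over, namely that the ribs $\mathcal{H}_\rho$ appearing in the transfer principle are iterated ultraproducts of structures from $\cC_2$. Your option~2 (an iterated ultraproduct is elementarily equivalent to a single ultraproduct, and monadic NIP depends only on the theory) is correct and is the right way to close this gap.

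However, your option~1 does not work as stated. The forbidden-configuration characterisation of monadic NIP in \Cref{thm:characterisations-of-monadic-NIP} is \emph{not} a countable conjunction of first-order sentences: for each formula $\vphi$, ``$\vphi$ does not code'' means \emph{there exists} $n$ such that no $n\times n$ array exists, so the overall shape is a conjunction (over $\vphi$) of countable \emph{disjunctions} (over $n$), and countable disjunctions are not preserved by \Los. Concretely, monadic NIP is not closed under ultraproducts of varying structures: every finite structure is monadically NIP, yet a nonprincipal ultraproduct of finite bipartite graphs coding all $n\times n$ $\{0,1\}$-matrices has IP. So the sentence ``since each $\mathcal{H}_g$ is already monadically NIP, the further ultraproduct $\mathcal{H}_\rho$ is too'' is false without the extra input that the $\mathcal{H}_g$ are themselves ultraproducts of $\cC_2$-structures. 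Keep option~2 and drop option~1.
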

\begin{proof}
    This follows immediately \Cref{cor:TransferPrincipleMonadicNIP}, the fact that monadic NIP is closed under reducts and \Cref{prop:ultraproducts}.
\end{proof}

It is conjectured (see, for instance, \cite[Conjecture 8.2]{GHPLR2020}) that for hereditary classes of graphs, under a mild assumption from descriptive complexity theory (namely that $\mathsf{FPT}\neq\mathsf{AW}[\star]$, which, in particular, implies that first-order model checking for the class of all graphs is not tractable), the algorithmic tameness condition of having \emph{fixed-parameter tractable model checking} coincides with the class being monadically NIP. There is strong evidence for this conjecture. In particular, it is true for \emph{monotone} (i.e. closed under not necessarily induced substructures) classes of relational structures \cite{BDIP2023}, and hereditary classes of ordered graphs \cite{BGOdMSTT2022}. More precisely, the latter result is the following:

\begin{fact}[{\cite[Theorems 1 and 3]{BGOdMSTT2022}}]\label{fact:twin-width}
    The following conditions are equivalent for a hereditary class $\cC$ of finite, ordered binary structures:
    \begin{enumerate}
        \item $\cC$ is monadically NIP.
        \item $\cC$ has bounded twin-width.
        \item (Assuming $\mathsf{FPT}\neq\mathsf{AW}[\star]$) Model-checking first-order logic is fixed-parameter tractable on $\cC$.
    \end{enumerate}
\end{fact}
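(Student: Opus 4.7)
The equivalence splits into three essentially independent arguments, and my plan would be to prove $(2)\Rightarrow(3)$, $(2)\Rightarrow(1)$, and then the harder direction $(1)\Rightarrow(2)$, with $(3)\Rightarrow(1)$ following by a contrapositive complexity argument. The implication $(2)\Rightarrow(3)$ would follow from the model-checking algorithm of Bonnet--Kim--Thomass\'e--Watrigant: given a contraction sequence witnessing bounded twin-width, one evaluates any fixed first-order sentence by a dynamic programming procedure along the sequence, where the size of the states depends only on the formula and on the twin-width bound. The implication $(2)\Rightarrow(1)$ would follow from two ingredients: first, hereditary classes of bounded twin-width are closed under first-order transductions (which subsume monadic expansions followed by interpretation); second, any class with IP admits a transduction onto the class of all finite graphs, which has unbounded twin-width. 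Combining these, a class with bounded twin-width cannot have a monadic expansion with IP, hence is monadically NIP.

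For the main direction $(1)\Rightarrow(2)$, my plan would be to argue the contrapositive through the combinatorial characterisation of unbounded twin-width for $0/1$-matrices in terms of \emph{mixed minors} (equivalently, ``rich divisions''): a hereditary class of matrices has unbounded twin-width if, and only if, it contains arbitrarily large mixed minors. Since the class $\cC$ consists of ordered binary structures, each member has an associated adjacency matrix with rows and columns indexed by the linear order, and unbounded twin-width in $\cC$ translates directly into unbounded twin-width for the associated matrix class. From arbitrarily large mixed minors, I would then extract, via a Ramsey-type pigeonholing refining the order structure, a uniform combinatorial pattern that realises ``arbitrarily complex'' incidences in a way compatible with the linear order. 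The conclusion would then be that after naming a bounded number of unary predicates (thus remaining within a monadic expansion), one can isolate rows/columns that witness the independence property with a single binary formula.

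The hard step here is the last one, namely turning a large combinatorial ``geometric'' pattern (a mixed minor in an ordered matrix) into a $\emptyset$-definable IP configuration in an appropriate monadic expansion. This is genuinely the technical core of \cite{BGOdMSTT2022}, and its success depends crucially on both hypotheses on $\cC$: binarity (so that the adjacency matrix captures the full atomic diagram up to reindexing) and the presence of a linear order (which lets unary predicates pick out intervals and thus simulate arbitrary row/column selections). Finally, for $(3)\Rightarrow(1)$, my plan would be to argue contrapositively: if $\cC$ is not monadically NIP, then by the previous step it has unbounded twin-width, and a reduction from parametrised problems known to be $\mathsf{AW}[\star]$-hard on classes of unbounded twin-width would show that FO model-checking on $\cC$ cannot be fixed-parameter tractable unless $\mathsf{FPT}=\mathsf{AW}[\star]$, completing the cycle.
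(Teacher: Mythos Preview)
The paper does not prove this statement at all: it is recorded as a \emph{Fact} with an explicit citation to \cite[Theorems 1 and 3]{BGOdMSTT2022}, and no argument is given or needed here. Your outline is a reasonable high-level summary of how the cited reference establishes the equivalence, but there is nothing in the present paper to compare it against.
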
  

Observe that any graph $G$ can be expanded by a total order resulting in an \emph{ordered graph} of the same twin-width (see \cite{BKTW2021} or \cite{ST21} for an explicit argument), but it is known that finding these orders is hard to do efficiently. In any case, given a class $\cC$ of graphs with bounded twin-width we can assume that $\cC$ consists of ordered graphs and still has bounded twin-width. Moreover, given any graph $G$, the twin-width of any (induced) subgraph of $G$ is at most that of $G$ (see \cite{BKTW2021}). In particular, given a class of graphs $\cC$ with bounded twin-width we may assume that $\cC$ is hereditary.

One of the results in \cite{BERTW2022} is that the lexicographic product of two graphs of bounded twin-width has bounded twin-width. This was expanded for various other product notions (always involving two graphs) in \cite{PS2023}. Here we generalise the first result to lexicographic sums of graphs with bounded twin-width. More precisely we obtain the following:

\begin{corollary}\label{cor:sum-has-bdd-twin-width}
    Let $\cC_1$ and $\cC_2$ be two classes of finite graphs with bounded twin-width. Then, the reduct of $\cC_1[\cC_2]$ to the language of graphs has bounded twin-width.
\end{corollary}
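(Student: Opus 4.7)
The plan is to combine the equivalence between bounded twin-width and monadic NIP for hereditary classes of finite ordered binary structures (\Cref{fact:twin-width}) with the transfer principle for monadic NIP across lexicographic sums (\Cref{cor:monadic-NIP-sums-of-classes}).

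I would first reduce to the case where $\cC_1$ and $\cC_2$ are hereditary classes of finite \emph{ordered} graphs of bounded twin-width. As noted in the discussion following \Cref{fact:twin-width}, every finite graph of bounded twin-width admits a linear order preserving the twin-width bound, so by replacing each graph in $\cC_i$ with an ordered expansion and closing under induced substructures we obtain such classes; the original graph reducts lie inside these hereditary ordered classes, and since twin-width is monotone under taking reducts, it suffices to establish the conclusion for the ordered versions.

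Once in the ordered setting, \Cref{fact:twin-width} ensures that $\cC_1$ and $\cC_2$ are monadically NIP. Applying \Cref{cor:monadic-NIP-sums-of-classes}, the class of multisorted structures $\cC_1[\cC_2]$ is monadically NIP. I would then form, for each lex sum $G[\mathfrak{H}]$ in $\cC_1[\cC_2]$, the quantifier-free $\emptyset$-definable reduct on the main sort consisting of the edge relation $(g,h)\sim(g',h')$ iff $(g=g'\wedge h\sim_{H_g}h')\vee(g\neq g'\wedge g\sim_G g')$, together with the lexicographic order $(g,h)<(g',h')$ iff $g<_G g'\vee(g=g'\wedge h<_{H_g}h')$. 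Collecting these over all $G\in\cC_1$ and $H_g\in\cC_2$ yields a hereditary class $\cC$ of finite ordered graphs, and since monadic NIP is preserved under first-order reducts, $\cC$ is monadically NIP.

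Applying \Cref{fact:twin-width} to $\cC$ in the reverse direction gives that $\cC$ has bounded twin-width. Dropping the order, which can only decrease twin-width, shows that the reduct of $\cC_1[\cC_2]$ to the language of graphs has bounded twin-width. The delicate step is the initial reduction to ordered classes, which relies on the non-trivial fact that bounded twin-width of a hereditary graph class is witnessed by some choice of orderings on its members; granting this and the monadic-NIP transfer proved in this paper, the rest of the argument is a direct and formal application of the tools assembled above.
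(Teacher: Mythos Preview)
Your proposal is correct and follows essentially the same approach as the paper: pass to hereditary ordered classes, apply \Cref{fact:twin-width} to get monadic NIP, transfer via \Cref{cor:monadic-NIP-sums-of-classes}, reduce to the language of ordered graphs, and apply \Cref{fact:twin-width} in the other direction. You are somewhat more explicit than the paper about describing the lexicographic order and edge relation on the main sort and about the final step of dropping the order, but the skeleton of the argument is identical.
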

\begin{proof}
    Without loss of generality, we may assume that $\cC_1$ and $\cC_2$ consist of ordered graphs and are hereditary. By \Cref{fact:twin-width}, it follows that $\cC_1$ and $\cC_2$ are monadically NIP. Now, by \Cref{cor:monadic-NIP-sums-of-classes}, it follows that the class $\cC_1[\cC_2]$, in which there is a natural total order coming from the orders from $\cC_1$ and $\cC_2$ is monadically NIP, and thus, its reduct to the language of ordered graphs is a monadically NIP hereditary class of ordered graphs. Thus, by \Cref{fact:twin-width} it has bounded twin-width, as claimed.
\end{proof}

\section{Open Questions}\label{sec:questions}
We conclude this paper by collecting some open questions. Recall that a structure $\sM$ \emph{admits a distal expansion} if it is a reduct of a distal structure. With respect to some combinatorial and model-theoretic aspects, it is more relevant to ask if a structure has a distal expansion, rather than if the structure itself is distal. Clearly, given two structures each of which has a distal expansion, their full product will also have a distal expansion. However, the other direction remains unclear. We have the following observation:

\begin{observation}
    An expansion of $\sM_1 \boxtimes \sM_2$ is not necessarily a reduct of a full product of expansions of $\sM_1$ and $\sM_2$. Assume $|\sM_1| = | \sM_2|$ and let $b:\sM_1  \rightarrow \sM_2 $ be a bijection. Then $(\sM_1 \boxtimes \sM_2,b)$ cannot be such a reduct, as its theory implies $ |\sM_1| = | \sM_2 |$.
\end{observation}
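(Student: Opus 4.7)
The plan is to flesh out the one-line justification attached to the observation: the theory of $(\sM_1 \boxtimes \sM_2, b)$ forces $|\sM_1| = |\sM_2|$ in every model, whereas the theory of any full product $\sN_1 \boxtimes \sN_2$ of expansions does not, provided $\sM_1$ and $\sM_2$ are infinite (which is the only substantive case; I assume it throughout).

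First, the sentence asserting that $b$ is a bijection from the $\sM_1$-sort to the $\sM_2$-sort, namely
\[
    \forall y \in M_2 \, \exists! x \in M_1 \, (b(x) = y),
\]
is first-order in $\lL_{\sM_1 \boxtimes \sM_2} \cup \{b\}$ and lies in $\Th((\sM_1 \boxtimes \sM_2, b))$, so every model of the latter has factor sorts of equal cardinality.

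Next, suppose for contradiction that $(\sM_1 \boxtimes \sM_2, b)$ is a reduct of some full product $\sN_1 \boxtimes \sN_2$, where each $\sN_i$ is an expansion of $\sM_i$. By the relative quantifier elimination of \Cref{fact: QE relative in full product}, every sentence in $\lL_{\sN_1 \boxtimes \sN_2}$ is equivalent to a Boolean combination of sentences purely in $\lL_{\sN_1}$ and in $\lL_{\sN_2}$, together with the axioms identifying the main sort with the ``honest'' product of the two factor sorts via the projections. Consequently, for any $A_1 \models \Th(\sN_1)$ and any $A_2 \models \Th(\sN_2)$, the full product $A_1 \boxtimes A_2$ is automatically a model of $\Th(\sN_1 \boxtimes \sN_2)$.

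Since $\sN_1, \sN_2$ are infinite, L\"owenheim--Skolem yields cardinals $\kappa_1 \neq \kappa_2$ and models $A_i \models \Th(\sN_i)$ with $|A_i| = \kappa_i$. The resulting $A_1 \boxtimes A_2$ is then a model of $\Th(\sN_1 \boxtimes \sN_2)$ whose two factor sorts have different cardinalities, contradicting the bijection sentence above and yielding the desired contradiction. The only subtlety is verifying that the two factor sorts can be chosen independently in models of the full product's theory, which is precisely the content of relative quantifier elimination; no significant obstacle is expected.
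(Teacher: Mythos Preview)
Your argument is correct and is precisely the intended elaboration of the paper's one-line justification: the paper records only the key point that the theory of $(\sM_1\boxtimes\sM_2,b)$ forces $|\sM_1|=|\sM_2|$, leaving implicit exactly the L\"owenheim--Skolem step and the appeal to relative quantifier elimination that you spell out. Your remark that the observation is only substantive when $\sM_1,\sM_2$ are infinite is a useful clarification the paper omits.
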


This motivates the following question:
\begin{question}
    Assume that a full product $\sM_1\boxtimes \sM_2$ admits a distal expansion. Do both $\sM_1$ and $\sM_2$ admit a distal expansion?    
\end{question}

In \Cref{Subsubsec:Monadic NIP}, we stated the fact that indiscernible triviality and dp-minimality characterise monadically NIP structures. Dp-minimality cannot be characterised in terms of forbidden coding (since it is not closed under bi-interpretation), but one can ask if there is a natural non-coding class strictly contained in NIP, that contains all monadically NIP structures. A natural candidate for such a class would be that of NIP structures with indiscernible triviality. This leads to the following question:

\begin{question} 
    Is it possible to characterise indiscernible triviality in terms of collapsing indiscernibles or in terms of forbidden coding?
\end{question}

Finally, the following question naturally relates to the classification of ultrahomogeneous NIP$_n$ structures:
\begin{question}
    Given an integer $n$, is there an ultrahomogeneous countable structure with the Ramsey property, that is NIP$_n$, but not $n$-ary?
\end{question}

 Some positive examples will give, by \Cref{Prop:NaryCollapsingHigherArityStructure}, some other instances of ultrahomogeneous countable structures $\mathcal{M}$ and $\mathcal{N}$ such that $\ncodingcla{\sN}$ and $\ncodingcla{\sM}$ are not included into each other.
 
 More generally, one can ask about the structure of the $\ncodingcla{\cla{K}}$-hierarchy - say, when $\cK$ ranges over the class of all ultrahomogeneous countable structures. By \Cref{fact: equality of closedness} and \Cref{example-full-product-bad}, it is a rooted meet-tree, with a countable ascending chain and an antichain of size at least two. One can then reformulate \cite[Question 4.7]{GPS21}:
\begin{question}
    What is the width of the  $\ncodingcla{\cla{K}}$-hierarchy?
\end{question}

\bibliographystyle{alpha}
\bibliography{bibtex}


\parbox{\linewidth}{\textsc{Nadav Meir}\\
\textsc{Department of Mathematics, Ben-Gurion University of the Negev, \\
84105 Be'er-Sheva, Israel} \\
and \\
\textsc{Instytut Matematyczny, Uniwersytet Wroc{\l}awski,  \\
pl. Grunwaldzki 2/4, 
50-384 Wroc{\l}aw, Poland \\}  
\url{math@nadav.me}} \\ \, \\

\parbox{\linewidth}{\textsc{Aris Papadopoulos}\\
\textsc{School of Mathematics, University of Leeds, Leeds LS2 9JT, United Kingdom} \\
\url{mmadp@leeds.ac.uk}}\\ \, \\

\parbox{\linewidth}{\textsc{Pierre Touchard}\\
\textsc{KU Leuven, Department of Mathematics, B-3001 Leuven, Belgium} \\
\url{pierre.touchard@kuleuven.be}}

\end{document}